\tikzset{cd/.style=matrix of math nodes} 
\tikzset{cdar/.style=->,auto}
\tikzset{dar/.style={double,double equal sign distance,-implies}}
\setlist[enumerate,1]{label=\textup{(\arabic*)}}
\setlist[enumerate,2]{label=\textup{(\alph*)}}
\renewcommand*{\PrintDOI}[1]{\href{http://dx.doi.org/\detokenize{#1}}{doi:
\detokenize{#1}}}
\newcommand{\longref}[2]{\hyperref[#2]{#1~\textup{\ref*{#2}}}}
\numberwithin{equation}{section}
\theoremstyle{plain}
\newtheorem{theorem}[equation]{Theorem}
\newtheorem{lemma}[equation]{Lemma}
\newtheorem{proposition}[equation]{Proposition}
\theoremstyle{definition}
\newtheorem{definition}[equation]{Definition}
\theoremstyle{remark}
\newtheorem{remark}[equation]{Remark}
\newtheorem{example}[equation]{Example}
\newcommand*{\alb}{\hspace{0pt}} 
\newcommand*{\nb}{\nobreakdash}
\newcommand*{\Mult}{\mathcal M}
\newcommand*{\Slice}[1][U]{\mathcal{#1}}
\newcommand*{\Rep}{T}
\newcommand{\Hilm}{\mathcal{E}}
\newcommand{\HilmF}{\mathcal{F}}
\newcommand*{\ev}{\mathrm{ev}}
\newcommand*{\Id}{\mathrm{id}}
\newcommand*{\Qu}{\mathsf{p}}
\newcommand*{\s}{s} 
\newcommand*{\rg}{r}
\newcommand{\Gr}[1][G]{\mathcal #1}
\newcommand*{\Bisp}[1][X]{\mathcal #1}
\newcommand{\Reg}{\mathcal R}
\newcommand{\Mod}{\mathcal M}
\newcommand{\into}{\rightarrowtail}
\newcommand{\prto}{\twoheadrightarrow}
\newcommand*{\ContS}{\mathfrak S}
\newcommand*{\Cont}{\mathrm C}
\newcommand*{\Contb}{\mathrm{C_b}}
\newcommand*{\Contc}{\mathrm{C_c}}
\newcommand*{\Cst}{\texorpdfstring{\textup C^*}{C*}}
\newcommand*{\Star}{$^*$\nobreakdash-}
\newcommand*{\defeq}{\mathrel{\vcentcolon=}}
\newcommand*{\congto}{\xrightarrow\sim}
\newcommand*{\pr}{\mathrm{pr}}
\newcommand{\C}{\mathbb{C}}
\newcommand{\N}{\mathbb{N}}
\newcommand{\Comp}{\mathbb{K}}
\newcommand{\Bound}{\mathbb{B}}
\newcommand{\Bis}{\mathcal{B}}
\newcommand{\IS}{\mathcal{I}}
\newcommand{\Grcomp}{\circ}
\newcommand{\blank}{{\llcorner\!\lrcorner}}
\newcommand*{\conj}[1]{\overline{#1}}
\DeclareMathOperator{\Dom}{Dom}
\DeclarePairedDelimiter{\abs}{\lvert}{\rvert}
\DeclarePairedDelimiter{\ket}{\lvert}{\rangle}
\DeclarePairedDelimiter{\bra}{\langle}{\rvert}
\DeclarePairedDelimiterX{\braket}[2]{\langle}{\rangle}{#1\,\delimsize\vert\,\mathopen{}#2}
\DeclarePairedDelimiterX{\BRAKET}[2]{\langle\!\langle}{\rangle\!\rangle}{#1\,\delimsize\vert\,\mathopen{}#2}
\DeclarePairedDelimiterX{\setgiven}[2]{\{}{\}}{#1\,{:}\,\mathopen{}#2}
\begin{document}
\title[Groupoid models for relative Cuntz--Pimsner algebras]{Groupoid models for relative Cuntz--Pimsner algebras of groupoid correspondences}
\author{Ralf Meyer}
\email{rmeyer2@uni-goettingen.de}
\address{Mathematisches Institut\\
  Universität Göttingen\\
  Bunsenstraße 3--5\\
  37073 Göttingen\\
  Germany}

\keywords{étale groupoid; groupoid correspondence; bicategory;
  Cuntz--Pimsner algebra; Toeplitz algebra; self-similar group;
  self-similar graph}

\begin{abstract}
  A groupoid correspondence on an \'etale, locally compact groupoid
  induces a \(\Cst\)\nb-correspondence on its groupoid
  \(\Cst\)\nb-algebra.  We show that the Cuntz--Pimsner algebra for
  this \(\Cst\)\nb-correspondence relative to an ideal associated to
  an open invariant subset of the groupoid is again a groupoid
  \(\Cst\)\nb-algebra for a certain groupoid.  We describe this groupoid
  explicitly and characterise it by a universal property that specifies
  its actions on topological spaces.  Our construction unifies the
  construction of groupoids underlying the \(\Cst\)\nb-algebras of
  topological graphs and self-similar groups.
\end{abstract}
\subjclass[2020]{46L55; 22A22}
\thanks{This work is part of the project Graph Algebras partially
  supported by EU grant HORIZON-MSCA-SE-2021 Project 101086394.}

\maketitle

\section{Introduction}
\label{sec:intro}

We show that the Cuntz--Pimsner algebra of the
\(\Cst\)\nb-correspondence defined by a groupoid correspondence is a
groupoid \(\Cst\)\nb-algebra for a canonical groupoid, which is
characterised by a universal property that specifies its actions on
topological spaces.
Our groupoid construction unifies several existing ones, starting with
the groupoids underlying Cuntz algebras
(see~\cite{Renault:Groupoid_Cstar}) and graph \(\Cst\)\nb-algebras
(see~\cite{Kumjian-Pask-Raeburn:Cuntz-Krieger_graphs}), and continuing
with topological graphs (see~\cite{Katsura:class_I}), self-similar
groups (see~\cite{Nekrashevych:Cstar_selfsimilar}), self-similar
graphs (see~\cite{Exel-Pardo:Self-similar}), or self-similar actions
of groupoids
(see~\cite{Laca-Raeburn-Ramagge-Whittaker:Equilibrium_self-similar_groupoid}).
In fact, the Cuntz--Pimsner algebras of groupoid correspondences may
be viewed as being the \(\Cst\)\nb-algebras of self-similar actions of
general \'etale groupoids as opposed to the discrete groupoids studied
in~\cite{Laca-Raeburn-Ramagge-Whittaker:Equilibrium_self-similar_groupoid}.

Let \(\Gr\) and~\(\Gr[H]\) be \'etale, locally compact groupoids,
possibly non-Hausdorff.  A groupoid correspondence
\(\Bisp\colon \Gr[H]\leftarrow \Gr\) is a topological space~\(\Bisp\)
with commuting actions of~\(\Gr[H]\) on the left and~\(\Gr\) on the
right, such that the right action is free and proper and its anchor
map \(\s\colon \Bisp\to \Gr^0\) is a local homeomorphism; this
forces~\(\Bisp\) to be locally Hausdorff and locally quasi-compact.
Such a groupoid correspondence induces a
\(\Cst(\Gr[H])\)-\(\Cst(\Gr)\)-correspondence, that is, a Hilbert
\(\Cst(\Gr)\)\nb-module with a nondegenerate action of
\(\Cst(\Gr[H])\) by adjointable operators
(see~\cite{Antunes-Ko-Meyer:Groupoid_correspondences}).

The anchor map \(\rg\colon \Bisp\to \Gr[H]^0\) of the left action
on~\(\Bisp\) induces a continuous map
\(\rg_*\colon \Bisp/\Gr \to \Gr[H]^0\) because it is
\(\Gr\)\nb-invariant.  Let
\(\Reg\subseteq \Gr[H]^0\) be an open, \(\Gr[H]\)\nb-invariant subset.
Then \(\Gr_\Reg = \s^{-1}(\Reg) = \rg^{-1}(\Reg)\) is an open
subgroupoid of~\(\Gr\) and \(\Cst(\Gr_\Reg)\) is an ideal in
\(\Cst(\Gr)\).  We call~\(\Bisp\) \emph{relatively proper} on~\(\Reg\)
if~\(\rg_*\) restricts to a proper map \(\rg_*^{-1}(\Reg) \to \Reg\).
This implies that \(\Cst(\Gr_\Reg)\) acts by compact operators on
\(\Cst(\Bisp)\).  If \(\Gr=\Gr[H]\), then this allows us to define a
Cuntz--Pimsner algebra \(\mathcal{O}_{\Cst(\Bisp),\Cst(\Gr_\Reg)}\)
for \(\Cst(\Bisp)\) relative to the ideal \(\Cst(\Gr_\Reg)\) in
\(\Cst(\Gr)\).  If~\(\Reg\) is empty, then this is the Toeplitz
\(\Cst\)\nb-algebra of~\(\Cst(\Bisp)\).  We are going to write
\(\mathcal{O}_{\Cst(\Bisp),\Cst(\Gr_\Reg)}\) as the
\(\Cst\)\nb-algebra of an \'etale groupoid, which we call the
\emph{groupoid model} of \((\Gr,\Bisp,\Reg)\).

A groupoid model as above was first constructed in the case
\(\Reg=\Gr^0\) by Albandik~\cite{Albandik:Thesis}.  It is described in a
better way in~\cite{Meyer:Diagrams_models}, and our description will
use the same idea.  These two articles also consider diagrams of
groupoid correspondences, which correspond roughly to higher-rank
graphs.  The generalisation to relative Cuntz--Pimsner algebras has
been worked out by Antunes~\cite{Antunes:Thesis}.  The special
case of a groupoid correspondence of a discrete groupoid is also
treated in~\cite{Miller-Steinberg:Homology_K_self-similar}.  Here we
mostly follow~\cite{Antunes:Thesis}, correcting one important
inaccuracy (see Remark~\ref{rem:inaccuracy}).
Currently, there is no treatment that allows relative Cuntz--Pimsner
algebras of diagrams of groupoid correspondences that fail to be
proper.  Indeed, this theory is more difficult because extra concepts
such as compact alignment become relevant on the \(\Cst\)\nb-algebraic
side.

To define the groupoid model, we first specify what an action of
\((\Gr,\Bisp,\Reg)\) on a topological space is.  The groupoid
model~\(\Mod\) is defined as a groupoid whose actions are in a
natural bijection with actions of \((\Gr,\Bisp,\Reg)\).  We show that
there is a groupoid with this universal property and that it is unique
up to isomorphism.  A key step here is to rewrite an action of
\((\Gr,\Bisp,\Reg)\) as an action of a certain inverse
semigroup~\(\IS\) with certain extra properties.  Thus our groupoid
model arises as an inverse semigroup transformation groupoid
\(\Omega\rtimes \IS\).  We describe \Star{}homomorphisms from both
\(\mathcal{O}_{\Cst(\Bisp),\Cst(\Gr_\Reg)}\) and
\(\Cst(\Omega\rtimes \IS)\) to \(\Comp(\Hilm)\) for a Hilbert
module~\(\Hilm\) over a \(\Cst\)\nb-algebra in terms of the original
diagram.  These universal properties are reasonably close to each
other, and then we do some technical work to identify the data that
appears in both universal properties.  This implies the desired
isomorphism
\(\mathcal{O}_{\Cst(\Bisp),\Cst(\Gr_\Reg)}\cong \Cst(\Omega\rtimes
\IS)\).

The groupoid model may be used to characterise when the relative
Cuntz--Pimsner algebra is simple, by checking whether it is Hausdorff,
effective, amenable, and minimal.  Criteria for this are developed
in~\cite{Antunes:Thesis}.  In ongoing work with de
Castro~\cite{Castro-Meyer:Graph_actors}, we use the
universal property that characterises the groupoid model to study
morphisms between graph \(\Cst\)\nb-algebras through certain morphisms
between the underlying groupoids.

\section{Relatively proper groupoid correspondences}
\label{sec:correspondences}

This article depends
on~\cite{Antunes-Ko-Meyer:Groupoid_correspondences}, which defines
groupoid correspondences and the associated
\(\Cst\)\nb-correspondences.  In addition, a composition of groupoid
correspondences is defined and shown to be compatible with the
composition of \(\Cst\)\nb-correspondences in the sense that the
passage from groupoid to \(\Cst\)\nb-correspondences is a homomorphism
of bicategories.  We shall not repeat the theory developed
in~\cite{Antunes-Ko-Meyer:Groupoid_correspondences}.

Throughout this article, we fix an (\'etale, locally compact)
groupoid~\(\Gr\), an (\'etale, locally compact) groupoid
correspondence \(\Bisp\colon \Gr \leftarrow \Gr\), and an open
\(\Gr\)\nb-invariant subset \(\Reg\subseteq \Gr^0\) such that the map
\(\rg_*\colon \Bisp/\Gr \to \Gr^0\) restricts to a proper map
\(\rg_*^{-1}(\Reg) \to \Reg\); then we briefly call~\(\Bisp\)
\emph{proper on~\(\Reg\)}.  Our notation differs
from~\cite{Antunes:Thesis} because we do \emph{not} assume
\(\Reg \subseteq \rg(\Bisp)\) as in~\cite{Antunes:Thesis}.

The case \(\Reg=\emptyset\) is allowed, and then~\(\Bisp\) may be any
groupoid correspondence.  The assumptions imply that \(\Gr^0\) and the
orbit space~\(\Bisp/\Gr\) of the right \(\Gr\)\nb-action on~\(\Bisp\)
are locally compact Hausdorff spaces.  In contrast, the arrow
space~\(\Gr\) and the space~\(\Bisp\) itself need not be Hausdorff.

We will discuss self-similar groupoid actions as an example in
Section~\ref{sec:examples}.  Throughout the paper, we use the case of
directed graphs and their \(\Cst\)\nb-algebras to illustrate our
theory.

\begin{example}
  \label{exa:graph_as_correspondence}
  Let \(\Gr=V\) be just a discrete set with only identity arrows.  So
  \(\Cst(V) = \Cont_0(V)\).  A correspondence
  \(\Bisp\colon \Gr\leftarrow \Gr\) is just a discrete set~\(E\) with
  two maps \(\rg,\s\colon E\rightrightarrows V\) or, in other words, a
  directed graph (see
  \cite{Antunes-Ko-Meyer:Groupoid_correspondences}*{Example~4.1}).
  Recall that a vertex \(v\in V\) is \emph{regular} if
  \(\rg^{-1}(v) \subseteq E\) is nonempty and finite.  Let
  \(\Reg \subseteq V\) be the subset of all regular vertices.  Then
  the Cuntz--Pimsner algebra of \(\Cst(E)\) relative to the ideal
  \(\Cont_0(\Reg) \subseteq \Cont_0(V)\) is the usual graph
  \(\Cst\)\nb-algebra of the graph
  \(\rg,\s\colon E\rightrightarrows V\) (see, for instance,
  \cite{Raeburn:Graph_algebras}*{Section~8}).
\end{example}

The next lemma describes all possible choices for~\(\Reg\):

\begin{lemma}
  \label {lem:Katsura_ideal_for_spaces}
  Let \(f\colon X\to Y\) be a continuous map between Hausdorff locally
  compact spaces.  There is an open subset \(Y_{\max}\subseteq Y\)
  such that for an open subset \(W\subseteq X\) the restriction
  \(f^{-1}(W) \to W\) of~\(f\) is proper if and only if
  \(W\subseteq Y_{\max}\).
\end{lemma}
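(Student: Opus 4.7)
The natural candidate is
\[
Y_{\max}\defeq \bigcup\setgiven{W\subseteq Y \text{ open}}{f^{-1}(W)\to W \text{ is proper}};
\]
this is open as a union of opens. I have to check (i) that $Y_{\max}$ itself belongs to this family and (ii) that being contained in $Y_{\max}$ is sufficient as well as necessary. Necessity of $W\subseteq Y_{\max}$ is tautological. Sufficiency, given (i), is the easy observation that if $g\colon A\to B$ is proper between locally compact Hausdorff spaces and $V\subseteq B$ is open, then $g^{-1}(V)\to V$ is proper: for compact $K\subseteq V$ the preimage $g^{-1}(K)$ is compact in $A$, hence compact in the subspace $g^{-1}(V)$. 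So the whole proof reduces to step~(i).

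For~(i) I would argue that $f^{-1}(K)$ is compact for every compact $K\subseteq Y_{\max}$, which is equivalent to properness of $f^{-1}(Y_{\max})\to Y_{\max}$ since all spaces in sight are locally compact Hausdorff. Given such~$K$, I would use local compactness of~$Y$ to choose, for each $y\in K$, a relatively compact open neighbourhood $V_y$ of~$y$ with $\overline{V_y}\subseteq W_y$ for some $W_y$ from the defining family of $Y_{\max}$; compactness of~$K$ then yields a finite subcover $V_{y_1},\dots,V_{y_n}$. Writing
\[
K = \bigcup_{j=1}^n \bigl(K\cap \overline{V_{y_j}}\bigr),
\]
each piece $K\cap\overline{V_{y_j}}$ is compact and contained in $W_{y_j}$, so its preimage under~$f$ is compact by properness of $f^{-1}(W_{y_j})\to W_{y_j}$. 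Hence $f^{-1}(K)$ is a finite union of compacts and therefore compact.

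The main obstacle, and really the only nontrivial point, is this local-to-global step for properness over an open cover. Once the covering/chopping argument above is in place, everything else is formal. Hausdorffness and local compactness of $X$ and $Y$ enter precisely to guarantee that properness is detected by preimages of compact sets and to furnish the relatively compact neighbourhoods $V_y$ needed to break $K$ into pieces that each fit inside a single member of the family.
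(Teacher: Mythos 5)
Your proof is correct, but it takes a genuinely different route from the paper. You work entirely with point-set topology: you define \(Y_{\max}\) as the union of all open \(W\subseteq Y\) over which \(f\) restricts properly, and then establish the only nontrivial point --- that properness is local on the base --- by covering a compact \(K\subseteq Y_{\max}\) with finitely many relatively compact opens \(V_{y_j}\) whose closures sit inside members of the family, and writing \(f^{-1}(K)\) as a finite union of compacts \(f^{-1}(K\cap\overline{V_{y_j}})\). This chopping argument is sound (each \(K\cap\overline{V_{y_j}}\) is a closed subset of \(K\), hence compact and contained in \(W_{y_j}\)), and your reduction of the general case to properness over \(Y_{\max}\) via the observation that proper maps restrict properly over open subsets of the codomain is also fine. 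The paper instead dualises: it passes to the nondegenerate \Star{}homomorphism \(f^*\colon \Cont_0(Y)\to\Contb(X)=\Mult(\Cont_0(X))\), characterises properness of \(f^{-1}(W)\to W\) by the condition \(f^*(\Cont_0(W))\subseteq\Cont_0(X)\), observes that \((f^*)^{-1}(\Cont_0(X))\) is an ideal in \(\Cont_0(Y)\), and reads off \(Y_{\max}\) from the correspondence between ideals and open subsets. There the local-to-global step is absorbed into the statement that every ideal of \(\Cont_0(Y)\) is of the form \(\Cont_0(U)\); your covering argument is the elementary topological counterpart of that fact, and has the small advantage of not invoking any \(\Cst\)\nb-machinery. (Note also that the lemma as printed says ``open subset \(W\subseteq X\)''; this is a typo for \(W\subseteq Y\), and you have correctly read it that way.)
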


\begin{proof}
  The map~\(f\) induces a nondegenerate \Star{}homomorphism
  \[
    f^*\colon \Cont_0(Y) \to \Mult(\Cont_0(X)) = \Contb(X).
  \]
  The restriction of~\(f\) to \(f^{-1}(W) \to W\) is proper for an
  open subset \(W\subseteq Y\) if and only if~\(f^*\) maps
  \(\Cont_0(W)\) to \(\Cont_0(f^{-1}(W))\).  Since \(f^*(h)\) for
  \(h\in\Cont_0(W)\) vanishes outside \(f^{-1}(W)\), this happens if
  and only if \(f^*(\Cont_0(W)) \subseteq \Cont_0(X)\).  The preimage
  \((f^*)^{-1}(\Cont_0(X))\) is an ideal in \(\Cont_0(Y)\).  It must
  be of the form \(\Cont_0(Y_{\max})\) for an open subset
  \(Y_{\max} \subseteq Y\).  The argument above shows that the
  restriction of~\(f\) to \(f^{-1}(W) \to W\) is proper if and only if
  \(W \subseteq Y_{\max}\).
\end{proof}

The following theorem would have fitted well
into~\cite{Antunes-Ko-Meyer:Groupoid_correspondences}.  We include it
here because it was used in a previous version of the proof of the
main theorem.
Muhly--Renault--Williams~\cite{Muhly-Renault-Williams:Equivalence}
show that a Morita equivalence between two Hausdorff, locally compact
groupoids induces a Morita equivalence of their \(\Cst\)\nb-algebras.
This is extended by Tu~\cite{Tu:Non-Hausdorff} to non-Hausdorff
groupoids, but using a different, more complicated construction of the
bimodule.

\begin{theorem}
  \label{the:Morita_equivalence}
  Let \(\Bisp\colon \Gr[H]\leftarrow \Gr\) be a Morita equivalence
  between two \'etale groupoids.  Then \(\Cst(\Bisp)\) is a
  Morita--Rieffel equivalence bimodule between \(\Cst(\Gr[H])\) and
  \(\Cst(\Gr)\).
\end{theorem}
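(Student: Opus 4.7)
The strategy is to exploit the bicategorical functoriality of the construction $\Bisp \mapsto \Cst(\Bisp)$ established in \cite{Antunes-Ko-Meyer:Groupoid_correspondences}. Since any homomorphism of bicategories preserves invertible 1\nb-morphisms, and since the Morita--Rieffel equivalence bimodules are exactly the invertible 1\nb-morphisms in the bicategory \(\Corr\) of \(\Cst\)\nb-correspondences, it suffices to show that a Morita equivalence of \'etale groupoids is an invertible 1\nb-morphism in the bicategory of groupoid correspondences, with inverse given by the opposite correspondence.

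To this end, one first constructs an inverse correspondence $\conj{\Bisp}\colon \Gr\leftarrow \Gr[H]$ by taking the same topological space~\(\Bisp\) with the roles of the two actions swapped via groupoid inversion. The Morita hypothesis---that both actions are free and proper, with anchor maps inducing homeomorphisms $\Bisp/\Gr \congto \Gr[H]^0$ and $\Gr[H]\backslash \Bisp \congto \Gr^0$---is symmetric in \(\Gr\) and \(\Gr[H]\), so $\conj{\Bisp}$ is again a groupoid correspondence (and again a Morita equivalence). One then exhibits canonical isomorphisms of groupoid correspondences
\[
  \Bisp \otimes_\Gr \conj{\Bisp} \congto \Gr[H], \qquad
  \conj{\Bisp} \otimes_{\Gr[H]} \Bisp \congto \Gr,
\]
where the right-hand sides are the identity correspondences.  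The first is induced by the map $(x,y) \mapsto \eta$, with $\eta \in \Gr[H]$ the unique arrow such that $x = \eta\cdot y$ in \(\Bisp\); this $\eta$ exists and is unique because the left $\Gr[H]$\nb-action is free and acts transitively on the fibres of the source anchor map of~\(\Bisp\).  One checks that this map is bi-equivariant, descends through the quotient defining the tensor product, and is a homeomorphism.  The second isomorphism is symmetric.

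Applying the bicategory homomorphism and using that it sends identity correspondences to \(\Cst(\Gr[H])\) and \(\Cst(\Gr)\) as bimodules over themselves then yields the \(\Cst\)\nb-correspondence isomorphisms
\[
  \Cst(\Bisp)\otimes_{\Cst(\Gr)} \Cst(\conj{\Bisp}) \cong \Cst(\Gr[H]), \qquad
  \Cst(\conj{\Bisp})\otimes_{\Cst(\Gr[H])} \Cst(\Bisp) \cong \Cst(\Gr),
\]
which is precisely the statement that $\Cst(\Bisp)$ is a Morita--Rieffel equivalence bimodule.

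The principal obstacle is the middle step: verifying the groupoid correspondence isomorphisms carefully in the possibly non-Hausdorff setting.  One must take some care with what continuity on the tensor product $\Bisp \otimes_\Gr \conj{\Bisp}$ means, since this is a quotient of a fibre product living inside a non-Hausdorff space.  The orbit spaces $\Bisp/\Gr \cong \Gr[H]^0$ and $\Gr[H]\backslash\Bisp \cong \Gr^0$ are however Hausdorff, which lets one trivialise the situation locally along \'etale slices; the verification then reduces to a direct calculation using the definition of composition from \cite{Antunes-Ko-Meyer:Groupoid_correspondences}, rather than posing a genuine conceptual difficulty.
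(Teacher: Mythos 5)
Your proposal is correct and follows essentially the same route as the paper: exhibit the opposite correspondence as an inverse in the bicategory of groupoid correspondences, push this through the bicategory homomorphism \(\Bisp\mapsto\Cst(\Bisp)\), and use that the invertible \(\Cst\)\nb-correspondences are exactly the Morita--Rieffel equivalence bimodules. The one detail you gloss over is that the opposite correspondence must satisfy the defining condition that its right anchor map \(\rg\colon\Bisp\to\Gr[H]^0\) is a local homeomorphism --- this is not literally part of the symmetric Morita hypothesis, but the paper obtains it from the fact that orbit space projections of free and proper actions of \'etale groupoids are local homeomorphisms, combined with the homeomorphisms on the orbit spaces.
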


\begin{proof}
  Let~\(\Bisp\) be a Morita equivalence between two \'etale groupoids
  \(\Gr[H]\) and~\(\Gr\) and let~\(\Bisp^*\) be~\(\Bisp\) with the
  left and right actions exchanged.  This is also a Morita
  equivalence.  Both \(\Bisp\) and~\(\Bisp^*\) are groupoid
  correspondences: both the left and the right actions on~\(\Bisp\)
  must be free and proper, and the anchor maps induce homeomorphisms
  \(\Bisp/\Gr \cong \Gr[H]^0\) and
  \(\Gr[H] \backslash \Bisp \cong \Gr^0\).  Orbit space projections
  are local homeomorphisms by
  \cite{Antunes-Ko-Meyer:Groupoid_correspondences}*{Lemma~2.10}.  It
  follows that the source and range anchor maps on~\(\Bisp\) are local
  homeomorphisms.  In addition, \(\Bisp \circ \Bisp^*\) and
  \(\Bisp^* \circ \Bisp\) are isomorphic to the identity
  correspondences on \(\Gr\) and~\(\Gr[H]\), respectively.
  So~\(\Bisp\) is an equivalence in the bicategory of groupoid
  correspondences.  The map \(\Bisp\mapsto \Cst(\Bisp)\) is part of a
  homomorphism of bicategories by
  \cite{Antunes-Ko-Meyer:Groupoid_correspondences}*{Theorem~7.13}.
  Therefore, \(\Cst(\Bisp)\) is an equivalence in the bicategory of
  \(\Cst\)\nb-correspondences.  These equivalences are precisely the
  Morita--Rieffel equivalences (see
  \cite{Buss-Meyer-Zhu:Higher_twisted}*{Proposition~2.11}).
  So~\(\Cst(\Bisp)\) is a Morita--Rieffel equivalence bimodule between
  \(\Cst(\Gr[H])\) and~\(\Cst(\Gr)\).
\end{proof}

\begin{remark}
  The left inner product on \(\Cst(\Bisp)\) is defined by
  \[
    \BRAKET{\xi}{\eta}(h)
    = \sum_{\setgiven{x\in\Bisp}{\rg(x) =  \s(h)}}
    \xi(h\cdot x) \conj{\eta(x)}
  \]
  for all \(\xi,\eta\in \ContS(\Bisp)\) and \(h\in\Gr[H]\).  This is the
  formula we get by transferring the right inner product on
  \(\Cst(\Bisp^*)\) defined in
  \cite{Antunes-Ko-Meyer:Groupoid_correspondences}*{Equation~(7.2)} to
  a left inner product on \(\Cst(\Bisp)\).
\end{remark}

\section{Actions of groupoid correspondences}
\label{sec:actions_groupoid_corr}

Let~\(Y\) be a topological space.  We are going to define actions of
\((\Gr,\Bisp,\Reg)\) and then use these to define the groupoid model of
\((\Gr,\Bisp,\Reg)\).

\begin{definition}[compare \cite{Antunes:Thesis}*{Definition~4.1}]
  \label{def:correspondence_action}
  An \emph{action} of~\((\Gr,\Bisp,\Reg)\) on~\(Y\) consists of a groupoid
  action of~\(\Gr\) on~\(Y\) and a continuous, open map~\(\mu\) from the
  fibre product \(\Bisp\times_{\s,\Gr^0,\rg} Y\) to~\(Y\), written
  multiplicatively as \((x,y)\mapsto x\cdot y\), such that
  \begin{enumerate}[label=\textup{(\ref*{def:correspondence_action}.\arabic*)},
    leftmargin=*,labelindent=0em]
  \item \label{en:correspondence_action_1}%
    \(\rg(x\cdot y) = \rg(x)\) and
    \(g\cdot (x\cdot y) = (g \cdot x)\cdot y\) for \(g\in \Gr\),
    \(x\in \Bisp\) and \(y\in Y\) with \(\s(g) = \rg(x)\) and
    \(\s(x) = \rg(y)\);
  \item \label{en:correspondence_action_2}%
    let \(x,x'\in \Bisp\) and \(y,y'\in Y\) satisfy \(\s(x) = \rg(y)\) and
    \(\s(x') = \rg(y')\); then \(x\cdot y = x'\cdot y'\) if and only
    if there is \(g\in \Gr\) with \(x' = x\cdot g\) and
    \(y = g\cdot y'\);
  \item \label{en:correspondence_action_3}%
    \(\rg^{-1}(\Reg) \subseteq Y\) is contained in the image
    \(\Bisp\cdot Y\) of~\(\mu\);
  \item \label{en:correspondence_action_4}%
    if \(K\subseteq \Bisp/\Gr\) is compact, then the set of all
    \(x\cdot y\) with \(x\in \Bisp\), \(y\in Y\), \([x]\in K\) and
    \(\s(x) = \rg(y)\) is closed in~\(Y\).
  \end{enumerate}
  A map \(\varphi\colon Y_1 \to Y_2\) between two
  \((\Gr,\Bisp,\Reg)\)\nb-actions is
  \emph{\((\Gr,\Bisp)\)\nb-equivariant} if
  \begin{itemize}
  \item \(\rg_{Y_2}\circ\varphi = \rg_{Y_1}\);
  \item \(\varphi(g\cdot y) = g\cdot \varphi(y)\) for all
    \(g\in \Gr\), \(y\in Y_1\) with \(\s(g) = \rg(y)\);
  \item \(\varphi(x\cdot y) = x\cdot \varphi(y)\) for all
    \(x\in \Bisp\), \(y\in Y_1\) with \(\s(x) = \rg(y)\); and
  \item \(\varphi^{-1}(\Bisp\cdot Y_2) = \Bisp\cdot Y_1\).
  \end{itemize}
\end{definition}

Let \(\Bisp\Grcomp Y\) be the orbit space of
\(\Bisp\times_{\s,\Gr^0,\rg} Y\) for the diagonal \(\Gr\)\nb-action
\((x,y)\cdot g \defeq (x\cdot g, g^{-1}\cdot y)\).  This is how the
composition of groupoid correspondences is defined, and it has similar
properties here although nothing acts on~\(Y\) on the right:

\begin{lemma}
  \label{lem:XY_properties}
  Let \(\Bisp\colon \Gr[H]\leftarrow \Gr\) be a groupoid
  correspondence and let~\(Y\) be a \(\Gr\)\nb-space.
  \begin{enumerate}[label=\textup{(\ref*{lem:XY_properties}.\arabic*)},
    leftmargin=*,labelindent=0em]
  \item The orbit space projection
    \(\Bisp\times_{\s,\Gr^0,\rg} Y \to \Bisp\Grcomp Y\) is a
    surjective local homeomorphism.
  \item The formula \(h\cdot [x,y] = [h\cdot x,y]\) defines a
    continuous \(\Gr[H]\)\nb-action on \(\Bisp\Grcomp Y\).
  \item There is a well defined, \(\Gr[H]\)\nb-equivariant, open and
    continuous map
    \[
      \pi_1 \colon \Bisp\Grcomp Y \to \Bisp/\Gr,\qquad
      [x,y] \mapsto [x].
    \]
  \end{enumerate}
\end{lemma}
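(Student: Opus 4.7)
The strategy is to transfer the structure of the right $\Gr$-action on $\Bisp$---which is free and proper with local homeomorphism anchor~$\s$, so that its orbit space projection $\Bisp \to \Bisp/\Gr$ is a local homeomorphism by \cite{Antunes-Ko-Meyer:Groupoid_correspondences}*{Lemma~2.10}---to the fibre product $\Bisp \times_{\s, \Gr^0, \rg} Y$ equipped with the diagonal right $\Gr$-action $(x, y) \cdot g \defeq (x \cdot g, g^{-1} \cdot y)$.

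For~(1), I first check that the diagonal action is free, proper, and has local homeomorphism anchor. Freeness is immediate: $(x \cdot g, g^{-1} \cdot y) = (x, y)$ already forces $x \cdot g = x$, hence~$g$ a unit. Properness holds because the associated action map on the fibre product is a pullback of the corresponding map for the $\Bisp$-action alone, which is proper. The anchor $(x, y) \mapsto \s(x)$ factors as~$\s$ composed with the first projection $\Bisp \times_{\s, \rg} Y \to \Bisp$; this projection is a local homeomorphism as the pullback of the anchor $\rg \colon Y \to \Gr^0$ (itself a local homeomorphism for any $\Gr$-space in our setting) along~$\s$, and composing with the local homeomorphism~$\s$ yields another local homeomorphism. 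A second application of \cite{Antunes-Ko-Meyer:Groupoid_correspondences}*{Lemma~2.10} then gives~(1).

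For~(2), well-definedness of $h \cdot [x, y] \defeq [h \cdot x, y]$ reduces to the identity $h \cdot (x \cdot g) = (h \cdot x) \cdot g$, which expresses commutativity of the left $\Gr[H]$- and right $\Gr$-actions on~$\Bisp$. The action axioms are inherited directly from those of the left $\Gr[H]$-action on $\Bisp$. Continuity follows from continuity of that action on~$\Bisp$ combined with the universal property of the surjective open quotient from~(1), applied to the map $(h, (x, y)) \mapsto [h \cdot x, y]$.

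For~(3), well-definedness of $\pi_1$ is immediate ($[x] \in \Bisp/\Gr$ does not change when $(x, y)$ is replaced by $(x \cdot g, g^{-1} \cdot y)$); continuity follows from the universal property of the quotient from~(1) applied to the $\Gr$-invariant continuous composition $\Bisp \times_{\s, \rg} Y \to \Bisp \to \Bisp/\Gr$; and $\Gr[H]$-equivariance is direct from the definitions, using that the left $\Gr[H]$-action descends to $\Bisp/\Gr$ because the two actions on~$\Bisp$ commute. I expect openness of $\pi_1$ to be the main obstacle. The plan is to use the commutative square whose horizontal arrows are the first projection $\Bisp \times_{\s, \rg} Y \to \Bisp$ (top) and $\pi_1$ (bottom), with vertical arrows the two orbit space projections. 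Since both vertical maps are open and the left one is surjective, $\pi_1$ is open provided the first projection $\Bisp \times_{\s, \rg} Y \to \Bisp$ is open---but this was already shown in~(1), where it was identified as a local homeomorphism.
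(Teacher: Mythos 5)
Your overall strategy is the same as the paper's: the diagonal $\Gr$\nb-action on $\Bisp\times_{\s,\Gr^0,\rg} Y$ is basic because the right action on~$\Bisp$ is, so \cite{Antunes-Ko-Meyer:Groupoid_correspondences}*{Lemma~2.10} gives~(1); parts (2) and~(3) then follow from the universal property of the open surjective orbit space projection, with the openness of~$\pi_1$ reduced to the openness of $\mathrm{pr}_1$. However, there is a genuine error: the anchor map $\rg\colon Y\to \Gr^0$ of a $\Gr$\nb-space is \emph{not} a local homeomorphism in general --- it is only required to be continuous. The central examples of the paper already violate your claim: the universal action $\Omega_{[0,\infty)}$ has a \emph{proper} anchor map, and for the one-vertex, two-edge graph this is a map from a Cantor set to a point. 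Consequently $\mathrm{pr}_1\colon \Bisp\times_{\s,\Gr^0,\rg} Y \to \Bisp$ is the pullback of a map that need not be \'etale, and your identification of $\mathrm{pr}_1$ as a local homeomorphism fails. (The coordinate projection that \emph{is} a local homeomorphism is $\mathrm{pr}_2$, the pullback of the local homeomorphism $\s\colon \Bisp\to\Gr^0$ along~$\rg$; that is the one used later in Lemma~\ref{lem:slice_acts}.)

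The error is harmless in~(1): the hypothesis of Lemma~2.10 is that the action be basic, which you verify correctly through freeness and the pullback argument for properness, so the spurious ``local homeomorphism anchor'' check can simply be deleted. It is not harmless in~(3): your entire argument for the openness of~$\pi_1$ rests on $\mathrm{pr}_1$ being open, and your only justification for that is the false local-homeomorphism claim. Note that the openness of $\mathrm{pr}_1$ is genuinely delicate --- a basic open set $(U\times V)$ intersected with the fibre product has $\mathrm{pr}_1$\nb-image $U\cap \s^{-1}(\rg(V))$, and since $\rg\colon Y\to\Gr^0$ need not be an open map, this requires a real argument (the paper's own proof also asserts the openness of $\mathrm{pr}_1$ without justification, so this is the one point of the lemma that deserves the most care rather than the least). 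As written, your proof of the openness of~$\pi_1$ has no valid support.
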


\begin{proof}
  The diagonal \(\Gr\)\nb-action on \(\Bisp\times_{\s,\Gr^0,\rg} Y\)
  is basic because the action on~\(\Bisp\) is.  Then the first
  statement follows from
  \cite{Antunes-Ko-Meyer:Groupoid_correspondences}*{Lemma~2.10}.  The
  second statement is easy.  The map~\(\pi_1\) is induced on orbit
  spaces by the coordinate projection
  \(\mathrm{pr}_1\colon \Bisp\times_{\s,\Gr^0,\rg} Y\to \Bisp\), which
  is equivariant for \(\Gr\) and~\(\Gr[H]\).  As such, it is well
  defined, \(\Gr[H]\)\nb-equivariant, and continuous.  It is also open
  because all orbit space projections are continuous, open and
  surjective and \(\mathrm{pr}_1\) is open.
\end{proof}

\begin{remark}
  \label{rem:mu_homeo}
  Conditions \ref{en:correspondence_action_1}
  and~\ref{en:correspondence_action_2} say exactly that~\(\mu\)
  induces a well defined, injective, \(\Gr\)\nb-equivariant map
  \(\mu'\colon \Bisp\Grcomp Y \to Y\).  By
  Lemma~\ref{lem:XY_properties}, \(\mu\) is continuous and open if and
  only if~\(\mu'\) is, if and only if~\(\mu'\) is a homeomorphism onto
  its image.  The condition~\ref{en:correspondence_action_3} says that the
  image of~\(\mu'\) contains \(\rg^{-1}(\Reg) \subseteq Y\).
  The condition~\ref{en:correspondence_action_4} says that
  \(\mu'(\pi_1^{-1}(K)) \subseteq Y\) is closed in~\(Y\).  Since
  \(\Bisp/\Gr\) is Hausdorff, any compact subset
  \(K\subseteq \Bisp/\Gr\) is closed, and then \(\mu'(\pi_1^{-1}(K))\)
  is relatively closed in \(\Bisp\cdot Y = \mu'(\Bisp\circ Y)\)
  because~\(\pi_1\) is continuous and~\(\mu'\) is a homeomorphism onto
  its image~\(\Bisp\cdot Y\).
\end{remark}

\begin{remark}
  If \(\Reg=\Gr^0\), then \ref{en:correspondence_action_3} says
  that~\(\mu'\) is a homeomorphism \(\Bisp\circ Y \congto Y\).
  So~\ref{en:correspondence_action_4} is automatic by the previous
  remark.  Thus the definition of a groupoid correspondence action in
  \cite{Meyer:Diagrams_models}*{Definition~4.1} is the special case
  \(\Reg=\Gr^0\) of our definition.
\end{remark}

The condition~\ref{en:correspondence_action_4} in our definition
corrects a subtle error in~\cite{Antunes:Thesis} and deserves further
discussion.  We first discuss the problem in~\cite{Antunes:Thesis}.
Then we show by an example that this condition is not automatic, and
finally we prove that it follows if the anchor map \(Y\to \Gr^0\) is
proper or, more generally, if there is an equivariant map to another
action with proper anchor map.

\begin{remark}
  \label{rem:inaccuracy}
  In~\cite{Antunes:Thesis}, it is assumed instead
  of~\ref{en:correspondence_action_4} that the anchor map
  \(\rg\colon Y\to \Gr^0\) should be proper.  We will show in
  Lemma~\ref{lem:action_extra_condition_proper_case} that this is a
  stronger condition.  The universal action built
  in~\cite{Antunes:Thesis} has this extra property.  However, a
  general action of the groupoid model \(\Omega\rtimes \IS\) that is
  constructed in~\cite{Antunes:Thesis} has this extra property if and
  only if its anchor map to~\(\Omega\) is proper.  So
  \cite{Antunes:Thesis}*{Theorem~5.17} is not correct as stated.
\end{remark}

The following example shows that~\ref{en:correspondence_action_4} is
necessary to make the theory work when we allow actions whose anchor
map fails to be proper.

\begin{example}
  \label{exa:non-action}
  Let \(\Gr\) be the one-point groupoid, let
  \(\Bisp\colon \Gr\leftarrow \Gr\) be the two-point correspondence,
  and let \(\Reg=\emptyset\) be empty.  The resulting Toeplitz algebra is
  the Cuntz--Toeplitz algebra~\(\mathcal{TO}_2\).  This is known to be
  a groupoid \(\Cst\)\nb-algebra of an \'etale groupoid with a totally
  disconnected object space.  Now let~\(Y\) be the open interval
  \((0,1)\) and let \(\mu\colon Y\sqcup Y \cong \Bisp \times Y \to Y\)
  be the map that identifies the two copies of \((0,1)\) with
  \((0,1/2)\) and \((1/2,1)\) by piecewise linear maps and then embeds
  \((0,1/2) \sqcup (1/2,1)\) into \((0,1)\).  This is a homeomorphism
  from \(\Bisp\times Y\) onto an open subset of~\(Y\).  It has all the
  properties of an action in
  Definition~\ref{def:correspondence_action}
  except~\ref{en:correspondence_action_4}.  It should not be an action
  of \((\Gr,\Bisp,\emptyset)\) because there is no equivariant
  continuous map to the object space of the groupoid model
  of~\(\mathcal{TO}_2\).
\end{example}

\begin{lemma}
  \label{lem:action_extra_condition_proper_case}
  Let~\(Y\) be a locally compact, Hausdorff \(\Gr\)\nb-space with a
  \(\Gr\)\nb-equivariant map \(\Bisp\Grcomp Y \to Y\) that is a
  homeomorphism onto an open subset of~\(Y\).  If the anchor map
  \(Y\to \Gr^0\) is proper or if there is an equivariant map to
  another action with proper anchor map, then the data above also
  satisfies~\ref{en:correspondence_action_4}, making it an
  action of \((\Gr,\Bisp,\emptyset)\).
\end{lemma}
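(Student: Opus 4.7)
The plan is to reduce to condition~\ref{en:correspondence_action_4} and then handle the two hypotheses in turn. Remark~\ref{rem:mu_homeo} shows that the hypothesis that~\(\mu'\) is a \(\Gr\)\nb-equivariant homeomorphism onto an open subset of~\(Y\) encodes \ref{en:correspondence_action_1}, \ref{en:correspondence_action_2}, and the continuity and openness of~\(\mu\), while~\ref{en:correspondence_action_3} is vacuous because \(\Reg = \emptyset\). Thus only~\ref{en:correspondence_action_4} needs to be verified. Fix a compact subset \(K \subseteq \Bisp/\Gr\) and write
\[
  S_K^Y \defeq \setgiven{x\cdot y}{x\in \Bisp,\ y\in Y,\ [x]\in K,\ \s(x) = \rg(y)};
\]
the goal is to show \(S_K^Y\) is closed in~\(Y\).

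Suppose first that \(\rg\colon Y\to \Gr^0\) is proper. I would lift~\(K\) to a compact subset \(K' \subseteq \Bisp\) with \(q(K') = K\), where \(q\colon \Bisp \to \Bisp/\Gr\) denotes the orbit projection, which is a local homeomorphism by \cite{Antunes-Ko-Meyer:Groupoid_correspondences}*{Lemma~2.10}. To construct~\(K'\), cover~\(K\) by finitely many compact sets lying inside images of \(q\)\nb-slices, pull them back through the local homeomorphisms, and intersect the resulting compact set with the closed set \(q^{-1}(K)\). Because any \(x\in \Bisp\) with \([x]\in K\) has the form \(x = x'\cdot g\) with \(x'\in K'\) and \(g\in \Gr\), and then \(x\cdot y = x'\cdot(g\cdot y)\), one obtains \(S_K^Y = \mu(K' \times_{\s,\Gr^0,\rg} Y)\). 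Now \(\s(K')\) is compact in~\(\Gr^0\), so properness of~\(\rg\) makes \(\rg^{-1}(\s(K'))\) compact in~\(Y\); the fiber product \(K' \times_{\s,\Gr^0,\rg} Y\) is closed in the compact product \(K' \times \rg^{-1}(\s(K'))\) because \(\Gr^0\) is Hausdorff, hence is compact. Thus \(S_K^Y\), being the continuous image of a compact set, is compact and so closed in the Hausdorff space~\(Y\).

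For a \((\Gr,\Bisp)\)\nb-equivariant map \(\varphi\colon Y \to Y'\) to an action~\(Y'\) with proper anchor, the previous case applied to~\(Y'\) makes \(S_K^{Y'}\) closed in~\(Y'\). I would then verify that \(S_K^Y = \varphi^{-1}(S_K^{Y'})\), which forces \(S_K^Y\) to be closed by continuity of~\(\varphi\). The inclusion \(\subseteq\) is immediate from \(\varphi(x\cdot y) = x\cdot \varphi(y)\). For \(\supseteq\), if \(\varphi(y) \in S_K^{Y'}\) then \(\varphi(y) \in \Bisp\cdot Y'\), so the defining clause \(\varphi^{-1}(\Bisp\cdot Y') = \Bisp\cdot Y\) of equivariance yields \(y = x_1\cdot y_1\) for some \(x_1\in \Bisp\) and \(y_1\in Y\); equivariance then gives \(x_1\cdot \varphi(y_1) = \varphi(y) = x\cdot y'\) with \([x]\in K\), and condition~\ref{en:correspondence_action_2} applied in~\(Y'\) produces \(g\in \Gr\) with \(x_1 = x\cdot g\), whence \([x_1] = [x] \in K\) and \(y\in S_K^Y\).

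The technical heart is the first case: constructing the compact lift~\(K'\) inside the possibly non-Hausdorff space~\(\Bisp\) and establishing the identification \(S_K^Y = \mu(K' \times_{\s,\Gr^0,\rg} Y)\). Once these are in hand, properness of~\(\rg\) makes \(S_K^Y\) the continuous image of a compact set, and the second case reduces to the first by a straightforward equivariance diagram chase.
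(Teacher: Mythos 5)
Your proof is correct, and both of your cases reach the same conclusion as the paper's proof — namely that the set in \ref{en:correspondence_action_4} is compact, hence closed in the Hausdorff space \(Y\) — but your first case travels a genuinely different road. The paper observes that \(\mathrm{pr}_1\colon \Bisp\times_{\s,\Gr^0,\rg} Y\to\Bisp\) is a pull-back of the proper anchor map and therefore proper, that the induced map \(\pi_1\colon\Bisp\Grcomp Y\to\Bisp/\Gr\) on orbit spaces is then proper as well (citing Lemmas 5.1 and 5.5 of the companion paper), and concludes that \(\pi_1^{-1}(K)\) is compact, hence its image under the homeomorphism \(\mu'\) is compact in \(Y\). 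You instead lift \(K\) to a quasi-compact set \(K'\subseteq\Bisp\) with \(\Qu(K')=K\) and exhibit the set directly as the continuous image of the compact fibre product \(K'\times_{\s,\Gr^0,\rg}Y\subseteq K'\times\rg^{-1}(\s(K'))\). This trades the two cited lemmas for an explicit lifting construction, which you carry out correctly: the local homeomorphism \(\Qu\) supplies the finite cover, and intersecting with the closed set \(\Qu^{-1}(K)\) is legitimate even though \(\Bisp\) need not be Hausdorff, since a closed subset of a quasi-compact set is quasi-compact. The identity \(x\cdot y = x'\cdot(g\cdot y)\) for \(x=x'\cdot g\) is exactly the well-definedness of \(\mu'\) on the orbit space \(\Bisp\Grcomp Y\), so your identification of the image is sound. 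In the second case you and the paper argue the same way, except that the paper silently uses that the set for \(Y\) is the \(\varphi\)-preimage of the corresponding set for \(Y'\); your verification of that equality via \(\varphi^{-1}(\Bisp\cdot Y')=\Bisp\cdot Y\) and \ref{en:correspondence_action_2} supplies precisely the step the paper leaves implicit.
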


\begin{proof}
  By \cite{Antunes-Ko-Meyer:Groupoid_correspondences}*{Lemma~5.1},
  pull-backs of proper maps are again proper.  So the first coordinate
  projection
  \(\mathrm{pr}_1\colon \Bisp \times_{\s,\Gr^0,\rg} Y \to \Bisp\) is
  proper.  This implies that the induced map on orbit spaces
  \(\Bisp \Grcomp Y \to \Bisp/\Gr\) is proper (see
  \cite{Antunes-Ko-Meyer:Groupoid_correspondences}*{Lemma~5.5}).  As a
  consequence, the preimage of a compact subset
  \(K\subseteq \Bisp/\Gr\) in~\(\Bisp\Grcomp Y\) is compact.  Then it
  must be closed in the Hausdorff space~\(Y\).

  Let \(\varphi\colon Y_1 \to Y_2\) be an equivariant map of
  \((\Gr,\Bisp,\emptyset)\)-actions and assume that the anchor map
  of~\(Y_2\) is proper.  Then the preimage of~\(K\) in
  \(\Bisp\Grcomp Y_2\) is compact and hence closed in~\(Y_2\).
  Therefore its \(\varphi\)\nb-preimage in~\(Y_1\) is also closed
  because~\(\varphi\) is continuous.
\end{proof}

In the following, the indices in nets belong to some directed sets,
which we do not care to name.

\begin{lemma}
  \label{lem:extra_condition_convergence}
  Let~\(Y\) be a locally compact, Hausdorff \(\Gr\)\nb-space with a
  \(\Gr\)\nb-equivariant map \(\mu\colon \Bisp\Grcomp Y \to Y\) that is
  a homeomorphism onto an open subset of~\(Y\).  The following
  statements are equivalent to~\ref{en:correspondence_action_4}:
  \begin{enumerate}[label=\textup{(\ref*{lem:extra_condition_convergence}.\arabic*)},
    leftmargin=*,labelindent=0em]
  \item \label{en:extra_condition_convergence_1}%
    if \((x_n,y_n)\) is a net in~\(\Bisp\times_{\s,\Gr^0,\rg} Y\) such
    that the nets \((x_n)\) in~\(\Bisp\) and \(x_n \cdot y_n\) in~\(Y\)
    converge, then~\(y_n\) also converges in~\(Y\);
  \item \label{en:extra_condition_convergence_2}%
    for all \(h\in \Cont_0(\Bisp/\Gr)\), the function
    \(\pi_1^*h\colon Y\to \C\) defined by \(\pi_1^*(y) \defeq 0\) if
    \(y\notin \Bisp\cdot Y\) and
    \((\pi_1^* h)(x\cdot y) \defeq h([x])\) otherwise is continuous.
  \end{enumerate}
\end{lemma}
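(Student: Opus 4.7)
The plan is to establish the cyclic chain of implications \ref{en:correspondence_action_4} $\Rightarrow$ \ref{en:extra_condition_convergence_1} $\Rightarrow$ \ref{en:extra_condition_convergence_2} $\Rightarrow$ \ref{en:correspondence_action_4}, using throughout the reformulation from Remark~\ref{rem:mu_homeo} that~\ref{en:correspondence_action_4} asserts the closedness of $\mu'(\pi_1^{-1}(K)) \subseteq Y$ for every compact $K \subseteq \Bisp/\Gr$.

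For \ref{en:correspondence_action_4} $\Rightarrow$ \ref{en:extra_condition_convergence_1}, given a net $(x_n, y_n)$ with $x_n \to x_\infty$ in~$\Bisp$ and $z_n \defeq x_n \cdot y_n \to z$ in~$Y$, I will pick a compact neighborhood $K \subseteq \Bisp/\Gr$ of $[x_\infty]$, available since $\Bisp/\Gr$ is locally compact Hausdorff. Eventually $[x_n] \in K$, so eventually $z_n \in \mu'(\pi_1^{-1}(K))$, and~\ref{en:correspondence_action_4} places~$z$ in this closed set, hence in $\Bisp \cdot Y$. Since $\mu'$ is a homeomorphism onto the open set $\Bisp \cdot Y$, the preimages $[x_n, y_n] = (\mu')^{-1}(z_n)$ converge in $\Bisp \Grcomp Y$; continuity of~$\pi_1$ (Lemma~\ref{lem:XY_properties}), combined with $[x_n] \to [x_\infty]$ and Hausdorffness of $\Bisp/\Gr$, identifies the limit as $[x_\infty, y'']$ for some $y'' \in Y$. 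Lifting this convergence through the surjective local homeomorphism $\Bisp \times_{\s,\Gr^0,\rg} Y \to \Bisp \Grcomp Y$ of Lemma~\ref{lem:XY_properties} yields eventual representatives $(x_n \cdot g_n, g_n^{-1} \cdot y_n)$ of $[x_n, y_n]$, with $g_n \in \Gr$, converging to $(x_\infty, y'')$. Because the right $\Gr$-action on~$\Bisp$ is free and proper, the division map $\Bisp \times_{\Bisp/\Gr} \Bisp \to \Gr$ is continuous; applied to the convergent pair $(x_n, x_n \cdot g_n)$, both of whose components tend to~$x_\infty$, it forces $g_n \to \Id_{\s(x_\infty)}$. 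Continuity of the $\Gr$-action on~$Y$ then gives $y_n = g_n \cdot (g_n^{-1} \cdot y_n) \to y''$.

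For \ref{en:extra_condition_convergence_1} $\Rightarrow$ \ref{en:extra_condition_convergence_2}, continuity of $\pi_1^* h$ at $y \in \Bisp \cdot Y$ is immediate from the homeomorphism~$\mu'$ and continuity of $h \circ \pi_1$ on $\Bisp \Grcomp Y$. At $y \notin \Bisp \cdot Y$, I argue by contradiction: if $y_n = x_n \cdot y'_n \to y$ with $|h([x_n])| > \varepsilon$ along a subnet, then $[x_n]$ lies in the compact set $\{|h| \geq \varepsilon\} \subseteq \Bisp/\Gr$ and has a cluster point $[x_\infty]$. Applying a local section of the local homeomorphism $\Bisp \to \Bisp/\Gr$ at a chosen lift of $[x_\infty]$, I may replace $(x_n, y'_n)$ by an orbit-equivalent pair preserving the product $y_n = x_n \cdot y'_n$ and reduce to the case $x_n \to x_\infty$ in~$\Bisp$. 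Then~\ref{en:extra_condition_convergence_1} yields $y'_n \to y'_\infty$, hence $y_n \to x_\infty \cdot y'_\infty \in \Bisp \cdot Y$, contradicting the openness of $\Bisp \cdot Y$ together with $y \notin \Bisp \cdot Y$. For \ref{en:extra_condition_convergence_2} $\Rightarrow$ \ref{en:correspondence_action_4}, I will fix a compact $K \subseteq \Bisp/\Gr$ and a net $y_n = x_n \cdot y'_n$ in $\mu'(\pi_1^{-1}(K))$ with $y_n \to y$. For each open $U \supseteq K$, Urysohn in the locally compact Hausdorff space $\Bisp/\Gr$ supplies $h_U \in \Cont_c(\Bisp/\Gr)$ with $h_U|_K \equiv 1$ and $\mathrm{supp}(h_U) \subseteq U$; since $(\pi_1^* h_U)(y_n) = 1$ for all~$n$, hypothesis~\ref{en:extra_condition_convergence_2} forces $(\pi_1^* h_U)(y) = 1$. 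In particular $y \in \Bisp \cdot Y$, and writing $y = x \cdot y'$ places $[x]$ in $\mathrm{supp}(h_U) \subseteq U$. Intersecting over all open $U \supseteq K$ in the regular space $\Bisp/\Gr$ yields $[x] \in K$, whence $y \in \mu'(\pi_1^{-1}(K))$.

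The main obstacle is in \ref{en:correspondence_action_4} $\Rightarrow$ \ref{en:extra_condition_convergence_1}: extracting convergence of~$y_n$ itself, rather than merely of its orbit image in $\Bisp \Grcomp Y$. The quotient projection $\Bisp \times_{\s,\Gr^0,\rg} Y \to \Bisp \Grcomp Y$ being only a local homeomorphism forces one to track a parasitic net of groupoid elements~$g_n$, and pinning them down at the unit requires the specific freeness and properness of the right $\Gr$-action on~$\Bisp$. The other implications reduce to routine manipulations with Urysohn functions and cluster points in locally compact Hausdorff spaces.
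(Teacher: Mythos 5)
Your proof is correct, but it is organised differently from the paper's. The paper proves the two equivalences \ref{en:correspondence_action_4}${}\Leftrightarrow{}$\ref{en:extra_condition_convergence_1} and \ref{en:correspondence_action_4}${}\Leftrightarrow{}$\ref{en:extra_condition_convergence_2} separately, in each case arguing the backward direction by contraposition (for \ref{en:extra_condition_convergence_2} it picks a single \(h\) with \(h|_K\ge 1\) and observes that \(\pi_1^*h\) fails to be continuous at an accumulation point of the preimage of~\(K\) outside \(\Bisp\cdot Y\)); your cyclic chain \ref{en:correspondence_action_4}${}\Rightarrow{}$\ref{en:extra_condition_convergence_1}${}\Rightarrow{}$\ref{en:extra_condition_convergence_2}${}\Rightarrow{}$\ref{en:correspondence_action_4} replaces these by the new implication \ref{en:extra_condition_convergence_1}${}\Rightarrow{}$\ref{en:extra_condition_convergence_2} and by a direct (non-contrapositive) proof of \ref{en:extra_condition_convergence_2}${}\Rightarrow{}$\ref{en:correspondence_action_4} using a family of Urysohn functions indexed by open neighbourhoods \(U\supseteq K\) and the regularity of \(\Bisp/\Gr\) to force \([x]\in K\); this is a clean variation that buys a shorter total argument at the cost of a slightly less symmetric statement-by-statement correspondence. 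Within the hard implication \ref{en:correspondence_action_4}${}\Rightarrow{}$\ref{en:extra_condition_convergence_1} you also deviate locally: where the paper concludes that the net \((g_n)\) is ``eventually constant'' from freeness, properness and \'etaleness, you invoke continuity of the division map \(\Bisp\times_{\Bisp/\Gr}\Bisp\to\Gr\) for the free and proper right action to get \(g_n\to 1_{\s(x_\infty)}\) and then use continuity of the \(\Gr\)\nb-action on~\(Y\); this is legitimate (it is exactly the bracket map of \cite{Antunes-Ko-Meyer:Groupoid_correspondences}*{Proposition~3.5}) and arguably more transparent, since it avoids the slightly delicate ``eventually constant'' claim. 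One cosmetic remark: in your step \ref{en:extra_condition_convergence_1}${}\Rightarrow{}$\ref{en:extra_condition_convergence_2}, the contradiction at the end comes from uniqueness of limits in the Hausdorff space~\(Y\) (so that \(y=x_\infty\cdot y'_\infty\in\Bisp\cdot Y\)), not from openness of \(\Bisp\cdot Y\); the openness is only needed to reduce continuity at interior points to the continuity of \(h\circ\pi_1\circ(\mu')^{-1}\).
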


\begin{proof}
  We first assume \ref{en:correspondence_action_4} and show
  that~\ref{en:extra_condition_convergence_1} follows.  Let \((x_n)\)
  and \((y_n)\) be as in~\ref{en:extra_condition_convergence_1}.
  Since~\(\Bisp/\Gr\) is locally compact, we may assume without loss
  of generality that all~\((x_n)\) belong to a compact subset
  \(K\subseteq \Bisp/\Gr\).  Then \ref{en:correspondence_action_4}
  implies that \(z_\infty \defeq \lim x_n \cdot y_n\) also belongs to
  \(\Bisp\cdot Y\).  So \(z_\infty = x\cdot y\) for some
  \((x,y) \in \Bisp\times_{\s,\Gr^0,\rg} Y\).  Then
  \(\lim {}[x_n,y_n] = [x,y]\) in \(\Bisp\Grcomp Y\) because the
  multiplication map~\(\mu'\) is a homeomorphism onto its image by
  Remark~\ref{rem:mu_homeo}.  Since the map
  \(\pi_1\colon \Bisp\Grcomp Y \to \Bisp/\Gr\) is continuous,
  \([x] = \pi_1(z_\infty) = \lim \pi_1(x_n \cdot y_n) = \lim
  {}[x_n]\).  This means that \(\lim x_n = x\cdot g\) for some
  \(g\in \Gr\).  Replacing \((x,y)\) by \((x\cdot g,g^{-1}\cdot y)\),
  we may arrange that \(\lim x_n= x\).  The orbit space projection
  \(\Bisp\times_{\s,\rg} Y\to \Bisp\Grcomp Y\) is a local
  homeomorphism by Lemma~\ref{lem:XY_properties}.  Therefore, since
  \((x_n,y_n)\) converges in the orbit space
  \(\Bisp\Grcomp Y = (\Bisp\times_{\s,\rg} Y)/\Gr\), there are
  elements \(h_n\in \Gr\) such that \((x_n\cdot h_n,h_n^{-1}\cdot y_n)\)
  is defined and converges in \(\Bisp\times_{\s,\rg} Y\).
  Since~\(\Gr\) is \'etale, its right action on~\(\Bisp\) is free and
  proper, and~\((x_n)\) itself already converges, \((h_n)\) must be
  eventually constant.  Then~\((y_n)\) converges, which is what we had
  to show.

  Now assume that \ref{en:correspondence_action_4} fails.  We prove
  that~\ref{en:extra_condition_convergence_1} fails.  By assumption,
  there is a compact subset \(K\subseteq \Bisp/\Gr\) whose preimage
  under \(\pi_1\colon \Bisp\cdot Y \to \Bisp/\Gr\) is not closed
  in~\(Y\).  Since the preimage of~\(K\) is relatively closed
  in~\(\Bisp\cdot Y\), this set must have an accumulation point
  outside~\(\Bisp\cdot Y\).  So there is a net of the form
  \(x_n \cdot y_n\) with \([x_n]\in K\), \(y_n\in Y\),
  \(\s(x_n) = \rg(y_n)\), which converges towards some point
  \(z\notin \Bisp\cdot Y\).  Since~\(K\) is compact, we may pass to a
  subnet such that \(\lim {}[x_n]\) exists.  Since the orbit space
  projection \(\Bisp\to \Bisp/\Gr\) is a local homeomorphism, we may
  find a net \(g_n\in \Gr\) such that \(\lim x_n\cdot g_n\) exists
  in~\(\Bisp\).  Replacing \((x_n,y_n)\) by
  \((x_n\cdot g_n, g_n^{-1}\cdot y_n)\), we may arrange that already
  \(x = \lim x_n\) exists in~\(\Bisp\).  If the net~\(y_n\) would
  converge towards some \(y\in Y\), then
  \(\lim x_n \cdot y_n = x\cdot y\) would belong to \(\Bisp\cdot Y\),
  which is a contradiction.  So we have found a net \((x_n,y_n)\) in
  \(\Bisp\times_{\s,\Gr^0,\rg} Y\) such that the nets~\((x_n)\) and
  \((x_n \cdot y_n)\) converge, but~\((y_n)\) does not converge.

  Next we prove that \ref{en:correspondence_action_4}
  implies~\ref{en:extra_condition_convergence_2}.  Let
  \(h\in \Cont_0(\Bisp/\Gr)\).  Since
  \(\pi_1\colon \Bisp\cdot Y \to \Bisp/\Gr\) is continuous, the
  function \(\pi_1^* h\) is continuous on \(\Bisp\cdot Y \).
  By construction, it vanishes outside.  So it remains to prove that
  the set of points with \(\abs{(\pi_1^* h)(y)} < \varepsilon\) is
  open for all \(\varepsilon>0\).  The complement of this is the
  preimage of the set of all \([x]\in \Bisp/\Gr\) with
  \(\abs{h([x])}\ge \varepsilon\).  Since the latter set is compact,
  the preimage is closed in~\(Y\) by \ref{en:correspondence_action_4}.
  So its complement is open in~\(Y\) as needed.  Conversely, assume
  that there is a compact subset \(K\subseteq \Bisp/\Gr\) whose
  preimage in~\(Y\) is not closed.  There is a function
  \(h\in \Cont_0(\Bisp/\Gr)\) with \(h|_K\ge 1\).  Then the function
  \(\pi_1^*h\) is not continuous at any accumulation point of the
  preimage of~\(K\) outside \(\Bisp\cdot Y\), and such
  accumulation points exist by assumption.
\end{proof}

\begin{definition}
  \label{def:groupoid_model}
  A \emph{groupoid model} for \((\Gr,\Bisp,\Reg)\) is an \'etale
  groupoid~\(\Mod\) such that for all topological spaces~\(Y\), there
  is a natural bijection between \((\Gr,\Bisp,\Reg)\)-actions and
  \(\Mod\)\nb-actions on~\(Y\).  Here naturality means that a
  continuous map between two spaces is \((\Gr,\Bisp)\)-equivariant
  if and only if it is \(\Mod\)\nb-equivariant for the associated
  \(\Mod\)\nb-actions.
\end{definition}

\begin{example}
  \label{exa:not_equivariant}
  Let~\(Y\) carry an action of \((\Gr,\Bisp,\Reg)\) with
  \(\Bisp\cdot Y\subsetneq Y\).  The subset
  \(\Bisp\cdot Y\subsetneq Y\) is invariant for the
  \(\Gr\)\nb-action, and the multiplication restricts to a map
  \(\Bisp\circ (\Bisp\cdot Y)\to(\Bisp\cdot Y)\).  This defines an
  action of \((\Gr,\Bisp,\Reg)\) on \(\Bisp\cdot Y\); the proof of
  Lemma~\ref{lem:action_iterated_extra_condition} shows that the
  restricted action inherits \ref{en:correspondence_action_4}.  It
  is remarkable that the inclusion map
  \(\iota\colon \Bisp\cdot Y\hookrightarrow Y\) is \emph{not}
  \((\Gr,\Bisp)\)-equivariant because
  \(\iota^{-1}(\Bisp\cdot(\Bisp\cdot Y)) \neq \Bisp\cdot Y\).
  Therefore, the subset \(\Bisp\cdot Y\subseteq Y\) cannot be
  invariant for the induced action of the groupoid model.  In fact,
  this will be visible from the construction of that action below.
  Any slice of~\(\Bisp\) will map \(Y\setminus \Bisp\cdot Y\) to
  \(\Bisp \cdot Y \setminus \Bisp\cdot \Bisp \cdot Y\), so that its
  adjoint maps some points of~\(\Bisp \cdot Y\) into
  \(Y\setminus \Bisp\cdot Y\).
\end{example}

Next, we are going to prove that the groupoid model is unique up
to isomorphism.  Later, we will construct
such a groupoid and then show that its groupoid \(\Cst\)\nb-algebra is
canonically isomorphic to the Cuntz--Pimsner algebra of
\(\Cst(\Bisp)\) relative to the ideal \(\Cst(\Gr_\Reg)\).

\begin{definition}
  \label{def:universal_action}
  A \((\Gr,\Bisp,\Reg)\)-action~\(\Omega\) is \emph{universal} if for
  any \((\Gr,\Bisp,\Reg)\)-action~\(Y\), there is a unique
  \((\Gr,\Bisp)\)\nb-equivariant map \(Y\to \Omega\), that is, it
  is a terminal object in the category of
  \((\Gr,\Bisp,\Reg)\)-actions.
\end{definition}

\begin{proposition}
  \label{pro:universal_action}
  Let~\(\Mod\) be a groupoid model for \((\Gr,\Bisp,\Reg)\)-actions.
  Its object space~\(\Mod^0\) carries a canonical
  \((\Gr,\Bisp,\Reg)\)-action, which is universal.
\end{proposition}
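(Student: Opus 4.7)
The plan is a Yoneda-style transport: recognise the canonical $\Mod$-action on $\Mod^0$ as a terminal object in the category of $\Mod$-spaces, and push it across the natural bijection of Definition~\ref{def:groupoid_model} to obtain the universal $(\Gr,\Bisp,\Reg)$-action.

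First, the object space $\Mod^0$ carries the canonical $\Mod$-action with anchor map $\Id_{\Mod^0}$ and multiplication $g\cdot \s(g) = \rg(g)$. For any $\Mod$-space $Y$ with anchor map $\alpha\colon Y\to \Mod^0$, the map $\alpha$ itself is $\Mod$-equivariant, since $\alpha(g\cdot y) = \rg(g) = g\cdot \s(g) = g\cdot \alpha(y)$; and any $\Mod$-equivariant map $\varphi\colon Y\to \Mod^0$ must coincide with $\alpha$, because equivariant maps of $\Mod$-spaces intertwine the anchor maps and the target anchor map is the identity. So $\Mod^0$ with its canonical action is a terminal object in the category of $\Mod$-spaces, with the unique arrow out of each $Y$ being its anchor map.

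Now apply the bijection of Definition~\ref{def:groupoid_model} at $Y=\Mod^0$: the canonical $\Mod$-action transports to a $(\Gr,\Bisp,\Reg)$-action on the same underlying space, and this is the candidate~$\Omega$. To prove universality, take any $(\Gr,\Bisp,\Reg)$-action~$Y$ and let $\alpha\colon Y\to \Mod^0$ be the anchor map of the associated $\Mod$-action. By the previous paragraph, $\alpha$ is the unique $\Mod$-equivariant map $Y\to \Mod^0$. The naturality clause in Definition~\ref{def:groupoid_model} states that a continuous map is $(\Gr,\Bisp)$-equivariant for the original data if and only if it is $\Mod$-equivariant for the associated $\Mod$-actions; applied to $\alpha$ itself this gives existence of a $(\Gr,\Bisp)$-equivariant map $Y\to \Omega$, and applied to any potential competitor it gives uniqueness. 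There is essentially no genuine obstacle: the argument is purely formal transport of structure across a natural bijection, and the only point requiring attention is the explicit description of $\Mod^0$ as the terminal $\Mod$-space, after which everything else follows from the definition of a groupoid model.
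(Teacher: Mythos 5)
Your proof is correct and follows essentially the same route as the paper: identify $\Mod^0$ with its canonical action as the terminal object in the category of $\Mod$-spaces (the unique equivariant map out of any $Y$ being its anchor map), then transport terminality across the isomorphism of categories given by the definition of a groupoid model. No gaps.
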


\begin{proof}
  Give~\(\Mod^0\) its canonical \(\Mod\)\nb-action, where an arrow
  \(g\in\Mod\) maps \(\s(g)\) to~\(\rg(g)\).  We claim
  that~\(\Mod^0\) is terminal in the category of \(\Mod\)\nb-actions.
  Let~\(Y\) be an \(\Mod\)\nb-space.  The anchor map
  \(s \colon Y\to \Mod^0\) is \(\Mod\)\nb-equivariant.  It is the only
  \(\Mod\)\nb-equivariant map \(Y\to \Mod^0\) because any
  \(\Mod\)\nb-equivariant map \(f \colon Y \to \Mod^0\) must intertwine
  the anchor maps to~\(\Mod^0\).  Thus~\(\Mod^0\) is terminal in the
  category of \(\Mod\)\nb-actions.  By the definition of a groupoid
  model, we may turn an \(\Mod\)\nb-action on a space~\(Y\) into a
  \((\Gr,\Bisp,\Reg)\)-action on~\(Y\), such that this defines an
  isomorphism of categories.  Therefore, it preserves terminal
  objects.
\end{proof}

\begin{proposition}
  \label{pro:groupoid_model_unique}
  Let \(\Mod\) and~\(\Mod'\) be two groupoid models for
  \((\Gr,\Bisp,\Reg)\)-actions.  There is a unique groupoid
  isomorphism \(\Mod\cong\Mod'\) that is compatible with the
  equivalence between actions of \(\Mod\), \(\Mod'\)
  and~\((\Gr,\Bisp,\Reg)\).
\end{proposition}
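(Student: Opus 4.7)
The plan is to first pin down the object map via Proposition~\ref{pro:universal_action}, and then extend to a groupoid isomorphism by transporting the regular action of~$\Mod'$ on itself through the equivalence of action categories.

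By Proposition~\ref{pro:universal_action}, both~$\Mod^0$ and~$\Mod'^0$ carry universal $(\Gr,\Bisp,\Reg)$-actions, hence are terminal objects of the category of $(\Gr,\Bisp,\Reg)$-actions. The standard uniqueness of terminal objects yields a unique $(\Gr,\Bisp)$-equivariant homeomorphism $\varphi\colon \Mod^0 \congto \Mod'^0$, which is to be the object map of the desired isomorphism $\Phi\colon \Mod \to \Mod'$. To build~$\Phi$ on arrows, I would apply the correspondence of actions to the $\Mod'$-space~$\Mod'$ carrying its left regular action (anchor~$\rg'$, multiplication by left translation), producing an $\Mod$-action on the same underlying space~$\Mod'$; naturality of the correspondence applied to the anchor~$\rg'$ forces the $\Mod$-anchor of the transported action to be $\varphi^{-1}\circ\rg'$. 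One may then define
\[
  \Phi(g) \defeq g \cdot u'\bigl(\varphi(\s(g))\bigr),
\]
where $u'\colon \Mod'^0 \hookrightarrow \Mod'$ is the unit embedding and the dot denotes the transported $\Mod$-action on~$\Mod'$; continuity is inherited from the action map and the \'etale structure of~$\Mod'$.

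The key verification is that $\Phi$ is a groupoid homomorphism. Source and range compatibility with~$\varphi$ come from the $\Mod$-anchor identity together with invariance of the $\s'$-fibres under the transported action (the fibres are $\Mod'$-invariant and the correspondence preserves invariant subspaces). Multiplicativity $\Phi(gh) = \Phi(g)\Phi(h)$ reduces, via associativity of the $\Mod$-action, to the identity $g\cdot h' = \Phi(g)\,h'$ for composable $h'\in\Mod'$; this identity in turn rests on the transported $\Mod$-action commuting with right multiplication by~$\Mod'$ on itself, which is ported through the same equivalence once the right action is viewed as a left $(\Mod')^{\op}$-action and transported via the intrinsic inversion $\Mod'\cong(\Mod')^{\op}$. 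Running the symmetric construction with the roles of $\Mod$ and $\Mod'$ exchanged yields a candidate inverse $\Phi^{-1}\colon \Mod'\to\Mod$, and the mutual inverse relations drop out by naturality of the correspondence.

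For uniqueness: any groupoid isomorphism $\Mod\cong\Mod'$ compatible with the correspondence of actions must restrict to~$\varphi$ on objects by the uniqueness in Proposition~\ref{pro:universal_action}, and its pullback of the left regular $\Mod'$-action on~$\Mod'$ must coincide with the transported $\Mod$-action; the defining formula then forces it to equal~$\Phi$. The step I expect to be the main obstacle is the multiplicativity verification, because the correspondence of actions is formulated only for left actions, so transporting the commuting \emph{right} multiplication on~$\Mod'$ requires the extra symmetric step through groupoid inversion\,---\,this is where the bookkeeping is most delicate, and where it is important that the equivalence of action categories is natural in morphisms and not just objects.
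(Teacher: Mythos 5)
Your proposal is correct in substance, but it takes a more hands-on route than the paper. The paper also starts from the terminal-object argument for $\Mod^0\cong(\Mod')^0$, but then packages the rest abstractly: it observes that the forgetful functor from $\Mod$-actions to spaces over $\Mod^0$ has the left adjoint $Z\mapsto\Mod\times_{\s,\Mod^0,\varrho}Z$, deduces a unique isomorphism of the two left adjoints compatible with the adjunction, and reads off the groupoid isomorphism (and its multiplicativity) by plugging in $\Mod^0$ and $\Mod\times_{\s,\Mod^0,\rg}\Mod$ and using naturality with respect to the maps $\Id$, $u\rg$ and the point inclusions. Your construction $\Phi(g)=g\cdot u'(\varphi(\s(g)))$ is exactly the adjunct map that the paper obtains abstractly, so the two proofs are two faces of the same idea; yours trades the adjunction bookkeeping for explicit pointwise verifications. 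One place where your described mechanism should be repaired: the equivalence of action categories is stated only for \emph{left} actions, so there is no licence to ``transport the right multiplication'' through $\Mod'\cong(\Mod')^{\op}$ --- and in fact nothing needs to be transported. The identity $g\cdot h'=\Phi(g)h'$ follows because each right translation $R_{h'}\colon(\s')^{-1}(\rg'(h'))\to(\s')^{-1}(\s'(h'))$ is a continuous $\Mod'$-equivariant map between invariant subspaces of the left regular action, hence (by naturality of the correspondence in morphisms, which you correctly identify as the crucial ingredient) is also equivariant for the transported $\Mod$-action; applying this to $g\cdot u'(\rg'(h'))$ gives the identity directly. With that adjustment, your multiplicativity, invertibility (by the symmetric construction) and uniqueness arguments all go through.
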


\begin{proof}
  This may be deduced quickly from
  \cite{Meyer-Zhu:Groupoids}*{Propositions 4.23 and~4.24} together
  with a description of the invariant maps for \'etale groupoid
  actions in terms of equivariant maps.  We give a more direct
  argument here.  The canonical actions of \(\Mod\) and~\(\Mod'\) on
  their object spaces correspond to actions of \((\Gr,\Bisp,\Reg)\),
  which are both universal.  Since any two terminal objects are
  isomorphic with a unique isomorphism, there is a unique
  \((\Gr,\Bisp)\)-equivariant homeomorphism
  \(\Mod^0 \cong (\Mod')^0\).  To simplify notation, we identify
  \(\Mod^0\) and \((\Mod')^0\) through this isomorphism.

  An \(\Mod\)\nb-action on a space~\(Y\) contains an anchor map
  \(Y\to \Mod^0\).  Forgetting the rest of the action gives a
  forgetful functor from the category of \(\Mod\)\nb-actions to the
  category of spaces over~\(\Mod^0\).  This functor has a left
  adjoint, namely, the map that sends a space~\(Z\) with a continuous
  map \(\varrho\colon Z\to\Mod^0\) to
  \(\Mod\times_{\s,\Mod^0,\varrho} Z\) with the obvious left
  \(\Mod\)\nb-action,
  \(\gamma_1\cdot (\gamma_2,z) \defeq (\gamma_1\cdot \gamma_2,z)\).
  Being left adjoint to the forgetful functor means that
  \(\Mod\)\nb-equivariant continuous maps
  \(\psi\colon \Mod\times_{\s,\Mod^0,\varrho} Z \to Y\) for an
  \(\Mod\)\nb-space~\(Y\) are in natural bijection with maps
  \(\varphi\colon Z \to Y\) that satisfy
  \(\rg\circ \varphi = \varrho\).  Under this natural bijection,
  \(\psi\) corresponds to the map \(\psi^\flat\colon Z\to Y\),
  \(z\mapsto \psi(1_{\varrho(z)},y)\), whereas
  \(\varphi\colon Z\to Y\) corresponds to the map
  \(\varphi^\#\colon \Mod\times_{\s,\Mod^0,\varrho} Z \to Y\) defined
  by \(\varphi^\#(\gamma,z) \defeq \gamma\cdot \varphi(z)\).  We
  give~\(\Mod\) the left multiplication action of~\(\Mod\), whose
  anchor map is~\(\rg\).  The adjunct of the unit map
  \(u\colon \Mod^0 \to \Mod\), \(x\mapsto 1_x\), is the canonical
  \(\Mod\)\nb-equivariant isomorphism
  \(u^\#\colon \Mod\times_{\s,\Mod^0,\Id} \Mod^0 \congto \Mod\).  The
  identity map \(\Mod \to \Mod\) and the map
  \(u\rg\colon \Mod\to\Mod\), \(\gamma\mapsto 1_{\rg(\gamma)}\), are
  maps over~\(\Mod^0\).  Their adjuncts are the multiplication map
  \(\Id^\#\colon \Mod\times_{\s,\Mod^0,\rg} \Mod \to \Mod\),
  \((\gamma,\eta) \mapsto \gamma\cdot \eta\), and the first coordinate
  projection
  \((u\rg)^\#\colon \Mod\times_{\s,\Mod^0,\rg} \Mod \to \Mod\),
  \((\gamma,\eta) \mapsto \gamma\cdot 1_{\rg(\eta)} = \gamma\),
  respectively.

  Since \(\Mod\) and~\(\Mod'\) are both groupoid models for
  \((\Gr,\Bisp,\Reg)\), the categories of \(\Mod\)\nb-actions and
  \(\Mod'\)\nb-actions are isomorphic to the category of
  \((\Gr,\Bisp,\Reg)\)-actions.  It follows that there is a unique
  isomorphism between the left adjoint functors
  \(\Mod\times_{\s,\Mod^0,\varrho} \blank\) and
  \(\Mod'\times_{\s,\Mod^0,\varrho} \blank\) that preserves the
  adjunction \(\varphi\mapsto \varphi^\#\).  Plugging in~\(\Mod^0\),
  this gives a \((\Gr,\Bisp)\)-equivariant homeomorphism
  \(h\colon \Mod \congto \Mod'\).  Since~\(h\) is
  \((\Gr,\Bisp)\)-equivariant, it intertwines the range maps on
  \(\Mod\) and~\(\Mod'\).  If \(x\in\Mod^0\), then the inclusion map
  makes~\(\{x\}\) a space over~\(\Mod^0\), and the inclusion
  \(\{x\} \hookrightarrow \Mod^0\) is a map of spaces over~\(\Mod^0\).
  By naturality, the homeomorphism~\(h\) intertwines the inclusion of
  \(\Mod\times_{\s,\Mod^0,x} \{x\} =
  \setgiven{\gamma\in\Mod}{\s(\gamma)=x}\) into~\(\Mod\) and the
  corresponding map for~\(\Mod'\).  Therefore, \(h\) intertwines the
  source maps to~\(\Mod^0\).

  Next, the natural isomorphism between
  \(\Mod\times_{\s,\Mod^0,\varrho} \blank\) and
  \(\Mod'\times_{\s,\Mod^0,\varrho} \blank\) specialises to a
  \((\Gr,\Bisp)\)-equivariant homeomorphism
  \(\Mod\times_{\s,\Mod^0,\rg} \Mod \congto
  \Mod'\times_{\s,\Mod^0,\rg} \Mod\).  Composing with the
  homeomorphism \(\Mod\cong \Mod'\), we get a
  \((\Gr,\Bisp)\)-equivariant homeomorphism
  \(h_2\colon\Mod\times_{\s,\Mod^0,\rg} \Mod \congto
  \Mod'\times_{\s,\Mod^0,\rg} \Mod'\).  This map must be compatible
  with the adjunction \(\varphi\mapsto \varphi^\#\) for the maps
  \(\Id\) and~\(u\rg\) described above.  So~\(h_2\) intertwines the
  multiplication maps and the first coordinate projections.  Since the
  inclusion map \(\{\eta\} \hookrightarrow \Mod\) is a map of spaces
  over~\(\Mod^0\), naturality implies that \(h_2(\gamma,\eta)\) has
  the form \((\gamma',h(\eta))\).  Since the first coordinate
  projection is intertwined, we get
  \(h_2(\gamma,\eta)=(h(\gamma),h(\eta))\).  Thus~\(h\) is a groupoid
  isomorphism.
\end{proof}

\begin{example}
  \label{exa:graph_action}
  Let \(\Gr=V\) be a discrete set.  So~\(\Bisp\) is a directed graph
  \(\rg,\s\colon E\rightrightarrows V\) (see
  Example~\ref{exa:graph_as_correspondence}).  Let \(\Reg\subseteq V\)
  be a set of regular vertices.  An action of~\(\Bisp\) on a
  space~\(Y\) is given by a continuous map \(\rg\colon Y \to V\) and a
  continuous, open map \(\mu\colon E\times_{\s,V,\rg} Y\to Y\),
  \((e,y) \mapsto e\cdot y\).  The conditions in
  Definition~\ref{def:correspondence_action} simplify to
  \(\rg(e\cdot y) = \rg(e)\); \(\mu\) being injective;
  \(\rg^{-1}(\Reg) \subseteq E\cdot Y\); and \(F\cdot Y \subseteq Y\)
  being closed in~\(Y\) for all finite \(F\subseteq E\).  It suffices,
  of course, to require this when~\(F\) is a singleton.  The map
  \(\rg\colon Y\to V\) is equivalent to a disjoint union decomposition
  \(Y = \bigsqcup_{v\in V} Y_v\) with \(Y_v \defeq \rg^{-1}(v)\).
  Then \(E\times_{\s,V,\rg} Y \cong \bigsqcup_{e\in E} Y_{\s(e)}\).
  So the injective, continuous, open map~\(\mu\) is equivalent to a
  family of homeomorphisms~\(\mu_e\) from~\(Y_{\s(e)}\) to clopen
  subsets \(e\cdot Y_{\s(e)}\subseteq Y_{\rg(e)}\) for all \(e\in E\),
  such that the subsets \(e\cdot Y_{\s(e)}\) for \(e\in E\) are
  mutually disjoint.  In addition, \ref{en:correspondence_action_3} is
  equivalent to \(Y_v = \bigsqcup_{\rg(e) = v} e\cdot Y_{\s(e)}\) for
  all \(v\in \Reg\).
\end{example}

\section{Encoding actions through inverse semigroups}
\label{sec:act_through_slices}

In this section, we encode an action of \((\Gr,\Bisp,\Reg)\) on~\(Y\)
in terms of certain partial homeomorphisms of~\(Y\).  Partial
homeomorphisms generate pseudogroups or inverse semigroups, and we
then rewrite an action of \((\Gr,\Bisp,\Reg)\) as an action of a
certain inverse semigroup with certain extra properties.  Like a group
action, an inverse semigroup action on a space has a transformation
groupoid.  We will later construct the groupoid model as the
transformation groupoid for the action of this inverse semigroup
induced by the universal action of \((\Gr,\Bisp,\Reg)\).  This is why
it is crucial to rewrite the action of \((\Gr,\Bisp,\Reg)\) as an
inverse semigroup action.

Let \(\Bisp\colon \Gr[H]\leftarrow \Gr\) be a groupoid correspondence.
A \emph{slice} of~\(\Bisp\) is an open subset \(\Slice\subseteq \Bisp\)
such that both \(\s\colon \Bisp \to \Gr^0\) and the orbit space
projection \(\Qu\colon \Bisp\prto \Bisp/\Gr\) are injective on~\(\Slice\)
(see
\cite{Antunes-Ko-Meyer:Groupoid_correspondences}*{Definition~7.2}).
It is shown in~\cite{Antunes-Ko-Meyer:Groupoid_correspondences} that
any point in~\(\Bisp\) has an open neighbourhood that is a slice.  Let
\(\Bis(\Bisp)\) be the set of all slices of~\(\Bisp\) and let
\[
  \Bis \defeq \Bis(\Gr) \sqcup \Bis(\Bisp).
\]

If \(\Bisp=\Gr\) is the arrow space of a groupoid, then our slices are
the usual ones, which are also often called bisections.  If \(\Slice\)
and~\(\Slice[V]\) are slices in composable groupoid correspondences
\(\Bisp\) and~\(\Bisp[Y]\), then
\(\Slice \Slice[V] = \setgiven{x\cdot y}{x\in\Slice,\ y\in
  \Slice[V]}\) is a slice in \(\Bisp \Grcomp \Bisp[Y]\) (see
\cite{Antunes-Ko-Meyer:Groupoid_correspondences}*{Lemma~7.11}).  In
particular, since \(\Gr[H]\Grcomp \Bisp = \Bisp = \Bisp \Grcomp \Gr\),
the product of a slice in~\(\Bisp\) with slices in \(\Gr[H]\)
and~\(\Gr\) is again a slice.  If \(\Slice,\Slice[V]\) are two slices
in~\(\Bisp\), let \(\braket{\Slice}{\Slice[V]}\) be the set of all
\(g\in\Gr\) for which there are \(x\in \Slice\), \(y\in \Slice[V]\)
with \(x\cdot g= y\).  This is a slice in~\(\Gr\) (see
\cite{Antunes-Ko-Meyer:Groupoid_correspondences}*{Lemma~7.4}).  The
bracket notation is justified by
\cite{Antunes-Ko-Meyer:Groupoid_correspondences}*{Proposition~3.5}.

\begin{lemma}
  \label{lem:slice_acts}
  Let \(\Bisp\colon \Gr[H]\leftarrow \Gr\) be a groupoid
  correspondence and let~\(Y\) be a \(\Gr\)\nb-space.  Let
  \(\Slice\subseteq\Bisp\) be a slice.  Write \([\gamma,y]\) instead of
  \(\Qu(\gamma,y)\) for the image of
  \((\gamma,y) \in \Bisp\times_{\s,\Gr^0,\rg} Y\)
  in~\(\Bisp\Grcomp_{\Gr} Y\).  There is a homeomorphism~\(\Slice_*\)
  between the open subsets \(\rg^{-1}(\s(\Slice))\subseteq Y\) and
  \(\Qu(\Slice\times_{\s,\Gr^0,\rg} Y)\subseteq \Bisp\Grcomp_{\Gr} Y\)
  which maps \(y\in Y\) to \([\gamma,y]\) for the unique
  \(\gamma\in\Slice\) with \(\s(\gamma) = \rg(y)\).
\end{lemma}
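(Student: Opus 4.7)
The plan is to build $\Slice_*$ as a composition of two homeomorphisms, where the auxiliary ``section'' map inverts projection onto~$Y$, and then check that composing with the orbit-space projection~$\Qu$ lands in the claimed open set as a homeomorphism.

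First I would observe that $\s(\Slice) \subseteq \Gr^0$ is open since $\s\colon \Bisp\to \Gr^0$ is a local homeomorphism, and moreover $\s|_\Slice\colon \Slice \to \s(\Slice)$ is itself a homeomorphism because $\s$ is injective on the slice~$\Slice$. Using this, I would define a map $\sigma\colon \rg^{-1}(\s(\Slice)) \to \Slice \times_{\s,\Gr^0,\rg} Y$ by $\sigma(y) \defeq ((\s|_\Slice)^{-1}(\rg(y)),y)$. This is well defined and continuous, and its inverse is the second-coordinate projection; indeed, for any $(\gamma,y)$ in the fibre product with $\gamma \in \Slice$, the element $\gamma$ is forced to be $(\s|_\Slice)^{-1}(\rg(y))$. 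Hence $\sigma$ is a homeomorphism onto $\Slice\times_{\s,\Gr^0,\rg} Y$.

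Next I would analyse the restriction of the orbit-space projection~$\Qu$ to $\Slice\times_{\s,\Gr^0,\rg} Y$. By \longref{Lemma}{lem:XY_properties}, the map $\Qu\colon \Bisp\times_{\s,\Gr^0,\rg} Y \to \Bisp\Grcomp Y$ is an open surjective local homeomorphism, so its restriction is open and locally homeomorphic. The key point is injectivity: if $[\gamma_1,y_1] = [\gamma_2,y_2]$ with $\gamma_i \in \Slice$, then $[\gamma_1] = [\gamma_2]$ in $\Bisp/\Gr$, and injectivity of $\Qu$ on~$\Slice$ (part of the slice condition) gives $\gamma_1 = \gamma_2$; freeness of the right $\Gr$\nb-action on~$\Bisp$ then forces the connecting arrow $g\in\Gr$ with $\gamma_1 g = \gamma_2$ to be a unit, so $y_1 = y_2$. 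Thus $\Qu$ restricts to a bijective local homeomorphism $\Slice\times_{\s,\Gr^0,\rg} Y \to \Qu(\Slice\times_{\s,\Gr^0,\rg} Y)$, i.e., a homeomorphism onto the open image.

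Finally I would define $\Slice_* \defeq \Qu \circ \sigma$. As a composition of two homeomorphisms, $\Slice_*$ is a homeomorphism from $\rg^{-1}(\s(\Slice))$ onto $\Qu(\Slice\times_{\s,\Gr^0,\rg} Y)$, and by construction it sends $y$ to $[\gamma,y]$ for the unique $\gamma\in\Slice$ with $\s(\gamma) = \rg(y)$. The only delicate point in the whole argument is the injectivity of $\Qu|_{\Slice\times_{\s,\Gr^0,\rg} Y}$, and this is precisely where the slice hypothesis combines with freeness of the right action on~$\Bisp$; everything else is routine bookkeeping with local homeomorphisms and openness.
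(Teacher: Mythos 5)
Your proof is correct and follows essentially the same route as the paper: both factor $\Slice_*$ as the section $y\mapsto(\gamma,y)$ (using that $\s|_\Slice$ is a homeomorphism onto $\s(\Slice)$) followed by the quotient map~$\Qu$, invoke Lemma~\ref{lem:XY_properties} for the local homeomorphism property of~$\Qu$, and prove injectivity from the injectivity of~$\Qu$ on~$\Slice$ together with freeness of the right $\Gr$\nb-action. The only cosmetic difference is that you verify injectivity of $\Qu$ restricted to $\Slice\times_{\s,\Gr^0,\rg}Y$ while the paper checks injectivity of $\Slice_*$ directly; the argument is the same.
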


\begin{proof}
  Since~\(\s|_{\Slice}\) is a homeomorphism
  \(\Slice \to \s(\Slice)\subseteq \Gr^0\), the map~\(\Slice_*\) is well defined
  and continuous.  Its image is
  \(\Qu(\Slice\times_{\s,\Gr^0,\rg} Y) \subseteq \Bisp\Grcomp_{\Gr}Y\).
  Suppose \(\Slice_*(y_1)=\Slice_*(y_2)\) for \(y_1,y_2\in\rg^{-1}(\s(\Slice))\).
  That is, there are \(\gamma_i\in\Slice\) with \(\s(\gamma_i)=\rg(y_i)\)
  for \(i=1,2\) and \([\gamma_1,y_1]=[\gamma_2,y_2]\) in
  \(\Bisp\Grcomp_{\Gr} Y\).  Then there is \(g\in\Gr\) with
  \(\gamma_2 = \gamma_1 g\) and \(y_1 = g y_2\).  Then
  \(\Qu(\gamma_1)=\Qu(\gamma_2)\).  This implies
  \(\gamma_1 = \gamma_2\) because~\(\Qu\) is injective on~\(\Slice\).
  Since the right \(\Gr\)\nb-action on~\(\Bisp\) is free, \(g\) is
  a unit and hence \(y_1=y_2\).  Thus~\(\Slice_*\) is injective.

  The quotient map
  \(\Qu\colon \Bisp\times_{\s,\Gr^0,\rg} Y \prto \Bisp\Grcomp_{\Gr}
  Y\) is a local homeomorphism by Lemma~\ref{lem:XY_properties}.  So
  is the coordinate projection
  \(\pr_2\colon \Bisp\times_{\s,\Gr^0,\rg} Y \prto Y\) by
  \cite{Antunes-Ko-Meyer:Groupoid_correspondences}*{Lemma~2.9} because
  \(\s\colon \Bisp\to\Gr^0\) is a local homeomorphism.  The
  map~\(\Slice_*\) is the composite of the local section
  \(y\mapsto (\gamma,y)\) of \(\pr_2\) and the map~\(\Qu\).  Hence it
  is a local homeomorphism.  Being injective, it is a homeomorphism
  onto an open subset of \(\Bisp\Grcomp_{\Gr} Y\).
\end{proof}

An action of an étale groupoid induces an action of its slices by
partial homeomorphisms (see~\cite{Exel:Inverse_combinatorial}).  The
following construction generalises this to an action
of~\((\Gr,\Bisp,\Reg)\) on~\(Y\).  A slice \(\Slice\subseteq\Bisp\)
induces a partial homeomorphism~\(\Slice_*\) from~\(Y\) to
\(\Bisp \Grcomp Y\) by Lemma~\ref{lem:slice_acts}.  Composing
with \(\mu'\colon \Bisp \Grcomp Y \to Y\) gives a partial
homeomorphism
\[
\vartheta(\Slice)\defeq \mu'\circ\Slice_* \colon
Y \supseteq \rg^{-1}(\s(\Slice)) \to Y;
\]
it maps~\(y\) to \(\gamma\cdot y\) for the unique \(\gamma\in \Slice\)
with \(\s(\gamma)=\rg(y)\).  We also view~\(\vartheta(\Slice)\) as a
partial homeomorphism of~\(Y\).  Similarly, the \(\Gr\)\nb-action
induces a homeomorphism \(\Gr\Grcomp Y \to Y\), and so a
slice~\(\Slice\) in~\(\Gr\) induces a partial homeomorphism of~\(Y\)
as well.

\begin{lemma}
  \label{lem:theta_multiplicative}
  Define \(\vartheta(\Slice)\) for slices of \(\Gr\) and~\(\Bisp\) as
  above, and let~\(\vartheta(\Slice)^*\) be the partial inverse
  of~\(\vartheta(\Slice)\).  Then
  \(\vartheta(\Slice \Slice[V]) =
  \vartheta(\Slice)\vartheta(\Slice[V])\) for all
  \(\Slice,\Slice[V]\in \Bis\) with \(\Slice\in \Bis(\Gr)\) or
  \(\Slice[V]\in \Bis(\Gr)\), and
  \(\vartheta(\Slice_1)^*\vartheta(\Slice_2) =
  \vartheta(\braket{\Slice_1}{\Slice_2})\) if \(\Slice_1,\Slice_2\)
  are slices in~\(\Bisp\).
\end{lemma}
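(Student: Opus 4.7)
The plan is to verify both identities by unwinding the definition of $\vartheta$ pointwise and invoking the associativity/injectivity properties of a $(\Gr,\Bisp,\Reg)$-action. The only mildly delicate point is to check that the domains of the partial homeomorphisms on the two sides of each identity coincide.

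First I would dispose of the multiplicativity identity $\vartheta(\Slice\Slice[V]) = \vartheta(\Slice)\vartheta(\Slice[V])$ by case analysis on the types of the slices. When both $\Slice,\Slice[V]\in \Bis(\Gr)$, this is the well known fact that slices of an étale groupoid act on a $\Gr$-space by composable partial homeomorphisms. When $\Slice\in\Bis(\Bisp)$ and $\Slice[V]\in\Bis(\Gr)$ (and symmetrically), I would pick $y\in Y$ in the putative domain, write $y = h\cdot y'$ for the unique $h\in\Slice[V]$ with $\s(h)=\rg(y)$, and then write $h\cdot y'$ further as $\gamma\cdot(h\cdot y')$ for the unique $\gamma\in\Slice$ with $\s(\gamma)=\rg(h)$. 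The condition \ref{en:correspondence_action_1} in Definition~\ref{def:correspondence_action} gives $\gamma\cdot(h\cdot y') = (\gamma\cdot h)\cdot y'$, and $\gamma\cdot h$ ranges precisely over $\Slice\Slice[V]$; the fact that $\gamma$ and $h$ are uniquely determined (because $\Slice$ and $\Slice[V]$ are slices) matches the uniqueness of $\gamma\cdot h\in\Slice\Slice[V]$ required in the definition of $\vartheta(\Slice\Slice[V])$. Matching of domains follows because $\s(\Slice\Slice[V]) = \s(\Slice[V]|_{\rg(\Slice[V])\cap \s(\Slice)})$, so $\rg^{-1}(\s(\Slice\Slice[V]))$ coincides with the set of $y\in\rg^{-1}(\s(\Slice[V]))$ whose image under $\vartheta(\Slice[V])$ lies in $\rg^{-1}(\s(\Slice))$.

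For the bracket identity, let $\Slice_1,\Slice_2\in\Bis(\Bisp)$. Given $y\in Y$ in the domain of the right-hand side, $\vartheta(\Slice_2)(y) = x_2\cdot y$ for the unique $x_2\in\Slice_2$ with $\s(x_2)=\rg(y)$, and $\vartheta(\Slice_1)^*(x_2\cdot y)$ is, by definition, the unique $z\in Y$ such that $x_1\cdot z = x_2\cdot y$ for some $x_1\in\Slice_1$. By condition~\ref{en:correspondence_action_2}, such $z$ exists precisely when there is $g\in \Gr$ with $x_1 g = x_2$ and $z = g^{-1}\cdot y$, that is, precisely when $y\in\rg^{-1}(\s(g))$ for some $g\in \braket{\Slice_1}{\Slice_2}$. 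The slice property of $\Slice_1,\Slice_2$ makes such a $g$ unique when it exists, so $z = g^{-1}\cdot y = \vartheta(\braket{\Slice_1}{\Slice_2})(y)$. This simultaneously identifies both the formula and the domain.

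The step I expect to require the most care is the verification that the domains match in the bracket identity: I need to know that $\s(\braket{\Slice_1}{\Slice_2}) = \s(\Slice_2)\cap \vartheta(\Slice_2)^{-1}(\rg^{-1}(\rg(\Slice_1)))$ and, dually, that $\braket{\Slice_1}{\Slice_2}$ is itself a slice of $\Gr$ so that $g$ above is uniquely determined by $\s(g)=\rg(y)$. Both facts rest on \cite{Antunes-Ko-Meyer:Groupoid_correspondences}*{Lemma~7.4}, which says that $\braket{\Slice_1}{\Slice_2}$ is a slice, together with freeness of the right $\Gr$-action on $\Bisp$. Once these are in hand, the rest is direct application of conditions \ref{en:correspondence_action_1} and \ref{en:correspondence_action_2} and the injectivity of $\s$ on a slice.
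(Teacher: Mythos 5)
Your proof takes essentially the same route as the paper's: a pointwise verification from \ref{en:correspondence_action_1} and~\ref{en:correspondence_action_2} together with the slice properties (injectivity of \(\s\) and of the orbit projection) and the freeness of the right \(\Gr\)\nb-action, with the bracket identity handled exactly as in the paper and the multiplicativity cases, which the paper dismisses as ``similar'', actually spelled out. One small slip to fix: applying \ref{en:correspondence_action_2} to \(x_1\cdot z = x_2\cdot y\) yields \(z = g\cdot y\) (not \(z=g^{-1}\cdot y\)) for the unique \(g\in\braket{\Slice_1}{\Slice_2}\) with \(x_1 g = x_2\) --- note \(\s(g)=\s(x_2)=\rg(y)\), so \(g^{-1}\cdot y\) is not even composable --- and with \(z=g\cdot y\) your identification \(z=\vartheta(\braket{\Slice_1}{\Slice_2})(y)\) goes through as stated.
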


\begin{proof}
  We prove the relation
  \(\vartheta(\Slice_1)^*\vartheta(\Slice_2) =
  \vartheta(\braket{\Slice_1}{\Slice_2})\).  The relation
  \(\vartheta(\Slice \Slice[V]) =
  \vartheta(\Slice)\vartheta(\Slice[V])\) for
  \(\Slice,\Slice[V]\in \Bis\) with \(\Slice\in \Bis(\Gr)\) or
  \(\Slice[V]\in \Bis(\Gr)\) follows similarly from
  \ref{en:correspondence_action_1}
  and~\ref{en:correspondence_action_2}.  Recall that
  \(\vartheta(\Slice_i)\) for \(i=1,2\) is the partial homeomorphism
  that maps \(y_i\in Y\) with \(\rg(y_i)\in\s(\Slice_i)\) to
  \(\gamma_i\cdot y_i\) for the unique \(\gamma_i\in\Slice_i\) with
  \(\s(\gamma_i) = \rg(y_i)\).  Thus the composite partial
  homeomorphism \(\vartheta(\Slice_1)^*\vartheta(\Slice_2)\)
  maps~\(y_2\) to~\(y_1\) if and only if
  \(\gamma_1 \cdot y_1 = \gamma_2 \cdot y_2\).  By
  \ref{en:correspondence_action_2}, this happens if and only if there
  is \(g\in\Gr\) with \(\gamma_1 g = \gamma_2\) and \(g y_2 = y_1\).
  If such a~\(g\) exists, then it is unique and belongs to
  \(\vartheta(\braket{\Slice_1}{\Slice_2})\), and we get
  \(y_1 = \vartheta(\braket{\Slice_1}{\Slice_2}) y_2\).  Thus
  \(\vartheta(\Slice_1)^*\vartheta(\Slice_2) =
  \vartheta(\braket{\Slice_1}{\Slice_2})\).
\end{proof}

\begin{lemma}
  \label{lem:range_source_theta_X}
  Let \(\rg_Y\colon Y\to\Gr^0\) be the anchor map and let
  \(\pi_1\colon \Bisp\cdot Y \to \Bisp/\Gr\) be as in
  Lemma~\textup{\ref{lem:XY_properties}}.  If \(\Slice\in\Bis(\Gr)\),
  then \(\vartheta(\Slice)\) is a homeomorphism from
  \(\rg_Y^{-1}(\s(\Slice)) \subseteq Y\) to
  \(\rg_Y^{-1}(\rg(\Slice)) \subseteq Y\).  If
  \(\Slice\in\Bis(\Bisp)\), then \(\vartheta(\Slice)\) is a
  homeomorphism from \(\rg_Y^{-1}(\s(\Slice)) \subseteq Y\) to
  \(\pi_1^{-1}(\Qu(\Slice)) \subseteq Y\).
\end{lemma}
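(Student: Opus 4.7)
The plan is to handle the two cases separately, using different ingredients in each. For $\Slice\in\Bis(\Gr)$, I would rely on the fact that the groupoid action $\Gr\times_{\s,\Gr^0,\rg_Y} Y\to Y$ is an ordinary \'etale groupoid action, so standard theory of slices applies. Concretely, the domain $\rg_Y^{-1}(\s(\Slice))$ is just the set of points~$y$ for which there exists a (necessarily unique) $g\in\Slice$ with $\s(g)=\rg_Y(y)$. By~\ref{en:correspondence_action_1}, $\rg_Y(g\cdot y)=\rg(g)\in\rg(\Slice)$, so the image lies in $\rg_Y^{-1}(\rg(\Slice))$. A two-sided inverse is provided by $\vartheta(\Slice^{-1})$, where $\Slice^{-1}\defeq\setgiven{g^{-1}}{g\in\Slice}$ is again a slice of $\Gr$; and continuity of both maps follows because the action map is continuous and $\s|_{\Slice},\rg|_{\Slice}$ are homeomorphisms onto open subsets of $\Gr^0$.

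For $\Slice\in\Bis(\Bisp)$, I would decompose $\vartheta(\Slice)=\mu'\circ\Slice_*$ as in its definition and read off the domain and image from the two factors. By Lemma~\ref{lem:slice_acts}, $\Slice_*$ is a homeomorphism from $\rg_Y^{-1}(\s(\Slice))$ onto $\Qu(\Slice\times_{\s,\Gr^0,\rg} Y)\subseteq \Bisp\Grcomp_{\Gr} Y$. By Remark~\ref{rem:mu_homeo}, $\mu'$ is a homeomorphism from $\Bisp\Grcomp_{\Gr} Y$ onto its image $\Bisp\cdot Y\subseteq Y$. Composing, $\vartheta(\Slice)$ is a homeomorphism from $\rg_Y^{-1}(\s(\Slice))$ onto $\mu'(\Qu(\Slice\times_{\s,\Gr^0,\rg} Y))\subseteq Y$.

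It remains to match this image with $\pi_1^{-1}(\Qu(\Slice))$, which must be interpreted as a subset of $Y$ via the identification $\Bisp\Grcomp_{\Gr} Y\cong\Bisp\cdot Y$ afforded by $\mu'$. The identification is purely set-theoretic: an orbit $[x,y]\in\Bisp\Grcomp_{\Gr} Y$ lies in $\pi_1^{-1}(\Qu(\Slice))$ iff $[x]\in\Qu(\Slice)$, which means $x=\gamma\cdot g$ for some $\gamma\in\Slice$ and $g\in\Gr$ with $\s(\gamma)=\rg(g)$; and then $[x,y]=[\gamma,g\cdot y]\in\Qu(\Slice\times_{\s,\Gr^0,\rg} Y)$. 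The reverse inclusion is immediate from the definition of $\pi_1$. Applying $\mu'$ transports this equality to subsets of $Y$ and completes the proof.

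No genuine obstacle is expected here: the only point demanding care is the notational convention that allows $\pi_1^{-1}(\Qu(\Slice))$ to be read as a subset of $Y$ through $\mu'$, which must be signalled clearly. Once this is fixed, both cases reduce to combining Lemma~\ref{lem:slice_acts} and Remark~\ref{rem:mu_homeo} with an unwinding of the orbit-space quotient.
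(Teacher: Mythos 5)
Your proposal is correct and follows essentially the same route as the paper: the paper also factors \(\vartheta(\Slice)=\mu'\circ\Slice_*\) implicitly, identifies the image as the set of \(x\cdot y\) with \(x\in\Slice\), and replaces the condition \(x\in\Slice\) by \(\Qu(x)\in\Qu(\Slice)\) using \(x\cdot y=(x\cdot g)\cdot(g^{-1}\cdot y)\), which is exactly your unwinding of the orbit-space quotient. The paper likewise treats the \(\Bis(\Gr)\) case as "proven similarly", so your explicit handling of it via \(\Slice^{-1}\) only fills in what the paper leaves to the reader.
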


\begin{proof}
  We prove the claim for \(\Slice\in\Bis(\Bisp)\).  The claims for
  the source and range of \(\vartheta(\Slice)\) for
  \(\Slice\in\Bis(\Gr)\) are proven similarly.  By construction, the
  image of \(\vartheta(\Slice)\) is the set of all \(x\cdot y\) for
  \(x\in\Bisp\), \(y\in Y\) with \(\s(x) = \rg(y)\) and
  \(x \in \Slice\).  Since
  \(x \cdot y = (x\cdot g)\cdot (g^{-1}\cdot y)\), we may replace
  the condition \(x \in \Slice\) by \(\Qu(x) \in \Qu(\Slice)\).  So
  the range of~\(\vartheta(\Slice)\) is equal to
  \(\pi_1^{-1}(\Qu(\Slice))\subseteq \Bisp\cdot Y\).
\end{proof}

For a space~\(Y\), let \(I(Y)\) denote the inverse semigroup of all
partial homeomorphisms on~\(Y\).

\begin{lemma}
  \label{lem:action_from_theta}
  Let~\(Y\) be a space and let \(\vartheta\colon \Bis\to I(Y)\) be a
  map.  It comes from a \((\Gr,\Bisp,\Reg)\)\nb-action on~\(Y\) if and
  only if
  \begin{enumerate}[label=\textup{(\ref*{lem:action_from_theta}.\arabic*)},
    leftmargin=*,labelindent=0em]
  \item \label{en:action_from_theta1}%
    \(\vartheta(\Slice \Slice[V]) =
    \vartheta(\Slice)\vartheta(\Slice[V])\) if
    \(\Slice,\Slice[V]\in \Bis\) and \(\Slice\in \Bis(\Gr)\) or
    \(\Slice[V]\in \Bis(\Gr)\);
  \item \label{en:action_from_theta2}%
    \(\vartheta(\Slice_1)^*\vartheta(\Slice_2) =
    \vartheta(\braket{\Slice_1}{\Slice_2})\) for all
    \(\Slice_1,\Slice_2\in\Bis(\Bisp)\);
  \item \label{en:action_from_theta4}%
    \(\vartheta(\Gr^0)\) is the identity map on all of~\(Y\) and
    \(\vartheta(\emptyset)\) is the empty partial map;
  \item \label{en:action_from_theta5}%
    the restriction of~\(\vartheta\) to open subsets of~\(\Gr^0\)
    commutes with arbitrary unions;
  \item \label{en:action_from_theta3}%
    the images of~\(\vartheta(\Slice)\) for \(\Slice\in\Bis(\Bisp)\) cover the
    domain of~\(\vartheta(\Reg)\);
  \item \label{en:action_from_theta6}%
    if \(\Slice\in\Bis(\Bisp)\) is precompact in~\(\Bisp\), then the
    closure of the codomain of \(\vartheta(\Slice)\) is contained in the
    union of the codomains of~\(\vartheta(W)\) for \(W\in\Bis(\Bisp)\).
  \end{enumerate}
  The corresponding \((\Gr,\Bisp,\Reg)\)-action on~\(Y\) is unique if it
  exists.
\end{lemma}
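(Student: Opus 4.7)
The forward direction $(\Rightarrow)$ is essentially already assembled from the preceding material. First I would note that Lemma~\ref{lem:theta_multiplicative} gives \ref{en:action_from_theta1} and~\ref{en:action_from_theta2}, while \ref{en:action_from_theta4} is immediate from the definition of $\vartheta$. Condition~\ref{en:action_from_theta5} follows because for $U\subseteq\Gr^0$ open, $\vartheta(U)$ is the identity on $\rg_Y^{-1}(U)$ by Lemma~\ref{lem:range_source_theta_X}, and preimages commute with unions. For~\ref{en:action_from_theta3}, combine \ref{en:correspondence_action_3} with the fact (Lemma~\ref{lem:range_source_theta_X}) that the codomain of $\vartheta(\Slice)$ for $\Slice\in\Bis(\Bisp)$ is $\pi_1^{-1}(\Qu(\Slice))\subseteq \Bisp\cdot Y$. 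For~\ref{en:action_from_theta6}, observe that for $\Slice$ precompact the closure of $\Qu(\Slice)$ in $\Bisp/\Gr$ is compact (since $\Qu$ is continuous and $\Bisp/\Gr$ Hausdorff), and then \ref{en:correspondence_action_4} via Remark~\ref{rem:mu_homeo} tells us that $\pi_1^{-1}(\overline{\Qu(\Slice)})$ is closed in~$Y$; this closed set is contained in $\Bisp\cdot Y$, which is the union of the codomains of~$\vartheta(W)$.

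For the reverse direction $(\Leftarrow)$, I would reconstruct the action in four steps. \emph{Step~1: Recover the anchor map.} The assignment $U\mapsto \Dom\vartheta(U)$, defined on open subsets of $\Gr^0$ (which are all slices in $\Bis(\Gr)$), is a frame homomorphism into the open subsets of~$Y$ by \ref{en:action_from_theta1}, \ref{en:action_from_theta4}, and~\ref{en:action_from_theta5}. Since $\Gr^0$ is locally compact Hausdorff and therefore sober, this corresponds to a unique continuous map $\rg_Y\colon Y\to \Gr^0$ with $\rg_Y^{-1}(U)=\Dom\vartheta(U)$. \emph{Step~2: Define the groupoid action.} For $g\in\Gr$ and $y\in Y$ with $\s(g)=\rg_Y(y)$, choose a slice $\Slice\ni g$ and set $g\cdot y \defeq \vartheta(\Slice)(y)$. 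Independence of the choice of $\Slice$ follows from~\ref{en:action_from_theta1} applied to intersections, and the groupoid axioms follow from \ref{en:action_from_theta1} together with~\ref{en:action_from_theta2} restricted to slices in~$\Gr$ (or equivalently from $\Slice^*\Slice=\s(\Slice)$). \emph{Step~3: Define the multiplication.} Analogously, for $x\in\Bisp$ and $y\in Y$ with $\s(x)=\rg_Y(y)$, pick a slice $\Slice\in\Bis(\Bisp)$ containing~$x$ and set $x\cdot y\defeq \vartheta(\Slice)(y)$; again well-definedness follows from~\ref{en:action_from_theta1}. Continuity and openness of $\mu$ on the fibre product are local and inherited from $\vartheta(\Slice)$.

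\emph{Step~4: Verify the action axioms.} Axiom \ref{en:correspondence_action_1} is a direct translation of~\ref{en:action_from_theta1} with $\Slice=g$ and $\Slice[V]\ni x$, noting that $g\cdot \Slice[V]$ is again a slice. For axiom \ref{en:correspondence_action_2}, suppose $x\cdot y=x'\cdot y'$ with $x\in\Slice_1$, $x'\in\Slice_2$; then \ref{en:action_from_theta2} gives $y=\vartheta(\Slice_1)^*\vartheta(\Slice_2)(y')=\vartheta(\braket{\Slice_1}{\Slice_2})(y')$, so there exists a unique $g\in\braket{\Slice_1}{\Slice_2}$ with $x\cdot g=x'$ and $g\cdot y'=y$, and conversely \ref{en:correspondence_action_1} gives the forward implication. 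Axiom~\ref{en:correspondence_action_3} is just~\ref{en:action_from_theta3} combined with the identification of $\Dom\vartheta(\Reg)$ with $\rg_Y^{-1}(\Reg)$. For axiom~\ref{en:correspondence_action_4}, a compact subset $K\subseteq\Bisp/\Gr$ can be covered by finitely many sets $\Qu(\Slice_i)$ with $\Slice_i\in\Bis(\Bisp)$ precompact, so the set of points $x\cdot y$ with $[x]\in K$ equals $\bigcup_i \vartheta(\Slice_i)(\rg_Y^{-1}(\s(\Slice_i))\cap K\text{-preimage})$, which is closed by~\ref{en:action_from_theta6} after noting that closures of codomains of $\vartheta(\Slice_i)$ are contained in $\Bisp\cdot Y$ and the intersection with the closed preimage of~$K$ is closed.

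The main obstacle I expect is \emph{Step~1} together with the treatment of axiom \ref{en:correspondence_action_4}: one must be careful that the frame-theoretic reconstruction of $\rg_Y$ really produces a continuous map (which uses sobriety of $\Gr^0$ in an essential way) and that \ref{en:action_from_theta6} is strong enough to give closedness in $Y$ rather than merely in~$\Bisp\cdot Y$. Uniqueness of the corresponding action is automatic from the construction: $\rg_Y$ is forced by the domains of $\vartheta|_{\text{open subsets of }\Gr^0}$, and the $\Gr$- and $\Bisp$-actions are forced by the values of $\vartheta$ on slices.
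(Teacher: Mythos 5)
Your proposal is correct and follows essentially the same route as the paper: the forward direction assembles Lemmas~\ref{lem:theta_multiplicative} and~\ref{lem:range_source_theta_X} together with the equivalence of~\ref{en:action_from_theta6} with \ref{en:correspondence_action_4} via compact covers by precompact slices, and the converse reconstructs the anchor map from the frame homomorphism on open subsets of~\(\Gr^0\) using sobriety and then defines the \(\Gr\)-action and multiplication pointwise through slices. The only detail you gloss over that the paper spells out is the verification of \(\rg(x\cdot y)=\rg(x)\) (via multiplicativity against slices \(W\subseteq\Gr^0\)) and the step from \(y=\vartheta(\braket{\Slice_1}{\Slice_2})(y')\) to \(x\cdot g=x'\), both of which are routine.
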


\begin{proof}
  Assume first that the maps~\(\vartheta\) come from an action of
  \((\Gr,\Bisp,\Reg)\).  Then \ref{en:action_from_theta1}
  and~\ref{en:action_from_theta2} follow from
  Lemma~\ref{lem:theta_multiplicative}.  If \(W\subseteq \Gr^0\) is
  open, then \(\vartheta(W)\) acts on~\(Y\) by the identity map on
  \(\rg^{-1}(W)\).  This implies the statements in
  \ref{en:action_from_theta4} and~\ref{en:action_from_theta5}.
  Lemma~\ref{lem:range_source_theta_X} implies that the ranges of
  \(\vartheta(\Slice)\) for \(\Slice\in\Bis(\Bisp)\)
  cover~\(\rg^{-1}(\Reg)\) if and only if ~\(\rg^{-1}(\Reg)\) is
  contained in \(\Bisp\cdot Y\).  So~\ref{en:action_from_theta3}
  holds.

  We claim that~\ref{en:action_from_theta6} is
  equivalent to \ref{en:correspondence_action_4}.
  In one direction, the closure of the image of~\(\Slice\)
  in~\(\Bisp/\Gr\) is compact, and so
  \(\vartheta(\Slice)\cdot Y = \Slice \cdot Y\) is contained in a relatively
  closed subset in \(\Bisp\cdot Y \subseteq Y\) by
  \ref{en:correspondence_action_4}.
  In the other direction, if \(K\subseteq \Bisp/\Gr\), there are
  finitely many slices~\(\Slice_i\) in~\(\Bisp\) whose images
  in~\(\Bisp/\Gr\) cover~\(K\).  By~\ref{en:action_from_theta6}, the
  codomains of~\(\Slice_i\) have closure contained in
  \(\Bisp\cdot Y\), and then this remains so for their union.  This
  implies that the preimage of~\(K\) in \(\Bisp\cdot Y \subseteq Y\)
  is relatively closed as required in
  \ref{en:correspondence_action_4}.

  So far, we have seen that all the conditions in the lemma are
  necessary for~\(\vartheta\) to come from an action of
  \((\Gr,\Bisp,\Reg)\).  Next, we prove that they are also sufficient.
  Any open subset of~\(\Gr^0\) is an idempotent slice in~\(\Gr\).
  The condition \ref{en:action_from_theta1} for such slices implies that
  their image is again idempotent, so that \(\vartheta(W)\) for
  \(W\subseteq\Gr^0\) is the identity map on some open subset
  of~\(Y\).  So~\(\vartheta\) restricts to a map from the lattice of
  open subsets of~\(\Gr^0\) to the lattice of open subsets of~\(Y\).
  The conditions \ref{en:action_from_theta1},
  \ref{en:action_from_theta4} and~\ref{en:action_from_theta5} for
  subsets of~\(\Gr^0\) say that the latter map commutes with finite
  intersections and arbitrary unions.  Since the two spaces involved
  are locally compact, they are ``sober''.  Therefore, by
  \cite{Meyer-Nest:Bootstrap}*{Lemma~2.25}, any map between their
  lattices of open subsets that commutes with arbitrary unions and
  finite intersections is of the form \(W\mapsto\rg^{-1}(W)\) for a
  continuous map \(\rg\colon Y\to\Gr^0\).  That is,
  \(\vartheta(W)\) is the identity map on \(\rg^{-1}(W)\) for any open
  subset \(W\subseteq \Gr^0\).

  Let \(\Slice\in\Bis(\Bisp)\), \(x\in\Slice\), \(y\in Y\) satisfy
  \(\s(x) = \rg(y)\).  The construction of~\(\rg\) says that~\(y\) is
  in the domain of \(\Slice^*\Slice = \s(\Slice)\).  So
  \(\vartheta(\Slice)\) is defined at~\(y\).  We want to put
  \(x\cdot y \defeq \vartheta(\Slice)(y)\).  This is the only
  possibility for a \((\Gr,\Bisp,\Reg)\)-action that induces the
  map~\(\vartheta(\Slice)\).  We must show that this formula for all
  \((x,y)\) gives a well defined map
  \(\Bisp\times_{\s,\Gr^0,\rg} Y \to Y\).  If
  \(\Slice[V]\in\Bis(\Bisp)\) is another slice with \(x\in\Slice[V]\),
  then \(x\in \Slice\cap \Slice[V]\).  Let
  \(W = \s(\Slice\cap \Slice[V])\subseteq \Gr^0\), viewed as a slice
  of~\(\Gr\).  Then
  \(\Slice \cdot W = \Slice\cap \Slice[V] = \Slice[V] \cdot W\) and
  \[
  \vartheta(\Slice)y
  = \vartheta(\Slice)\vartheta(W) y
  = \vartheta(\Slice[V])\vartheta(W) y
  = \vartheta(\Slice[V])y.
  \]
  Thus \(x\cdot y\) is well defined.  The resulting map
  \(\Bisp \times_{\s,\rg} Y \to Y\) is continuous and open because the
  maps~\(\vartheta(\Slice)\) are partial homeomorphisms.  Its image
  contains the domain of \(\vartheta(\Reg)\)
  by~\ref{en:action_from_theta3}; this is \(\rg^{-1}(\Reg)\) by the
  construction of~\(\rg\).

  A similar construction for slices in~\(\Gr\) gives a continuous
  multiplication map \(\Gr\times_{\s,\Gr^0,\rg} Y\ \to Y\).
  The condition~\ref{en:action_from_theta4} implies
  \(1_{\rg(y)} \cdot y = y\) for all \(y\in Y\),
  and~\ref{en:action_from_theta1} for two slices in~\(\Gr\) implies
  that this is a \(\Gr\)\nb-action.

  The multiplicativity of~\(\vartheta\) for slices in~\(\Gr^0\)
  and~\(\Bisp\) implies \(\rg(x\cdot y) = \rg(x)\) for all \(x\in\Slice\),
  \(y\in Y\) with \(\s(x) = \rg(y)\) because both sides are in the
  domains of the same \(\vartheta(W)\) for \(W\subseteq \Gr^0\) open.
  When applied to slices in \(\Gr\) and~\(\Bisp\), it implies
  \(g\cdot (x\cdot y) = (g\cdot x)\cdot y\) for all \(g\in\Gr\) with
  \(\s(g)=\rg(x)\).  When applied to slices in~\(\Bisp\) and \(\Gr\),
  it implies \((x \cdot g) \cdot (g^{-1}\cdot y) = x\cdot y\) for all
  \(g\in\Gr\) with \(\rg(g)=\s(x)\).  Assume now that
  \(x_1 \cdot y_1 = x_2 \cdot y_2\) for \(x_1,x_2\in\Bisp\),
  \(y_1,y_2\in Y\).  Choose \(\Slice_1,\Slice_2\in\Bis(\Bisp)\) with
  \(x_j\in \Slice_j\) for \(j=1,2\).  Then
  \[
    \vartheta(\Slice_1) y_1 = x_1\cdot y_1 = x_2\cdot y_2 = \vartheta(\Slice_2)y_2
  \]
  and so \(y_1 = \vartheta(\Slice_1)^* \vartheta(\Slice_2) y_2\),
  which is equal to \(\vartheta(\braket{\Slice_1}{\Slice_2}) y_2\)
  by~\ref{en:action_from_theta3}.  Equivalently, \(y_1 = g\cdot y_2\)
  for the unique \(g\in \braket{\Slice_1}{\Slice_2}\) with
  \(\s(g) = \rg(y_2)\).  Now \(\rg(g) = \rg(y_1) = \s(x_1)\) and
  \(g\in \braket{\Slice_1}{\Slice_2}\) imply \(x_1 \cdot g = x_2\).
  So there is \(g\in \Gr\) with \(x_2 = x_1 \cdot g\) and
  \(y_1 = g\cdot y_2\), as required in
  \ref{en:correspondence_action_2}.  We have already shown
  that~\ref{en:action_from_theta6} is equivalent to
  \ref{en:correspondence_action_4}.  So we get the data and all the
  conditions for a  \((\Gr,\Bisp,\Reg)\)-action.
\end{proof}

The next lemma describes \((\Gr,\Bisp)\)-equivariant maps through the
partial homeomorphisms \(\vartheta(\Slice)\) for \(\Slice\in\Bis\).
This is important to characterise the groupoid model.

\begin{lemma}
  \label{lem:theta_gives_equivariant_invariant}
  Let \(Y\) and~\(Y'\) be \((\Gr,\Bisp,\Reg)\)-actions.
  A continuous map \(\varphi\colon Y\to Y'\) is
  \((\Gr,\Bisp)\)-equivariant if and only if
  \(\vartheta'(\Slice)\circ\varphi = \varphi\circ\vartheta(\Slice)\)
  and
  \(\vartheta'(\Slice)^*\circ\varphi =
  \varphi\circ\vartheta(\Slice)^*\) as partial maps for all
  \(\Slice\in \Bis\).
\end{lemma}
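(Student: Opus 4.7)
The plan is to read off both directions from the definition of $\vartheta(\Slice)$ recalled before Lemma~\ref{lem:theta_multiplicative} by matching it piece-by-piece against the three clauses that make up $(\Gr,\Bisp)$-equivariance. Throughout, I would interpret ``equality as partial maps'' as equality of partially defined functions with the same domain.

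For the ``only if'' direction, I would first fix $\Slice\in\Bis(\Gr)$ or $\Slice\in\Bis(\Bisp)$. The domain of $\vartheta(\Slice)$ is $\rg_Y^{-1}(\s(\Slice))$, and $(\Gr,\Bisp)$-equivariance of $\varphi$ forces $\rg_{Y'}\circ\varphi=\rg_Y$, so $\varphi$ carries this domain into the domain of $\vartheta'(\Slice)$. On that domain, $\vartheta(\Slice)(y)=\gamma\cdot y$ for the unique $\gamma\in\Slice$ with $\s(\gamma)=\rg(y)$, so the commutation $\vartheta'(\Slice)\circ\varphi=\varphi\circ\vartheta(\Slice)$ is exactly clause (1) or (2) of Definition~\ref{def:correspondence_action} applied to this $\gamma$. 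For the adjoint relation I would note that by what we just proved, $\varphi$ sends $\mathrm{dom}\,\vartheta(\Slice)^*=\mathrm{im}\,\vartheta(\Slice)$ into $\mathrm{dom}\,\vartheta'(\Slice)^*$, and values are intertwined. For the reverse inclusion, suppose $\varphi(y)\in \mathrm{dom}\,\vartheta'(\Slice)^*=\pi_1^{-1}(\Qu(\Slice))$; then $\varphi(y)\in\Bisp\cdot Y'$, so the third clause of equivariance gives $y\in\Bisp\cdot Y$, and writing $y=x'\cdot y''$, the injectivity statement~\ref{en:correspondence_action_2} applied in $Y'$ to $\varphi(y)=x'\cdot\varphi(y'')$ and a factorisation in $\Slice$ pins down $[x']\in\Qu(\Slice)$, so $y\in\pi_1^{-1}(\Qu(\Slice))$ as required.

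For the ``if'' direction, I would deduce the three clauses of equivariance in turn. Taking $\Slice=W$ open in $\Gr^0$, for which $\vartheta(W)$ is the identity on $\rg_Y^{-1}(W)$ and likewise for~$Y'$, the commutation relation yields $\varphi^{-1}(\rg_{Y'}^{-1}(W))=\rg_Y^{-1}(W)$ for all such~$W$; the sobriety argument used in the proof of Lemma~\ref{lem:action_from_theta} gives $\rg_{Y'}\circ\varphi=\rg_Y$. Clauses (1) and (2) then follow by choosing, for a given $g\in\Gr$ or $x\in\Bisp$, a slice containing it and applying the commutation relation to a single point. Finally, by Lemma~\ref{lem:range_source_theta_X}, $\Bisp\cdot Y$ is the union of the images of $\vartheta(\Slice)$ for $\Slice\in\Bis(\Bisp)$, that is, of the domains of the partial maps $\vartheta(\Slice)^*$, and similarly for $Y'$; the adjoint relations say exactly that $\varphi^{-1}$ of each summand on the $Y'$-side is the corresponding summand on the $Y$-side, so $\varphi^{-1}(\Bisp\cdot Y')=\Bisp\cdot Y$, which is clause (3).

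The main obstacle is the adjoint relations: as partial maps, the equality $\vartheta'(\Slice)^*\circ\varphi=\varphi\circ\vartheta(\Slice)^*$ fails unless the domains match, and this domain match is precisely the content of clause (3) of equivariance combined with the orbit-uniqueness statement~\ref{en:correspondence_action_2}. In the forward direction this is where clause (3) is used non-trivially, and in the backward direction this is the only input that recovers clause (3); everything else is a straightforward unpacking.
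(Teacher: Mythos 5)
Your proposal is correct and follows essentially the same route as the paper's proof: identify the domains of the composed partial maps via Lemma~\ref{lem:range_source_theta_X}, observe that the domain equality for the adjoint relations is exactly where the condition $\varphi^{-1}(\Bisp\cdot Y')=\Bisp\cdot Y$ enters (in both directions), and recover the remaining equivariance clauses by covering elements of $\Gr$ and $\Bisp$ with slices. You have in fact spelled out the domain-matching argument for $\vartheta'(\Slice)^*\circ\varphi=\varphi\circ\vartheta(\Slice)^*$ (via~\ref{en:correspondence_action_2}) in more detail than the paper does.
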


\begin{proof}
  Our construction of the maps \(\vartheta(\Slice)\) from an action
  implies \(\vartheta'(\Slice)\circ\varphi = \varphi\circ\vartheta(\Slice)\)
  and \(\vartheta'(\Slice)^*\circ\varphi = \varphi\circ\vartheta(\Slice)^*\)
  as partial maps if~\(\varphi\) is equivariant.  A subtle point
  here is the equality of the domains.  By
  Lemma~\ref{lem:range_source_theta_X}, the domains of
  \(\vartheta'(\Slice)\circ\varphi\) and \(\varphi\circ\vartheta(\Slice)\) are
  \(\varphi^{-1}\bigl(\rg^{-1}_{Y'}(\s(\Slice))\bigr)\) and
  \(\rg_Y^{-1}(\s(\Slice))\), respectively, and the domains of
  \(\vartheta'(\Slice)^*\circ\varphi\) and
  \(\varphi\circ\vartheta(\Slice)^*\)
  for \(\Slice\in\Bis(\Bisp)\)
  are \(\varphi^{-1}\bigl(\pi_1^{-1}(\Qu(\Slice))\bigr)\) and
  \(\pi_1^{-1}(\Qu(\Slice))\), respectively, for the canonical map
  \(\pi_1\colon \Bisp\circ Y \to \Bisp/\Gr\).
  The equality of the first two domains is equivalent to
  \(\rg\circ\varphi = \rg\).
  The equality of the latter domains for all \(\Slice\in\Bis(\Bisp)\)
  is proven using \(\varphi^{-1}(\Bisp\cdot Y') = \Bisp\cdot Y\).
  Conversely, if \(\vartheta'(\Slice)\circ\varphi =
  \varphi\circ\vartheta(\Slice)\) and
  \(\vartheta'(\Slice)^*\circ\varphi =
  \varphi\circ\vartheta(\Slice)^*\) holds for all
  bisections~\(\Slice\) in \(\Gr\) and~\(\Bisp\), then the
  map~\(\varphi\) must be equivariant; to prove this, use the
  construction of~\(\rg\) in the proof of
  Lemma~\ref{lem:action_from_theta} and that any element~\(x\) of
  \(\Gr\) or~\(\Bisp\) belongs to a slice~\(\Slice\), and
  \(\vartheta(\Slice)y = x\cdot y\) if \(y\in Y\) satisfies \(\s(x) =
  \rg(y)\).
\end{proof}

\section{The universal action}
\label{sec:universal_action}

In this section, we build the universal action.  We first treat the
case \(\Reg=\emptyset\).  Then we reduce the general case to this one.  A
crucial ingredient for the construction is the iterated composition
of~\(\Bisp\) with itself.  Let \(\Bisp_0 \defeq \Gr\) be the identity
groupoid correspondence and let
\(\Bisp_n \defeq \Bisp_{n-1} \Grcomp \Bisp\); in particular,
\(\Bisp_1 =\Bisp\).  These are all groupoid correspondences
(see~\cite{Antunes-Ko-Meyer:Groupoid_correspondences}).  So their
right \(\Gr\)\nb-action is free and proper, so that the orbit spaces
\(\Bisp_n/\Gr\) are locally compact, Hausdorff.  The left
\(\Gr\)\nb-action on~\(\Bisp_n\) induces a \(\Gr\)\nb-action
on~\(\Bisp_n/\Gr\).  There are canonical \(\Gr\)\nb-equivariant maps
\[
  \pi_n^m\colon \Bisp_n/\Gr \to \Bisp_m/\Gr,\qquad
  [x_1,\dotsc,x_n] \mapsto   [x_1,\dotsc,x_m],
\]
for \(m \le n\), which satisfy \(\pi_m^k \circ \pi_n^m= \pi_n^k\) for
\(k \le m \le n\).  Unlike in~\cite{Meyer:Diagrams_models}, the
maps~\(\pi_n^m\) are not proper, so that it is not useful to form
their projective limit: this space need not be locally compact.

Let~\(Y\) carry an action of \((\Gr,\Bisp,\Reg)\).  The multiplication
map~\(\mu\) induces iterated multiplication maps
\[
  \mu_n\colon \Bisp_n \Grcomp Y \to Y,\qquad
  \mu_n\bigl([x_1,\dotsc,x_n,y]\bigr) \defeq x_1 \cdot \mu_{n-1}([x_2,\dotsc,x_n,y])
\]
for \(n\ge1\).  This is just~\(\mu\) for \(n=1\).  We let
\(\mu_0\colon \Bisp_0 \Grcomp Y = \Gr\Grcomp Y \to Y\) be the
\(\Gr\)\nb-action on~\(Y\).  The maps~\(\mu_n\) for \(n\in\N\) are
homeomorphisms onto open subsets in~\(Y\).

\begin{lemma}
  \label{lem:action_iterated_extra_condition}
  For \(n\in\N\) and \(h\in\Cont_0(\Bisp_n/\Gr)\), define a function
  \(\pi_n^* h\colon Y\to \C\) by \((\pi_n^* h)(y)\defeq 0\) for
  \(y\notin \Bisp_n\cdot Y\), and
  \((\pi_n^* h)(\omega \cdot y)\defeq h([\omega])\) for all
  \(\omega\in \Bisp_n\), \(y\in Y\) with \(\s(\omega) = \rg(y)\).
  These functions are continuous.
\end{lemma}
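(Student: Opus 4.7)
The plan is to proceed by induction on~$n$, leveraging Lemma~\ref{lem:extra_condition_convergence} at each stage. The base case $n=0$ is immediate since $\Bisp_0=\Gr$, $\Bisp_0/\Gr = \Gr^0$, and $\pi_0^* h$ is the pullback of~$h$ along the anchor map $Y\to\Gr^0$. The case $n=1$ is exactly condition~\ref{en:extra_condition_convergence_2} of Lemma~\ref{lem:extra_condition_convergence}, which holds because the action satisfies~\ref{en:correspondence_action_4}.

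For the inductive step, the key observation is that $\mu_n\colon \Bisp_n\Grcomp Y\to Y$ makes~$Y$ into an action of the groupoid correspondence~$\Bisp_n$ in the sense of Lemma~\ref{lem:extra_condition_convergence}: it is $\Gr$-equivariant and, by iteration of the corresponding property for~$\mu$, a homeomorphism onto an open subset of~$Y$. Applying Lemma~\ref{lem:extra_condition_convergence} to~$\mu_n$ then reduces the continuity of $\pi_n^* h$ for all $h\in\Cont_0(\Bisp_n/\Gr)$ to condition~\ref{en:extra_condition_convergence_1} for~$\mu_n$: whenever $(\omega_\alpha, z_\alpha)$ is a net in $\Bisp_n\times_{\s,\Gr^0,\rg} Y$ with $\omega_\alpha\to\omega_\infty$ in~$\Bisp_n$ and $\omega_\alpha\cdot z_\alpha\to y_\infty$ in~$Y$, then~$z_\alpha$ converges.

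To verify this convergence, I would use the decomposition $\Bisp_n = \Bisp\Grcomp\Bisp_{n-1}$ and lift~$\omega_\alpha$ to representatives $(x_\alpha, \omega'_\alpha)\in \Bisp\times_{\s,\Gr^0,\rg}\Bisp_{n-1}$. Since the orbit space projection $\Bisp\times_{\s,\Gr^0,\rg}\Bisp_{n-1}\to \Bisp_n$ is a local homeomorphism by Lemma~\ref{lem:XY_properties}, after replacing the lifts by suitably $\Gr$-adjusted ones (which leaves both~$z_\alpha$ and~$\omega_\alpha\cdot z_\alpha$ unchanged) I may assume $x_\alpha\to x_\infty$ and $\omega'_\alpha\to\omega'_\infty$. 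Setting $z'_\alpha \defeq \omega'_\alpha\cdot z_\alpha$, the factorisation $\omega_\alpha\cdot z_\alpha = x_\alpha\cdot z'_\alpha$ combined with~\ref{en:extra_condition_convergence_1} for~$\mu$ (base case) yields convergence of~$z'_\alpha$ in~$Y$; then~\ref{en:extra_condition_convergence_1} for~$\mu_{n-1}$ (inductive hypothesis) applied to the net $(\omega'_\alpha, z_\alpha)$ yields convergence of~$z_\alpha$.

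The main technical hurdle is the lifting step, which mirrors the $\Gr$-adjustment trick in the proof of Lemma~\ref{lem:extra_condition_convergence}: an arbitrary choice of representatives $(x_\alpha, \omega'_\alpha)$ need not converge in the fibre product even when $\omega_\alpha$ converges in~$\Bisp_n$, so one exploits the local homeomorphism property to modify representatives by $\Gr$-elements~$g_\alpha$, passing from $(x_\alpha, \omega'_\alpha)$ to $(x_\alpha g_\alpha, g_\alpha^{-1}\omega'_\alpha)$, until convergence in $\Bisp\times_{\s,\Gr^0,\rg}\Bisp_{n-1}$ is achieved.
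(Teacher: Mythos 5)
Your proposal is correct and follows essentially the same route as the paper: both reduce the claim via Lemma~\ref{lem:extra_condition_convergence} to the net-convergence criterion~\ref{en:extra_condition_convergence_1} for~\(\mu_n\), use the decomposition \(\Bisp_n = \Bisp\Grcomp\Bisp_{n-1}\) with a \(\Gr\)\nb-adjustment of representatives so that the lifts converge in the fibre product, and then apply the \(n=1\) case followed by the inductive hypothesis for~\(\mu_{n-1}\). Your explicit justification of the lifting step via the local homeomorphism property of the orbit space projection is slightly more detailed than the paper's, but the argument is the same.
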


\begin{proof}
  For \(n=0\), the statement is trivial.  For \(n=1\), the statement
  is equivalent to \ref{en:correspondence_action_4} by
  Lemma~\ref{lem:extra_condition_convergence}.  So it remains to prove
  that the maps~\(\mu_n\) inherit this extra property from~\(\mu\).
  We show this by induction on~\(n\), using the equivalent
  characterisation in
  \ref{en:extra_condition_convergence_1}.  So take a net
  \((x_n,\omega_n,y_n)\) in
  \(\Bisp\times_{\s,\rg} \Bisp_{n-1} \times_{\s,\rg} Y\) such that the
  net \([x_n,\omega_n]\) converges in~\(\Bisp_n\) and
  \(x_n\cdot \omega_n \cdot y_n\) converges in~\(Y\).  Then there are
  arrows \(g_n'\in\Gr\) such that \((x_n\cdot g,g^{-1}\cdot \omega_n)\)
  converges in \(\Bisp\times_{\s,\rg} \Bisp_{n-1}\).  We may change
  our net to arrange without loss of generality that already
  \((x_n, \omega_n)\) converges in
  \(\Bisp\times_{\s,\rg} \Bisp_{n-1}\).  In particular, \(\omega_n\)
  converges in \(\Bisp_{n-1}\).  Since
  \(x_n \cdot \omega_n \cdot y_n\) converges in~\(Y\),
  \ref{en:extra_condition_convergence_1} implies that
  \(\omega_n \cdot y_n\) converges in~\(Y\).  Then the induction
  assumption about~\(\mu_{n-1}\) implies that~\(y_n\) converges.
\end{proof}

So an action on~\(Y\) induces \Star{}homomorphisms from
\(\Cont_0(\Bisp_n/\Gr)\) to \(\Contb(Y)\) for all \(n\in\N\).  We now
define a \(\Cst\)\nb-algebra that allows us to combine these maps for
different~\(n\).

\begin{definition}
  \label{def:Amn}
  For \(0\le m \le n\), let
  \(A_{[m,n]} \defeq \bigoplus_{k=m}^n \Cont_0(\Bisp_k/\Gr)\) with the
  pointwise \Star{}operation \((f_k)^* \defeq (f_k^*)\) and the
  multiplication where \((f_k) \cdot (h_l)\) is the family with
  \(j\)th entry
  \[
    \sum_{k=0}^j (\pi_j^k)^*(f_k) \cdot h_j + f_j \cdot (\pi_j^k)^*(h_k).
  \]
\end{definition}

\begin{lemma}
  \label{lem:Amn}
  There is a unique norm that makes \(A_{[m,n]}\) into a commutative
  \(\Cst\)\nb-algebra.  If \(m\le k\le n\), then
  \(A_{[m,k]} \subseteq A_{[m,n]}\) is a \(\Cst\)\nb-subalgebra and
  \(A_{[k,n]} \subseteq A_{[m,n]}\) is a closed ideal, with quotient
  isomorphic to \(A_{[m,k-1]}\).
\end{lemma}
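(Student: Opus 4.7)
The plan is to realise \(A_{[m,n]}\) as a closed \Star{}subalgebra of the commutative \(\Cst\)\nb-algebra \(B \defeq \bigoplus_{k=m}^n \Contb(\Bisp_k/\Gr)\) (with pointwise operations and supremum norm) via an explicit injective \Star{}homomorphism \(\Psi\colon A_{[m,n]} \to B\) defined by
\[
\Psi((f_l))_k \defeq \sum_{l=m}^k (\pi_k^l)^*(f_l).
\]
Each \((\pi_k^l)^*(f_l)\) is a bounded continuous function on \(\Bisp_k/\Gr\), so \(\Psi\) really lands in~\(B\); it is clearly linear and \Star{}preserving.  Injectivity of \(\Psi\) follows from the recursion \(f_m = g_m\), \(f_l = g_l - \sum_{l'<l}(\pi_l^{l'})^*f_{l'}\), which recovers \((f_l)\) from \((g_k) \defeq \Psi((f_l))\).

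The key calculation is that \(\Psi\) is multiplicative.  Expanding \(\Psi((f)(h))_k\) using the definition of the product in \(A_{[m,n]}\), the contributions from the three kinds of terms in each coordinate (a diagonal \(f_l h_l\) and the two off-diagonal families \((\pi_l^{l'})^* f_{l'}\cdot h_l\) and \(f_l\cdot (\pi_l^{l'})^* h_{l'}\)) combine, using the cocycle identity \(\pi_l^{l'}\circ \pi_k^l = \pi_k^{l'}\), into the full double sum \(\sum_{l, l' \le k} (\pi_k^l)^*(f_l)\cdot (\pi_k^{l'})^*(h_{l'}) = \Psi((f))_k \cdot \Psi((h))_k\).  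Pulling back the norm of~\(B\) along~\(\Psi\) therefore defines a \(\Cst\)\nb-norm on \(A_{[m,n]}\).  To obtain completeness I will show that the image of~\(\Psi\) is closed in~\(B\): if \(\Psi((f^{(N)}))\) converges to \((g_k)\in B\) in sup norm, the inverse recursion above shows that each sequence \((f_l^{(N)})_N\) is Cauchy in sup norm, and since \(\Cont_0(\Bisp_l/\Gr)\) is closed in \(\Contb(\Bisp_l/\Gr)\), its limit~\(f_l\) lies in \(\Cont_0(\Bisp_l/\Gr)\) and satisfies \(\Psi((f_l)) = (g_k)\).  Uniqueness of the \(\Cst\)\nb-norm is then automatic for a commutative \(\Cst\)\nb-algebra, where the norm is determined by the \Star{}algebra structure via the spectral radius.

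The subalgebra and ideal structure I will read off directly from the product formula: if \(f_l = h_l = 0\) for all \(l > k\), then the \(j\)\nb-th coordinate of \((f)(h)\) vanishes for \(j > k\), so \(A_{[m,k]}\) is closed under multiplication; if \(f_l = 0\) for \(l < k\) in either factor, then the \(j\)\nb-th coordinate of the product vanishes for \(j < k\), so \(A_{[k,n]}\) is a two-sided ideal.  Norm-closedness of \(A_{[k,n]}\) follows because it is the \(\Psi\)\nb-preimage of the closed ideal in~\(B\) consisting of tuples with vanishing first \(k-m\) coordinates; norm-closedness of \(A_{[m,k]}\) follows because the obvious inclusion \(A_{[m,k]} \hookrightarrow A_{[m,n]}\) is an isometry for the \(\Psi\)\nb-induced norms (pullback along \(\pi_l^k\) does not increase the supremum).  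Finally, the coordinate projection \((f_m,\dotsc,f_n)\mapsto (f_m,\dotsc,f_{k-1})\) is a surjective \Star{}homomorphism onto \(A_{[m,k-1]}\) with kernel~\(A_{[k,n]}\), yielding the claimed quotient.  The most delicate step in the whole argument is the multiplicativity of~\(\Psi\), where the index bookkeeping in the product formula must be carried out carefully in order to see the three sums collapse into one double sum.
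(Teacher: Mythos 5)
Your proof is correct, and at bottom it is the paper's argument with the induction unrolled: the paper proves the lemma by induction on \(n-m\), injecting \(A_{[m,n]}\) into \(\Mult(\Cont_0(\Bisp_n/\Gr)) \times A_{[m,n-1]}\) at each step, and since \(\Mult(\Cont_0(X)) = \Contb(X)\), composing these inductive embeddings yields exactly your map \(\Psi\) into \(\bigoplus_{k=m}^n \Contb(\Bisp_k/\Gr)\). What your version buys is an explicit closed formula for the faithful representation, so that multiplicativity, completeness (closedness of the image via the inverse recursion \(f_l = g_l - \sum_{l'<l}(\pi_l^{l'})^*f_{l'}\)), the isometry of \(A_{[m,k]} \hookrightarrow A_{[m,n]}\), and the ideal property of \(A_{[k,n]}\) can all be read off directly; the paper instead leaves the \Star{}algebra verifications as routine and gets completeness from the remark that the induced norm generates the product topology on the direct sum. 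One point worth recording: the product formula in Definition~\ref{def:Amn}, as printed, counts the diagonal term \(f_j h_j\) twice (the summand \(k=j\) contributes it in both halves of the sum), and your double-sum computation of \(\Psi\bigl((f)(h)\bigr)_k\) shows that \(\Psi\) is multiplicative precisely for the reading with a single diagonal term --- so your implicit correction is the intended one, and it is the only reading under which the lemma (and the identification of \(\Omega_{[m,n]}\) with a fibrewise one-point compactification later on) goes through.
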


\begin{proof}
  Routine computations show that~\(A_{[m,n]}\) is a commutative
  \Star{}algebra, that \(A_{[m,k]} \subseteq A_{[m,n]}\) is a
  \Star{}subalgebra, and that~\(A_{[k,n]}\) is a \Star{}ideal in it
  such that \(A_{[m,n]}/A_{[k,n]} \cong A_{[m,k-1]}\).  We prove by
  induction on \(n-m\) that \(A_{[m,n]}\) is a \(\Cst\)\nb-algebra in
  a unique \(\Cst\)\nb-norm.  If \(n-m=0\), then
  \(A_{[m,n]} = \Cont_0(\Bisp_n/\Gr)\) is indeed a
  \(\Cst\)\nb-algebra and
  the assertion follows.  If the assertion is known for some
  \(n-m\), then in the extension of \Star{}algebras
  \begin{equation}
    \label{eq:split_extension_Amn}
    \Cont_0(\Bisp_n/\Gr) \into A_{[m,n]} \prto A_{[m,n-1]}
  \end{equation}
  both the kernel and quotient are known to be \(\Cst\)\nb-algebras in
  a unique \(\Cst\)\nb-norm.  The inclusion
  \(A_{[m,n-1]} \to A_{[m,n]}\) induces a map from~\(A_{[m,n-1]} \) to
  the multiplier algebra of \(\Cont_0(\Bisp_n/\Gr)\).  Then the
  obvious map provides an injective \Star{}homomorphism
  \(A_{[m,n]} \to \Mult(\Cont_0(\Bisp_n/\Gr)) \times A_{[m,n-1]}\).
  This induces a \(\Cst\)\nb-norm on~\(A_{[m,n]}\).  Since it
  generates the product topology on our direct sum, \(A_{[m,n]}\) is
  complete in this norm, and so it carries only one \(\Cst\)\nb-norm.
\end{proof}

Let~\(\Omega_{[m,n]}\) be the spectrum of~\(A_{[m,n]}\).  To
understand its topology, we recall a general topological construction
of Whyburn:

\begin{lemma}
  \label{lem:unified_space}
  Let \(X\) and~\(Y\) be locally compact Hausdorff spaces and let
  \(f\colon X\to Y\) be a continuous map.  There is a unique locally
  compact Hausdorff topology on \(Z\defeq X\sqcup Y\) with the
  following properties:
  \begin{itemize}
  \item the inclusion \(X\to Z\) is a homeomorphism onto an open
    subset;
  \item  the inclusion \(Y\to Z\) is a homeomorphism onto a closed
    subset;
  \item the map \((f, \Id_Y)\colon X\sqcup Y \to Y\) is continuous and
    proper.
  \end{itemize}
  A subset \(V\subseteq Z\) is open if and only if \(V\cap X\) is open
  in~\(X\), \(V\cap Y\) is open in~\(Y\) and for every compact subset
  \(K\subseteq V\cap Y\), \(f^{-1}(K)\setminus (V\cap X)\subseteq X\)
  is compact.
\end{lemma}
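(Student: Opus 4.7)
The plan is to construct the topology by taking the stated characterization of open sets as the definition, verify that this really is a locally compact Hausdorff topology with the three listed features, and then prove uniqueness separately. So let $\tau$ be the collection of $V \subseteq Z = X\sqcup Y$ such that $V\cap X$ is open in $X$, $V\cap Y$ is open in $Y$, and for every compact $K\subseteq V\cap Y$ the set $f^{-1}(K)\setminus (V\cap X)$ is compact in $X$. Closure of $\tau$ under finite intersections is routine. For arbitrary unions $V=\bigcup_\alpha V_\alpha$, the first two conditions are clear; for the third, given a compact $K\subseteq V\cap Y$ use local compactness of $Y$ to write $K=K_1\cup\dotsb\cup K_n$ with each $K_i$ compact and contained in some $V_{\alpha_i}\cap Y$, so that $f^{-1}(K)\setminus (V\cap X) \subseteq \bigcup_i f^{-1}(K_i)\setminus (V_{\alpha_i}\cap X)$ is a closed subset of a finite union of compact sets, hence compact in $X$.

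Next I would verify the three stated properties. That $X\in\tau$ and $Z\setminus Y=X\in\tau$ are immediate, and $U\subseteq X$ open in $X$ lies in $\tau$ (the third condition is vacuous), so $X\hookrightarrow Z$ is an open embedding and $Y\hookrightarrow Z$ a closed embedding. For the subspace topology on $Y$, the key observation is that if $U\subseteq Y$ is open, then $V\defeq U\cup f^{-1}(U)\in\tau$ with $V\cap Y=U$: the third condition holds because any compact $K\subseteq U$ satisfies $f^{-1}(K)\subseteq f^{-1}(U)=V\cap X$. The same formula shows continuity of $(f,\Id_Y)\colon Z\to Y$. Properness: given a compact $K\subseteq Y$ and a $\tau$-open cover of $f^{-1}(K)\cup K$, first extract a finite subcover whose union $W$ covers $K$; by the argument in step one applied to $W$, the set $f^{-1}(K)\setminus (W\cap X)$ is compact in $X$, and a finite subcover of it completes the job.

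The main technical work is Hausdorffness and local compactness, particularly for points of $Y$. Two points of $X$ are separated in $X$; two points of $Y$ are separated by sets of the form $U_i\cup f^{-1}(U_i)$ for disjoint opens $U_i\subseteq Y$. For a mixed pair $x_1\in X$ and $x_2\in Y$, take a compact neighbourhood $\overline{W}$ of~$x_1$ in $X$; one checks directly that $Z\setminus\overline{W}\in\tau$, so $\overline{W}$ is $\tau$-closed, and then $W$ and $(U\cup f^{-1}(U))\setminus\overline{W}$ separate $x_1$ and $x_2$ for any small open $U\ni x_2$ in $Y$. Local compactness at $x\in X$ is immediate. For $x\in Y$, choose a compact neighbourhood $K$ of $x$ in $Y$ with interior $U$; then $U\cup f^{-1}(U)$ is a $\tau$-open neighbourhood of $x$ contained in the $\tau$-compact set $(f,\Id_Y)^{-1}(K)=f^{-1}(K)\cup K$, which is compact by properness verified above.

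Finally, for uniqueness, let $\tau'$ be any topology on $Z$ with the three stated properties. Any $V\in\tau'$ satisfies the first two conditions by the embedding properties, and the third because $(f^{-1}(K)\cup K)\setminus V$ is a $\tau'$-closed subset of the $\tau'$-compact set $(f,\Id_Y)^{-1}(K)$, hence compact, and it equals $f^{-1}(K)\setminus (V\cap X)$ when $K\subseteq V\cap Y$; thus $\tau'\subseteq\tau$. Conversely, let $V\in\tau$; I find a $\tau'$-open neighbourhood of each point inside $V$. For $y\in V\cap X$ this is clear since $V\cap X$ is $\tau'$-open. For $y\in V\cap Y$, pick a compact neighbourhood $\overline U$ of $y$ in $Y$ with $\overline U\subseteq V\cap Y$; the set $C\defeq f^{-1}(\overline U)\setminus (V\cap X)$ is compact in $X$, hence $\tau'$-closed, while $(f,\Id_Y)^{-1}(\overline U)$ is $\tau'$-compact, so $\tau'$-local compactness provides a $\tau'$-open neighbourhood $V'$ of $y$ whose closure lies inside it. Then $V'\setminus C$ is a $\tau'$-open neighbourhood of $y$ contained in $V$. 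The expected main obstacle is exactly this interplay between the $X$-part and the $Y$-part at the border, and the unifying trick throughout is that $U\cup f^{-1}(U)$ is a canonical member of $\tau$ for $U\subseteq Y$ open.
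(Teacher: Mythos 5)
Your proof is correct, but it proceeds quite differently from the paper's. The paper outsources the existence half entirely to Whyburn's original article (it only records that the explicitly described topology has the three listed properties) and then proves uniqueness by Gelfand duality: for any topology with the three properties, \(\Cont_0(X)\) is an ideal in \(\Cont_0(Z)\) with quotient \(\Cont_0(Y)\), properness of \((f,\Id_Y)\) splits this extension, and one identifies \(\Cont_0(Z)\) with the pullback \(\setgiven{(\xi,\eta)\in\Contb(X)\oplus\Cont_0(Y)}{\xi-\eta\circ f\in\Cont_0(X)}\), which depends only on~\(f\); uniqueness of the spectrum does the rest. You instead give a self-contained point-set argument: you take the stated description of open sets as the definition, verify the topology axioms (the only nontrivial point being stability of the third condition under arbitrary unions, which you handle correctly by chopping a compact \(K\) into finitely many compact pieces each sitting inside a single \(V_{\alpha_i}\)), check the three properties, Hausdorffness and local compactness directly, and then prove uniqueness by the two inclusions \(\tau'\subseteq\tau\) (using that \(\bigl(f^{-1}(K)\cup K\bigr)\setminus V\) is a closed subset of a compact set) and \(\tau\subseteq\tau'\) (using the canonical \(\tau'\)-open sets \((f,\Id_Y)^{-1}(U)=U\cup f^{-1}(U)\) and removing the compact, hence \(\tau'\)-closed, obstruction \(f^{-1}(\overline U)\setminus(V\cap X)\)). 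Your route is more elementary and makes the lemma independent of the reference; the paper's route is shorter and has the side benefit of exhibiting \(\Cont_0(Z)\) as an explicit pullback \(\Cst\)-algebra, which matches how the spaces \(\Omega_{[m,n]}\) are actually built from the algebras \(A_{[m,n]}\) later in the text. Both are valid proofs of the statement.
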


\begin{proof}
  It is shown by Whyburn~\cite{Whyburn:Unified} that the topology in
  the statement has the three properties in the lemma.  Now give~\(Z\)
  any topology with these properties.  Being locally compact and
  Hausdorff, \(Z\) is determined by the commutative
  \(\Cst\)\nb-algebra \(\Cont_0(Z)\).  The latter contains
  \(\Cont_0(X)\) as an ideal because~\(X\) is homeomorphic to an open
  subset.  Restriction to~\(Y\) identifies \(\Cont_0(Z)/\Cont_0(X)\)
  with \(\Cont_0(Y)\) because~\(Y\) is homeomorphic to
  \(Z\setminus X\).  Since the map \((f,\Id_Y)\) is proper, it induces
  a \Star{}homomorphism \(\sigma\colon \Cont_0(Y) \to \Cont_0(Z)\).
  That map is a section for the quotient map.  Therefore, there is a
  \Star{}isomorphism
  \[
    \Cont_0(Z) \congto \setgiven{(\xi,\eta) \in \Contb(X) \oplus \Cont_0(Y)}{\xi-
      \eta\circ f \in \Cont_0(X)},\qquad
    \zeta\mapsto (\zeta|_X,\zeta|_Y).
  \]
  Since its target only depends on~\(f\), the topology on its
  spectrum~\(Z\) is unique.
\end{proof}

If~\(Y\) is a one-point space, then the map~\(f\) is constant
and the space~\(Z\) in Lemma~\ref{lem:unified_space} is the one-point
compactification of~\(X\).  In general, we think of~\(X\) as a bundle
of spaces over~\(Y\) with fibres~\(f^{-1}(y)\) for \(y\in Y\).  When
we apply the one-point compactification to each fibre, we add one
point to each fibre, getting \(X\sqcup Y\) as a set.  The compactness
of the fibres translates into the projection \(Z\to Y\) being proper.
Thus the lemma describes a unique topology that makes~\(Z\) into a
fibrewise one-point compactification.  This is why we call~\(Z\) the
\emph{fibrewise one-point compactification for~\(f\)}.
Whyburn~\cite{Whyburn:Unified} and Antunes~\cite{Antunes:Thesis} call
it the unified space instead.

\begin{lemma}
  \label{lem:unified_pullback}
  The fibrewise one-point compactification construction is compatible
  with pull-backs.  More precisely, take a diagram
  \[
    \begin{tikzcd}
      && A \ar[d, "g"] \\
      X \ar[r, "f"] & Y \ar[r, "\pi"] &Y'.
    \end{tikzcd}
  \]
  Let~\(Z\) be the fibrewise one-point compactification of~\(f\) and
  let \(\tilde{f}\colon Z\to Y\) be the canonical map.  Then the
  pullback of \(\pi\circ \tilde{f}\) and~\(g\) is homeomorphic to the
  fibrewise one-point compactification of the pull-back map
  \(\Id_A\times_{Y'} f\colon A\times_{Y'} X \to A\times_{Y'}Y\).
\end{lemma}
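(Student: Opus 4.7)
The plan is to apply the uniqueness part of Lemma~\ref{lem:unified_space}. Both the candidate pullback $A\times_{Y'}Z$ and the fibrewise one-point compactification of $\Id_A\times_{Y'}f$ have underlying set $(A\times_{Y'}X)\sqcup(A\times_{Y'}Y)$, so it suffices to show that the pullback topology on $A\times_{Y'}Z$ satisfies the three defining properties of the fibrewise one-point compactification of $\Id_A\times_{Y'}f$.

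First I would set up the identification on sets. As a set, $Z = X\sqcup Y$, and the pullback of $\pi\circ\tilde{f}\colon Z\to Y'$ against $g\colon A\to Y'$ splits as
\[
A\times_{Y'}Z \;=\; \{(a,x)\in A\times X : g(a)=\pi f(x)\} \;\sqcup\; \{(a,y)\in A\times Y: g(a)=\pi(y)\},
\]
which is exactly $(A\times_{Y'}X)\sqcup(A\times_{Y'}Y)$. The pullback of $\tilde{f}$ along $g$ gives a canonical map $\widetilde{\Id_A\times f}\colon A\times_{Y'}Z \to A\times_{Y'}Y$ that restricts to $\Id_A\times_{Y'}f$ on the first piece and to the identity on the second.

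Next I would verify the three properties from Lemma~\ref{lem:unified_space} for $A\times_{Y'}Z$ with the pullback topology. Since $X\subseteq Z$ is open, its preimage $A\times_{Y'}X$ under the projection $A\times_{Y'}Z\to Z$ is open, and the subspace topology it inherits coincides with the pullback topology of $g$ and $\pi\circ f$, since taking fibre products is associative with subspaces. Similarly, $Y\subseteq Z$ is closed, so $A\times_{Y'}Y\subseteq A\times_{Y'}Z$ is closed with the correct subspace topology. For the third property, the map $\tilde{f}\colon Z\to Y$ is proper by construction, and pullbacks of proper maps are proper (see \cite{Antunes-Ko-Meyer:Groupoid_correspondences}*{Lemma~5.1}), so $\widetilde{\Id_A\times f}$ is continuous and proper.

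The uniqueness clause of Lemma~\ref{lem:unified_space} then forces the pullback topology on $A\times_{Y'}Z$ to agree with that of the fibrewise one-point compactification of $\Id_A\times_{Y'}f$, which gives the required homeomorphism. The only mildly delicate step is checking that the subspace topology on $A\times_{Y'}X$ inside $A\times_{Y'}Z$ really is the pullback topology defined by $g$ and $\pi\circ f$ rather than by $g$ and $\pi\circ\tilde{f}|_X$; but these are literally the same map since $\tilde{f}|_X=f$, so no difficulty actually arises. Everything else is a direct appeal to the characterisation in Lemma~\ref{lem:unified_space} and the stability of properness under pullback.
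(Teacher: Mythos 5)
Your proposal is correct and follows essentially the same route as the paper: the paper's (very terse) proof likewise observes that pull-backs of proper maps are proper, so the pullback topology on \(A\times_{Y'}Z\) has the three properties of Lemma~\ref{lem:unified_space}, and then invokes the uniqueness clause. Your version just spells out the set-level decomposition and the verification of the three properties in more detail.
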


\begin{proof}
  Pull-Backs of proper maps are again proper (see
  \cite{Antunes-Ko-Meyer:Groupoid_correspondences}*{Lemma~5.1}).
  Therefore, the topology of the pull-back of \(\Id_A\times_{Y'} f\)
  is a locally compact topology with the three properties required in
  Lemma~\ref{lem:unified_space}.
\end{proof}

Now we show how the spectra~\(\Omega_{[m,n]}\) may be built through
fibrewise one-point compactifications.  The inclusion of the ideal
\(\Cont_0(\Bisp_n/\Gr)\) in~\(A_{[m,n]}\) identifies \(\Bisp_n/\Gr\)
with an open subset of~\(\Omega_{[m,n]}\).  The complement is
naturally homeomorphic to~\(\Omega_{[m,n-1]}\).  The formula for the
multiplication in~\(A_{[m,n]}\) shows that the inclusion
\Star{}homomorphism \(A_{[m,n-1]} \hookrightarrow A_{[m,n]}\) is
nondegenerate.  Therefore, it is induced by a continuous, proper map
\begin{equation}
  \label{eq:pi_on_Omegas}
  \pi_{[m,n]}^{[m,n-1]}\colon \Omega_{[m,n]} \prto \Omega_{[m,n-1]}.
\end{equation}
This is a retraction onto the subset \(\Omega_{[m,n-1]}\).  Comparing
with the proof of Lemma~\ref{lem:unified_space}, we see that
\(\Omega_{[m,n]}\) is the fibrewise one-point compactification for the
continuous map
\begin{equation}
  \label{eq:step_n_map_to_compactify}
  \Bisp_n/\Gr \hookrightarrow \Omega_{[m,n]}  \xrightarrow{\pi_{[m,n]}^{[m,n-1]}}
  \Omega_{[m,n-1]}.
\end{equation}
Since \(\Omega_{[m,m]} = \Bisp_m/\Gr\), we may build the
spaces~\(\Omega_{[m,n]}\) from the spaces \(\Bisp_k/\Gr\) by repeated
fibrewise one-point compactification.  As a result, there is a
(discontinuous) bijection between \(\Omega_{[m,n]}\) and
\(\bigsqcup_{k=m}^n \Bisp_k/\Gr\).  Here
\(x\in \Bisp_k/\Gr\) corresponds to the character on~\(A_{[m,n]}\)
that vanishes on \(\Cont_0(\Bisp_m/\Gr)\) for \(m>k\) and is
evaluation at~\(\pi_k^m(x)\) for \(m\le k\).

\begin{definition}
  Let \(A_{[m,\infty)}\) be the colimit \(\Cst\)\nb-algebra of the
  inductive system of \(\Cst\)\nb-algebras~\(A_{[m,n]}\) for the
  obvious inclusion maps \(A_{[m,n-1]}\to A_{[m,n]}\).
  Let~\(\Omega_{[m,\infty)}\) be the spectrum of~\(A_{[m,\infty)}\).
\end{definition}

The spectrum of the inductive limit of commutative \(\Cst\)\nb-algebras
\(\varinjlim A_{[m,n]}\) is the projective limit of the spectra
of~\(A_{[m,n]}\).  So \(\Omega_{[m,\infty)}\) is the projective limit
of the projective system of maps in~\eqref{eq:pi_on_Omegas}.
Lemma~\ref{lem:Amn} implies that~\(A_{[m,\infty)}\) for \(m\in\N\) is
isomorphic to an ideal in~\(A_{[0,\infty)}\) and that these ideals
form a decreasing chain with
\[
  A_{[m,\infty)}/A_{[m+1,\infty)} \cong \Cont_0(\Bisp_m/\Gr).
\]

\begin{lemma}
  \label{lem:map_Y_to_Omega}
  Let~\(Y\) be a topological space with a
  \((\Gr,\Bisp,\emptyset)\)-action.  The maps
  \(\pi_m^*\colon \Cont_0(\Bisp_m/\Gr) \to \Contb(Y)\) for \(m\in\N\)
  constructed in
  Lemma~\textup{\ref{lem:action_iterated_extra_condition}} induce a
  \Star{}\alb{}homomorphism \(\Cont_0(\Omega_{[0,\infty)}) \to \Contb(Y)\),
  which is nondegenerate in the sense that it comes from a continuous
  map \(\varrho\colon Y\to \Omega_{[0,\infty)}\).
\end{lemma}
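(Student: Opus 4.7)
The plan is to assemble the continuous bounded functions $\pi_m^* h$ from Lemma~\ref{lem:action_iterated_extra_condition} into a nondegenerate \Star{}homomorphism $\Phi\colon A_{[0,\infty)}\to\Contb(Y)$; by Gelfand duality this will be equivalent to the sought continuous map $\varrho\colon Y\to\Omega_{[0,\infty)}$, since $\Contb(Y)=\Mult(\Cont_0(Y))$ and nondegenerate \Star{}homomorphisms from a commutative $\Cst$\nb-algebra into $\Mult(\Cont_0(Y))$ correspond to continuous maps on spectra.

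The heart of the argument, and the main obstacle, is the multiplicativity relation
$\pi_m^*(h)\cdot\pi_n^*(h')=\pi_n^*\bigl((\pi_n^m)^*(h)\cdot h'\bigr)$
for $0\le m\le n$, $h\in\Cont_0(\Bisp_m/\Gr)$, $h'\in\Cont_0(\Bisp_n/\Gr)$. I would check this pointwise. Both sides vanish outside $\Bisp_n\cdot Y$. For $y=\mu_n(\omega,z)$ with $\omega\in\Bisp_n$ represented by $(x_1,\dotsc,x_n)$, the factorisation $y=\mu_m\bigl((x_1,\dotsc,x_m),\mu_{n-m}((x_{m+1},\dotsc,x_n),z)\bigr)$ gives $\pi_m^*(h)(y)=h([x_1,\dotsc,x_m])=((\pi_n^m)^*(h))([\omega])$ and $\pi_n^*(h')(y)=h'([\omega])$, from which the equality follows. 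Well-definedness of $\pi_m^*(h)(y)$ reflects \ref{en:correspondence_action_2} applied to the iterated multiplication: any two factorisations of $y$ through $\mu_m$ differ by a $\Gr$\nb-arrow, and $h$ descends to a function on $\Bisp_m/\Gr$.

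Taking $m=n$ shows each $\pi_m^*$ is itself a \Star{}homomorphism (preservation of involution being immediate from the pointwise formula). Combining this with the general multiplicativity relation and the algebra structure of Definition~\ref{def:Amn}, the maps $\Phi_n\colon A_{[0,n]}\to\Contb(Y)$, $(f_k)\mapsto\sum_{k=0}^n \pi_k^*(f_k)$, are \Star{}homomorphisms compatible with the inclusions $A_{[0,n-1]}\hookrightarrow A_{[0,n]}$; they therefore extend to a \Star{}homomorphism $\Phi\colon A_{[0,\infty)}\to\Contb(Y)$ on the colimit.

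Finally, nondegeneracy follows already from the $m=0$ summand. The restriction of $\Phi$ to $\Cont_0(\Gr^0)=\Cont_0(\Bisp_0/\Gr)$ is pull-back along the continuous anchor map $\rg_Y\colon Y\to\Gr^0$. Given $g\in\Cont_0(Y)$ with compact support $K$, the image $\rg_Y(K)$ is compact, so there is $h\in\Cont_0(\Gr^0)$ with $h\equiv 1$ on $\rg_Y(K)$, and then $\Phi(h)\cdot g=g$. Hence $\Phi(A_{[0,\infty)})\cdot\Cont_0(Y)$ is dense in $\Cont_0(Y)$, which is the nondegeneracy condition; applying Gelfand duality yields the desired continuous map $\varrho\colon Y\to\Omega_{[0,\infty)}$.
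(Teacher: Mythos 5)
Your proof is correct and follows essentially the same route as the paper's: the heart of both arguments is the compatibility \(\pi_n^m\circ\pi_n=\pi_m\) on \(\Bisp_n\cdot Y\subseteq\Bisp_m\cdot Y\), which you spell out as the pointwise identity \(\pi_m^*(h)\,\pi_n^*(h')=\pi_n^*\bigl((\pi_n^m)^*(h)\,h'\bigr)\); from this the maps \((f_k)\mapsto\sum_k\pi_k^*(f_k)\) on \(A_{[0,n]}\) are \Star{}homomorphisms compatible with the inclusions and pass to the colimit \(A_{[0,\infty)}\). The one place where you genuinely diverge is the final step. The paper extracts \(\varrho\) pointwise: for each \(y\in Y\) the character \(\ev_y\circ\varrho^*\) on \(\Cont_0(\Omega_{[0,\infty)})\) is nonzero because it is already nonzero on \(\Cont_0(\Gr^0)=\Cont_0(\Bisp_0/\Gr)\), hence is evaluation at a point \(\varrho(y)\), and \(\varrho\) is continuous because \(\Omega_{[0,\infty)}\) carries the weak topology from its \(\Cont_0\)\nb-functions. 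You instead prove nondegeneracy of \(\Phi\) into \(\Mult(\Cont_0(Y))\) and invoke Gelfand duality. Both hinge on the same observation (the \(m=0\) summand, i.e.\ the anchor map, supplies local units), but your version silently assumes \(Y\) is locally compact Hausdorff: for a general topological space \(\Contb(Y)=\Mult(\Cont_0(Y))\) and the correspondence between nondegenerate \Star{}homomorphisms and continuous maps on spectra both break down, and the lemma is stated for an arbitrary \(Y\) --- the phrase ``nondegenerate in the sense that it comes from a continuous map'' is there precisely to sidestep this. The pointwise character argument closes that (small) gap; also, ``\(g\in\Cont_0(Y)\) with compact support'' should read \(g\in\Contc(Y)\), followed by density of \(\Contc(Y)\) in \(\Cont_0(Y)\).
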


\begin{proof}
  The \Star{}homomorphism
  \(\pi_m^* \colon \Cont_0(\Bisp_m/\Gr) \to \Contb(Y)\) was built by
  pulling back functions along the projection
  \(\pi_m\colon \Bisp_m\cdot Y \to \Bisp_m/\Gr\) and then
  extending by~\(0\).  Let \(0\le m \le n\).  Using that
  \(\pi_n^m \circ \pi_n = \pi_m\) on
  \(\Bisp_n\cdot Y \subseteq \Bisp_m\cdot Y\), it
  follows that the resulting map \(A_{[0,n]} \to \Contb(Y)\),
  \((f_k) \mapsto \sum_{k=0}^n \pi_k^*(f_k)\), is a
  \Star{}homomorphism.  These maps induce a \Star{}homomorphism
  \(\varrho^*\colon \Cont_0(\Omega_{[0,\infty)}) = A_{[0,\infty)} \to
  \Contb(Y)\).  Each \(y\in Y\) defines a nonzero character \(\ev_y\)
  on \(\Contb(Y)\).  The character \(\ev_y\circ\varrho^*\) on
  \(\Cont_0(\Omega_{[0,\infty)})\) is also nonzero because it is even
  nonzero on
  \(\Cont_0(\Bisp_0/\Gr) = \Cont_0(\Gr^0) \subseteq
  \Cont_0(\Omega_{[0,\infty)})\).  Then it must be the evaluation
  character at some point \(\varrho(y)\in \Omega_{[0,\infty)}\).  The
  map \(\varrho\colon Y \to \Omega_{[0,\infty)}\) defined in this way
  is continuous because its composite with all
  \(\Cont_0\)\nb-functions on~\(\Omega_{[0,\infty)}\) is continuous.
\end{proof}

Each space \(\Bisp_n/\Gr\) carries a \(\Gr\)\nb-action and the maps
\(\pi_n^m\colon \Bisp_n/\Gr \to \Bisp_m/\Gr\) are
\(\Gr\)\nb-equivariant.  Recall that
\(\Omega_{[m,n]} = \bigsqcup_{k=m}^n \Bisp_k/\Gr\) as a set.  This
carries an obvious \(\Gr\)\nb-action.  This is inherited by the
projective limit sets \(\Omega_{[m,\infty)}\) for \(m\in\N\).  As a
set,
\begin{equation}
  \label{eq:X_circ_Omega_finite}
  \Omega_{[m,n]}
  \cong \bigsqcup_{k=m}^n \Bisp_k/\Gr
  \cong \bigsqcup_{k=0}^{n-m} \Bisp_m\circ (\Bisp_k/\Gr)
  \cong \Bisp_m\circ \bigsqcup_{k=0}^{n-m} (\Bisp_k/\Gr)
  \cong \Bisp_m\circ \Omega_{[0,n-m]}.
\end{equation}
Passing to the projective limit sets, these bijections induce a
canonical bijection
\begin{equation}
  \label{eq:X_circ_Omega_infty}
  \Omega_{[m,\infty)} \cong  \Bisp_m \circ \Omega_{[0,\infty]}.
\end{equation}

\begin{lemma}
  \label{lem:G-action_finite}
  The canonical \(\Gr\)\nb-action on \(\Omega_{[m,n]}\) is continuous.
\end{lemma}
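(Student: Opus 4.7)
The plan is to prove continuity by Gelfand duality. Since $\Omega_{[m,n]}$ is locally compact Hausdorff, continuity of the action map $\alpha\colon \Gr \times_{\s,\Gr^0,\rg} \Omega_{[m,n]} \to \Omega_{[m,n]}$ is equivalent to continuity of $f \circ \alpha$ for every $f \in A_{[m,n]} = \Cont_0(\Omega_{[m,n]})$. The vector-space decomposition $A_{[m,n]} = \bigoplus_{k=m}^n \Cont_0(\Bisp_k/\Gr)$ reduces this by linearity to checking continuity for $f$ in a single summand $\Cont_0(\Bisp_k/\Gr)$, $m \le k \le n$. The anchor map $\Omega_{[m,n]} \to \Gr^0$ factors through the continuous projection $\Omega_{[m,n]} \to \Omega_{[m,m]} = \Bisp_m/\Gr$ (dual to the nondegenerate subalgebra inclusion $A_{[m,m]} \hookrightarrow A_{[m,n]}$) followed by the continuous anchor on $\Bisp_m/\Gr$, so it is continuous.

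Fix $f \in \Cont_0(\Bisp_k/\Gr)$, viewed as an element of $A_{[m,n]}$. The character description above shows that $f$ evaluates at $\omega \in \Bisp_j/\Gr \subseteq \Omega_{[m,n]}$ as $f(\pi_j^k(\omega))$ when $j \ge k$ and as $0$ when $j < k$. Combined with the $\Gr$-equivariance of the maps~$\pi_j^k$, this yields the factorisation
\[
(f\circ\alpha)(g,\omega) = \tilde F\bigl(g, p(\omega)\bigr),
\]
where $p\colon \Omega_{[m,n]} \to \Omega_{[m,k]}$ is the $\Gr$-equivariant continuous projection dual to the subalgebra inclusion $A_{[m,k]} \hookrightarrow A_{[m,n]}$, and $\tilde F\colon \Gr \times_{\s,\Gr^0,\rg} \Omega_{[m,k]} \to \C$ is defined by $\tilde F(g,\beta) = f(g\cdot\beta)$ for $\beta \in \Bisp_k/\Gr \subseteq \Omega_{[m,k]}$ and $\tilde F(g,\beta) = 0$ otherwise. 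It therefore remains to show that $\tilde F$ is continuous.

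On the open subset $\Gr \times_{\s,\Gr^0,\rg} \Bisp_k/\Gr$, continuity of $\tilde F$ follows from continuity of the canonical $\Gr$-action on the orbit space $\Bisp_k/\Gr$ together with $f \in \Cont_0$. The main work is continuity at a boundary point $(g,\beta)$ with $\beta \in \Omega_{[m,k]} \setminus \Bisp_k/\Gr$, where $\tilde F(g,\beta) = 0$. Argue by contradiction: suppose a net $(g_\lambda,\beta_\lambda) \to (g,\beta)$ in $\Gr \times_{\s,\Gr^0,\rg} \Omega_{[m,k]}$ with $\beta_\lambda \in \Bisp_k/\Gr$ and $\abs{f(g_\lambda\cdot\beta_\lambda)} \ge \varepsilon$ for some $\varepsilon > 0$. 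By $\Cont_0$-ness of $f$, the points $g_\lambda\cdot\beta_\lambda$ lie in the compact set $\setgiven{z \in \Bisp_k/\Gr}{\abs{f(z)} \ge \varepsilon}$; passing to a subnet, $g_\lambda\cdot\beta_\lambda \to \gamma \in \Bisp_k/\Gr$. Continuity of inversion in $\Gr$ gives $g_\lambda^{-1} \to g^{-1}$, and continuity of the $\Gr$-action on $\Bisp_k/\Gr$ then gives $\beta_\lambda = g_\lambda^{-1}\cdot(g_\lambda\cdot\beta_\lambda) \to g^{-1}\cdot\gamma \in \Bisp_k/\Gr$. Since $\Omega_{[m,k]}$ is Hausdorff, this forces $\beta = g^{-1}\cdot\gamma \in \Bisp_k/\Gr$, contradicting $\beta \notin \Bisp_k/\Gr$.

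The main obstacle is this boundary-vanishing argument for $\tilde F$. It requires combining the $\Cont_0$-decay of~$f$, continuity of the $\Gr$-action on the open subset $\Bisp_k/\Gr$, and Hausdorffness of the ambient compactification $\Omega_{[m,k]}$ to rule out accumulation of the orbit points $g_\lambda\cdot\beta_\lambda$ in $\Bisp_k/\Gr$ when $\beta_\lambda$ escapes every compact subset.
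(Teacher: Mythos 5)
Your proof is correct, but it takes a genuinely different route from the paper's. The paper argues by induction on $n-m$: it uses that $\Omega_{[m,n]}$ is the fibrewise one-point compactification of the $\Gr$\nobreakdash-equivariant map $\Bisp_n/\Gr \to \Omega_{[m,n-1]}$, that this construction commutes with pull-backs along $\Gr\times_{\s,\Gr^0,\rg}\blank$ (Lemma~\ref{lem:unified_pullback}), and naturality of the construction for commuting squares; continuity then drops out without any pointwise argument. You instead dualise: since $\Omega_{[m,n]}$ carries the initial topology from $\Cont_0(\Omega_{[m,n]})=A_{[m,n]}$, you reduce by linearity to a single summand $f\in\Cont_0(\Bisp_k/\Gr)$, factor $f\circ\alpha$ through the equivariant projection to $\Omega_{[m,k]}$, and then handle the only delicate point --- vanishing of $\tilde F$ at the boundary $\Omega_{[m,k]}\setminus\Bisp_k/\Gr$ --- by a net-and-compactness argument using that $\{\abs{f}\ge\varepsilon\}$ is compact, that inversion in $\Gr$ is continuous, and that $\Omega_{[m,k]}$ is Hausdorff. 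Both arguments rest on the continuity of the $\Gr$\nobreakdash-action on the orbit spaces $\Bisp_k/\Gr$ (your open-set case and base case of the paper's induction). What the paper's route buys is economy: it reuses Lemma~\ref{lem:unified_space}/\ref{lem:unified_pullback} and avoids nets entirely. What your route buys is self-containedness and transparency about \emph{why} the action is continuous at the added points of the compactification: the orbit of a net escaping to the boundary cannot accumulate in $\Bisp_k/\Gr$, which is exactly the fibrewise-compactness phenomenon made explicit. Your boundary argument is essentially the statement that the action of a fixed compact set of arrows is proper on the orbit space, which is the same mechanism the fibrewise one-point compactification encodes.
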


\begin{proof}
  The anchor map \(\Omega_{[m,n]} \to \Gr^0\) factors through the
  continuous maps
  \(\Omega_{[m,n]} \to \Omega_{[m,m]} = \Bisp_m/\Gr \to \Gr^0\), so
  that it is continuous.  It remains to prove the continuity of a map
  \(\Gr\times_{\s,\Gr^0,\rg} \Omega_{[m,n]} \to \Omega_{[m,n]}\).
  This is clear for \(n=m\), and we prove this by induction
  over \(n-m\).  We have seen above that \(\Omega_{[m,n]}\) is the
  fibrewise one-point compactification of a certain continuous,
  \(\Gr\)\nb-equivariant map \(\Bisp_n/\Gr \to \Omega_{[m,n-1]}\).  By
  Lemma~\ref{lem:unified_pullback},
  \(\Gr\times_{\s,\Gr^0,\rg} \Omega_{[m,n]}\) is homeomorphic to the
  fibrewise one-point compactification of the induced map
  \(\Gr\times_{\s,\Gr^0,\rg}\Bisp_n/\Gr \to
  \Gr\times_{\s,\Gr^0,\rg}\Omega_{[m,n-1]}\).  Since the fibrewise one-point
  compactification is natural for commuting squares of maps, this
  implies the desired continuity.
\end{proof}

\begin{lemma}
  \label{lem:circ_homeo_finite}
  The bijections
  \(\Omega_{[m,n]} \cong \Bisp_m\circ \Omega_{[0,n-m]}\) in
  \eqref{eq:X_circ_Omega_finite} are homeomorphisms.
\end{lemma}

\begin{proof}
  Since \(\Bisp_m = \Bisp\circ \Bisp_{m-1}\), an induction on~\(m\)
  shows that it suffices to prove the claim for \(m=0\) and \(m=1\),
  and the case \(m=0\) follows from Lemma~\ref{lem:G-action_finite}.
  So let \(m=1\).  We argue by
  induction on \(n-m\).  The case \(n-m=0\) reduces to the true
  statement \(\Bisp \circ \Bisp_m/\Gr \cong \Bisp_{m+1}/\Gr\).  Assume
  the statement for all shorter intervals than \([m,n]\) and in
  particular for \([m,n-1]\).  The split exact sequence of
  \(\Cst\)\nb-algebras in~\eqref{eq:split_extension_Amn} and the proof
  of Lemma~\ref{lem:unified_space} show that \(\Omega_{[m,n]}\) is the
  fibrewise one-point compactification of the map
  \(\Bisp_n/\Gr \to \Omega_{[m,n-1]}\).  By the induction assumption,
  the canonical maps are homeomorphisms
  \(\Bisp_n \cong \Bisp\circ \Bisp_{n-1}/\Gr\) and
  \(\Omega_{[m,n-1]} \cong \Bisp \circ \Omega_{[m-1,n-2]}\).  Now
  \(\Bisp\circ \Omega_{[m-1,n-1]}\) is another locally compact
  Hausdorff space with the properties in Lemma~\ref{lem:unified_space}
  for the canonical map \(\Bisp_n/\Gr \to \Omega_{[m,n-1]}\).  Since
  the topology in Lemma~\ref{lem:unified_space} is unique, it follows
  that the canonical map in~\eqref{eq:X_circ_Omega_finite} is an
  isomorphism, completing the induction step.
\end{proof}

\begin{proposition}
  \label{pro:circ_homeo_projlim}
  Let~\((Y_n)\) be a projective system of \(\Gr\)\nb-spaces.  Then the
  induced action on \(\varprojlim Y_n\) is continuous and the
  canonical map
  \(\Bisp\circ \varprojlim Y_n \to \varprojlim \Bisp \circ Y_n\) is a
  homeomorphism.
\end{proposition}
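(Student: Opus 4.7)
The plan has two parts: continuity of the induced action on $Z \defeq \varprojlim Y_n$, and the homeomorphism claim. Part~1 is quick: since the maps in the projective system are $\Gr$\nb-equivariant, they intertwine the anchor maps $Y_n \to \Gr^0$, inducing a continuous anchor map $\rg_Z \colon Z \to \Gr^0$. The $\Gr$\nb-action is defined coordinatewise, and its continuity is verified via the universal property of the projective limit: composing $\Gr \times_{\s,\Gr^0,\rg_Z} Z \to Z$ with each projection $Z \to Y_m$ gives the continuous map $\Gr \times_{\s,\Gr^0,\rg} Z \to \Gr \times_{\s,\Gr^0,\rg} Y_m \to Y_m$.

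For Part~2, let $\varphi\colon \Bisp \circ Z \to \varprojlim(\Bisp \circ Y_n)$ denote the canonical map. I would first check bijectivity directly. Given $(z_n) \in \varprojlim (\Bisp \circ Y_n)$, the image $\xi \defeq \pi_1(z_n) \in \Bisp/\Gr$ (with $\pi_1$ from Lemma~\ref{lem:XY_properties}) is the same for all $n$ by naturality of~$\pi_1$. Choose any lift $x \in \Bisp$ with $\Qu(x) = \xi$ and write $z_n = [x, y_n']$ for suitable $y_n' \in Y_n$; the compatibility of the $z_n$, combined with freeness of the right $\Gr$\nb-action on~$\Bisp$, forces the $y_n'$ into a compatible family $y \in Z$ with $\varphi([x,y]) = (z_n)$. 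Injectivity uses freeness of the right action similarly.

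The main step is the topology, and here the strategy is to cover both sides by open subsets indexed by slices and compare via Lemma~\ref{lem:slice_acts}. The maps $\pi_1\colon \Bisp \circ Y_n \to \Bisp/\Gr$ assemble into a continuous map $\pi_1\colon \varprojlim (\Bisp \circ Y_n) \to \Bisp/\Gr$ that agrees with $\pi_1\colon \Bisp \circ Z \to \Bisp/\Gr$ via~$\varphi$. For each slice $\Slice \subseteq \Bisp$, Lemma~\ref{lem:slice_acts} supplies a natural homeomorphism $\Slice_*\colon \rg^{-1}(\s(\Slice)) \congto \pi_1^{-1}(\Qu(\Slice))$ for every $\Gr$\nb-space. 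Applied to~$Z$ on one side and to each~$Y_n$ on the other (then taking the projective limit), both $\pi_1^{-1}(\Qu(\Slice)) \subseteq \Bisp \circ Z$ and $\pi_1^{-1}(\Qu(\Slice)) \subseteq \varprojlim(\Bisp \circ Y_n)$ become canonically homeomorphic to $\rg_Z^{-1}(\s(\Slice)) \subseteq Z$, and one checks that $\varphi$ is the identity under this identification. Since slices cover~$\Bisp$, the open sets $\pi_1^{-1}(\Qu(\Slice))$ cover both source and target of~$\varphi$. Hence~$\varphi$ is a local homeomorphism, and combined with bijectivity, a homeomorphism.

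The main technical point is the interchange of the projective limit with the passage to the open subspace $\pi_1^{-1}(\Qu(\Slice))$: one needs that the subspace topology on $\pi_1^{-1}(\Qu(\Slice)) \subseteq \varprojlim(\Bisp \circ Y_n)$ coincides with the projective limit of the subspaces $\pi_1^{-1}(\Qu(\Slice)) \subseteq \Bisp \circ Y_n$, which is the standard compatibility of limits with preimages under a natural continuous map.
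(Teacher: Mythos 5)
Your proposal is correct and follows essentially the same route as the paper: continuity of the limit action via the universal property, bijectivity of the canonical map, and then covering both sides by the open sets coming from slices via Lemma~\ref{lem:slice_acts} and matching them up through the naturality of the projective limit. The only difference is that you spell out the bijectivity and the limit-versus-open-subspace compatibility in more detail than the paper does.
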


\begin{proof}
  The canonical map exists and is continuous by the universal property
  of the limit and the naturality of \(Y\mapsto \Bisp\circ Y\).  It is
  also easy to see that it is bijective.  It is straightforward to
  check that the induced \(\Gr\)\nb-action on the projective limit
  remains continuous.  If \(\Slice\in\Bis(\Bisp)\), then
  Lemma~\ref{lem:slice_acts} provides a homeomorphism from
  \(\rg^{-1}(\s(\Slice)) \subseteq Y\) to an open subset of
  \(\Bisp\circ Y\) for any
  \(\Gr\)\nb-space~\(Y\).  Applying this to the induced
  \(\Gr\)\nb-action on \(\varprojlim Y_n\) gives a homeomorphism from
  the preimage of \(\s(\Slice)\) in \(\varprojlim Y_n\) onto an open
  subset of \(\Bisp\circ \varprojlim Y_n\).  Applying this to
  each~\(Y_n\) for \(n\in\N\) and using the naturality of the
  projective limit, we get that this map is also a homeomorphism onto
  an open subset of \(\varprojlim \Bisp\circ Y_n\).  Therefore, the
  canonical map
  \(\Bisp\circ \varprojlim Y_n \to \varprojlim \Bisp \circ Y_n\) is a
  homeomorphism on certain open subsets of the two spaces.  These open
  subsets cover the whole sets because the slices
  \(\Slice\in\Bis(\Bisp)\) cover~\(\Bisp\).  Therefore, the map is a
  homeomorphism everywhere.
\end{proof}

\begin{lemma}
  The bijections in~\eqref{eq:X_circ_Omega_infty} are homeomorphisms.
\end{lemma}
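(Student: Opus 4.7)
The plan is to obtain the infinite case by passing to projective limits from the finite-level homeomorphisms already established in Lemma~\ref{lem:circ_homeo_finite}, and then commuting the functor \(\Bisp_m\Grcomp\blank\) past the projective limit by means of Proposition~\ref{pro:circ_homeo_projlim}.

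First I would record that \(\Omega_{[m,\infty)}\) is, by construction, the projective limit of the system \(\bigl(\Omega_{[m,n]},\pi_{[m,n]}^{[m,n-1]}\bigr)_{n\ge m}\) of locally compact Hausdorff spaces with proper connecting maps. Analogously, \(\Omega_{[0,\infty)}=\varprojlim_{n}\Omega_{[0,n]}\). Lemma~\ref{lem:circ_homeo_finite} provides homeomorphisms \(\Omega_{[m,n]}\cong \Bisp_m\Grcomp \Omega_{[0,n-m]}\) at each finite level. The construction of these homeomorphisms is visibly natural in~\(n\): both sides arise by iterated fibrewise one-point compactification of the maps \(\Bisp_k/\Gr\to\Omega_{[m,k-1]}\) and \(\Bisp_{k-m}/\Gr\to\Omega_{[0,k-m-1]}\), and the functor \(\Bisp_m\Grcomp\blank\) preserves these compactification steps by Lemma~\ref{lem:unified_pullback}. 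Hence the finite-level homeomorphisms intertwine the connecting maps of the two projective systems, so they pass to a homeomorphism of projective limits:
\[
  \Omega_{[m,\infty)}
  \;\cong\; \varprojlim_{n}\bigl(\Bisp_m\Grcomp \Omega_{[0,n-m]}\bigr).
\]

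Next I would commute \(\Bisp_m\Grcomp\blank\) past the projective limit. Proposition~\ref{pro:circ_homeo_projlim} gives this for \(\Bisp\) applied to any projective system of \(\Gr\)\nb-spaces. Since \(\Bisp_m=\Bisp\Grcomp\Bisp_{m-1}\), an induction on~\(m\) yields a natural homeomorphism \(\Bisp_m\Grcomp \varprojlim Y_n \congto \varprojlim (\Bisp_m\Grcomp Y_n)\) for any projective system of \(\Gr\)\nb-spaces~\((Y_n)\); the fact that the intermediate \(\Gr\)\nb-actions on the projective limits are continuous is also part of Proposition~\ref{pro:circ_homeo_projlim}. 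Applied to \(Y_n=\Omega_{[0,n-m]}\), this turns the displayed equation above into
\[
  \Omega_{[m,\infty)}\;\cong\;\Bisp_m\Grcomp \varprojlim_{n}\Omega_{[0,n-m]}
  \;=\;\Bisp_m\Grcomp \Omega_{[0,\infty)},
\]
and tracing through the construction shows that the resulting map agrees with the set-theoretic bijection in~\eqref{eq:X_circ_Omega_infty}.

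The only place that requires a little care is verifying that the finite-level homeomorphisms of Lemma~\ref{lem:circ_homeo_finite} really do form a morphism of projective systems; this will be the main obstacle, but it reduces to checking a commuting square between a fibrewise one-point compactification step and the functor \(\Bisp_m\Grcomp\blank\), which is exactly what Lemma~\ref{lem:unified_pullback} provides. Once the systems are matched, the identification of limits is purely formal, and the application of Proposition~\ref{pro:circ_homeo_projlim} (iterated \(m\) times) finishes the argument.
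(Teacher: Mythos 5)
Your proposal is correct and follows essentially the same route as the paper: combine the finite-level homeomorphisms of Lemma~\ref{lem:circ_homeo_finite} with the commutation of \(\Bisp\Grcomp\blank\) past projective limits from Proposition~\ref{pro:circ_homeo_projlim} (iterated via \(\Bisp_m=\Bisp\Grcomp\Bisp_{m-1}\)), where Lemma~\ref{lem:G-action_finite} supplies the continuity of the \(\Gr\)\nb-actions needed to invoke that proposition. The paper states this in one sentence; you spell out the naturality-in-\(n\) check that makes the passage to limits legitimate, which is a reasonable elaboration rather than a divergence.
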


\begin{proof}
  Lemma~\ref{lem:G-action_finite} allows us to apply
  Proposition~\ref{pro:circ_homeo_projlim}, and then the claim follows
  from Lemma~\ref{lem:circ_homeo_finite}.
\end{proof}

\begin{proposition}
  \label{pro:universal_action_Toeplitz}
  The \(\Gr\)\nb-action on
  \(\Omega_{[0,\infty)} \cong \varprojlim \Omega_{[0,n]}\) and the
  homeomorphism
  \[
    \Bisp\circ \Omega_{[0,\infty)} \congto
    \Omega_{[1,\infty)} \subseteq \Omega_{[0,\infty)}
  \]
  are an action of \((\Gr,\Bisp,\emptyset)\) on
  \(\Omega_{[0,\infty)}\).  Its anchor map
  \(\Omega_{[0,\infty)} \to \Gr^0\) is proper and surjective.  This
  action of \((\Gr,\Bisp,\emptyset)\) is universal.
\end{proposition}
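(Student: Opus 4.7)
The plan is to verify three claims: first, that the given data defines a $(\Gr,\Bisp,\emptyset)$-action; second, that the anchor map $\Omega_{[0,\infty)}\to\Gr^0$ is proper and surjective; third, that this action is universal.

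For the first claim, continuity of the $\Gr$-action on $\Omega_{[0,\infty)} = \varprojlim \Omega_{[0,n]}$ follows by passage to the limit from Lemma~\ref{lem:G-action_finite}. The map $\mu'\colon \Bisp\circ \Omega_{[0,\infty)} \congto \Omega_{[1,\infty)}$ is $\Gr$-equivariant onto the open subset $\Omega_{[1,\infty)}\subseteq \Omega_{[0,\infty)}$ (corresponding to the ideal $A_{[1,\infty)}\subseteq A_{[0,\infty)}$). By Remark~\ref{rem:mu_homeo}, this yields axioms \ref{en:correspondence_action_1} and~\ref{en:correspondence_action_2}; axiom \ref{en:correspondence_action_3} is vacuous since $\Reg=\emptyset$; and axiom \ref{en:correspondence_action_4} will follow from Lemma~\ref{lem:action_extra_condition_proper_case} once properness of the anchor map is established.

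For the second claim, the split extension in \eqref{eq:split_extension_Amn} dualises to a retraction $r_n\colon \Omega_{[0,n]}\to\Omega_{[0,n-1]}$ which is proper by the construction via fibrewise one-point compactification in Lemma~\ref{lem:unified_space}, and is split by a closed embedding $\iota_n\colon \Omega_{[0,n-1]}\hookrightarrow \Omega_{[0,n]}$. Composing, each anchor map $p_n\colon \Omega_{[0,n]}\to \Gr^0$ is proper. For the anchor map $p_\infty$ of the limit and any compact $K\subseteq \Gr^0$, we have $p_\infty^{-1}(K) = \varprojlim p_n^{-1}(K)$, a projective limit of compact Hausdorff spaces, hence compact by Tychonoff; so $p_\infty$ is proper. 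The compatible closed embeddings $\iota_1\circ\cdots\circ\iota_n\colon \Gr^0\hookrightarrow \Omega_{[0,n]}$ assemble in the limit to a closed section of $p_\infty$, so $p_\infty$ is also surjective.

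For universality, given a $(\Gr,\Bisp,\emptyset)$-action on~$Y$, Lemma~\ref{lem:map_Y_to_Omega} supplies a continuous $\varrho\colon Y\to \Omega_{[0,\infty)}$ whose pullback sends each generating function $h\in \Cont_0(\Bisp_m/\Gr)\subseteq \Cont_0(\Omega_{[0,\infty)})$ to the canonical function $\pi_m^* h$ on~$Y$ defined in Lemma~\ref{lem:action_iterated_extra_condition}. Equivariance is checked via Lemma~\ref{lem:theta_gives_equivariant_invariant}: for a slice $\Slice\in \Bis$, the identity $\vartheta_\Omega(\Slice)\circ\varrho = \varrho\circ\vartheta_Y(\Slice)$ reduces, on the algebra of functions, to $\varrho^*\circ\vartheta_\Omega(\Slice)^* = \vartheta_Y(\Slice)^*\circ\varrho^*$, which I would verify on the generators $\pi_m^* h$; both sides unwind to the same pullback computed via the multiplication formula of Definition~\ref{def:Amn} on the $\Omega$-side and via the iterated multiplication $\mu_m$ on the $Y$-side. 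Uniqueness is then automatic: any equivariant continuous map $\psi\colon Y\to\Omega_{[0,\infty)}$ must satisfy $\psi^*(\pi_m^* h) = \pi_m^* h$ on~$Y$, and since these functions generate $\Cont_0(\Omega_{[0,\infty)})$, we conclude $\psi=\varrho$. The most delicate step in this plan is the $\Bisp$-equivariance of $\varrho$, since one must carefully match the action of $\vartheta_\Omega(\Slice)^*$ on the ideals $\Cont_0(\Bisp_m/\Gr)\subseteq A_{[0,\infty)}$ --- dictated by the rather intricate multiplication in $A_{[0,\infty)}$ --- with the action of $\vartheta_Y(\Slice)^*$ on the corresponding pullback functions on $Y$.
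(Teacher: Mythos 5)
Your proposal is correct and follows the same overall strategy as the paper: axioms \ref{en:correspondence_action_1}--\ref{en:correspondence_action_3} are immediate, \ref{en:correspondence_action_4} is obtained from properness of the anchor map via Lemma~\ref{lem:action_extra_condition_proper_case}, the map \(\varrho\) comes from Lemma~\ref{lem:map_Y_to_Omega}, and uniqueness is settled by comparing with the finite stages. Two mild differences: for properness you run a Tychonoff argument on \(\varprojlim p_n^{-1}(K)\), whereas the paper gets it in one line from the fact that the anchor map is dual to the nondegenerate inclusion \(\Cont_0(\Gr^0)=A_{[0,0]}\hookrightarrow A_{[0,\infty)}\); and you phrase equivariance and uniqueness dually on the function algebras, where the paper works directly with points. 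One caution on the uniqueness step: the assertion that any equivariant \(\psi\) ``must satisfy \(\psi^*(\tilde h)=\pi_m^*h\)'' for \(h\in\Cont_0(\Bisp_m/\Gr)\) is not purely formal --- it requires knowing \(\psi^{-1}(\Bisp_m\cdot\Omega_{[0,\infty)})=\Bisp_m\cdot Y\) for all \(m\) (which follows from the single equivariance condition \(\psi^{-1}(\Bisp\cdot\Omega_{[0,\infty)})=\Bisp\cdot Y\) only by an induction on \(m\), using \ref{en:correspondence_action_2} to descend factorisations), together with the observation that points of \(Y\setminus\Bisp\cdot Y\) are forced into \(\Gr^0\subseteq\Omega_{[0,\infty)}\) and there pinned down by the anchor map. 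This is exactly the content of the induction the paper carries out explicitly, so your ``automatic'' conceals rather than avoids it; spelled out, your argument closes the same way.
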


\begin{proof}
  Recall that \(\Omega_{[1,\infty)}\) is an open subset of
  \(\Omega_{[0,\infty)}\), being the spectrum of the
  ideal~\(A_{[1,\infty)}\) in~\(A_{[0,\infty)}\).  The
  conditions \ref{en:correspondence_action_1}
  and~\ref{en:correspondence_action_2} are easily seen to be
  satisfied, and \ref{en:correspondence_action_3}
  is empty for \(\Reg=\emptyset\).  The anchor map is proper (and
  surjective) because it is the map on spectra induced by the inclusion
  \(\Cont_0(\Gr^0) = A_{[0,0]} \hookrightarrow A_{[0,\infty)}\).
  This implies \ref{en:correspondence_action_4}
  by Lemma~\ref{lem:action_extra_condition_proper_case}.

  To prove the universality, take another action of
  \((\Gr,\Bisp,\emptyset)\) on a locally compact space~\(Y\).
  Lemma~\ref{lem:map_Y_to_Omega} provides a continuous map
  \(\varrho\colon Y\to \Omega_{[0,\infty)}\).  The canonical
  \(\Gr\)\nb-action on~\(\Omega_{[0,\infty)}\) makes this map
  \(\Gr\)\nb-equivariant.  In addition, \(y\in Y\) belongs to
  \(\Bisp\cdot Y\) if and only if there is \(h\in\Cont_0(\Bisp/\Gr)\)
  such that \(\pi_1^*(h)(y)\neq0\).  This implies that
  \(\varrho^{-1}(\Bisp\cdot \Omega_{[0,\infty)}) =
  \varrho^{-1}(\Omega_{[1,\infty)}) = \Bisp\cdot Y\).  Our canonical
  constructions ensure that \(\varrho(x\cdot y) = x\cdot \varrho(y)\)
  for all \(x\in\Bisp\), \(y\in Y\) with \(\s(x) = \rg(y)\).
  So~\(\varrho\) is indeed \((\Gr,\Bisp)\)-equivariant.

  Now let \(\varphi\colon Y\to \Omega_{[0,\infty)}\) be any
  \((\Gr,\Bisp)\)-equivariant map.  We claim that
  \(\varphi = \varrho\).  We prove that the projections of
  \(\varphi\) and~\(\varrho\) to
  \(\Omega_{[0,n]}\) are the same for all \(n\in\N\).  This proves the
  claim.  The claim for \(n=0\) concerns the projections to
  \(\Omega_{[0,0]} = \Gr^0\).  These are the anchor maps, and
  \(\rg_{\Omega_{[0,\infty)}}\circ \varphi = \rg_Y
  =\rg_{\Omega_{[0,\infty)}}\circ\varrho\) implies the claim.  Assume
  the claim has been shown for \(n-1\).  We are going to prove it
  for~\(n\).  Let \(y\in \Bisp\cdot Y\) and write \(y = x\cdot y'\)
  for \(x\in\Bisp\), \(y'\in Y\).  Then
  \(\varphi(y) = x\cdot \varphi(y')\) and
  \(\varrho(y) = x\cdot \varrho(y')\).  Now if
  \(\omega\in \Omega_{[0,\infty)}\), then the image of
  \(x\cdot \omega\) in \(\Omega_{[0,n]}\) depends only on the image
  of~\(\omega\) in \(\Omega_{[0,n-1]}\).  So the induction assumption
  implies \(\varphi(y) = \varrho(y)\) for all \(y\in\Bisp\cdot Y\).
  Since
  \(\varphi^{-1}(\Bisp\cdot \Omega_{[0,\infty)}) = \Bisp\cdot Y\) and
  \(\varrho^{-1}(\Bisp\cdot \Omega_{[0,\infty)}) = \Bisp\cdot Y\),
  both maps must send any \(y\in Y\setminus \Bisp\cdot Y\) to a point in
  \(\Omega_{[0,\infty)} \setminus \Omega_{[1,\infty)} = \Gr^0\).
  Since both maps are compatible with the anchor map to~\(\Gr^0\),
  \(\varphi(y) = \rg(y) = \varrho(y)\) holds for all these~\(y\).
  This completes the induction step and shows that~\(\varrho\) is the
  unique equivariant map \(Y\to \Omega_{[0,\infty)}\).
\end{proof}

\begin{proposition}
  \label{pro:Omega_set}
  There is a bijection between~\(\Omega_{[0,\infty)}\) and
  \[
    \bigl(\varprojlim \Bisp_m/\Gr\bigr) \sqcup
    \bigsqcup_{m=0}^\infty \Bisp_m/\Gr,
  \]
  such that the projection from~\(\Omega_{[0,\infty)}\) to
  \(\Omega_{[0,n]} \cong \bigsqcup_{m=0}^n \Bisp_m/\Gr\) becomes the
  identity map on the components~\(\Bisp_m/\Gr\) for \(m\le n\),
  \(\pi_m^n\) on~\(\Bisp_m/\Gr\) for \(m>n\), and the canonical map
  to~\(\Bisp_n/\Gr\) on the projective limit.
\end{proposition}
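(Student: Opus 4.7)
The plan is to describe the set $\Omega_{[0,\infty)} = \varprojlim_n \Omega_{[0,n]}$ explicitly from the set-theoretic identification $\Omega_{[0,n]} = \bigsqcup_{k=0}^n \Bisp_k/\Gr$ already recorded after Lemma~\ref{lem:circ_homeo_finite}, under which a point $x \in \Bisp_k/\Gr$ corresponds to the character on $A_{[0,n]}$ that is $\ev_{\pi_k^m(x)}$ on $\Cont_0(\Bisp_m/\Gr)$ for $m \le k$ and vanishes on $\Cont_0(\Bisp_m/\Gr)$ for $m > k$.

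First I would determine the restriction map $\pi_{[0,n]}^{[0,n-1]}\colon \Omega_{[0,n]} \to \Omega_{[0,n-1]}$ induced by the nondegenerate inclusion $A_{[0,n-1]} \hookrightarrow A_{[0,n]}$ at the level of sets. Restricting the character attached to $x \in \Bisp_k/\Gr$ to $A_{[0,n-1]}$ shows immediately that this map is the identity on the components $\Bisp_k/\Gr$ for $k \le n-1$ and coincides with $\pi_n^{n-1}\colon \Bisp_n/\Gr \to \Bisp_{n-1}/\Gr \subseteq \Omega_{[0,n-1]}$ on the top component.

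Next, for a compatible sequence $(\omega_n)_{n \in \N}$, write $\omega_n \in \Bisp_{k_n}/\Gr$ with $k_n \le n$. The above description forces the dichotomy: either $k_n \le n-1$, in which case $k_{n-1} = k_n$ and $\omega_{n-1} = \omega_n$, or $k_n = n$, in which case $k_{n-1} = n-1$ and $\omega_{n-1} = \pi_n^{n-1}(\omega_n)$. Hence $(k_n)$ is nondecreasing with $k_n \le n$, and any strict jump $k_{n-1} < k_n$ can only occur at the diagonal $k_n = n$. It follows that as soon as $(k_n)$ falls strictly below the diagonal at some index $n_0$, its value is frozen for all $n \ge n_0$; otherwise $k_n = n$ for every~$n$.

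Finally I split into the two resulting cases. If $(k_n)$ is eventually constant at some $K \in \N$, then $k_n = n$ for $n \le K$ and $k_n = K$ for $n \ge K$, and the entire sequence is determined by $\omega_K \in \Bisp_K/\Gr$; summing over $K$ produces the summand $\bigsqcup_{K \in \N} \Bisp_K/\Gr$. If instead $k_n = n$ for all~$n$, then $(\omega_n) \in \prod_n \Bisp_n/\Gr$ satisfies $\pi_n^{n-1}(\omega_n) = \omega_{n-1}$, which is tautologically a point of $\varprojlim \Bisp_m/\Gr$. The stated formulas for the projections $\Omega_{[0,\infty)} \to \Omega_{[0,n]}$ then read off directly from how each point was built. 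The only step requiring real care is the dichotomy argument; once it is in place the rest is pure bookkeeping.
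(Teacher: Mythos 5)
Your proposal is correct and follows essentially the same route as the paper: both identify $\Omega_{[0,n]}$ with $\bigsqcup_{k=0}^n \Bisp_k/\Gr$, observe that the structure maps are the identity below the top component and $\pi_n^{n-1}$ on it, and then split compatible sequences into those that eventually leave the diagonal (frozen, giving the components $\Bisp_K/\Gr$) and those that stay on it (giving $\varprojlim \Bisp_m/\Gr$). The dichotomy on the index sequence $(k_n)$ is just a slightly more explicit phrasing of the paper's observation that the projections restrict to the identity on $\bigsqcup_{k<n}\Bisp_k/\Gr$.
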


\begin{proof}
  Identify~\(\Omega_{[0,n]}\) with \(\bigsqcup_{k=0}^n \Bisp_k/\Gr\)
  as a set.  The map
  \(\pi_{[0,m]}^{[0,n]}\colon \Omega_{[0,m]} \to \Omega_{[0,n]}\) for
  \(m>n\) is the identity map on the components \(\Bisp_k/\Gr\) for
  \(k\le n\) and restricts to~\(\pi_k^n\) for \(k>n\).  An element of
  \(\Omega_{[0,\infty)} = \varprojlim \Omega_{[0,n]}\) is a family of
  elements \(x_n \in \Omega_{[0,n]}\) with
  \(\pi_{[0,m]}^{[0,n]}(x_m) = x_n\) for all \(m\ge n\).  The preimage
  of \(\bigsqcup_{k=0}^{n-1} \Bisp_k/\Gr \subseteq \Omega_{[0,n]}\) is
  the same subset of \(\Omega_{[0,m]}\), and \(\pi_{[0,m]}^{[0,n]}\)
  restricts to the identity map on this subset.  Therefore, if there
  is \(n\in\N\) with \(x_n \in \Bisp_k/\Gr\) for some \(k<n\),
  then~\(x_m\) must be the same element of \(\Bisp_k/\Gr\) for all
  \(m\ge n\).  This gives the subsets~\(\Bisp_k/\Gr\) in
  \(\Omega_{[0,\infty)}\).  If this does not happen, then
  \(x_n \in \Bisp_n/\Gr\) for all \(n\in\N\).  Then
  \(x_n = \pi_m^n(x_m)\) for \(m \ge n\).  So these elements
  of~\(\Omega_{[0,\infty)}\) correspond to elements of the projective
  limit \(\varprojlim \Bisp_n/\Gr\).
\end{proof}

From now on, let \(\Reg\subseteq \Gr^0\) be an open, \(\Gr\)\nb-invariant
subset such that \(\rg_*\colon \Bisp/\Gr\to \Gr^0\) restricts to a
proper map \(\rg_*^{-1}(\Reg) \to \Reg\).  We are going to build a universal
action of \((\Gr,\Bisp,\Reg)\).  For \(n\in\N\), let
\[
  (\Bisp_n/\Gr)_\Reg \defeq
  \setgiven{[\omega] \in \Bisp_n/\Gr}{\s(\omega) \in \Reg};
\]
since~\(\Reg\) is invariant, \(\s(\omega)\in \Reg\) implies
\(\s(\omega\cdot g) \in \Reg\) for all \(g\in \Gr\) with
\(\s(\omega) = \rg(g)\).

\begin{lemma}
  \label{lem:Reg-complement}
  Describe the underlying set of \(\Omega_{[0,\infty)}\) as in
  Proposition~\textup{\ref{pro:Omega_set}}.  The subsets
  \((\Bisp_n/\Gr)_\Reg \subseteq \bigsqcup_{k\in\N} \Bisp_k/\Gr
  \subseteq \Omega_{[0,\infty)}\) are open for all \(n\in\N\).
\end{lemma}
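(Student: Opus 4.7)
The plan is to prove openness of $(\Bisp_n/\Gr)_\Reg$ in the finite-level spectrum $\Omega_{[0,n+1]}$ and then pull back under the continuous projection $p\colon \Omega_{[0,\infty)} \to \Omega_{[0,n+1]}$. Working only at level~$n$ does not suffice, because by Proposition~\ref{pro:Omega_set} the preimage of $(\Bisp_n/\Gr)_\Reg \subseteq \Omega_{[0,n]}$ in $\Omega_{[0,\infty)}$ picks up contributions from every $\Bisp_k/\Gr$ with $k > n$ via $\pi_k^n$ and from the projective limit, so it is strictly larger than the set itself. Going one level higher solves this problem: the same proposition shows that $p^{-1}\bigl((\Bisp_n/\Gr)_\Reg\bigr) = (\Bisp_n/\Gr)_\Reg$ inside $\Omega_{[0,\infty)}$, because $p$ is the identity on $\Bisp_n/\Gr$ while it maps every other stratum of $\Omega_{[0,\infty)}$ away from $\Bisp_n/\Gr$.

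The core ingredient is then that the map $\pi_{n+1}^n \colon \Bisp_{n+1}/\Gr \to \Bisp_n/\Gr$ is proper over $(\Bisp_n/\Gr)_\Reg$. Writing $\Bisp_{n+1} = \Bisp_n \Grcomp \Bisp$ and passing to orbit spaces under the right $\Gr$\nb-action, I realise
\[
  \Bisp_{n+1}/\Gr \cong \bigl(\Bisp_n \times_{\s, \Gr^0, \rg_*} \Bisp/\Gr\bigr)/\Gr
\]
with $\Gr$ acting diagonally, and $\pi_{n+1}^n$ as the map induced by first-coordinate projection. That projection is a pullback of $\rg_* \colon \Bisp/\Gr \to \Gr^0$ along $\s \colon \Bisp_n \to \Gr^0$, so by \cite{Antunes-Ko-Meyer:Groupoid_correspondences}*{Lemma~5.1} it is proper over $\s^{-1}(\Reg) \subseteq \Bisp_n$; \cite{Antunes-Ko-Meyer:Groupoid_correspondences}*{Lemma~5.5} then transfers this to the orbit-space map and yields properness of $\pi_{n+1}^n$ over $(\Bisp_n/\Gr)_\Reg$.

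With this properness in hand, openness in $\Omega_{[0,n+1]}$ follows from the Whyburn criterion of Lemma~\ref{lem:unified_space}, applied to the realisation of $\Omega_{[0,n+1]}$ as the fibrewise one-point compactification of $\pi_{n+1}^n \colon \Bisp_{n+1}/\Gr \to \Omega_{[0,n]}$. For $V = (\Bisp_n/\Gr)_\Reg$, the three conditions read: $V \cap \Bisp_{n+1}/\Gr = \emptyset$; the set $V \cap \Omega_{[0,n]} = V$ is open in $\Omega_{[0,n]}$ because $\Bisp_n/\Gr$ is the top open stratum and $\s^{-1}(\Reg)$ is open in it; and $(\pi_{n+1}^n)^{-1}(K)$ is compact for every compact $K \subseteq V$, which is exactly the properness just established. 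The main obstacle is setting up the pullback--orbit-space realisation of $\pi_{n+1}^n$ that exposes the properness assumption on $\rg_*$; once this decomposition is in place, everything reduces to standard lemmas from \cite{Antunes-Ko-Meyer:Groupoid_correspondences} together with the Whyburn criterion.
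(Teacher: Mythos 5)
Your proof is correct and follows essentially the same route as the paper: establish properness of $\Bisp_{n+1}/\Gr \to \Bisp_n/\Gr$ over $(\Bisp_n/\Gr)_\Reg$ by pulling back the properness hypothesis on $\rg_*$, apply the Whyburn criterion of Lemma~\ref{lem:unified_space} to get openness in $\Omega_{[0,n+1]}$, and observe via Proposition~\ref{pro:Omega_set} that the preimage under the projection from $\Omega_{[0,\infty)}$ is the set itself. Your version of the properness step, which quotients to $\Bisp/\Gr$ first and then forms the pullback of $\rg_*$, is if anything slightly cleaner than the paper's, which pulls back at the level of $\Bisp$ before passing to orbit spaces.
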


\begin{proof}
  Let \(n\in\N\).  Since pull-backs of proper maps are again proper,
  the restriction of the projection
  \(\Bisp_n \times_{\s,\rg} \Bisp \to \Bisp_n \times_{\s,\rg} \Gr^0
  \cong \Bisp_n\) to the preimage of \(\Bisp_n \times_{\s,\rg} \Reg\)
  is again proper.  This implies that the induced map
  \(\Bisp_{n+1}/\Gr \to \Bisp_n/\Gr\) restricts to a proper map on the
  preimage of \((\Bisp_n/\Gr)_\Reg\) .  The description of the
  topology of the fibrewise one-point compactification in
  Lemma~\ref{lem:unified_space} shows that \((\Bisp_n/\Gr)_\Reg\) is
  open in the fibrewise one-point compactification
  \(\Omega_{[0,n+1]}\) for the map
  \(\Bisp_{n+1}/\Gr \to \Bisp_n/\Gr \subseteq\Omega_{[0,n]}\).  The
  image of \(\Bisp_k/\Gr \to \Omega_{[0,k-1]}\) is contained
  in~\(\Bisp_{k-1}/\Gr\) and thus disjoint from~\(\Bisp_n/\Gr\) for
  \(k>n+1\).  Therefore, the image of~\((\Bisp_n/\Gr)_\Reg\)
  in~\(\Omega_{[0,k]}\) for \(k\ge n+1\) is simply the preimage
  of~\((\Bisp_n/\Gr)_\Reg\) in~\(\Omega_{[0,n+1]}\) under the
  canonical projection
  \(\pi_{[0,k]}^{[0,n+1]}\colon \Omega_{[0,k]} \to \Omega_{[0,n+1]}\).
  This remains true in the projective limit~\(\Omega_{[0,\infty)}\).
  As the preimage of an open subset under a continuous map,
  \((\Bisp_n/\Gr)_\Reg\) is open in \(\Omega_{[0,\infty)}\).
\end{proof}

\begin{lemma}
  \label{lem:Omega_Reg}
  The complement
  \[
    \Omega(\Reg) \defeq \Omega_{[0,\infty)} \setminus
    \bigsqcup_{n=0}^\infty (\Bisp_n/\Gr)_\Reg
  \]
  is closed in~\(\Omega_{[0,\infty)}\), and the restriction of the
  anchor map \(\rg\colon \Omega(\Reg) \to \Gr^0\) is proper.  There is a
  unique action of \((\Gr,\Bisp,\Reg)\) on~\(\Omega(\Reg)\) for which the
  inclusion into~\(\Omega_{[0,\infty)}\) becomes equivariant.
\end{lemma}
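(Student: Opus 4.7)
The closedness of $\Omega(\Reg)$ is immediate from Lemma~\ref{lem:Reg-complement}: each $(\Bisp_n/\Gr)_\Reg$ is open in $\Omega_{[0,\infty)}$, so their disjoint union is open and the complement is closed. Properness of the restricted anchor map then follows because $\rg\colon \Omega_{[0,\infty)} \to \Gr^0$ is proper by Proposition~\ref{pro:universal_action_Toeplitz}, and restrictions of proper maps to closed subspaces remain proper.

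My plan for the action is to obtain it purely by restriction from the universal action on $\Omega_{[0,\infty)}$ built in Proposition~\ref{pro:universal_action_Toeplitz}. First, I will show that $\Omega(\Reg)$ is $\Gr$-invariant: since the $\Gr$-action on each $\Bisp_n/\Gr$ is by left multiplication, it preserves the source map and hence preserves $(\Bisp_n/\Gr)_\Reg$. Second, under the homeomorphism $\Bisp\Grcomp \Omega_{[0,\infty)} \cong \Omega_{[1,\infty)}$, the subspace $\Bisp \Grcomp \Omega(\Reg)$ maps into $\Omega(\Reg)$: a class $[x,\omega]$ with $\omega \in \Bisp_{n}/\Gr \cap \Omega(\Reg)$ is sent to an element of $\Bisp_{n+1}/\Gr$ whose source equals $\s(\omega) \notin \Reg$, and the projective-limit part of $\Omega(\Reg)$ is preserved by inspection of the set-theoretic description. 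Consequently the multiplication restricts to a continuous open injection $\mu'\colon \Bisp\Grcomp \Omega(\Reg) \to \Omega(\Reg)$.

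Conditions~\ref{en:correspondence_action_1} and~\ref{en:correspondence_action_2} of Definition~\ref{def:correspondence_action} are inherited directly from the ambient action, and~\ref{en:correspondence_action_4} follows from properness of the restricted anchor map via Lemma~\ref{lem:action_extra_condition_proper_case}. The main nontrivial step will be~\ref{en:correspondence_action_3}, namely that every $y\in \Omega(\Reg)$ with $\rg(y) \in \Reg$ belongs to $\Bisp\cdot \Omega(\Reg)$. To handle this I would case-split using Proposition~\ref{pro:Omega_set}: if $y\in \Gr^0 \cap \Omega(\Reg)$, then $y=\rg(y) \notin \Reg$ (else $y\in (\Bisp_0/\Gr)_\Reg$), so the condition is vacuous at that level; for $y = [\omega_1,\dotsc,\omega_n]\in \Bisp_n/\Gr \cap \Omega(\Reg)$ with $n\ge 1$, the factorisation $y = \omega_1\cdot [\omega_2,\dotsc,\omega_n]$ works because the source of the second factor is $\s(\omega_n) \notin \Reg$; and any projective-limit element already lies in $\Omega_{[1,\infty)} \cong \Bisp \Grcomp \Omega_{[0,\infty)}$ and admits a $\Bisp$-factorisation whose second factor is again a projective-limit element, automatically in $\Omega(\Reg)$.

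Uniqueness is then forced by equivariance of the inclusion $\Omega(\Reg) \hookrightarrow \Omega_{[0,\infty)}$: the $\Gr$-action, the open subset $\Bisp\cdot \Omega(\Reg)$, and the multiplication map must all coincide with the restrictions from $\Omega_{[0,\infty)}$. The main obstacle throughout is the verification of~\ref{en:correspondence_action_3}; the set-theoretic description of $\Omega_{[0,\infty)}$ in Proposition~\ref{pro:Omega_set} was set up precisely so that this decomposition is visible level by level, and everything else is routine bookkeeping once the restriction strategy has been committed to.
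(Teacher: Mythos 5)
Your proposal is correct and follows essentially the same route as the paper: restrict the universal $(\Gr,\Bisp,\emptyset)$-action from $\Omega_{[0,\infty)}$, using the openness of the sets $(\Bisp_n/\Gr)_\Reg$, their $\Gr$-invariance, the identification $\Bisp\circ(\Bisp_n/\Gr)_\Reg\cong(\Bisp_{n+1}/\Gr)_\Reg$, and properness of the restricted anchor map to get \ref{en:correspondence_action_4} via Lemma~\ref{lem:action_extra_condition_proper_case}. The only cosmetic difference is that for \ref{en:correspondence_action_3} the paper deduces everything from the single identity $\Bisp\cdot\Omega(\Reg)=\Omega(\Reg)\cap\Omega_{[1,\infty)}$ together with the fact that $\Reg$ was removed from $\Gr^0$, whereas you verify the same containment by a level-by-level case split using Proposition~\ref{pro:Omega_set}; both arguments are sound.
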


\begin{proof}
  Since each subset \((\Bisp_n/\Gr)_\Reg\) is open
  in~\(\Omega_{[0,\infty)}\) by Lemma~\ref{lem:Reg-complement}, their
  union remains open and so~\(\Omega(\Reg)\) is closed.  Since
  \((\Bisp_n/\Gr)_\Reg\) is \(\Gr\)\nb-invariant in \(\Bisp_n/\Gr\)
  for all \(n\in\N\), \(\Omega(\Reg)\) is \(\Gr\)\nb-invariant
  in~\(\Omega_{[0,\infty)}\).  So the \(\Gr\)\nb-action restricts to
  it.  The image of \(\Bisp \circ (\Bisp_n/\Gr)_\Reg\) under the
  canonical homeomorphism
  \(\Bisp\circ \Bisp_n/\Gr \cong \Bisp_{n+1}/\Gr\) is equal to
  \((\Bisp_{n+1}/\Gr)_\Reg\).  Thus the homeomorphism
  \(\Bisp\circ \Omega_{[0,\infty)} \to \Omega_{[1,\infty)}\) maps
  \(\Bisp\circ \Omega(\Reg)\) onto the complement of
  \(\bigsqcup_{n=0}^\infty {} (\Bisp_{n+1}/\Gr)_\Reg\).  This implies
  that the map
  \(\Bisp\circ \Omega_{[0,\infty)} \to \Omega_{[0,\infty)}\) restricts
  to a map from \(\Bisp\circ \Omega(\Reg)\) onto
  \(\Omega(\Reg) \cap (\Bisp\cdot \Omega_{[0,\infty)})\).  This
  restriction remains a homeomorphism onto its image, and it inherits
  the technical property \ref{en:correspondence_action_4} because the
  anchor map \(\Omega(\Reg) \to \Gr^0\) is proper.  The preimage
  of~\(\Reg\) under the anchor map \(\Omega(\Reg) \to \Gr^0\) is
  contained in \(\Bisp\cdot \Omega_{[0,\infty)} \cap \Omega(\Reg)\)
  because we removed \(\Reg\) from~\(\Gr^0\).  We have seen that this
  is equal to
  \(\Bisp\cdot \Omega_{[0,\infty)} \cap \Omega(\Reg) = \Bisp\cdot
  \Omega(\Reg)\).  So our action satisfies
  \ref{en:correspondence_action_3}.
\end{proof}

\begin{theorem}
  \label{the:universal_action_Reg}%
  The \((\Gr,\Bisp,\Reg)\)-action on~\(\Omega(\Reg)\) is the universal one.
\end{theorem}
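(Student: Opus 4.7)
The plan is to leverage the universal property of $\Omega_{[0,\infty)}$ established in Proposition~\ref{pro:universal_action_Toeplitz}. Any $(\Gr,\Bisp,\Reg)$-action on $Y$ is automatically a $(\Gr,\Bisp,\emptyset)$-action, since condition~\ref{en:correspondence_action_3} becomes vacuous when $\Reg=\emptyset$. So there is a unique $(\Gr,\Bisp)$-equivariant map $\varrho\colon Y\to\Omega_{[0,\infty)}$. I will show that $\varrho(Y)\subseteq \Omega(\Reg)$. Combined with the fact that the inclusion $\Omega(\Reg)\hookrightarrow\Omega_{[0,\infty)}$ is equivariant by Lemma~\ref{lem:Omega_Reg}, this yields both existence and uniqueness of an equivariant map $Y\to \Omega(\Reg)$.

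The crux is the containment $\varrho(Y)\subseteq \Omega(\Reg)$, i.e.\ $\varrho(y)\notin (\Bisp_n/\Gr)_\Reg$ for every $y\in Y$ and $n\in\N$. Fix $y\in Y$. By the construction of $\varrho$ in Lemma~\ref{lem:map_Y_to_Omega}, the ideal $A_{[k,\infty)}\subseteq A_{[0,\infty)}$ is generated by $\Cont_0(\Bisp_m/\Gr)$ for $m\geq k$, and $\pi_m^*$ is supported on $\Bisp_m\cdot Y$. The same argument as in the proof of Proposition~\ref{pro:universal_action_Toeplitz} then gives $\varrho^{-1}(\Omega_{[k,\infty)})=\Bisp_k\cdot Y$ for every $k\in\N$. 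In the set-level decomposition of Proposition~\ref{pro:Omega_set}, if $\varrho(y)$ lies in the projective-limit factor $\varprojlim \Bisp_m/\Gr$, then $\varrho(y)\in\Omega(\Reg)$ automatically, since $\Omega(\Reg)$ only excises points from the finite components. Otherwise there is a smallest $k$ with $\varrho(y)\in\Bisp_k/\Gr$, which forces $y\in\Bisp_k\cdot Y\setminus \Bisp_{k+1}\cdot Y$. Writing $y=\omega\cdot y'$ with $\omega\in\Bisp_k$ and $y'\in Y$, the class $[\omega]\in\Bisp_k/\Gr$ is independent of the factorisation by iterated application of~\ref{en:correspondence_action_2}, and the formula $\pi_k^*(f)(y)=f([\omega])$ from Lemma~\ref{lem:action_iterated_extra_condition} identifies $\varrho(y)$ with $[\omega]$.

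The key use of condition~\ref{en:correspondence_action_3} now appears: since $y\notin \Bisp_{k+1}\cdot Y$, the factor $y'$ cannot lie in $\Bisp\cdot Y$, because $y'=x\cdot y''$ would give $y=(\omega\cdot x)\cdot y''\in \Bisp_{k+1}\cdot Y$. Then~\ref{en:correspondence_action_3} forces $\rg(y')\notin \Reg$, so $\s(\omega)=\rg(y')\notin \Reg$, and hence $[\omega]\notin (\Bisp_k/\Gr)_\Reg$ as required. This handles every $k$ uniformly, including $k=0$, where $\omega=1_{\rg(y)}$ and $\s(\omega)=\rg(y)\notin\Reg$.

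Uniqueness is then immediate: any $(\Gr,\Bisp)$-equivariant map $Y\to\Omega(\Reg)$ composes with the equivariant inclusion of Lemma~\ref{lem:Omega_Reg} to give an equivariant map $Y\to\Omega_{[0,\infty)}$, which is unique by Proposition~\ref{pro:universal_action_Toeplitz}. The main obstacle is the bookkeeping in the second paragraph, namely identifying the image of $y$ under $\varrho$ with an explicit element $[\omega]\in\Bisp_k/\Gr$, and then recognising that the excised set $\bigsqcup_n(\Bisp_n/\Gr)_\Reg$ is precisely the locus ruled out by condition~\ref{en:correspondence_action_3}.
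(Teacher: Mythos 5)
Your proof is correct and follows essentially the same route as the paper: reduce to the universal $(\Gr,\Bisp,\emptyset)$-action $\Omega_{[0,\infty)}$ via Proposition~\ref{pro:universal_action_Toeplitz}, then show the unique equivariant map avoids $\bigsqcup_n(\Bisp_n/\Gr)_\Reg$ by factoring a point of $\Bisp_n\cdot Y\setminus\Bisp_{n+1}\cdot Y$ as $\omega\cdot y'$ with $y'\notin\Bisp\cdot Y$ and invoking~\ref{en:correspondence_action_3}. The extra bookkeeping you supply (the identification $\varrho^{-1}(\Omega_{[k,\infty)})=\Bisp_k\cdot Y$ and the set-level decomposition of Proposition~\ref{pro:Omega_set}) is consistent with, and slightly more explicit than, the paper's argument.
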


\begin{proof}
  Let~\(Y\) carry an action of \((\Gr,\Bisp,\Reg)\).  This is also an
  action of \((\Gr,\Bisp,\emptyset)\), and the conditions on
  equivariant maps do not involve~\(\Reg\).  So
  Proposition~\ref{pro:universal_action_Toeplitz} provides a unique
  equivariant map \(\varrho\colon Y\to \Omega_\emptyset\).  It remains
  to check that the image of
  \(\varrho\colon Y\to \Omega_{[0,\infty)}\) is always disjoint from
  \(\bigsqcup_{n=0}^\infty (\Bisp_n/\Gr)_\Reg \subseteq
  \Omega_{[0,\infty)}\).

  The map~\(\varrho\) sends all points in \(\Bisp\cdot Y\) to
  \(\Bisp\cdot \Omega_{[0,\infty)} = \Omega_{[1,\infty)}\).  So only
  points in \(Y\setminus \Bisp\cdot Y\) are mapped to~\(\Gr^0\), and
  this is done by the anchor map \(\rg\colon Y\to \Gr^0\).  By
  assumption, \(\rg^{-1}(\Reg) \subseteq \Bisp\cdot Y\).  So no point
  in~\(Y\) can be mapped to \(\Reg\subseteq \Gr^0\).  Now let
  \(\omega\in \Bisp_n\), \(y\in Y\) satisfy \(\s(\omega) = \rg(y)\).
  Then \(\omega \cdot y \in \Bisp_n\cdot Y\) is only mapped to
  \(\Bisp_n/\Gr \subseteq \Omega_{[0,\infty)}\) if it does not belong
  to \(\Bisp_{n+1}\cdot Y = \Bisp_n\cdot (\Bisp\cdot Y)\).  Hence
  \(y\notin \Bisp\cdot Y\), so that \(\s(\omega) = \rg(y) \notin \Reg\).
  Thus \(\varrho(\omega\cdot y) \notin (\Bisp_n/\Gr)_\Reg\).
\end{proof}

\begin{example}
  \label{exa:Omega_infty_graph}
  Let \(\Gr=V\) be just a discrete set with only identity arrows.
  Then a correspondence \(\Bisp\colon \Gr\leftarrow \Gr\) is a
  discrete set~\(E\) with two maps
  \(\rg,\s\colon E\rightrightarrows V\).  Therefore
  \[
    \Bisp_n/\Gr
    = \Bisp_n
    = \setgiven{(e_1,\dotsc,e_n)\in E^n}{\s(e_j) = \rg(e_{j+1})
      \text{ for } j=1,\dotsc,n-1}.
  \]
  So we may identify \(\Bisp_m/\Gr\) with the set of paths of
  length~\(m\) and \(\varprojlim \Bisp_m/\Gr\) with the set of
  infinite paths in the directed graph described by
  \(\rg,\s\colon E\rightrightarrows V\).  The
  space~\(\Omega_{[0,\infty)}\) comprises these finite and infinite
  paths and carries a locally compact Hausdorff topology in which the
  final vertex map from paths to~\(V\) is a proper continuous map.  In
  particular, if~\(V\) is finite, then~\(\Omega_{[0,\infty)}\) is
  compact.

  Let \(\Reg \subseteq V\) be the
  set of regular vertices, that is, those vertices \(v\in V\) for
  which \(\rg^{-1}(v)\) is finite and nonempty.
  Lemma~\ref{lem:Omega_Reg} defines
  \(\Omega(\Reg) \subset \Omega_{[0,\infty)}\) as the subset
  consisting of the infinite paths and those finite paths that start in
  an irregular vertex \(v\in V\setminus \Reg\).
  Paterson~\cite{Paterson:Graph_groupoids} has already described the
  graph \(\Cst\)\nb-algebra of a possibly irregular graph as a
  groupoid \(\Cst\)\nb-algebra.  We remark without proof that the
  space~\(\Omega(\Reg)\) is identical to the object space of
  Paterson's groupoid.
\end{example}

The universal action \(\Omega(\Reg)\) is a key ingredient in our
construction because it is the object space of the
groupoid model by Proposition~\ref{pro:universal_action}.  In this
context, it is useful to study an extra property of it.

Recall that a groupoid is \emph{ample} if and only if its object
space is totally disconnected, that is, every point has a compact
open neighbourhood.  The following proposition implies that the
groupoid model is ample if the groupoid~\(\Gr\) is:

\begin{proposition}
  Assume that~\(\Gr^0\) is totally disconnected.  Then so is
  \(\Omega(\Reg)\).
\end{proposition}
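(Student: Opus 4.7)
The plan is to show that the larger space \(\Omega_{[0,\infty)}\) is totally disconnected and then deduce the same for the closed subspace \(\Omega(\Reg)\). Since \(\Omega_{[0,\infty)} = \varprojlim \Omega_{[0,n]}\) is (homeomorphic to) a closed subspace of \(\prod_{n} \Omega_{[0,n]}\) with the product topology, and arbitrary products of zero-dimensional spaces are zero-dimensional, it suffices to prove that each finite \(\Omega_{[0,n]}\) is totally disconnected. I would do this by induction on~\(n\) with base case \(\Omega_{[0,0]} = \Gr^0\).

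The first ingredient is that \(\Bisp_n/\Gr\) is totally disconnected for every \(n\). The source map \(\s\colon \Bisp_n\to \Gr^0\) is a local homeomorphism (inductively, as a composition of local homeomorphisms), so each point of~\(\Bisp_n\) has a Hausdorff open neighbourhood that is mapped homeomorphically onto an open subset of~\(\Gr^0\); these neighbourhoods inherit bases of compact open sets from~\(\Gr^0\). Since the orbit space projection \(\Qu\colon \Bisp_n\prto \Bisp_n/\Gr\) is a local homeomorphism by \cite{Antunes-Ko-Meyer:Groupoid_correspondences}*{Lemma~2.10}, and \(\Bisp_n/\Gr\) is locally compact Hausdorff, this property passes to \(\Bisp_n/\Gr\).

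The induction step rests on the following purely topological lemma: if \(f\colon X\to Y\) is a continuous map between locally compact Hausdorff totally disconnected spaces, then the fibrewise one-point compactification~\(Z\) of Lemma~\ref{lem:unified_space} is again totally disconnected. Given \(z\in Z\) and an open neighbourhood~\(W\), if \(z\in X\) one just uses that~\(X\) is open in~\(Z\) and zero-dimensional. If \(z\in Y\), pick a compact open \(U\subseteq W\cap Y\) containing~\(z\); then \(\pi^{-1}(U)=U\sqcup f^{-1}(U)\) is compact in~\(Z\) because the canonical map \(\pi\colon Z\to Y\) is proper, and the set \(L\defeq f^{-1}(U)\setminus W\) is closed in this compact set, hence compact in~\(X\). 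Using total disconnectedness of~\(X\), cover~\(L\) by a compact open \(C\subseteq X\); a direct check against the description of open sets in Lemma~\ref{lem:unified_space} shows that~\(C\) is clopen in~\(Z\), so \(V\defeq \pi^{-1}(U)\setminus C\) is a compact open neighbourhood of~\(z\) sitting inside~\(W\). Applying the lemma to the canonical map \(\Bisp_n/\Gr \to \Omega_{[0,n-1]}\) (whose fibrewise one-point compactification is \(\Omega_{[0,n]}\) by~\eqref{eq:step_n_map_to_compactify}) gives the induction step.

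Finally, since \(\Omega(\Reg)\) is closed in \(\Omega_{[0,\infty)}\) by Lemma~\ref{lem:Omega_Reg}, it inherits total disconnectedness, finishing the proof. The main technical point is the fibrewise one-point compactification lemma above; everything else is bookkeeping with local homeomorphisms and projective limits.
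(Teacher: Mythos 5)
Your proposal is correct and follows essentially the same route as the paper: reduce to each \(\Omega_{[0,n]}\) via the projective limit, handle \(\Bisp_n/\Gr\) using that the source map and the orbit space projection are local homeomorphisms, and propagate total disconnectedness through the fibrewise one-point compactification of Lemma~\ref{lem:unified_space}. The only differences are cosmetic: you spell out the clopen-neighbourhood argument for the fibrewise one-point compactification (which the paper merely asserts follows from the explicit description of the topology) and you pass to the projective limit via zero-dimensionality of products rather than the paper's Tychonov/properness argument, both of which are valid.
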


\begin{proof}
  Since the orbit space projection \(\Bisp_n \to \Bisp_n/\Gr\) and
  the source map \(\Bisp_n \to \Gr^0\) are local homeomorphisms,
  compact open neighbourhoods in~\(\Gr^0\) provide compact open
  neighbourhoods in~\(\Bisp_n\) and then in~\(\Bisp_n/\Gr\).  The
  explicit description of the topology on the fibrewise one-point
  compactification of \(f\colon X\to Y\) in
  Lemma~\ref{lem:unified_space} shows that it is totally
  disconnected once \(X\) and~\(Y\) are so.  Thus the
  spaces~\(\Omega_{[0,n]}\) for \(n\in\N\) are totally disconnected.
  Finally, Tychonov's Theorem implies that the preimage of a compact
  open subset in~\(\Omega_{[0,n]}\) in the projective
  limit~\(\Omega_{[0,\infty)}\) is again compact open.  It follows
  that~\(\Omega_{[0,\infty)}\) is totally disconnected.  This is
  inherited by the closed subset~\(\Omega(\Reg)\).
\end{proof}

\section{Construction of the groupoid model}
\label{sec:construct_model}

\begin{definition}
  Let \(\IS = \IS(\Gr,\Bisp)\) be the inverse semigroup that is
  generated by the set \(\Bis = \Bis(\Gr) \sqcup \Bis(\Bisp)\) and the
  relations in \ref{en:action_from_theta1}
  and~\ref{en:action_from_theta2}, that is,
  \(\Theta(\Slice \Slice[V]) = \Theta(\Slice)\Theta(\Slice[V])\) if
  \(\Slice,\Slice[V]\in \Bis\) and \(\Slice\in \Bis(\Bisp)\) or
  \(\Slice[V]\in \Bis(\Bisp)\), and
  \(\Theta(\Slice_1)^*\Theta(\Slice_2) =
  \Theta(\braket{\Slice_1}{\Slice_2})\) for all
  \(\Slice_1,\Slice_2\in\Bis(\Bisp)\); here we
  write~\(\Theta(\Slice)\) for~\(\Slice\) viewed as a generator
  of~\(\IS(\Gr,\Bisp)\).
\end{definition}

To construct \(\IS\), we first let \(\mathrm{Free}(\Bis)\) be the free
inverse semigroup on the set \(\Bis\) (see
\cite{Higgins:Techniques_semigroup_theory}*{Theorem~1.1.10}).  Then we
let~\(\IS\) be the quotient of \(\mathrm{Free}(\Bis)\) by the smallest
congruence relation that contains the relations in
\ref{en:action_from_theta1} and~\ref{en:action_from_theta2}.  This is
an inverse semigroup by
\cite{Higgins:Techniques_semigroup_theory}*{Corollary~1.1.8}.  By
definition, there is a natural bijection between inverse semigroup
homomorphisms \(\IS\to S\) for another inverse semigroup~\(S\) and
maps \(\vartheta\colon \Bis\to S\) that satisfy
\ref{en:action_from_theta1} and~\ref{en:action_from_theta2}.

\begin{remark}
  The element \(\Theta(\Gr^0)\) is a unit element because
  of~\ref{en:action_from_theta1}.  The relations defining~\(\IS\)
  imply that \(\Bis(\Gr)\) is an inverse subsemigroup of~\(\IS\).
\end{remark}

\begin{lemma}
  \label{lem:IS_structure}
  Any element in~\(\IS\) is of the form
  \(a_1\dotsm a_n \cdot b_1^*\dotsm b_m^*\) or \(a_1\dotsm a_n\) or
  \(b_1^*\dotsm b_m^*\) with \(a_i,b_j\in \Bis(\Bisp)\), or just
  \(a\in\Bis(\Gr)\).
\end{lemma}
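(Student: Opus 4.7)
The plan is to reduce an arbitrary word $w = \Theta(c_1)^{\epsilon_1} \cdots \Theta(c_k)^{\epsilon_k}$ with $c_i \in \Bis$ and $\epsilon_i \in \{1,*\}$ to one of the four claimed normal forms by a terminating sequence of rewrites using the defining relations. The proof is an induction on the total number of generator symbols appearing in the word.

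The first step is to eliminate every $\Bis(\Gr)$-letter. Using the preceding remark that $\Bis(\Gr)$ is an inverse subsemigroup of $\IS$, I would replace each $\Theta(a)^*$ with $a\in\Bis(\Gr)$ by $\Theta(a^{-1})$, and collapse any two consecutive $\Bis(\Gr)$-letters into one via $\Theta(a)\Theta(a') = \Theta(aa')$. A $\Bis(\Gr)$-letter adjacent to a $\Bis(\Bisp)$-letter is absorbed by relation~\ref{en:action_from_theta1}: $\Theta(a)\Theta(\Slice) = \Theta(a\Slice)$ and $\Theta(\Slice)\Theta(a) = \Theta(\Slice a)$, both again in $\Theta(\Bis(\Bisp))$. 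The starred variants follow by taking adjoints: $\Theta(a)\Theta(\Slice)^* = \Theta(\Slice a^{-1})^*$ and $\Theta(\Slice)^*\Theta(a) = \Theta(a^{-1}\Slice)^*$. If the word contained no $\Bis(\Bisp)$-letters, the merging collapses it to a single element of $\Bis(\Gr)$, giving form~(iv). Otherwise every $\Bis(\Gr)$-letter is absorbed and we may assume every remaining letter lies in $\Theta(\Bis(\Bisp))\cup\Theta(\Bis(\Bisp))^*$.

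The second step is to eliminate each ``starred-then-unstarred'' adjacency $\Theta(\Slice_1)^*\Theta(\Slice_2)$ with $\Slice_1,\Slice_2\in\Bis(\Bisp)$ by applying relation~\ref{en:action_from_theta2}, which rewrites it as $\Theta(\braket{\Slice_1}{\Slice_2}) \in \Theta(\Bis(\Gr))$, and then reabsorbing the newly produced $\Bis(\Gr)$-letter into a neighbour exactly as above. Each cycle strictly decreases the total number of generator symbols, so the process terminates. The terminal word has no starred letter to the left of an unstarred one, so it has the shape $\Theta(\Slice_1)\cdots\Theta(\Slice_n)\,\Theta(\Slice'_1)^*\cdots\Theta(\Slice'_m)^*$ with $\Slice_i,\Slice'_j\in\Bis(\Bisp)$, which is form~(i), or forms~(ii)/(iii) if one of the two blocks is empty; the empty case collapses to the unit $\Theta(\Gr^0)\in\Bis(\Gr)$, falling under form~(iv).

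The main obstacle is the very first move, namely the identities $\Theta(a)^* = \Theta(a^{-1})$ and $\Theta(a)\Theta(a') = \Theta(aa')$ for $a,a'\in\Bis(\Gr)$. These are the content of the remark preceding the lemma and must be read into the definition of $\IS$ (the natural interpretation of ``generated by $\Bis$ subject to the listed relations'' when $\Bis(\Gr)$ already carries its own inverse semigroup structure). Once this is granted, everything else is a bookkeeping argument ensuring that the two rewriting steps can be alternated and that the resulting word length is strictly decreasing, guaranteeing termination in one of the four listed forms.
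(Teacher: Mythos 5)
Your proof is correct and follows essentially the same route as the paper's: a terminating word-rewriting argument that absorbs the \(\Bis(\Gr)\)-letters using the preceding remark that \(\Bis(\Gr)\) is an inverse subsemigroup of \(\IS\), and eliminates starred--unstarred adjacencies via~\ref{en:action_from_theta2}; only the order of the two reduction steps differs. Your explicit flagging of the dependence on that remark matches the paper, which invokes it in exactly the same way.
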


\begin{proof}
  Any element of~\(\IS\) may be written as a product of generators
  \(a,a^*\) with \(a\in \Bis\).  Since \(\Bis(\Gr)\) is an inverse
  subsemigroup, we do not need the letters~\(a^*\) for
  \(a\in \Bis(\Gr)\).  The defining relations allow us to shorten any
  word in the generators that contains an expression~\(a^* b\) with
  \(a,b\in\Bis\).  Therefore, every element in~\(\IS\) may be
  rewritten as \(a_1\dotsm a_n \cdot b_1^*\dotsm b_m^*\) with
  \(a_i,b_j\in \Bis\).  If \(n\ge 1\), then we may further shorten the
  word if some~\(a_i\) belongs to~\(\Bis(\Gr)\), and similarly for
  \(m\ge 1\) and for \(b_j\in\Bis(\Gr)\).  This reduces every word in
  the generators to the form claimed in the lemma.
\end{proof}

\begin{definition}
  Let~\(S\) be an inverse semigroup and let \(Y\) and~\(Z\) be
  topological spaces equipped with actions
  \(\vartheta_Y\colon S\to I(Y)\) and \(\vartheta_Z\colon S\to I(Z)\)
  of~\(S\) by partial homeomorphisms.  A continuous map
  \(f\colon Y\to Z\) is \emph{\(S\)\nb-equivariant} if
  \(f\circ \vartheta_Y(t) = \vartheta_Z(t)\circ f\) as partial maps
  for all \(t\in S\).
\end{definition}

In particular, the equality of
\(f\circ \vartheta_Y(t) = \vartheta_Z(t)\circ f\) says that both maps
have the same domain, that is,
\(f^{-1}(\Dom \vartheta_Z(t)) = \Dom \vartheta_Y(t)\).

\begin{proposition}
  \label{pro:action_through_IS}
  An action of \((\Gr,\Bisp,\Reg)\) on a topological space~\(Y\) is
  equivalent to an inverse semigroup homomorphism
  \(\IS\to I(Y)\) for which there exists an
  \(\IS\)-equivariant continuous map \(Y \to \Omega(\Reg)\).
\end{proposition}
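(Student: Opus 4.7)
The plan is to move between the three kinds of data using the results already assembled, with Lemma~\ref{lem:action_from_theta} providing the main bookkeeping tool. In the forward direction, start with a \((\Gr,\Bisp,\Reg)\)\nb-action on~\(Y\).  Lemma~\ref{lem:theta_multiplicative} yields maps \(\vartheta(\Slice)\in I(Y)\) for all \(\Slice\in\Bis\) satisfying exactly the defining relations \ref{en:action_from_theta1} and~\ref{en:action_from_theta2} of~\(\IS\), so these maps extend uniquely to an inverse semigroup homomorphism \(\vartheta\colon \IS\to I(Y)\).  Theorem~\ref{the:universal_action_Reg} provides a \((\Gr,\Bisp)\)\nb-equivariant continuous map \(f\colon Y\to \Omega(\Reg)\), and Lemma~\ref{lem:theta_gives_equivariant_invariant} says that this equivariance is precisely \(f\circ \vartheta_Y(\Slice) = \vartheta_{\Omega(\Reg)}(\Slice)\circ f\) (and similarly for adjoints) for all \(\Slice\in\Bis\); since \(\Bis\) generates~\(\IS\), the map~\(f\) is automatically \(\IS\)\nb-equivariant.

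For the reverse direction, I start with an inverse semigroup homomorphism \(\vartheta\colon \IS\to I(Y)\) together with an \(\IS\)\nb-equivariant continuous map \(f\colon Y\to \Omega(\Reg)\).  The plan is to verify all the hypotheses of Lemma~\ref{lem:action_from_theta}.  The relations \ref{en:action_from_theta1} and~\ref{en:action_from_theta2} hold because they are built into~\(\IS\).  The remaining conditions have to be derived by pulling them back along the equivariant map~\(f\) from the universal action, where they are known to hold by construction.

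The key observation is that equivariance forces \(\Dom\vartheta_Y(t) = f^{-1}(\Dom\vartheta_{\Omega(\Reg)}(t))\) and \(\Im\vartheta_Y(t) = f^{-1}(\Im\vartheta_{\Omega(\Reg)}(t))\) for every \(t\in \IS\).  For \ref{en:action_from_theta4}, \(\vartheta_Y(\Gr^0)\) and \(\vartheta_Y(\emptyset)\) are idempotents in~\(I(Y)\), hence partial identities, whose domains are \(f^{-1}(\Omega(\Reg)) = Y\) and \(f^{-1}(\emptyset) = \emptyset\) respectively.  For \ref{en:action_from_theta5}, the map \(W\mapsto \Dom\vartheta_Y(W) = f^{-1}(\rg^{-1}(W))\) for \(W\subseteq \Gr^0\) open commutes with arbitrary unions because preimages do.  For \ref{en:action_from_theta3}, apply \(f^{-1}\) to the covering of \(\Dom\vartheta_{\Omega(\Reg)}(\Reg)\) by the images \(\Im\vartheta_{\Omega(\Reg)}(\Slice)\), \(\Slice\in\Bis(\Bisp)\), which holds because \(\Omega(\Reg)\) carries an honest action by Lemma~\ref{lem:Omega_Reg}.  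For \ref{en:action_from_theta6}, continuity of~\(f\) gives \(\overline{\Im\vartheta_Y(\Slice)} = \overline{f^{-1}(\Im\vartheta_{\Omega(\Reg)}(\Slice))} \subseteq f^{-1}(\overline{\Im\vartheta_{\Omega(\Reg)}(\Slice)})\), and the right-hand side is contained in \(f^{-1}(\Bisp\cdot\Omega(\Reg)) = \bigcup_{W\in\Bis(\Bisp)}\Im\vartheta_Y(W)\) by the same property on~\(\Omega(\Reg)\).

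With all conditions of Lemma~\ref{lem:action_from_theta} verified, that lemma produces a unique \((\Gr,\Bisp,\Reg)\)\nb-action on~\(Y\) whose induced maps on slices coincide with~\(\vartheta\).  Finally, the two constructions are mutually inverse: the forward construction uses the same slice-wise formula \(\vartheta(\Slice)(y) = \gamma\cdot y\), so feeding an action into forward-then-reverse recovers the original by the uniqueness clause of Lemma~\ref{lem:action_from_theta}, and the reverse construction inverts it in the same way.  The main obstacle is arranging the bookkeeping in the reverse direction so that the extra conditions \ref{en:action_from_theta3}--\ref{en:action_from_theta6}, which are \emph{not} part of the defining relations of~\(\IS\), are genuinely forced by the existence of \(f\); this is where the role of equivariance to~\(\Omega(\Reg)\) is essential, since otherwise an arbitrary \(\IS\)\nb-action on~\(Y\) need not correspond to a correspondence action at all.
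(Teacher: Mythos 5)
Your proposal is correct and follows essentially the same route as the paper: both directions reduce to Lemma~\ref{lem:action_from_theta}, the forward direction uses Lemma~\ref{lem:theta_gives_equivariant_invariant} to translate \((\Gr,\Bisp)\)-equivariance into \(\IS\)-equivariance, and the converse pulls the conditions \ref{en:action_from_theta3}--\ref{en:action_from_theta6} back from the universal action on \(\Omega(\Reg)\) using exactly the facts that preimages commute with unions and that the closure of a preimage is contained in the preimage of the closure. You spell out some of the bookkeeping (e.g.\ identifying images of \(\vartheta_Y(t)\) with domains of \(\vartheta_Y(t^*)\)) that the paper leaves implicit, but the argument is the same.
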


\begin{proof}
  Lemma~\ref{lem:action_from_theta} shows that the maps
  \(\vartheta(\Slice)\) defined by an action of \((\Gr,\Bisp,\Reg)\)
  satisfy the relations that are needed for~\(\vartheta\) to induce a
  homomorphism \(\IS\to I(Y)\).  In addition,
  Lemma~\ref{lem:theta_gives_equivariant_invariant} implies easily
  that a map is \((\Gr,\Bisp)\)-equivariant if and only if it is
  equivariant for the induced actions of~\(\IS\).  In particular, the
  continuous map \(Y\to \Omega(\Reg)\) is \(\IS\)-equivariant.
  Conversely, assume that \(\vartheta\colon\IS\to I(Y)\) is a
  homomorphism and that there is an \(\IS\)\nb-equivariant map
  \(\varrho\colon Y\to \Omega(\Reg)\).
  Lemma~\ref{lem:action_from_theta} applied to the
  \((\Gr,\Bisp,\Reg)\)-action on~\(\Omega(\Reg)\) shows that the
  action~\(\vartheta_{\Omega(\Reg)}\) on~\(\Omega(\Reg)\) satisfies
  all the conditions in Lemma~\ref{lem:action_from_theta}.  The
  equivariance of~\(\varrho\) requires, among others,
  that~\(\varrho^{-1}\) maps the domain of
  \(\vartheta_{\Omega(\Reg)}(t)\) for \(t\in\IS\) to the domain of
  \(\vartheta_Y(t)\).  In particular, for the empty slice and the unit
  slice in~\(\Gr\), this gives~\ref{en:action_from_theta4}
  for~\(\vartheta_Y\).  Since taking preimages commutes with unions
  and the closure of a preimage is contained in the preimage of the
  closure, the action~\(\vartheta_Y\) also inherits the properties in
  \ref{en:action_from_theta3} and~\ref{en:action_from_theta6}
  from~\(\vartheta_{\Omega(\Reg)}\).
\end{proof}

We form the transformation groupoid \(\Omega \rtimes S\) for an action
of an inverse semigroup~\(S\) on a space~\(\Omega\) as
in~\cite{Exel:Inverse_combinatorial}, where it is called a groupoid of
germs.  The following proposition describes the transformation
groupoid through a universal property:

\begin{proposition}
  \label{pro:isg_trafo_universal}
  Let~\(S\) be an inverse semigroup and let~\(X\) be a space with an
  \(S\)\nb-action~\(\vartheta_X\) by partial homeomorphisms.
  Let~\(Y\) be a space.  There is a natural bijection between
  actions of the transformation groupoid \(X\rtimes S\) on~\(Y\) and
  pairs \((\vartheta_Y,f)\) consisting of an action~\(\vartheta_Y\)
  of~\(S\) on~\(Y\) and an \(S\)\nb-equivariant map
  \(f\colon Y\to X\).
\end{proposition}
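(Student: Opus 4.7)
The plan is to exhibit the bijection concretely at the level of points and then check that the two constructions are continuous and mutually inverse. Recall that a point of $X\rtimes S$ is a germ $[t,x]$, where $t\in S$, $x\in \Dom \vartheta_X(t)$, and $[t,x]=[t',x]$ means there exists an idempotent $e\in S$ with $x\in \Dom \vartheta_X(e)$ and $te=t'e$. The source is $s([t,x]) = x$, the range is $r([t,x]) = \vartheta_X(t)(x)$, and composition is $[t,\vartheta_X(t')(x)]\cdot[t',x]=[tt',x]$.

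Given an action of $X\rtimes S$ on $Y$ with anchor map $f\colon Y\to X$, I would first define $\vartheta_Y(t)$ on $\Dom\vartheta_Y(t)\defeq f^{-1}(\Dom\vartheta_X(t))$ by $\vartheta_Y(t)(y)\defeq [t,f(y)]\cdot y$. That this is a partial homeomorphism with inverse $\vartheta_Y(t^*)$ follows from the identities $[t^*,r([t,f(y)])]\cdot[t,f(y)]=[t^*t,f(y)]$ and the fact that $t^*t$ is an idempotent fixing $f(y)$, so its action on $y$ reduces to the identity via the unit axiom. Multiplicativity $\vartheta_Y(tt') = \vartheta_Y(t)\vartheta_Y(t')$ is the associativity axiom of the groupoid action combined with the composition rule in $X\rtimes S$. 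The equivariance $f\circ \vartheta_Y(t) = \vartheta_X(t)\circ f$ is forced by the fact that $f$ is the anchor map and $r([t,f(y)]) = \vartheta_X(t)(f(y))$, and the domain condition matches by the very definition of $\Dom\vartheta_Y(t)$.

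Conversely, given a pair $(\vartheta_Y,f)$, I would define the action of $X\rtimes S$ on $Y$ by the formula $[t,f(y)]\cdot y \defeq \vartheta_Y(t)(y)$ on the fibre product $(X\rtimes S)\times_{s,X,f} Y$. The key well-definedness step, which I expect to be the main obstacle, is to show this descends to germs: if $[t,f(y)]=[t',f(y)]$ via an idempotent $e$ with $te=t'e$ and $f(y)\in\Dom\vartheta_X(e)$, then $S$\nb-equivariance of $f$ gives $y\in\Dom\vartheta_Y(e)$, and since $e$ is idempotent, $\vartheta_Y(e)(y)=y$; consequently $\vartheta_Y(t)(y)=\vartheta_Y(te)(y)=\vartheta_Y(t'e)(y)=\vartheta_Y(t')(y)$. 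Continuity of the resulting action follows because the germ groupoid carries the quotient topology induced from the natural map $S\times X\supseteq \bigsqcup_{t\in S}\Dom\vartheta_X(t)\to X\rtimes S$, so continuity of the multiplication reduces to continuity of each $\vartheta_Y(t)$. The groupoid action axioms then translate directly into the inverse semigroup action axioms.

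Finally, I would verify that these two assignments are mutually inverse: starting from an action of $X\rtimes S$, passing to $(\vartheta_Y,f)$ and back recovers the same multiplication because $[t,f(y)]\cdot y$ was the defining formula in both directions; starting from $(\vartheta_Y,f)$ and passing back likewise returns $\vartheta_Y$. Naturality in $Y$ is immediate from the pointwise formulas, since a map $Y_1\to Y_2$ commutes with the $X\rtimes S$\nb-actions if and only if it commutes with each $\vartheta_Y(t)$ and respects the anchor maps to $X$.
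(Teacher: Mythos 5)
Your proposal is correct and takes essentially the same approach as the paper: both directions of the bijection are defined by the formula $\gamma\cdot y=\vartheta_Y(t)(y)$ for $\gamma$ in the slice $\Theta_t$ with $\s(\gamma)=f(y)$, the well-definedness on germs is handled via an idempotent $e$ with $te=t'e$ together with the $S$\nb-equivariance of $f$, and continuity is checked slice by slice. The only cosmetic difference is that you phrase everything in terms of germs $[t,x]$ while the paper works with the slices $\Theta_t$ of the transformation groupoid.
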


\begin{proof}
  In this proof, we identify an idempotent partial homeomorphism of a
  space with its domain, which is an open subset.  Assume
  that~\(X\rtimes S\) acts on~\(Y\).  Let \(f\colon Y\to X\) be the
  anchor map of the action.  Any \(t\in S\) gives a slice~\(\Theta_t\)
  of~\(X\rtimes S\).  We define a partial map~\(\vartheta_{Y,t}\)
  of~\(Y\) with domain \(f^{-1}(\vartheta_X(t^* t))\) by
  \(\vartheta_{Y,t}(y) \defeq \gamma\cdot y\) for the unique
  \(\gamma\in\Theta_t\) with \(\s(\gamma) = f(y)\) in~\(X\).  This is
  a partial homeomorphism because~\(\vartheta_{Y,t^*}\) is a partial
  inverse for it.  The partial homeomorphisms~\(\vartheta_{Y,t}\) for
  \(t\in S\) define an action~\(\vartheta_Y\) of~\(S\) on~\(Y\) such
  that~\(f\) is \(S\)\nb-equivariant.  Conversely, let
  \((\vartheta_Y,f)\) be given.  We are going to define an action
  of~\(X\rtimes S\) with anchor map~\(f\).  Let
  \(\gamma\in X\rtimes S\) and \(y\in Y\) satisfy
  \(\s(\gamma) = f(y)\).  There is \(t\in S\) with
  \(\gamma\in \Theta_t\).  Since \(\s(\Theta_t)\) is the open subset
  in~\(X\) corresponding to the idempotent element \(t^* t \in S\),
  \(f(y) = \s(\gamma) \in \vartheta_X(t^* t)\).  Then
  \(y\in \vartheta_Y(t^* t)\) because~\(f\) is \(S\)\nb-equivariant.
  So \(\gamma\cdot y\defeq \vartheta_{Y,t}(y)\) is defined.  If
  \(\gamma\in \Theta_t\cap \Theta_u\), then there is an idempotent
  element \(e\in S\) with \(t e = u e\) and
  \(\s(\gamma)\in \vartheta_X(e)\).  Then \(y\in \vartheta_Y(e)\) as
  well and hence
  \(\vartheta_{Y,t}(y) = \vartheta_{Y,t e}(y) = \vartheta_{Y,u e}(y) =
  \vartheta_{Y,u}(y)\).  Thus \(\gamma\cdot y\) does not depend on the
  choice of \(t\in S\) with \(y\in\Theta_t\).  The multiplication map
  \((X\rtimes S) \times_{\s,X,f} Y \to Y\) is continuous because this
  holds on each \(\Theta_t\times_{s,X,f} Y\).  Routine computations
  show that the multiplication satisfies
  \(\gamma_1\cdot (\gamma_2\cdot y) = (\gamma_1\cdot \gamma_2)\cdot
  y\) for
  \((\gamma_1,\gamma_2,y) \in (X\rtimes S)\times_{\s,X,\rg} (X\rtimes
  S) \times_{\s,X,f} Y\) and \(1_{f(y)}\cdot y = y\) for all
  \(y\in Y\).  Thus the pair \((\vartheta_Y,f)\) gives rise to an
  action of \(X\rtimes S\).  The two constructions above are inverse
  to each other, so that we get the desired bijection.  Both are
  natural; that is, a map \(\varphi\colon Y\to Y'\) is equivariant
  with respect to two actions of \(X\rtimes S\) if and only if it is
  \(S\)\nb-equivariant and satisfies \(f'\circ \varphi = f\).
\end{proof}

\begin{theorem}
  \label{the:groupoid_model}
  The transformation groupoid \(\Omega(\Reg) \rtimes \IS(\Gr,\Bisp)\) is
  a groupoid model for \((\Gr,\Bisp,\Reg)\).
\end{theorem}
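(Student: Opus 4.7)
The plan is to chain Propositions \ref{pro:action_through_IS} and \ref{pro:isg_trafo_universal} to obtain the natural bijection required by Definition \ref{def:groupoid_model}. First, I would observe that the transformation groupoid $\Omega(\Reg) \rtimes \IS$ is automatically étale because it arises from an inverse semigroup action by partial homeomorphisms, whose slices form a basis of bisections for the groupoid topology. This handles the structural requirement of the definition.

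For a fixed space $Y$, I would set up the bijection as a composite. In the forward direction, a $(\Gr,\Bisp,\Reg)$-action on $Y$ yields a homomorphism $\vartheta\colon \IS \to I(Y)$ via the universal property of $\IS$ together with Lemma \ref{lem:action_from_theta}; Theorem \ref{the:universal_action_Reg} then furnishes the unique $(\Gr,\Bisp)$-equivariant map $Y\to \Omega(\Reg)$, which by Lemma \ref{lem:theta_gives_equivariant_invariant} is $\IS$-equivariant. Proposition \ref{pro:isg_trafo_universal} converts this pair into an $(\Omega(\Reg) \rtimes \IS)$-action on $Y$. In the reverse direction, Proposition \ref{pro:isg_trafo_universal} turns an $(\Omega(\Reg) \rtimes \IS)$-action into an $\IS$-action together with an $\IS$-equivariant map $Y\to \Omega(\Reg)$, and Proposition \ref{pro:action_through_IS} produces the corresponding $(\Gr,\Bisp,\Reg)$-action. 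That these two passages are mutually inverse follows because both propositions establish genuine bijections and the intermediate data---the $\IS$-action on $Y$ together with the equivariant map to $\Omega(\Reg)$---is the same on both sides.

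For the naturality clause, suppose $\varphi\colon Y \to Y'$ is a continuous map between $(\Gr,\Bisp,\Reg)$-actions. Lemma \ref{lem:theta_gives_equivariant_invariant} shows that $(\Gr,\Bisp)$-equivariance of $\varphi$ is equivalent to its $\IS$-equivariance (including equality of domains, which encodes the condition $\varphi^{-1}(\Bisp\cdot Y') = \Bisp\cdot Y$). By Proposition \ref{pro:isg_trafo_universal}, $(\Omega(\Reg)\rtimes \IS)$-equivariance is $\IS$-equivariance together with intertwining of the anchor maps $Y \to \Omega(\Reg)$ and $Y' \to \Omega(\Reg)$. But the latter compatibility is automatic: given a $(\Gr,\Bisp)$-equivariant $\varphi$, the composite of $\varphi$ with the anchor $Y'\to \Omega(\Reg)$ is itself $(\Gr,\Bisp)$-equivariant, so uniqueness in Theorem \ref{the:universal_action_Reg} forces it to coincide with the anchor $Y \to \Omega(\Reg)$. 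Conversely, $\IS$-equivariance is already sufficient for $(\Gr,\Bisp)$-equivariance. Hence the bijection is natural.

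The main obstacle I anticipate is purely bookkeeping: making sure that the domain conditions on partial homeomorphisms match up correctly when translating between $(\Gr,\Bisp)$-equivariance and $\IS$-equivariance, and that the anchor maps of the transformation groupoid action behave as expected. All of this, however, has been packaged into the preceding propositions, so the proof should be short and essentially an assembly of previous results.
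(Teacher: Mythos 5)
Your proposal is correct and follows essentially the same route as the paper, whose proof is precisely the combination of Proposition~\ref{pro:isg_trafo_universal} with Proposition~\ref{pro:action_through_IS}; your additional care in invoking Theorem~\ref{the:universal_action_Reg} to pin down the uniqueness of the $\IS$-equivariant map $Y\to\Omega(\Reg)$ (so that the pair required by Proposition~\ref{pro:isg_trafo_universal} is determined by the $(\Gr,\Bisp,\Reg)$-action alone) is a detail the paper leaves implicit but which your argument handles correctly.
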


\begin{proof}
  This follows by combining the universal property of the
  transformation groupoid in Proposition~\ref{pro:isg_trafo_universal}
  with the description of actions of \((\Gr,\Bisp,\Reg)\) on a
  space~\(Y\) in Proposition~\ref{pro:action_through_IS}.
\end{proof}

\begin{proposition}
  \label{pro:universal_restricted_to_Reg}
  The restriction of
  \(\Mod \defeq \Omega_{[0,\infty)} \rtimes \IS(\Gr,\Bisp)\) to the
  open subset \(\Reg\subseteq \Gr^0\subseteq \Omega_{[0,\infty)}\) is
  isomorphic to~\(\Gr_\Reg\), and the orbit \(\Mod \cdot \Reg\) is
  \(\bigsqcup_{n\in\N} (\Bisp_n/\Gr)_\Reg\).  The restriction
  of~\(\Mod\) to \(\bigsqcup_{n\in\N} (\Bisp_n/\Gr)_\Reg\) is Morita
  equivalent to~\(\Gr_\Reg\).
\end{proposition}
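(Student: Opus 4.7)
The plan is to combine the normal form for elements of $\IS$ given in Lemma~\ref{lem:IS_structure} with the set-theoretic decomposition of $\Omega_{[0,\infty)}$ provided by Proposition~\ref{pro:Omega_set}. The key observation is that each slice $\Slice \in \Bis(\Bisp)$ acts on $\Omega_{[0,\infty)}$ by a partial homeomorphism that shifts strata: by Lemma~\ref{lem:range_source_theta_X}, $\vartheta(\Slice)$ sends each stratum $\Bisp_m/\Gr \subseteq \Omega_{[0,\infty)}$ into $\Bisp_{m+1}/\Gr$, and $\vartheta(\Slice)^*$ goes the other way. Consequently, $\vartheta(\Slice)$ and $\vartheta(\Slice)^*$ strictly displace points out of $\Gr^0 = \Bisp_0/\Gr$.

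For the first claim, I would show that the only elements $t \in \IS$ whose partial homeomorphism $\vartheta(t)$ has both domain and image meeting $\Gr^0$ lie in $\Bis(\Gr)$. By Lemma~\ref{lem:IS_structure}, every $t$ is either in $\Bis(\Gr)$, or of the form $a_1 \cdots a_n$, $b_1^* \cdots b_m^*$, or $a_1 \cdots a_n b_1^* \cdots b_m^*$ with $a_i,b_j \in \Bis(\Bisp)$; in each of the latter three cases the level-shifting observation forces the image or the domain of $\vartheta(t)$ to lie entirely outside $\Gr^0$. Hence at any $y \in \Reg$, only germs of $\vartheta(\Slice)$ for $\Slice \in \Bis(\Gr)$ contribute to $\Mod|_\Reg$. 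Since $\Gr$ is étale and the canonical action of its slices on $\Gr^0$ recovers $\Gr$ as a transformation groupoid, this germ groupoid on the invariant open subset $\Reg$ is exactly $\Gr_\Reg$.

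The orbit calculation follows the same case analysis: given $x \in \Reg$, $\vartheta(t)(x)$ is defined only when $t \in \Bis(\Gr)$, giving a point in $\Reg$ by invariance of $\Reg$, or $t = a_1 \cdots a_n$ with $a_i \in \Bis(\Bisp)$ and $n \geq 1$, giving a point in $(\Bisp_n/\Gr)_\Reg$ because its source in $\Gr^0$ equals $x \in \Reg$. Conversely, every $[\omega] \in (\Bisp_n/\Gr)_\Reg$ with representative $\omega = x_1 \cdots x_n$ is obtained as $\vartheta(\Slice_1 \cdots \Slice_n)(\s(\omega))$ for any slices $\Slice_i \in \Bis(\Bisp)$ with $x_i \in \Slice_i$. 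This identifies $\Mod \cdot \Reg$ set-theoretically with $\bigsqcup_{n \in \N} (\Bisp_n/\Gr)_\Reg$, and the identification is also topological because the relevant partial homeomorphisms are open.

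For the Morita equivalence, I would invoke the standard fact about étale groupoids: for any étale groupoid $H$ and any open subset $U \subseteq H^0$, the restrictions $H|_U$ and $H|_{H \cdot U}$ are Morita equivalent, with equivalence bi-space $\s^{-1}(U) \cap \rg^{-1}(H \cdot U) \subseteq H$ under the obvious left and right multiplication actions. Applying this with $H = \Mod$ and $U = \Reg$, and using the identification $\Mod|_\Reg \cong \Gr_\Reg$ from the first step, yields the claim. The main conceptual obstacle is the level-shifting case analysis underlying the first two parts; once that is in place, both the orbit computation and the invocation of the general Morita principle are straightforward.
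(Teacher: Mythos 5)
Your proposal is correct and follows essentially the same route as the paper: the normal form of Lemma~\ref{lem:IS_structure} combined with the observation that slices of $\Bisp$ move points out of $\Gr^0 \subseteq \Omega_{[0,\infty)}$ (into $\Omega_{[1,\infty)}$), so that only germs of elements of $\Bis(\Gr)$ survive over $\Reg$, the same case analysis for the orbit, and the restriction-to-a-full-open-subset principle for the Morita equivalence, which the paper verifies by hand rather than citing. The only cosmetic difference is that the paper phrases the level-shifting in terms of images landing in $\Bisp_n/\Gr \subseteq \Omega_{[n,\infty)}$ rather than stratum-by-stratum, but the content is identical.
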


\begin{proof}
  If \(a\in\Bis(\Bisp)\), then \(\vartheta_a\) maps
  \(\Omega_{[0,\infty)}\) into
  \(\Bisp\cdot \Omega_{[0,\infty)} = \Omega_{[1,\infty)}\), which is
  disjoint from \(\Reg \subseteq \Gr^0\subseteq \Omega_{[0,\infty)}\).
  Therefore, any element of~\(\IS(\Gr,\Bisp)\) of the form~\(s a^*\) with
  \(s \in \IS(\Gr,\Bisp)\) vanishes on~\(\Reg\).  By
  Lemma~\ref{lem:IS_structure}, the only remaining elements of~\(\IS(\Gr,\Bisp)\)
  are of the form \(a_1 \dotsm a_n\) for
  \(a_1,\dotsc,a_n \in \Bis(\Bisp)\) or \(a\in \Bis(\Gr)\).  An
  element of the first type maps~\(\Gr^0\) into
  \(\Bisp_n/\Gr \subseteq \Omega_{[n,\infty)}\), so that its codomain
  is disjoint from~\(\Reg\).  Therefore, an arrow in
  \(\Omega_{[0,\infty)} \rtimes \IS(\Gr,\Bisp)\) with range and source
  in~\(\Reg\) must belong to~\(\Theta_a\) for some \(a\in \Bis(\Gr)\).
  The germ relation for these arrows is also the same as for
  \(\Reg \rtimes \Bis(\Gr) = \Gr_\Reg\).  Thus the restriction
  of~\(\Mod\) to~\(\Reg\) is isomorphic to~\(\Gr_\Reg\).  Let
  \(x_{n+1}\in\Reg\) belong to the domain of a slice of the form
  \(a_1\dotsm a_n\) with \(a_1,\dotsc,a_n \in \Bis(\Bisp)\).  Let
  \(x_1,\dotsc,x_n\in \Bisp\) be such that \(x_j \in a_j\) and
  \(\s(x_j) = \rg(x_{j+1})\) for \(j=1,\dotsc,n+1\).  An induction
  on~\(n\) shows that
  \(\vartheta_{a_1\dotsm a_n}(x_{n+1}) = [x_1,\dotsc,x_n] \in
  \Bisp_n/\Gr \subseteq \Omega_{[0,\infty)}\).  The latter belongs to
  \((\Bisp_n/\Gr)_\Reg\).  Conversely, any element
  \([x_1,\dotsc,x_n]\) of \((\Bisp_n/\Gr)_\Reg\) is of this form by
  choosing \(x_{n+1} \defeq \s(x_n) \in \Reg\) and slices
  \(a_j\in\Bis(\Gr)\) with \(x_j \in a_j\).  Thus
  \(\Mod \cdot \Reg = \bigsqcup_{n\in\N} (\Bisp_n/\Gr)_\Reg\).
  Since~\(\Reg\) is open and the multiplication map in an \'etale
  groupoid is open, \(\Mod \cdot \Reg \subseteq \Omega_{[0,\infty)}\)
  is open.  The subset
  \[
    \Mod_\Reg \defeq \setgiven{\gamma\in\Mod}{\s(\gamma)\in\Reg}
    \subseteq \Mod
  \]
  is also open, and the left and right multiplication actions
  of~\(\Mod\) on its arrow space restrict to actions of
  \(\Mod|_{\Mod \cdot \Reg}\) and \(\Mod|_{\Reg}\) on \(\Mod_\Reg\) on
  the left and right, respectively.  These actions remain free and
  proper, and the range and source maps induce homeomorphisms
  \(\Mod_\Reg/\Mod|_{\Reg} \cong \Mod\cdot\Reg\),
  \(\Mod|_{\Mod\cdot\Reg} \backslash \Mod_\Reg \cong \Reg\).
  Thus~\(\Mod_\Reg\) is a Morita equivalence between the groupoids
  \(\Mod|_{\Mod \cdot \Reg}\) and \(\Mod|_{\Reg}\).
\end{proof}

\section{A universal property for transformation groupoid
  \texorpdfstring{$\Cst$}{C*}-algebras}
\label{sec:universal_groupoid_Cstar}

Cuntz--Pimsner algebras are defined by a universal property that
specifies their \Star{}homomorphisms to arbitrary
\(\Cst\)\nb-algebras.  We would like a similar universal property for
the \(\Cst\)\nb-algebra of our groupoid model.  In this section, we
formulate the relevant universal property, which applies to all
transformation groupoids of inverse semigroup actions.  Similar
results exist in the literature, but with extra assumptions, such as
assuming the groupoid to be Hausdorff
(\cite{Buss-Holkar-Meyer:Universal}*{Theorem~7.6}) or second countable
(\cite{Exel:Inverse_combinatorial}*{Proposition~9.8}).

Let~\(Y\) be a locally compact space and let~\(S\) be an inverse
semigroup.  Let \(\vartheta\colon S\to I(Y)\) be an inverse semigroup
homomorphism.  Let \(Y\rtimes S\) be the resulting transformation
groupoid.  Recall that this is an \'etale groupoid with unit
space~\(Y\).  For each \(a\in S\), let
\(\Theta_a\subseteq Y\rtimes S\) be the set of all arrows
\(a\colon y\to \vartheta_a(y)\) for \(y\in D_{a^* a}\).  These subsets
of the arrow space are slices that cover~\(Y\rtimes S\), and
\(\Theta_a \cap \Theta_b\) is the union of all \(\Theta_c\) for
\(c\le a,b\).  Extend functions in \(\Contc(\Theta_a)\) by zero to the
arrow space \(Y\rtimes S\) and take linear combinations.  The
resulting space of functions is a \Star{}algebra
\(\ContS(Y\rtimes S)\) for the usual convolution and involution.  The
groupoid \(\Cst\)\nb-algebra \(\Cst(Y\rtimes S)\) is defined as the
completion of \(\ContS(Y\rtimes S)\) in the maximal
\(\Cst\)\nb-seminorm.

We are going to describe nondegenerate \Star{}homomorphisms
\(\Cst(Y\rtimes S) \to \Comp(\Hilm)\) for a Hilbert module~\(\Hilm\)
over some \(\Cst\)\nb-algebra~\(D\).  This differs slightly
from~\cite{Buss-Holkar-Meyer:Universal} in that we only allow
representations by compact operators.  This makes it easier to prove
that certain operators are adjointable.  Our definition of a covariant
representation differs from the one
in~\cite{Buss-Holkar-Meyer:Universal} because we want to use
generators and relations for an inverse semigroup and thus want the
action of the inverse semigroup to be an action by partial maps.  On a
Hilbert module, the right partial maps are the following:

\begin{definition}
  A \emph{partial unitary} on~\(\Hilm\) is a unitary operator between
  two Hilbert submodules of~\(\Hilm\).
\end{definition}

The set of partial unitaries is a unital inverse subsemigroup of the
inverse semigroup of partial bijections on~\(\Hilm\), that is, the
identity map is a partial unitary, the partial inverse of a partial
unitary is a partial unitary, and the composite of two partial
unitaries (as partial maps) is again a partial unitary.

Since \(\Hilm = D\) is possible, our theory contains nondegenerate
\Star{}homomorphisms to any \(\Cst\)\nb-algebra~\(D\) as a special
case.  Conversely, since \(\Comp(\Hilm)\) is also a
\(\Cst\)\nb-algebra, what we do is equivalent to describing
nondegenerate \Star{}homomorphisms to all \(\Cst\)\nb-algebras.  If
\(\Hilm=D\) for a \(\Cst\)\nb-algebra, a partial unitary is a map
between two closed right ideals in~\(D\) that becomes unitary when
these are viewed as Hilbert modules.  This idea becomes much more
natural in the setting of Hilbert modules.

Before we define covariant representations, we prove a useful lemma
about the domains of the partial unitaries that occur in our covariant
representations.

\begin{lemma}
  \label{lem:domains_Hilm_apprid}
  Let~\(\Hilm\) be a Hilbert module over some \(\Cst\)\nb-algebra and
  let \(\varphi\colon \Cont_0(Y) \to \Bound(\Hilm)\) be a
  representation.  Let \(W\subseteq Y\) be an open subset and
  let~\((f_n)\) be a bounded approximate unit in \(\Cont_0(W)\).  Then
  a vector
  \(\xi\in\Hilm\) is of the form \(\varphi(g)\eta\) for some
  \(g\in \Cont_0(W)\), \(\eta\in\Hilm\) if and only if the net
  \(\varphi(f_n)\xi\) converges towards~\(\xi\).  If
  \(W_1,W_2\subseteq Y\)  are open, then
  \[
    \Cont_0(W_1)\Hilm \cap \Cont_0(W_2)\Hilm
    = \Cont_0(W_1 \cap W_2)\Hilm.
  \]
\end{lemma}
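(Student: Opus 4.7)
My plan is to prove the two statements in sequence, using the first as the main tool for the second.

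For the first statement, the forward direction is a routine bounded estimate: if $\xi=\varphi(g)\eta$ with $g\in\Cont_0(W)$, then
\[
\varphi(f_n)\xi-\xi = \varphi(f_n g - g)\eta,
\]
and since $(f_n)$ is a bounded approximate unit for $\Cont_0(W)$ and $g\in\Cont_0(W)$, we have $f_n g\to g$ in norm, so $\|\varphi(f_n)\xi-\xi\|\le \|f_n g - g\|\cdot\|\eta\|\to 0$. For the converse, the hypothesis $\varphi(f_n)\xi\to\xi$ exhibits~$\xi$ as a norm-limit of vectors in $\varphi(\Cont_0(W))\Hilm$, so $\xi\in\overline{\varphi(\Cont_0(W))\Hilm}$. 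The sharper conclusion that~$\xi$ admits an exact factorisation $\xi=\varphi(g)\eta$ is the Cohen--Hewitt factorisation theorem for Banach modules, applied to~$\Hilm$ as a module over the Banach algebra $\Cont_0(W)$; the only hypothesis needed is that $\Cont_0(W)$ has a bounded approximate unit, which is automatic for a \Cst\nobreakdash-algebra.

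For the second statement, the inclusion $\Cont_0(W_1\cap W_2)\Hilm \subseteq \Cont_0(W_1)\Hilm\cap\Cont_0(W_2)\Hilm$ is immediate from $\Cont_0(W_1\cap W_2)\subseteq\Cont_0(W_i)$ for $i=1,2$. For the reverse inclusion, let $\xi\in\Cont_0(W_1)\Hilm\cap\Cont_0(W_2)\Hilm$. Applying part~(1) to~$W_2$, choose an exact factorisation $\xi=\varphi(h_2)\eta_2$ with $h_2\in\Cont_0(W_2)$ and $\eta_2\in\Hilm$. Let $(e_\alpha)$ be an approximate unit for $\Cont_0(W_1)$; since $\xi\in\Cont_0(W_1)\Hilm$, the forward direction of part~(1) gives $\varphi(e_\alpha)\xi\to\xi$. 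But
\[
\varphi(e_\alpha)\xi = \varphi(e_\alpha h_2)\eta_2,
\]
and $e_\alpha h_2$ vanishes outside $W_1\cap W_2$ (since $e_\alpha$ vanishes on $Y\setminus W_1$ and $h_2$ vanishes on $Y\setminus W_2$), so $e_\alpha h_2\in\Cont_0(W_1\cap W_2)$. Hence each $\varphi(e_\alpha)\xi$ belongs to $\varphi(\Cont_0(W_1\cap W_2))\Hilm$, and passing to the limit (together with a second application of Cohen--Hewitt, to promote membership in the closure to an exact factorisation) yields $\xi\in\Cont_0(W_1\cap W_2)\Hilm$.

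The main obstacle, and indeed the only substantive input, is the invocation of the Cohen--Hewitt factorisation theorem. This is what converts the natural closure statement $\overline{\varphi(\Cont_0(W))\Hilm}$ into the set-theoretic image $\varphi(\Cont_0(W))\Hilm$ asserted by the lemma. Without Cohen--Hewitt, part~(1) would give only $\xi\in\overline{\varphi(\Cont_0(W))\Hilm}$ and part~(2) would hold only after passing to closures on both sides. Once this tool is granted, both statements reduce to short manipulations with bounded approximate units.
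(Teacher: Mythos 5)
Your proof is correct and follows essentially the same route as the paper: both directions of the first statement rest on the boundedness of the approximate unit and on the Cohen--Hewitt factorisation theorem (which shows that \(\varphi(\Cont_0(W))\Hilm\) is a closed linear subspace), exactly as in the paper. The only cosmetic difference is in the second statement, where the paper applies part one directly to the product approximate unit \(f_{1,n}f_{2,n}\) of \(\Cont_0(W_1\cap W_2)\), whereas you first factorise \(\xi\) over \(W_2\) and then multiply by an approximate unit of \(\Cont_0(W_1)\); the two arguments are interchangeable.
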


\begin{proof}
  The approximate unit property implies immediately that
  \(\varphi(f_n) \varphi(g) \eta\) converges towards
  \(\varphi(g) \eta\) for \(g\), \(\eta\) as above.  For the converse,
  we use the Cohen--Hewitt Factorisation Lemma.  It shows that the
  subset of \(\varphi(g)\eta\) is a closed linear subspace.  Since it
  contains \(\varphi(f_n) \xi\), it also contains the limit of
  \(\varphi(f_n)\xi\) if that limit exists.

  For the second statement, the inclusion
  \(\Cont_0(W_1)\Hilm \cap \Cont_0(W_2)\Hilm \supseteq \Cont_0(W_1
  \cap W_2)\Hilm\) is obvious.  For the converse, let \((f_{j,n})\) be
  bounded approximate units in \(\Cont_0(W_j)\) for \(j=1,2\),
  respectively.  Then \((f_{1,n} f_{2,n})\) is an approximate
  unit in \(\Cont_0(W_1 \cap W_2)\Hilm\).  If
  \(\xi\in\Cont_0(W_1)\Hilm \cap \Cont_0(W_2)\Hilm\), then
  \(\varphi(f_{2,n})\xi\) converges towards~\(\xi\), and then
  \(\varphi(f_{1,n})\varphi(f_{2,n})\xi\) converges towards~\(\xi\) as
  well because~\(f_{1,n}\) is bounded.  This implies
  \(\xi \in\Cont_0(W_1 \cap W_2)\Hilm\) by the first statement.
\end{proof}

\begin{definition}[compare~\cite{Buss-Holkar-Meyer:Universal}]
  \label{def:cov-rep}
  Let~\(Y\) be a locally compact Hausdorff space with a unital action
  of a unital inverse semigroup~\(S\) by partial homeomorphisms
  \(\vartheta_a\colon D_{a^*a}\congto D_{aa^*}\) between open subsets
  \(D_e\subseteq Y\) for idempotent \(e\in S\).  A \emph{compact
    covariant representation} of this system on a Hilbert
  \(D\)\nb-module~\(\Hilm\) consists of a nondegenerate representation
  \(\varphi\colon \Cont_0(Y)\to \Comp(\Hilm)\) and a family of
  unitaries \(\Rep_a\colon \Hilm_{a^*a}\congto \Hilm_{aa^*}\) for
  \(a\in S\), where \(\Hilm_e\defeq \varphi(\Cont_0(D_e))\Hilm\), that
  satisfy the following conditions:
  \begin{enumerate}[label=\textup{(\ref*{def:cov-rep}.\arabic*)},
    leftmargin=*,labelindent=0em]
  \item \label{en:cov-rep1}%
    \(\Rep_1 = \Id_{\Hilm}\) and \(\Rep_a \Rep_b = \Rep_{a b}\) for all
    \(a,b \in S\), where \(\Rep_a \Rep_b\) denotes the composition of
    partial maps on~\(\Hilm\);
\item \label{en:cov-rep4}%
    \(\Rep_a^*\varphi(f)\Rep_a = \varphi(f\circ\vartheta_a)\) as elements of
    \(\Comp(\Hilm_{a^* a})\), for all \(f\in \Cont_0(D_{a a^*})\).
  \end{enumerate}
\end{definition}

\begin{theorem}
  \label{the:covariant_rep}
  There is a natural bijection between compact covariant
  representations and nondegenerate \Star{}homomorphisms
  \(\Cst(Y\rtimes S) \to \Comp(\Hilm)\).  Naturality means two things:
  \begin{enumerate}[label=\textup{(\ref*{the:covariant_rep}.\arabic*)},
    leftmargin=*,labelindent=0em]
  \item \label{en:covariant_rep_1}%
    A bounded, possibly nonadjointable operator
    \(\xi\colon \Hilm_1 \to \Hilm_2\) intertwines two nondegenerate
    \Star{}homomorphisms
    \(\psi_j\colon \Cst(Y\rtimes S) \to \Comp(\Hilm_j)\) for \(j=1,2\),
    if and only if it intertwines the corresponding covariant
    representations \((\varphi_j,\Rep_j)\), that is, it intertwines
    the representations \(\varphi_1\) and~\(\varphi_2\) and
    \(\xi \Rep_{1,a} = \Rep_{2,a} \xi\) holds as maps
    \(\Hilm_{1,a^* a} \to \Hilm_{2,a a^*}\) for all \(a\in A\).
  \item \label{en:covariant_rep_2}%
    Let~\(\HilmF\) be a Hilbert \(D'\)\nb-module for another
    \(\Cst\)\nb-algebra with a nondegenerate left action of~\(D\) by
    compact operators.  Then a nondegenerate \Star{}\alb{}homomorphism
    \(\Cst(Y\rtimes S) \to \Comp(\Hilm)\) induces a nondegenerate
    \Star{}homomorphism
    \(\Cst(Y\rtimes S) \to \Comp(\Hilm \otimes_D \HilmF)\), and a
    compact covariant representation \((\varphi,\Rep)\) induces a
    compact covariant representation
    \((\varphi\otimes_D \Id_{\HilmF}, \Rep_a \otimes_D \Id_{\HilmF})\)
    on \(\Hilm \otimes_D \HilmF\).  The bijection between
    representations is compatible with these two constructions.
  \end{enumerate}
\end{theorem}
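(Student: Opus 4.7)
The plan is to construct the bijection in both directions and then verify naturality; both directions will be characterised by the same identity $\Rep_a \varphi(h) = \psi(\tilde h)$, where $\tilde h \in \Contc(\Theta_a)$ denotes the source-pullback of $h \in \Contc(D_{a^*a})$ via the homeomorphism $\s|_{\Theta_a}\colon \Theta_a \to D_{a^*a}$. For the forward direction $(\varphi,\Rep) \mapsto \psi$, I will define a $*$\nb-representation $\pi$ of $\ContS(Y \rtimes S)$ slice by slice and pass to the $\Cst$-completion. Since each $\xi \in \Contc(\Theta_a)$ is of the form $\tilde f_\xi$ for a unique $f_\xi \in \Contc(D_{a^*a})$, I put $\pi_a(\xi) \defeq \Rep_a \varphi(f_\xi)$; this lies in $\Comp(\Hilm)$ because $\varphi$ does. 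Compatibility on overlaps $\Contc(\Theta_a \cap \Theta_b)$ will follow from the natural order on~$S$: any arrow in the overlap lies in some $\Theta_c$ with $c \le a, b$, and the relation $\Rep_{ae} = \Rep_a \Rep_e$ for idempotent~$e$ from~\ref{en:cov-rep1} gives $\pi_a|_{\Contc(\Theta_c)} = \pi_c = \pi_b|_{\Contc(\Theta_c)}$. Multiplicativity is then a direct calculation: unique factorisation of arrows in~$\Theta_{ab}$ shows that $\xi * \eta$ for $\xi \in \Contc(\Theta_a)$, $\eta \in \Contc(\Theta_b)$ has source-function $(f_\xi \circ \vartheta_b) \cdot f_\eta$, and
\[
  \pi(\xi * \eta) = \Rep_{ab}\varphi((f_\xi \circ \vartheta_b) f_\eta)
  = \Rep_a \Rep_b \Rep_b^* \varphi(f_\xi) \Rep_b \varphi(f_\eta)
  = \Rep_a \varphi(f_\xi) \Rep_b \varphi(f_\eta) = \pi(\xi)\pi(\eta),
\]
using~\ref{en:cov-rep4} and $\Rep_b \Rep_b^* = \Id_{\Hilm_{bb^*}}$; the involutive relation is analogous. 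Since $\|\pi(\xi)\| \le \|\xi\|_\infty$ on each slice, $\pi$ extends to a nondegenerate $*$\nb-homomorphism $\psi \colon \Cst(Y \rtimes S) \to \Comp(\Hilm)$.

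For the reverse direction $\psi \mapsto (\varphi,\Rep)$, I will set $\varphi \defeq \psi|_{\Cont_0(Y)}$ via the canonical embedding $\Cont_0(Y) \subseteq \Mult(\Cst(Y \rtimes S))$. To define $\Rep_a$, the convolution identity $\tilde h^* * \tilde h = \bar h \cdot h$ in $\Contc(Y)$ gives
\[
  \|\psi(\tilde h)\eta\|^2
  = \langle \eta, \varphi(|h|^2)\eta \rangle
  = \|\varphi(h)\eta\|^2
\]
for every $h \in \Contc(D_{a^*a})$ and $\eta \in \Hilm$. Thus $\varphi(h)\eta \mapsto \psi(\tilde h)\eta$ is well defined and isometric on the dense subspace $\varphi(\Contc(D_{a^*a}))\Hilm$ of $\Hilm_{a^*a}$ and extends to an isometry $\Rep_a \colon \Hilm_{a^*a} \to \Hilm$. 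The analogous construction with~$a^*$ produces~$\Rep_{a^*}$, and an approximate-unit computation will show $\Rep_a \Rep_{a^*} = \Id_{\Hilm_{aa^*}}$ and $\Rep_{a^*}\Rep_a = \Id_{\Hilm_{a^*a}}$, so $\Rep_a$ is a partial unitary onto~$\Hilm_{aa^*}$ with inverse~$\Rep_{a^*} = \Rep_a^*$. Axiom~\ref{en:cov-rep1} then follows by passing to the limit in $\Rep_a \Rep_b \varphi(h) = \lim_\alpha \psi(\tilde u_\alpha * \tilde h)$ for an approximate unit $(u_\alpha)$ in $\Cont_0(D_{a^*a})$, and axiom~\ref{en:cov-rep4} is a short algebraic consequence.

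The two constructions are mutually inverse by direct inspection, since both are characterised by the identity $\Rep_a \varphi(h) = \psi(\tilde h)$ on a dense subspace. Naturality claim~\ref{en:covariant_rep_1} reduces to checking the intertwining relation on each $\Contc(\Theta_a)$, which is immediate from this identity, and~\ref{en:covariant_rep_2} follows because the interior tensor product with~$\HilmF$ commutes with both the construction of~$\pi$ and the restriction to $\Cont_0(Y) \subseteq \Mult(\Cst(Y \rtimes S))$. The main technical obstacle will be the non-Hausdorff case: sections in $\Contc(\Theta_a)$ extended by zero to $Y \rtimes S$ need not be continuous, so one must carefully specify the $*$\nb-subalgebra $\ContS(Y \rtimes S)$ and verify that $\pi$ is well defined on sums $\sum \xi_i$ with $\xi_i \in \Contc(\Theta_{a_i})$ that cancel pointwise; this will require a partition-of-unity argument combined with the overlap compatibility established via the order on~$S$.
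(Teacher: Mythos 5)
Your proposal is correct and follows essentially the same route as the paper's proof: the forward direction defines the representation slice by slice via \(\psi_a(f) \defeq \Rep_a\varphi(f^\s)\), checks agreement on overlaps through the order on~\(S\) and multiplicativity through the source-pullback formula for convolution, while the reverse direction restricts to the unit slice and builds the partial unitaries from the isometry identity \(\norm{\psi(\tilde h)\eta} = \norm{\varphi(h)\eta}\). The well-definedness obstacle you flag at the end (sums of sections on different slices that cancel pointwise) is real and is exactly what the paper resolves by combining the overlap compatibility with the gluing result \cite{Buss-Meyer:Actions_groupoids}*{Proposition~B.2}, so your plan of attack is the right one.
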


The first naturality property differs slightly from the one in
\cite{Buss-Holkar-Meyer:Universal}*{Theorem~3.23} in that we allow
nonadjointable bounded operators instead of nonadjointable isometries.
Michelle G\"obel~\cite{Goebel:Thesis} needed this extra generality in
her recent work about
induction theorems for \(\Cst\)\nb-hulls, and it is no extra effort to
prove the stronger property.

\begin{proof}
  First let \(\Rep_a\) and~\(\varphi\) be a covariant representation.
  Let \(a\in S\) and \(f\in \Contc(\Theta_a)\).  The source
  map~\(\s|_{\Theta_a}\) is a homeomorphism onto
  \(\Theta_a^* \Theta_a = \Theta_{a^* a} = D_{a^* a}\).  So we may
  transfer~\(f\) to a function
  \(f^\s\in \Contc(\s(\Theta_a)) = \Contc(D_{a^* a})\) by composing
  with the inverse of the homeomorphism
  \(\s|_{\Theta_a}\colon \Theta_a \congto \s(\Theta_a)\).  Define
  \(f^\rg\in \Contc(D_{a a^*})\) similarly.  We claim that
  \(\varphi(f^\s)\) belongs to the image of \(\Comp(\Hilm_{a^* a})\)
  in \(\Comp(\Hilm)\), which is a hereditary \(\Cst\)\nb-subalgebra.
  To see this, we use that products of functions
  \(f_1\cdot f_2\cdot f_3\) with \(f_j\in \Contc(D_{a^* a})\) are
  dense in \(\Cont_0(D_{a^* a})\) and that
  \(\varphi(f_1) \ket{\xi} \bra{\eta} \varphi(f_3) =
  \ket{\varphi(f_1)\xi}\bra{\varphi(f_3^*)\eta} \in \Comp(\Hilm_{a^*
    a})\) for all \(\xi,\eta\in \Hilm\).  Since~\(\Rep_a\) is an
  isometry from~\(\Hilm_{a^* a}\) to~\(\Hilm_{a a^*}\),
  \[
    \psi_a(f) \defeq \Rep_a \varphi(f)
  \]
  is a compact operator from~\(\Hilm_{a^* a}\) to~\(\Hilm_{a a^*}\)
  for all \(a\in S\) and \(f\in \Contc(\Theta_a)\).  The space of such
  compact operators is contained in \(\Comp(\Hilm)\).

  We claim that the linear maps
  \(\psi_a\colon \Contc(\Theta_a) \to \Comp(\Hilm)\) defined above
  piece together to a representation of \(\ContS(Y\rtimes S)\).
  First, we check that it is compatible with the involution and
  convolution for functions that live on single slices.  This requires
  a slightly more general formula.  Any \(f\in\Contc(\Theta_a)\) is a
  pointwise product \(f= f_1 f_2\) of two functions in
  \(\Contc(\Theta_a)\), for instance, take \(f_1 = \sqrt{\abs{f}}\)
  and \(f_2(g) = f(g)/f_1(g)\) if \(f(g)=0\) and~\(0\) otherwise.
  Then
  \(\Rep_a^* \varphi(f_1^\rg) \Rep_a = \varphi(f_1^\rg\circ
  \vartheta_a) = \varphi(f_1^\s)\) as operators
  \(\Hilm_{a^* a} \to \Hilm_{a^* a}\).  Since
  \(\varphi(f_2^\s) \in \Comp(\Hilm_{a^* a}, \Hilm_{a^* a})\), we
  compute
  \[
    \psi_a(f_1 f_2)
    = \Rep_a \varphi(f_1^\s f_2^\s)
    = \Rep_a \varphi(f_1^\s) \varphi(f_2^\s)
    = \Rep_a \Rep_a^* \varphi(f_1^\rg) \Rep_a \varphi(f_2^\s)
    = \varphi(f_1^\rg) \Rep_a \varphi(f_2^\s),
  \]
  where the last step uses that \(\varphi(f_1^\rg) \Rep_a \varphi(f_2^\s)\) is
  a compact operator to \(\Hilm_{a a^*}\) and that \(\Rep_a \Rep_a^*\) is
  the identity map on that Hilbert submodule.

  Since \(a\mapsto \Rep_a\) is an inverse semigroup homomorphism,
  \(\Rep_{a^*} = \Rep_a^*\).  Let \(f^\dagger(g) \defeq \conj{f(g)}\) denote
  the pointwise complex conjugate.  This is the adjoint in
  \(\Cont_0(Y\rtimes S)\), but not in \(\ContS(Y\rtimes S)\).  We compute
  \[
    \psi_a(f_1 f_2)^*
    = \varphi(f_2^\s)^* \Rep_a^* \varphi(f_1^\s)^*
    = \varphi( (f_2^\s)^\dagger) \Rep_{a^*} \varphi( (f_1^\rg)^\dagger)
    = \psi_{a^*}((f_1 f_2)^*)
  \]
  because \((f_1 f_2)^* \in \Contc(\Theta_{a^*})\) is given by
  \((f_1 f_2)^*(g) = (f_1 f_2)^\dagger(g^{-1})\) and the inverse flips
  range and source maps.  Thus \(\psi_a(f)^* = \psi_{a^*}(f^*)\) for
  all \(f\in \Contc(\Theta_a)\), \(a\in S\).

  Next, let \(a,b\in S\) and \(f_a\in \Contc(\Theta_a)\),
  \(f_b \in \Contc(\Theta_b)\).  Write them as pointwise products
  \(f_a = f_{a,1} f_{a,2}\) and \(f_b = f_{b,1} f_{b,2}\).  Recall
  that \(f_a * f_b \in \Contc(\Theta_{a b})\) with
  \((f_a * f_b)(g\cdot h) = f_a(g) f_b(h)\) for all \(g\in\Theta_a\),
  \(h\in\Theta_b\) with \(\s(g) = \rg(h)\).  Here
  \(\rg(g\cdot h) = \rg(g)\) and
  \(\s(g\cdot h) = \s(h) = \vartheta_b^*(\s(g))\).  Then
  \begin{multline*}
    \psi_a(f_a) \psi_b(f_b)
    = \varphi(f_{a,1}^\rg) \Rep_a \varphi(f_{a,2}^\s) \varphi(f_{b,1}^\rg) \Rep_b
    \varphi(f_{b,2}^\s)
    \\= \varphi(f_{a,1}^\rg) \Rep_a \varphi(f_{a,2}^\s \cdot f_{b,1}^\rg) \Rep_b
    \varphi(f_{b,2}^\s)
    = \varphi(f_{a,1}^\rg) \Rep_a \Rep_b  \varphi\Bigl(\bigl( (f_{a,2}^\s \cdot
    f_{b,1}^\rg)\circ \vartheta_b\bigr) \cdot f_{b,2}^\s\Bigr)
    \\= \varphi(f_{a,1}^\rg) \Rep_{a b}  \varphi\Bigl(\bigl( (f_{a,2}^\s \cdot
    f_{b,1}^\rg)\circ \vartheta_b\bigr) \cdot f_{b,2}^\s\Bigr)
    = \psi_{a b}(f_a * f_b).
  \end{multline*}
  because the functions \(f_{k,j}\) provide a suitable factorisation
  of \(f_a*f_b\).  Thus the family of maps \(\psi_a\) is
  multiplicative.

  Next, we claim that \(\psi_a\) and \(\psi_b\) coincide on
  \(\Contc(\Theta_a \cap \Theta_b)\) for all \(a,b\in S\).  To prove
  this, we use that \(\Theta_a \cap \Theta_b = \bigcup \Theta_c\) for
  \(c\in S\) with \(c \le a\) and \(c\le b\).  Thus each such~\(c\) is
  of the form \(c = a\cdot e = b\cdot e\) with \(e = c^* c\)
  idempotent.  Thus \(\Rep_b \Rep_e = \Rep_c = \Rep_a \Rep_e\).  Any
  idempotent partial unitary on~\(\Hilm\) is the identity map on a
  Hilbert submodule.  So the partial unitaries \(\Rep_a\)
  and~\(\Rep_b\) restrict to the same map on the image \(\Hilm_e\) of
  the idempotent~\(e\).  Then they also restrict to the same map on
  the sum of \(\Hilm_{c^* c}\) over all \(c \le a,b\).  This sum is
  dense in \(\Contc(\s(\Theta_a \cup \Theta_b))\Hilm\).  Therefore, if
  \(f\in \Contc(\Theta_a \cap \Theta_b)\), then \(\Rep_a\)
  and~\(\Rep_b\) restrict to the same map on \(\varphi(f^\s)\xi\) for
  all \(\xi\in\Hilm\).  It follows that
  \(\psi_a(f) = \Rep_a \varphi(f^\s)\) and
  \(\psi_b(f) = \Rep_b \varphi(f^\s)\) are equal in \(\Comp(\Hilm)\),
  as desired.

  Putting the maps \(\psi_a\colon \Contc(\Theta_a) \to
  \Comp(\Hilm)\) together gives a map
  \(\bigoplus_{a\in S} \Contc(\Theta_a) \to \Comp(\Hilm)\).  Since
  \(\psi_a\) and \(\psi_b\) coincide on
  \(\Contc(\Theta_a \cap \Theta_b)\) for all \(a,b\in S\),
  \cite{Buss-Meyer:Actions_groupoids}*{Proposition~B.2} implies that
  this map descends to a well defined map
  \(\psi\colon \ContS(Y\rtimes S) \to \Comp(\Hilm)\).  The
  computations above show that this is a \Star{}homomorphism.  Its
  restriction to the unit slice \(\Theta_1\) is~\(\varphi\)
  because~\(\Rep_1 = \Id_{\Hilm}\).  Therefore, \(\psi\) is
  nondegenerate.  It extends to a nondegenerate representation on the
  \(\Cst\)\nb-completion \(\Cst(Y\rtimes S)\) of
  \(\ContS(Y\rtimes S)\).

  Conversely, let \(\psi\colon \Cst(Y\rtimes S)\to \Comp(\Hilm)\) be
  a nondegenerate \Star{}homomorphism.  Restricting to the unit slice
  gives a nondegenerate \Star{}homomorphism
  \(\varphi\colon \Cont_0(Y) \to \Comp(\Hilm)\).  Fix \(a\in S\).  Let
  \(f_1,f_2\in\Contc(\Theta_a)\), \(\xi_1,\xi_2\in\Hilm\).  We compute
  \[
    \braket{\psi(f_1)\xi_1}{\psi(f_2) \xi_2}
    =  \braket{\xi_1}{\varphi(f_1^**f_2) \xi_2}
    =  \braket{\xi_1}{\varphi((f_1^\s)^\dagger \cdot f_2^\s) \xi_2}
    =  \braket{\varphi(f_1^\s)\xi_1}{\varphi(f_2^\s) \xi_2}.
  \]
  Thus the map \(\varphi(f^\s) \xi \mapsto \psi(f)\xi\) defines an
  isometric map from \(\varphi(\Contc(D_{a^* a}))\Hilm\) to
  \(\psi(\Contc(\Theta_a))\Hilm\).  Extending this to the closed
  linear spans gives a partial unitary~\(\Rep_a\) on~\(\Hilm\), whose
  domain is \(\Hilm_{a^* a} \defeq \varphi(\Cont_0(D_{a^* a}))\Hilm\)
  and whose codomain is the closure of
  \(\psi(\Contc(\Theta_a))\Hilm\).
  We claim that the latter is equal to~\(\Hilm_{a a^*}\).  To prove
  this, we use the chain of inclusions
  \[
    \psi(\Contc(\Theta_a))\Hilm
    \supseteq \psi(\Contc(\Theta_a)) \psi(\Contc(\Theta_a))^* \Hilm
    \supseteq \psi(\Contc(\Theta_a)) \psi(\Contc(\Theta_a))^*
    \psi(\Contc(\Theta_a))\Hilm.
  \]
  Since \(\Theta_a \Theta_a^* = D_{a a^*}\), the closed linear span of
  \(\psi(f_1) \psi(f_2)^* \xi\) with \(f_1,f_2\in \Contc(\Theta_a)\),
  \(\xi\in\Hilm\) is
  \(\Hilm_{a a^*} \defeq \varphi(\Cont_0(D_{a a^*}))\Hilm\).  Since
  any element of \(\Contc(\Theta_a)\) is a product
  \(f_1* f_2^* * f_3\) for \(f_j \in \Contc(\Theta_a)\), the sets
  \(\psi(\Contc(\Theta_a))\Hilm\) and
  \(\psi(\Contc(\Theta_a)) \psi(\Contc(\Theta_a))^*
  \psi(\Contc(\Theta_a))\Hilm\) are equal.  Since~\(\Hilm_{a a^*}\) is
  sandwiched between them, this proves the claim.  We have now
  attached partial unitaries
  \(\Rep_a\colon \Hilm_{a^* a} \to \Hilm_{a a^*}\) to~\(\psi\).  By
  construction, these satisfy \(\psi(f) = \Rep_a \varphi(f^\s)\) for
  all \(f\in\Contc(\Theta_a)\).

  If \(f_1,f_2\in \Contc(\Theta_a)\), then
  \(\varphi(f_1^\rg) \psi(f_2) = \psi(f_1 f_2)\), where
  \(f_1 f_2 \in \Contc(\Theta_a)\) means the pointwise product.  This
  gives the more symmetric formula
  \[
    \psi(f_1 f_2) = \varphi(f_1^\rg) \Rep_a \varphi(f_2^\s)
  \]
  for all \(f_1,f_2\in\Contc(\Theta_a)\).  So
  \(\psi(f) \in \Comp(\Hilm_{a^* a}, \Hilm_{a a^*}) \subseteq
  \Comp(\Hilm)\) for all \(f\in \Contc(\Theta_a)\).

  Since \(\psi(f_1 f_2) = \Rep_a \varphi(f_1^\s) \varphi(f_2^\s)\) as
  well, we get
  \(\Rep_a \varphi(f_1^\s) \varphi(f_2^\s) = \varphi(f_1^\rg) \Rep_a
  \varphi(f_2^\s)\).  This implies
  \(\Rep_a^* \Rep_a \varphi(f_1^\s) \varphi(f_2^\s) = \Rep_a^*
  \varphi(f_1^\rg) \Rep_a \varphi(f_2^\s)\).  Since
  \(\varphi(f_1^\s)\) takes values in \(\Hilm_{a^* a}\) where
  \(\Rep_a^* \Rep_a\) is the identity, we may leave out
  \(\Rep_a^* \Rep_a\).  Now we interpret \(\varphi(f_1^\s)\) and
  \(\Rep_a^* \varphi(f_1^\rg) \Rep_a\) as compact operators
  on~\(\Hilm_{a^* a}\).  Since~\(\Hilm_{a^* a}\) is the span of
  \(\varphi(f_2^\s)\xi\) for \(f_2\in\Contc(\Theta_a)\),
  \(\xi\in\Hilm\), the equality we have proven
  implies~\ref{en:cov-rep4}.

  Next we prove~\ref{en:cov-rep1} for
  \(a,b\in S\).  The operator \(\Rep_{a b}\) has the domain
  \(\Hilm_{b^* a^* a b}\).  This is contained in the
  domain~\(\Hilm_{b^* b}\) of~\(\Rep_b\), so that~\(\Rep_b\) is
  defined there.  The image of~\(\Rep_b\) is \(\Hilm_{b b^*}\).
  Lemma~\ref{lem:domains_Hilm_apprid} implies
  \[
    \Hilm_{b b^*} \cap \Hilm_{a^* a}
    = \varphi(\Cont_0(D_{b b^*}))\Hilm \cap  \varphi(\Cont_0(D_{a^* a}))\Hilm
    = \varphi(\Cont_0(D_{b b^*} \cap D_{a^* a}))\Hilm.
  \]
  The
  property~\ref{en:cov-rep4} that we
  have already proven shows that the \(\Rep_b\)\nb-preimage of
  \(\varphi(\Cont_0(D_{b b^*} \cap D_{a^* a}))\Hilm\) is
  \(\Cont_0(D_{b^* a^* a b}) \Hilm\) because the partial
  homeomorphism~\(\vartheta_b\) associated to~\(b\) restricts to a
  homeomorphism
  \[
    D_{b^* a^* a b}
    = D_{b^* b} \cap D_{b^* a^* a b}
    \congto D_{b b^*} \cap D_{a^* a}.
  \]
  Thus \(\Rep_a \Rep_b\) has the same domain as~\(\Rep_{a b}\).

  Let \(f_1\in \Contc(\Theta_a)\), \(f_2\in \Contc(\Theta_b)\) and
  \(f_3\in \Contc(D_{b^* a^* a b})\).  Then
  \(f_1*f_2 \in \Contc(\Theta_{a b})\) and
  \begin{multline*}
    \Rep_{a b} \varphi((f_1* f_2)^\s) \varphi(f_3)
    = \psi(f_1 * f_2) \varphi(f_3)
    \\= \psi(f_1) \psi(f_2) \varphi(f_3)
    = \Rep_a \varphi(f_1^\s) \Rep_b \varphi(f_2^\s) \varphi(f_3).
  \end{multline*}
  Here we use that the various functions belong to the domain where
  \(\psi_x(f) = \Rep_x \varphi(f^\s)\) holds for \(x\in \{a,b,a b\}\).
  The right hand side further simplifies to
  \(\Rep_a \Rep_b \varphi(f_1^\s\circ \vartheta_b^*) \varphi(f_2^\s)
  \varphi(f_3)\) because the domain of~\(f_3\) is small enough.  Now
  we may let \(f_1^\s\) and~\(f_2^\s\) run through approximate units
  in \(\Cont_0(D_{a^* a})\) and \(\Cont_0(D_{b b^*})\), respectively.
  Then \((f_1* f_2)^\s\) and
  \((f_1^\s\circ \vartheta_b^*)\cdot f_2^\s\) run through
  approximate units in \(\Cont_0(D_{b^* a^* a b})\).  Therefore, we get
  \(\Rep_a \Rep_b \varphi(f_3) =\Rep_{a b} \varphi(f_3)\) for all
  \(f_3\in \Contc(D_{b^* a^* a b})\).  Since both \(\Rep_a \Rep_b\)
  and~\(\Rep_{a b}\) are defined only on
  \(\varphi(\Contc(D_{b^* a^* a b}))\Hilm\), this implies
  \(\Rep_a \Rep_b=\Rep_{a b}\).  Thus any nondegenerate representation
  \(\psi\colon \Contc(Y\rtimes S) \to \Comp(\Hilm)\) gives a
  covariant pair \((\varphi,\Rep)\).

  When we start with a nondegenerate representation~\(\psi\), make a
  covariant pair \((\varphi,\Rep)\), and turn this into a
  nondegenerate representation again, we get back~\(\psi\) because
  \(\psi(f) = \Rep_a \varphi(f^\s)\) for all
  \(f\in \Contc(\Theta_a)\), \(a\in S\), and these functions generate
  \(\ContS(Y\rtimes S)\).  The converse is also true because the
  representation~\(\psi\) built from a covariant pair
  \((\varphi,\Rep)\) restricts to~\(\varphi\) on the unit slice and
  the property \(\psi(f) = \Rep_a \varphi(f^\s)\) for all
  \(f\in \Contc(\Theta_a)\), \(a\in S\) determines~\(\Rep_a\) as a
  partial unitary \(\Hilm_{a^* a} \to \Hilm_{a a^*}\).  So the two
  constructions are inverse to each other.

  Routine computations show that our constructions in both directions
  have the two naturality properties stated in the theorem.
\end{proof}

\section{Comparison to the relative Cuntz--Pimsner algebra}
\label{sec:compare_CP}

In this section, we are going to identify the groupoid
\(\Cst\)\nb-algebra of the groupoid model of \((\Gr,\Bisp,\Reg)\) with
the Cuntz--Pimsner algebra of \(\Cst(\Bisp)\) relative to
\(\Cst(\Gr_\Reg)\).  Since the groupoid model is unique up to isomorphism
by Proposition~\ref{pro:groupoid_model_unique}, we may work with the
specific groupoid model built in Theorem~\ref{the:groupoid_model}.
This is the transformation groupoid
\[
  \Mod \defeq \Omega(\Reg) \rtimes \IS(\Gr,\Bisp)
\]
for the inverse semigroup \(\IS(\Gr,\Bisp)\) acting on the
space~\(\Omega(\Reg)\) of Lemma~\ref{lem:Omega_Reg}.  Let
\(\vartheta_a\colon D_{a^* a} \to D_{a a^*}\) for
\(a\in \IS(\Gr,\Bisp)\) denote the partial homeomorphisms by which
\(\IS(\Gr,\Bisp)\) acts on \(\Omega(\Reg)\).

\begin{proposition}
  \label{pro:rep_model_Hilm}
  Let~\(\Hilm\) be a Hilbert module over a \(\Cst\)\nb-algebra~\(D\).
  There is a bijection between nondegenerate
  \Star{}homomorphisms \(\psi\colon \Cst(\Mod) \to \Comp(\Hilm)\) and
  pairs \((\varphi,\Rep)\) where
  \(\varphi\colon \Cont_0(\Omega(\Reg)) \to \Comp(\Hilm)\) is a
  nondegenerate \Star{}homomorphism and \(\Rep_a\) for
  \(a\in \Bis= \Bis(\Gr) \sqcup \Bis(\Bisp)\) are partial unitaries,
  for which the following hold:
  \begin{enumerate}[label=\textup{(\ref*{pro:rep_model_Hilm}.\arabic*)},
    leftmargin=*,labelindent=0em]
  \item \label{en:rep_model_Hilm_1}%
    \(\Rep_a \Rep_b = \Rep_{a b}\) if \(a,b\in \Bis\) and
    \(a\in \Bis(\Gr)\) or \(b\in \Bis(\Gr)\);
  \item \label{en:rep_model_Hilm_3}%
    the domain of~\(\Rep_a\) for \(a\in\Bis\)
    is~\(\Hilm_{\rg^{-1}(\s(a))}\), where \(\s(a) \subseteq \Gr^0\)
    and we define \(\Hilm_X \defeq \varphi(\Cont_0(X))\Hilm\) for an
    open subset \(X\subseteq \Omega(\Reg)\);
  \item \label{en:rep_model_Hilm_4}%
    the codomain of~\(\Rep_a\) is~\(\Hilm_{\pi^{-1}(\Qu(a))}\) for
    \(a\in \Bis(\Bisp)\), where
    \(\Qu\colon \Bisp\to \Bisp/\Gr \subseteq \Omega_{[0,1]}\) is the
    orbit space projection and
    \(\pi\colon \Omega(\Reg) \subseteq \Omega_{[0,\infty)}\to
    \Omega_{[0,1]}\) is the canonical projection;
  \item \label{en:rep_model_Hilm_5}%
    \(\Rep_a^* \varphi(f_1) \Rep_a \varphi(f_2) =
    \varphi\bigl((f_1\circ \vartheta_a)f_2\bigr)\) if
    \(a\in \Bis\), \(f_1\in \Cont_0(D_{a a^*})\),
    \(f_2\in \Cont_0(D_{a^* a})\);
  \item \label{en:rep_model_Hilm_2}%
    \(\Rep_a^* \Rep_b = \Rep_{\braket{a}{b}}\) for all \(a,b\in \Bis(\Bisp)\).
  \end{enumerate}
  This bijection has the two naturality properties in
  Theorem~\textup{\ref{the:covariant_rep}}.
  Condition~\ref{en:rep_model_Hilm_2} is redundant, that is, it
  follows from the other conditions.
\end{proposition}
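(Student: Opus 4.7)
The plan is to deduce this proposition from Theorem~\ref{the:covariant_rep} applied to $\Mod = \Omega(\Reg)\rtimes \IS(\Gr,\Bisp)$, which produces a natural bijection between nondegenerate $*$-homomorphisms $\Cst(\Mod)\to \Comp(\Hilm)$ and compact covariant representations $(\varphi, T')$ of the inverse semigroup action of $\IS(\Gr,\Bisp)$ on~$\Omega(\Reg)$. The content of the proposition is to rewrite the inverse semigroup representation~$T'$ in terms of its restriction $T_a \defeq T'_{\Theta(a)}$ to the generating set~$\Bis$.

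In the forward direction, starting from $(\varphi, T')$, I will read off conditions \ref{en:rep_model_Hilm_1}--\ref{en:rep_model_Hilm_5} as follows: \ref{en:rep_model_Hilm_1} and \ref{en:rep_model_Hilm_2} are the images under~$T'$ of the defining relations \ref{en:action_from_theta1}, \ref{en:action_from_theta2} of $\IS(\Gr,\Bisp)$; the domain and codomain assertions \ref{en:rep_model_Hilm_3}, \ref{en:rep_model_Hilm_4} unpack the identities $D_{\Theta(a)^*\Theta(a)} = \rg^{-1}(\s(a))$ and $D_{\Theta(a)\Theta(a)^*} = \pi_1^{-1}(\Qu(a)) = \pi^*(\Qu(a))$ given by Lemma~\ref{lem:range_source_theta_X}; and \ref{en:rep_model_Hilm_5} is the covariance~\ref{en:cov-rep4} restricted to generators, multiplied on the right by $\varphi(f_2)$. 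Conversely, given $(\varphi, T)$ satisfying all of \ref{en:rep_model_Hilm_1}--\ref{en:rep_model_Hilm_5}, the first two conditions are exactly the defining relations of $\IS(\Gr,\Bisp)$, so $a\mapsto T_a$ extends uniquely to an inverse semigroup homomorphism from $\IS(\Gr,\Bisp)$ into the partial unitaries on~$\Hilm$; the covariance~\ref{en:cov-rep4} for a general $s = a_1\dotsm a_n \in \IS(\Gr,\Bisp)$ will follow by induction from \ref{en:rep_model_Hilm_5} on generators together with $\vartheta_{st} = \vartheta_s \circ \vartheta_t$. Theorem~\ref{the:covariant_rep} then produces the desired $*$-homomorphism, the two constructions are mutually inverse by inspection, and the naturality statements transfer directly from Theorem~\ref{the:covariant_rep}.

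The main technical point, and the step I expect to be the principal obstacle, is the redundancy of \ref{en:rep_model_Hilm_2}. Assuming only \ref{en:rep_model_Hilm_1}, \ref{en:rep_model_Hilm_3}, \ref{en:rep_model_Hilm_4}, \ref{en:rep_model_Hilm_5}, I fix $a_1, a_2 \in \Bis(\Bisp)$ and set $c \defeq \braket{a_1}{a_2} \in \Bis(\Gr)$. A direct inspection of the definition of the bracket, using that $a_1, a_2$ are slices, yields the set-theoretic identity
\[
  a_1 \cdot c \;=\; \setgiven{x_2 \in a_2}{\s(x_2) \in \s(c)} \;=\; a_2 \cdot \s(c)
\]
of open subslices of~$\Bisp$; in particular $\rg(c) \subseteq \s(a_1)$ and $\s(c) \subseteq \s(a_2)$. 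Two applications of \ref{en:rep_model_Hilm_1} give $T_{a_1} T_c = T_{a_1 \cdot c} = T_{a_2 \cdot \s(c)} = T_{a_2} T_{\s(c)}$. Multiplying on the left by $T_{a_1}^*$, and using the inclusion $\rg(c) \subseteq \s(a_1)$ to conclude that $T_{a_1}^* T_{a_1}$ acts as the identity on the codomain of~$T_c$, yields $T_c = T_{a_1}^* T_{a_2} T_{\s(c)}$ on~$\Hilm_{\rg^{-1}(\s(c))}$. A similar manipulation with the idempotent identity $\s(c) \cdot \s(c) = \s(c)$ gives $T_{a_2} T_{\s(c)}^2 = T_{a_2} T_{\s(c)}$, and multiplying by $T_{a_2}^*$ (using $\s(c) \subseteq \s(a_2)$) forces $T_{\s(c)}^2 = T_{\s(c)}$ on~$\Hilm_{\rg^{-1}(\s(c))}$. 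Since $T_{\s(c)}$ is a unitary on this submodule by \ref{en:rep_model_Hilm_3}, being idempotent forces $T_{\s(c)} = \Id$, so $T_c = T_{a_1}^* T_{a_2}$. Once the set-theoretic identity $a_1 \cdot c = a_2 \cdot \s(c)$ is in hand, the rest is driven by \ref{en:rep_model_Hilm_1} and the elementary fact that an idempotent partial unitary is an identity map.
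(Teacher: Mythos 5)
Your overall strategy for the bijection itself coincides with the paper's: apply Theorem~\ref{the:covariant_rep} to \(\Omega(\Reg)\rtimes\IS(\Gr,\Bisp)\), use that \(\IS(\Gr,\Bisp)\) is presented by the relations \ref{en:rep_model_Hilm_1} and \ref{en:rep_model_Hilm_2}, translate the domain/codomain data via Lemma~\ref{lem:range_source_theta_X}, and propagate the covariance condition from generators to arbitrary words by induction (the paper's ``good elements'' argument).  That part is fine as a sketch, modulo the same domain bookkeeping the paper does with Lemma~\ref{lem:domains_Hilm_apprid}.

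The gap is in the redundancy of \ref{en:rep_model_Hilm_2}.  Condition \ref{en:rep_model_Hilm_2} is an equality of \emph{partial} unitaries, so it asserts two things: that the two maps agree where both are defined, and that their domains coincide.  Your argument --- via the set identity \(a_1\braket{a_1}{a_2}=a_2\cdot\s(\braket{a_1}{a_2})\), cancellation of \(\Rep_{a_1}\), and the idempotency of \(\Rep_{\s(c)}\) --- only yields \(\Rep_c=\Rep_{a_1}^*\Rep_{a_2}\) \emph{restricted to} \(\Hilm_{\rg^{-1}(\s(c))}\).  It never shows that the composite partial map \(\Rep_{a_1}^*\Rep_{a_2}\), whose domain is \(\Rep_{a_2}^{-1}\bigl(\operatorname{codom}\Rep_{a_1}\cap\operatorname{codom}\Rep_{a_2}\bigr)\), is not defined on anything larger.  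This reverse inclusion is exactly where \ref{en:rep_model_Hilm_4} and \ref{en:rep_model_Hilm_5} must enter: one has to identify \(\Rep_{a_2}^{-1}\bigl(\Hilm_{\pi^*(\Qu(a_1)\cap\Qu(a_2))}\bigr)\) with \(\Hilm_{\rg^{-1}(\s(c))}\), which the paper does by an approximate-unit computation using Lemma~\ref{lem:domains_Hilm_apprid} and the covariance relation \(\varphi(f_n\circ\pi)\Rep_b=\Rep_b\varphi(f_n\circ\pi\circ\vartheta_b)\).  Your argument nominally assumes \ref{en:rep_model_Hilm_4} but never uses it, which is a reliable sign that the domain half of the claim cannot follow: e.g.\ for the one-point groupoid with \(\Bisp=\{x,y\}\) one has \(\braket{\{x\}}{\{y\}}=\emptyset\), so \ref{en:rep_model_Hilm_2} asserts that the codomains of \(\Rep_{\{x\}}\) and \(\Rep_{\{y\}}\) meet trivially --- a statement your chain of identities (which in this case is vacuous, since \(\Hilm_{\rg^{-1}(\s(\emptyset))}=0\)) cannot produce without \ref{en:rep_model_Hilm_4}.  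A smaller point: the codomain of \(\Rep_c\) for \(c\in\Bis(\Gr)\) is not among the hypotheses, so ``\(\Rep_{a_1}^*\Rep_{a_1}\) is the identity on the codomain of \(\Rep_c\)'' needs the extra observation that \(\operatorname{dom}(\Rep_{a_1}\Rep_c)=\operatorname{dom}(\Rep_{a_1c})=\operatorname{dom}(\Rep_c)\) forces \(\operatorname{codom}(\Rep_c)\subseteq\operatorname{dom}(\Rep_{a_1})\); that is easily repaired, but should be said.
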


\begin{proof}
  We first start with a nondegenerate \Star{}homomorphism
  \(\Cst(\Mod) \to \Comp(\Hilm)\) and construct a pair
  \((\varphi,\Rep)\) as in the proposition.  Recall that
  \(\Mod = \Omega(\Reg) \rtimes \IS(\Gr,\Bisp)\).
  Theorem~\ref{the:covariant_rep} provides a natural bijection between
  nondegenerate \Star{}homomorphisms \(\Cst(\Mod) \to \Comp(\Hilm)\)
  and covariant representations \((\varphi,\Rep)\) of the action of
  \(\IS(\Gr,\Bisp)\) on \(\Cont_0(\Omega(\Reg))\).  The inverse
  semigroup \(\IS(\Gr,\Bisp)\) is the universal inverse semigroup
  generated by the set~\(\Bis\) subject to the relations in
  \ref{en:rep_model_Hilm_1} and~\ref{en:rep_model_Hilm_2}.  In
  particular, a representation~\(\Rep\) of~\(\IS(\Gr,\Bisp)\) provides
  partial unitaries~\(\Rep_a\) for all \(a\in\Bis\) that satisfy the
  relations in \ref{en:rep_model_Hilm_1}
  and~\ref{en:rep_model_Hilm_2}.  In addition, the domain and codomain
  of~\(\Rep_a\) in a covariant representation are \(\Hilm_{a^* a}\)
  and~\(\Hilm_{a a^*}\), respectively.  These submodules involve the
  domains and codomains of the partial homeomorphisms induced by the
  action on~\(\Omega(\Reg)\), and Lemma~\ref{lem:slice_acts} allows us to
  compute these.  Putting things together, it follows that the domain
  of~\(\Rep_a\) for \(a\in\Bis\) is
  \(\Hilm_{a^* a} = \Hilm_{\rg^{-1}(\s(a))}\) as required
  in~\ref{en:rep_model_Hilm_3} and the codomain of~\(\Rep_a\) for
  \(a\in\Bis(\Bisp)\) is \(\Hilm_{a a^*} = \Hilm_{\pi^{-1}(\Qu(a))}\)
  as required in~\ref{en:rep_model_Hilm_4}.  Thus a nondegenerate
  \Star{}homomorphism \(\Cst(\Mod) \to \Comp(\Hilm)\) gives rise to a
  pair \((\varphi,\Rep)\) satisfying the conditions
  \ref{en:rep_model_Hilm_1}--\ref{en:rep_model_Hilm_2}.

  Conversely, assume that such a pair \((\varphi,\Rep)\) is given.
  The conditions \ref{en:rep_model_Hilm_1}
  and~\ref{en:rep_model_Hilm_2} ensure that~\(\Rep\) extends to a
  homomorphism~\(\bar\Rep\) from~\(\IS(\Gr,\Bisp)\) to the inverse
  semigroup of partial unitaries on~\(\Hilm\).  We claim that
  \((\varphi,\bar\Rep)\) is a covariant representation of the action
  of~\(\IS(\Gr,\Bisp)\) on~\(\Omega(\Reg)\).  Thus \((\varphi,\Rep)\)
  induces a nondegenerate \Star{}homomorphism
  \(\Cst(\Mod) \to \Comp(\Hilm)\).  The property
  \(\Rep_a \Rep_b = \Rep_{a b}\) for \(a,b\in\IS(\Gr,\Bisp)\) in
  \ref{en:cov-rep1} is already built in.  The unit element of
  \(\IS(\Gr,\Bisp)\) is the unit slice in~\(\Gr\).  This is idempotent
  and~\ref{en:rep_model_Hilm_3} says that its domain is all
  of~\(\Hilm\).  So \(\Rep_1 = \Id_{\Hilm}\), as required
  in~\ref{en:rep_model_Hilm_3}.

  We call \(a\in \IS(\Gr,\Bisp)\) \emph{good} if the domain and
  codomain of~\(\Rep_a\) are \(\Hilm_{a^* a}\) and~\(\Hilm_{a a^*}\),
  respectively, and
  \(\Rep_a^* \varphi(f) \Rep_a = \varphi(f\circ \vartheta_a)\) in
  \(\Comp(\Hilm_{a^* a})\) for all \(f\in \Cont_0(D_{a a^*})\).  The
  pair \((\varphi,\bar\Rep)\) is a covariant representation if and
  only if all elements of~\(\IS(\Gr,\Bisp)\) are good.  We are going
  to prove this by showing that the generators \(a\in\Bis\) are good
  and that being good is hereditary for products and adjoints
  in~\(\IS(\Gr,\Bisp)\).  That is, the good elements form an inverse
  subsemigroup of~\(\IS(\Gr,\Bisp)\) that contains all the generators,
  and this forces it to be all of~\(\IS(\Gr,\Bisp)\).

  We have already discussed that the domain of~\(\Rep_a\) for
  \(a\in\Bis\) is~\(\Hilm_{a^* a}\) if and only if
  \ref{en:rep_model_Hilm_3} is satisfied.  Since \(a^*\in\Bis\) for
  \(a\in\Bis(\Gr)\), this also gives the correct codomain for
  \(a\in \Bis(\Gr)\).  If \(a\in\Bis(\Bisp)\),
  then~\ref{en:rep_model_Hilm_4} ensures that the codomain
  of~\(\Rep_a\) is~\(\Hilm_{a a^*}\).  The covariance
  condition~\ref{en:cov-rep4} identifies
  \(\Rep_a^*\varphi(f_1)\Rep_a\) and \(\varphi(f_1\circ\vartheta_a)\)
  in \(\Comp(\Hilm_{a^* a})\) for all \(f_1\in \Cont_0(D_{a a^*})\).
  This is equivalent to
  \(\Rep_a^*\varphi(f_1)\Rep_a \varphi(f_2) =
  \varphi(f_1\circ\vartheta_a) \varphi(f_2)\) as elements of
  \(\Comp(\Hilm)\) for all \(f_1\in \Cont_0(D_{a a^*})\) and
  \(f_2\in \Cont_0(D_{a^* a})\) by the definition
  of~\(\Hilm_{a^* a}\); this is what is required
  in~\ref{en:rep_model_Hilm_5} for \(a\in\Bis\).  This finishes the
  proof that all elements of~\(\Bis\) are good.

  It is easy to see that~\(a^*\) is good if~\(a\) is.  To show that
  all elements of \(\IS(\Gr,\Bisp)\) are good, it remains to prove
  that \(a b\) is good if \(a\) and~\(b\) are good.  So let \(a,b\) be
  good.  The intersection of the image of~\(\Rep_b\) with the domain
  of~\(\Rep_a\) is
  \[
    \Hilm_{b b^*}\cap \Hilm_{a^* a}
    = \Cont_0(D_{b b^*})\Hilm \cap \Cont_0(D_{a^* a})\Hilm
    = \Cont_0(D_{b b^*} \cap D_{a^* a})\Hilm
  \]
  by Lemma~\ref{lem:domains_Hilm_apprid}.  If
  \(f\in\Cont_0(D_{b b^*} \cap D_{a^* a})\), then
  \(\Rep_b^* \varphi(f) \Rep_b = \varphi(f\circ \vartheta_b)\) as
  operators on~\(\Hilm_{b^* b}\) because~\(b\) is good.  So the
  domain of \(\Rep_{a b} = \Rep_a \Rep_b\) is
  \(\Cont_0(\vartheta_b^{-1}(D_{b b^*} \cap D_{a^* a}))\Hilm =
  \Cont_0(D_{(a b)^* a b}) \Hilm = \Hilm_{(a b)^* ab}\) as desired.
  The codomain is
  \[
    \Rep_a\bigl( \Cont_0(D_{b b^*} \cap D_{a^* a})\Hilm\bigr)
    = \Rep_a\varphi\bigl(\Cont_0(D_{b b^*})\bigr) \Hilm_{a^* a}
    = \varphi\bigl(\Cont_0(\vartheta_a(D_{b b^*}))\bigr)\Hilm_{a a^*}
    = \Hilm_{a b b^* a^*}
  \]
  as needed.  Finally, the following equalities hold as operators
  on~\(\Hilm_{(a b)^* ab}\):
  \[
    \Rep_{a b}^* \varphi(f) \Rep_{a b}
    = \Rep_b^* \Rep_a^*  \varphi(f) \Rep_a \Rep_b
    = \Rep_b^* \varphi(f\circ \vartheta_a) \Rep_b
    = \varphi(f\circ\vartheta_a \circ \vartheta_b)
    = \varphi(f\circ\vartheta_{a b})
  \]
  for all \(f\in \Cont_0(D_{a b (a b)^*})\).  So~\(a b\) is good as
  desired.

  Finally, we show that condition~\ref{en:rep_model_Hilm_2} is
  redundant.  Let \(a,b\in\Bis(\Bisp)\).  First, we claim
  that~\ref{en:rep_model_Hilm_2} follows if
  \begin{equation}
    \label{eq:rep_model_Hilm_2_variant}
    \Rep_a \Rep_a^* \Rep_b = \Rep_a \Rep_{\braket{a}{b}}.
  \end{equation}
  The codomain of \(\Rep_{\braket{a}{b}}\) is the same as the domain
  of \(\Rep_{\braket{a}{b}}^* = \Rep_{\braket{b}{a}}\), which
  is~\(\Hilm_{\rg^{-1}(\s(\braket{b}{a}))}\)
  by~\ref{en:rep_model_Hilm_3}.  This is contained
  in~\(\Hilm_{\rg^{-1}(\s(a))}\), which is the domain of \(\Rep_a\)
  by~\ref{en:rep_model_Hilm_3}.  This is where \(\Rep_a^*\Rep_a\) is
  the identity map.  So
  \(\Rep_a^* \Rep_a \Rep_{\braket{a}{b}} = \Rep_{\braket{a}{b}}\).  In
  addition, \(\Rep_a^* \Rep_a \Rep_a^* = \Rep_a^*\).  Therefore,
  \eqref{eq:rep_model_Hilm_2_variant} implies
  \[
    \Rep_a^* \Rep_b
    = \Rep_a^* \Rep_a \Rep_a^* \Rep_b
    = \Rep_a^* \Rep_a \Rep_{\braket{a}{b}}
    = \Rep_{\braket{a}{b}},
  \]
  which is~\ref{en:rep_model_Hilm_2}.  So it suffices to prove
  \eqref{eq:rep_model_Hilm_2_variant}.  Now
  \(\Rep_a \Rep_{\braket{a}{b}} = \Rep_{a \braket{a}{b}}\)
  by~\ref{en:rep_model_Hilm_1}.  Since \(x \braket{x}{y} = y\) for all
  \(x,y\in \Bisp\) with \(\Qu(x) = \Qu(y)\), the slice
  \(a\braket{a}{b}\) is contained in~\(b\).  Namely, it consists of
  all \(y\in b\) with \(\Qu(y) \in \Qu(a)\).  Let
  \(W = \s(a\braket{a}{b})\).  This is an open subset of~\(\Gr^0\) and
  thus an idempotent element of \(\Bis(\Gr)\).  The computation above
  implies \(a\braket{a}{b} = b W\).  So
  \(\Rep_a \Rep_{\braket{a}{b}} = \Rep_b\Rep_W\).
  By~\ref{en:rep_model_Hilm_1}, \(\Rep_W\) is the identity map on its
  domain, which is \(\Hilm_{\rg^{-1}(W)}\)
  by~\ref{en:rep_model_Hilm_3}.  So \(\Rep_b\Rep_W\) is the
  restriction of~\(\Rep_b\) to~\(\Hilm_{\rg^{-1}(W)}\).  Next
  \(\Rep_a\Rep_a^*\) is the identity map on the codomain
  of~\(\Rep_a\), which is \(\Hilm_{\pi^{-1}(\Qu(a))}\)
  by~\ref{en:rep_model_Hilm_4}.  Let \((f_n)\) be an approximate unit
  in
  \(\Cont_0(\Qu(a)) \subseteq \Cont_0(\Bisp/\Gr) \subseteq
  \Cont_0(\Omega_{[0,1]})\).  Then \((f_n\circ \pi)\) is an
  approximate unit in \(\Cont_0(\pi^{-1}(\Qu(a)))\).
  Lemma~\ref{lem:domains_Hilm_apprid} implies that
  \(\Rep_a \Rep_a^* \Rep_b\) is defined at \(\xi\in\Hilm\) if and only
  if \(\xi\in\Hilm_{\s(b)}\) and
  \(\lim \varphi(f_n\circ \pi) \Rep_b(\xi) = \Rep_b(\xi)\).  If
  \(\xi\in\Hilm_{\s(b)}\), then \(\xi = \varphi(f_b) \xi'\) for some
  \(f_b\in \Cont_0(\s(b))\), \(\xi\in\Hilm\) by the Cohen--Hewitt
  Factorisation Theorem.  So~\ref{en:rep_model_Hilm_5} implies
  \[
    \varphi(f_n\circ \pi) \Rep_b(\xi)
    = \Rep_b \varphi(f_n\circ \pi\circ \vartheta_b) \varphi(f_b) \xi'
    = \Rep_b \varphi(f_n\circ\pi\circ \vartheta_b) \xi.
  \]
  If \(\omega \in \Omega(\Reg)\), then
  \(\pi(\vartheta_b(\omega)) \in \Omega_{[0,1]}\) is defined if and
  only if \(\rg(\omega) \in \Gr^0\) is in \(\s(b)\), and then it is
  \([x]\in \Bisp/\Gr \subseteq \Omega_{[0,1]}\) for the unique
  \(x\in b\) with \(\s(x) = \rg(\omega)\).  It follows that
  \(f_n\circ\pi \circ \vartheta_b(\omega)\) converges to~\(1\) if
  \(\rg(\omega) \in W\) and vanishes otherwise, with
  \(W\subseteq \Gr^0\) as above.  So we may write
  \(f_n\circ \pi \circ \vartheta_b(\omega) = f'_n\circ \rg(\omega)\)
  with an approximate unit~\(f'_n\) for the ideal \(\Cont_0(W)\)
  in~\(\Cont_0(\Gr^0)\).  Using Lemma~\ref{lem:domains_Hilm_apprid}
  once again, we conclude that the domain of
  \(\Rep_a \Rep_a^* \Rep_b\) is also equal to~\(\Hilm_{\rg^{-1}(W)}\).
  Since \(\Rep_a\Rep_a^*\) is idempotent, \(\Rep_a \Rep_a^* \Rep_b\)
  is the restriction of~\(\Rep_b\) to this domain.  This is equal to
  \(\Rep_a \Rep_{\braket{a}{b}} = \Rep_{a \braket{a}{b}} = \Rep_b \Rep_W\).
  So~\eqref{eq:rep_model_Hilm_2_variant} and
  thus~\ref{en:rep_model_Hilm_2} follow from the other conditions in
  the proposition.
\end{proof}

\begin{remark}
  \label{rem:relation_IS_redundant}
  The last sentence in Proposition~\ref{pro:rep_model_Hilm} implies
  that the transformation groupoid
  \(\Omega(\Reg)\rtimes \IS(\Gr,\Bisp)\) does not change if we drop
  the relation
  \(\Theta(\Slice_1)^*\Theta(\Slice_2) =
  \Theta(\braket{\Slice_1}{\Slice_2})\) for
  \(\Slice_1,\Slice_2\in\Bis(\Bisp)\) in the definition of
  \(\IS(\Gr,\Bisp)\).
\end{remark}

The next proposition describes nondegenerate representations of the
Cuntz--Pimsner algebra
\(\mathcal{O} \defeq \mathcal{O}_{\Cst(\Bisp),\Cst(\Gr_\Reg)}\) in
\(\Comp(\Hilm)\) in a somewhat similar fashion.  It uses the partial
map~\(\varrho_a\) on~\(\Gr^0\) induced by a slice \(a\in\Bis(\Bisp)\).
This is defined on the open subset \(\s(a)\subseteq\Gr^0\) and maps
\(\s(x)\) for \(x\in a\) to \(\rg(x) \in \rg(a) \subseteq \Gr^0\).
Since~\(\rg\) is not a local homeomorphism, this map
\(\varrho_a\colon \s(a) \to \rg(a)\) is not a homeomorphism.  Instead,
it is the composite of the homeomorphism
\(\vartheta_a\colon \s(a) \to \Qu(a) \subseteq \Bisp/\Gr\) with
\(\rg_*\colon \Bisp/\Gr \to \Gr^0\).  If \(f\in \Cont_0(\Gr^0)\), then
\(f\circ \varrho_a \in \Contb(\s(a))\), and it need not belong to
\(\Cont_0(\s(a))\) unless the correspondence~\(\Bisp\) is proper.  If
\(f_1\in \Cont_0(\Gr^0)\), \(f_2\in \Cont_0(\s(a))\), then
\((f_1\circ \varrho_a) \cdot f_2 \in \Cont_0(\s(a)) \subseteq
\Cont_0(\Gr^0)\), although \(f_1\circ \varrho_a\) is only defined
on~\(\s(a)\).

\begin{proposition}
  \label{pro:rep_CP_Hilm}
  Let~\(\Hilm\) be a Hilbert module over a \(\Cst\)\nb-algebra~\(D\).
  There is a natural bijection between nondegenerate
  \Star{}homomorphisms
  \(\psi\colon \mathcal{O}_{\Cst(\Bisp),\Cst(\Gr_\Reg)} \to
  \Comp(\Hilm)\) and pairs \((\varphi_0,\Rep)\) where
  \(\varphi_0\colon \Cont_0(\Gr^0) \to \Comp(\Hilm)\) is a
  nondegenerate \Star{}homomorphism and \(\Rep_a\) for
  \(a\in \Bis= \Bis(\Gr) \sqcup \Bis(\Bisp)\) are partial unitaries, such
  that the following hold:
  \begin{enumerate}[label=\textup{(\ref*{pro:rep_CP_Hilm}.\arabic*)},
    leftmargin=*,labelindent=0em]
  \item \label{en:rep_CP_Hilm_1}%
    \(\Rep_a \Rep_b = \Rep_{a b}\) if \(a,b\in \Bis\) and
    \(a\in \Bis(\Gr)\) or \(b\in \Bis(\Gr)\);
  \item \label{en:rep_CP_Hilm_3}%
    the domain of~\(\Rep_a\) for \(a\in\Bis\) is~\(\Hilm_{\s(a)}\),
    where \(\s(a) \subseteq \Gr^0\) and we define
    \(\Hilm_W \defeq \varphi_0(\Cont_0(W))\Hilm\) for an open subset
    \(W\subseteq \Gr^0\);
  \item \label{en:rep_CP_Hilm_4}%
    \(\Hilm_\Reg\) is contained in the closed linear
    span of the images of~\(\Rep_a\) for \(a\in\Bis(\Bisp)\);
  \item \label{en:rep_CP_Hilm_5}%
    \(\Rep_a^* \varphi_0(f_1) \Rep_a \varphi_0(f_2) = \varphi_0\bigl(
    (f_1\circ \varrho_a) \cdot f_2\bigr)\) if \(a\in \Bis\),
    \(f_1\in \Cont_0(\Gr^0)\), \(f_2\in \Cont_0(\s(a))\);
  \item \label{en:rep_CP_Hilm_2}%
    let \(a,b\in \Bis(\Bisp)\), \(f_j \in \Cont_0(j)\),
    \(\xi_j \in \Hilm\) for \(j=a,b\); then
    \[
      \braket{\Rep_a \varphi_0(f_a^\s) \xi_a}{\Rep_b \varphi_0(f_b^\s)
        \xi_b}
      = \braket{\xi_a}{\Rep_{\braket{a}{b}}
        \varphi_0\bigl((f_a^\s\circ \vartheta_{\braket{a}{b}})^\dagger
        f_b^\s\bigr) \xi_b};
    \]
    here~\(\dagger\) denotes pointwise complex conjugation and
    \(f_j^\s \in \Contc(\s(j))\) is \(f_j \circ (\s|_j)^{-1}\) for the
    homeomorphism \((\s|_j)^{-1} \colon \s(j) \congto j\) for \(j=a,b\).
  \end{enumerate}
  This bijection has the two naturality properties in
  Theorem~\textup{\ref{the:covariant_rep}}.
\end{proposition}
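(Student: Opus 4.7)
The plan is to use the standard universal property of the relative Cuntz--Pimsner algebra: a nondegenerate \(*\)\nb-homomorphism \(\psi\colon \mathcal{O}_{\Cst(\Bisp),\Cst(\Gr_\Reg)}\to \Comp(\Hilm)\) is the same as a Toeplitz representation \((\varphi,t)\) of \(\Cst(\Bisp)\) as a \(\Cst(\Gr)\)-correspondence, where \(\varphi\colon \Cst(\Gr)\to\Comp(\Hilm)\) is nondegenerate, \(t\colon \Cst(\Bisp)\to\Comp(\Hilm)\) is a bimodule map preserving the inner product, and on the ideal \(\Cst(\Gr_\Reg)\) the left action \(\varphi\) factors through the Katsura--Pimsner representation by compact operators built from~\(t\). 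The task is then to translate \((\varphi,t)\) into the slice-indexed data \((\varphi_0,\Rep)\) of the proposition and back.

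From \((\varphi_0,\Rep)\) to \((\varphi,t)\): the restriction of \(\Rep\) to \(\Bis(\Gr)\) together with \(\varphi_0\) satisfies the hypotheses of Theorem~\ref{the:covariant_rep} for the \'etale groupoid~\(\Gr\) itself, because \ref{en:rep_CP_Hilm_1}, \ref{en:rep_CP_Hilm_3}, and the restriction of~\ref{en:rep_CP_Hilm_5} to \(\Bis(\Gr)\) match the conditions in Definition~\ref{def:cov-rep}. This produces a nondegenerate representation \(\varphi\colon \Cst(\Gr)\to\Comp(\Hilm)\). For \(a\in \Bis(\Bisp)\) and \(\xi\in \Contc(a)\), define \(t(\xi)\defeq \Rep_a \varphi_0(\xi^\s)\); as in the proof of Theorem~\ref{the:covariant_rep}, relation~\ref{en:rep_CP_Hilm_1} implies that this is independent of the chosen slice containing~\(\mathrm{supp}(\xi)\), so \(t\) extends linearly to \(\ContS(\Bisp)\). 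The bimodule identities \(t(\xi\cdot f)=t(\xi)\varphi(f)\) and \(t(h\cdot\xi)=\varphi(h)t(\xi)\) for slice factors \(f,h\in\Bis(\Gr)\) are immediate from~\ref{en:rep_CP_Hilm_1}. The Toeplitz inner product identity \(t(\xi)^*t(\eta)=\varphi(\braket{\xi}{\eta})\) is exactly what~\ref{en:rep_CP_Hilm_2} encodes, once one observes that \(\braket{\xi}{\eta}\in \Contc(\braket{a}{b})\) and that \(\varphi\) applied to a function supported on the slice \(\braket{a}{b}\in\Bis(\Gr)\) equals \(\Rep_{\braket{a}{b}}\) composed with an appropriate \(\varphi_0\)\nb-factor. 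Finally, Katsura's covariance on \(\Cst(\Gr_\Reg)\) says that for \(f\in \Cont_0(\Reg)\subseteq\Cont_0(\Gr^0)\), the operator \(\varphi_0(f)\) lies in the closed span of the products \(t(\Contc(a))t(\Contc(a'))^*\) for \(a,a'\in\Bis(\Bisp)\); since this span equals the closed span of the codomains of the \(\Rep_a\), this is precisely~\ref{en:rep_CP_Hilm_4}.

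Conversely, given~\(\psi\), set \(\varphi\defeq\psi|_{\Cst(\Gr)}\), \(\varphi_0\defeq\varphi|_{\Cont_0(\Gr^0)}\), and let~\(t\) denote the accompanying Toeplitz map. For \(a\in\Bis(\Gr)\), the \(\Rep_a\) come from Theorem~\ref{the:covariant_rep}. For \(a\in\Bis(\Bisp)\), the Toeplitz identity yields
\[
  \braket{t(\xi_1)\zeta_1}{t(\xi_2)\zeta_2}
  = \braket{\zeta_1}{\varphi(\braket{\xi_1}{\xi_2})\zeta_2}
  = \braket{\varphi_0(\xi_1^\s)\zeta_1}{\varphi_0(\xi_2^\s)\zeta_2}
\]
for \(\xi_1,\xi_2\in\Contc(a)\), \(\zeta_1,\zeta_2\in\Hilm\), so the assignment \(\varphi_0(\xi^\s)\zeta\mapsto t(\xi)\zeta\) extends to a partial unitary \(\Rep_a\colon \Hilm_{\s(a)}\to \Hilm\). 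Conditions \ref{en:rep_CP_Hilm_1}, \ref{en:rep_CP_Hilm_3}, \ref{en:rep_CP_Hilm_5} follow along the lines of the proof of Theorem~\ref{the:covariant_rep} from the bimodule identities of~\(t\); condition~\ref{en:rep_CP_Hilm_2} is again just the Toeplitz inner product identity read off on slices; and \ref{en:rep_CP_Hilm_4} is Katsura's covariance on \(\Cst(\Gr_\Reg)\).

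The main obstacle will be the bookkeeping behind~\ref{en:rep_CP_Hilm_2}. The correspondence inner product \(\braket{\xi}{\eta}\) of slice functions lives on the slice \(\braket{a}{b}\subseteq \Gr\), while the right-hand side of~\ref{en:rep_CP_Hilm_2} is written in terms of the partial homeomorphism \(\vartheta_{\braket{a}{b}}\) on \(\Omega(\Reg)\) pulled back to \(\Gr^0\). Unpacking this requires identifying \(\Rep_{\braket{a}{b}}\varphi_0(g)\) for \(g\in \Cont_0(\s(\braket{a}{b}))\) with the image under \(\varphi\) of the corresponding element of \(\Contc(\braket{a}{b})\subseteq\Cst(\Gr)\), and matching the pointwise formula \((f_a^\s\circ \vartheta_{\braket{a}{b}})^\dagger f_b^\s\) with the convolution on~\(\Gr\) that computes \(\braket{\xi}{\eta}\). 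Once this translation is carried out, both directions of the bijection assemble from the verifications above, and the naturality assertions follow from those of Theorem~\ref{the:covariant_rep}.
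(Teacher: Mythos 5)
Your proposal takes essentially the same route as the paper's proof: pass through the universal property of the relative Cuntz--Pimsner algebra as Toeplitz representations \((\varphi,L)\) covariant on \(\Cst(\Gr_\Reg)\), treat the \(\Cst(\Gr)\)-part by applying Theorem~\ref{the:covariant_rep} to \(\Gr\cong\Gr^0\rtimes\Bis(\Gr)\), define \(L(f)\defeq\Rep_a\varphi_0(f^\s)\) on \(\Contc(a)\) for \(a\in\Bis(\Bisp)\), identify \ref{en:rep_CP_Hilm_2} with the Toeplitz inner-product identity (using that \(\braket{f_1}{f_2}=(f_1^\s)^\dagger f_2^\s\) for functions on a common slice), and identify \ref{en:rep_CP_Hilm_4} with covariance, which the paper phrases via the submodule criterion \(\psi(\Cst(\Gr_\Reg))\Hilm\subseteq L(\Cst(\Bisp))\Hilm\). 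Two small corrections: the bimodule identities are not immediate from \ref{en:rep_CP_Hilm_1} alone, since moving a \(\varphi_0\)-factor past \(\Rep_b\) (as in \(\varphi_0(f_a^\s)\Rep_b=\Rep_b\varphi_0(f_a^\s\circ\varrho_b)\) on the relevant domain) requires \ref{en:rep_CP_Hilm_5}; and in the converse direction the equality \(\Rep_a\Rep_b=\Rep_{ab}\) needs, beyond agreement on \(\Hilm_{\s(ab)}\), the approximate-unit argument of Lemma~\ref{lem:domains_Hilm_apprid} to show the domains coincide, a step your ``follows along the lines of'' elides.
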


\begin{proof}
  The universal property says that a \Star{}homomorphism
  \(\mathcal{O} \to \Comp(\Hilm)\) is equivalent to a Toeplitz
  representation \(\Cst(\Bisp)\to\Comp(\Hilm)\) that is Cuntz--Pimsner
  covariant on the ideal \(\Cst(\Gr_\Reg)\).  Here a Toeplitz
  representation is a pair of maps
  \(\psi\colon \Cst(\Gr) \to \Comp(\Hilm)\) and
  \(L\colon \Cst(\Bisp) \to \Comp(\Hilm)\), where~\(\psi\) is a
  \Star{}homomorphism and~\(L\) is a linear map that satisfies
  \(\psi(f) L(\xi) = L(f*\xi)\), \(L(\xi) \psi(f) = L(\xi*f)\), and
  \(L(\xi_1)^* L(\xi_2) = \psi(\braket{\xi_1}{\xi_2})\) for
  \(f\in\Cst(\Gr)\), \(\xi,\xi_1,\xi_2\in \Cst(\Bisp)\).  By
  \cite{Meyer-Sehnem:Bicategorical_Pimsner}*{Proposition~2.15}, this
  Toeplitz representation is Cuntz--Pimsner covariant on the ideal
  \(\Cst(\Gr_\Reg) \subseteq \Cst(\Gr)\) if and only if
  \(\psi(\Cst(\Gr_\Reg)) \Hilm \subseteq L(\Cst(\Bisp))\Hilm\).  Since
  \(\Cont_0(\Reg)\) is a nondegenerate \(\Cst\)\nb-subalgebra of
  \(\Cst(\Gr_\Reg)\), this is equivalent to
  \(\psi(\Cont_0(\Reg)) \Hilm \subseteq L(\Cst(\Bisp))\Hilm\).  In
  addition, the \Star{}homomorphism \(\mathcal{O}\to\Comp(\Hilm)\) is
  nondegenerate if and only if~\(\psi\) is nondegenerate.

  We may write~\(\Gr\) as the inverse semigroup transformation
  groupoid \(\Gr^0 \rtimes \Bis(\Gr)\).  By
  Theorem~\ref{the:covariant_rep}, a nondegenerate \Star{}homomorphism
  \(\psi\colon \Cst(\Gr) \to \Comp(\Hilm)\) is equivalent to a
  covariant representation \((\varphi_0,\Rep)\) of
  \(\Gr^0 \rtimes \Bis(\Gr)\) as in Definition~\ref{def:cov-rep}.
  This consists of a nondegenerate \Star{}homomorphism
  \(\varphi_0\colon \Cont_0(\Gr^0) \to \Comp(\Hilm)\) and partial
  unitaries \(\Rep_a\colon \Hilm_{\s(a)} \to \Hilm_{\rg(a)}\) for
  \(a\in \Bis(\Gr)\).  The conditions in Definition~\ref{def:cov-rep}
  are equivalent to the conditions in \ref{en:rep_CP_Hilm_1},
  \ref{en:rep_CP_Hilm_3} and~\ref{en:rep_CP_Hilm_5} for
  \(a\in \Bis(\Gr)\); compare the proof of
  Theorem~\ref{the:covariant_rep} for why we may add the
  factor~\(\varphi_0(f_2)\) in~\ref{en:rep_CP_Hilm_5}.

  Next we relate the map \(L\colon \Cst(\Bisp) \to \Comp(\Hilm)\) in a
  Toeplitz representation to a family of partial unitaries~\(\Rep_a\)
  for \(a\in\Bis(\Bisp)\).  By the construction
  in~\cite{Antunes-Ko-Meyer:Groupoid_correspondences}, \(\Cst(\Bisp)\)
  is defined as the Hilbert module completion of the space of
  functions \(\ContS(\Bisp)\), which is the closed linear span of
  functions in \(\Contc(a)\) for slices \(a\subseteq \Bisp\), extended
  by zero outside~\(a\).  So~\(L\) is determined uniquely by its
  restrictions to \(\Contc(a) \subseteq \Cst(\Bisp)\) for all
  \(a\in \Bis(\Bisp)\).  Given~\(\varphi_0\) and~\(\Rep_a\) as in the
  statement, we define a map
  \[
    L\colon \bigoplus_{a\in\Bis(\Bisp)} \Contc(a) \to
    \Comp(\Hilm),\qquad
    L\bigl((f_a)_{a\in\Bis(\Bisp)}\bigr) \defeq \sum \Rep_a
    \varphi_0(f_a^\s).
  \]
  Condition~\ref{en:rep_CP_Hilm_2} is equivalent to
  \(\braket{L(f_a) \xi_a}{L(f_b)\xi_b} =
  \braket{\xi_a}{\psi(\braket{f_a}{f_b})\xi_b}\) for
  \(f_a\in \Contc(a)\), \(f_b\in \Contc(b)\) for \(a,b\in\Bis(\Bisp)\)
  and \(\xi_a,\xi_b\in \Hilm\) because
  \(\varphi_0(f_a^\s)\xi_a \in \Hilm_{\s(a)}\),
  \(\varphi_0(f_b^\s)\xi_b \in \Hilm_{\s(b)}\), and
  \(\psi(\braket{f_a}{f_b}) = \Rep_{\braket{a}{b}} \varphi_0((f_a^**
  f_b)^\s) = \Rep_{\braket{a}{b}} \varphi_0\bigl((f_a^\s\circ
  \vartheta_{\braket{a}{b}})^\dagger f_b^\s\bigr)\).
  So~\ref{en:rep_CP_Hilm_2} is equivalent to
  \(L(f_a)^* L(f_b) = \psi(\braket{f_a}{f_b})\).  By linearity, we get
  \(L(f_1)^* L(f_2) = \psi(\braket{f_1}{f_2})\) for all
  \(f_1,f_2\in \bigoplus \Contc(a)\) with the scalar product that
  defines \(\Cst(\Bisp)\).  So~\(L\) factors through an isometric map
  \(\Cst(\Bisp) \to \Comp(\Hilm)\), which we also denote by~\(L\).

  Let \(a\in\Bis(\Gr)\), \(b\in \Bis(\Bisp)\), \(f_a\in\Contc(a)\),
  \(f_b\in\Contc(b)\).  The product \(f_a*f_b\) in \(\ContS(\Bisp)\)
  is defined in
  \cite{Antunes-Ko-Meyer:Groupoid_correspondences}*{Equation~(7.3)}.
  It is supported on the slice \(a b\subseteq \Bisp\) and has the
  value \(f_a(g_a) f_b(x_b)\) at \(g_a x_b\) if \(g_a\in a\),
  \(x_b\in b\) are such that \(\s(g_a) = \rg(x_b)\).  Thus
  \((f_a * f_b)^\s = (f_a^\s \circ \varrho_b) f_b^\s\).  Then
  \begin{multline*}
    \psi(f_a) L(f_b)
    = \Rep_a \varphi_0(f_a^\s) \Rep_b \varphi_0(f_b^\s)
    = \Rep_a \Rep_b \varphi_0((f_a^\s \circ \varrho_b)f_b^\s)
    \\= \Rep_{a b} \varphi_0\bigl((f_a*f_b)^\s\bigr)
    = L(f_a*f_b)
  \end{multline*}
  by~\ref{en:rep_CP_Hilm_5} and~\ref{en:rep_CP_Hilm_1}.  This implies
  that~\(L\) is a left \(\Cst(\Gr)\)-module map.  A similar
  computation shows that it is a right \(\Cst(\Gr)\)-module map.  So
  \((\psi,L)\) is a Toeplitz representation.  We have already argued
  above that \((\psi,L)\) is Cuntz--Pimsner covariant on
  \(\Cst(\Gr_\Reg)\) if and only if
  \(\Hilm_\Reg \subseteq L(\Cst(\Bisp))\Hilm\).  The latter is
  equivalent to~\ref{en:rep_CP_Hilm_4}.  Summing up, we get a
  \Star{}homomorphism \(\mathcal{O}\to \Comp(\Hilm)\) from the data
  and conditions in the proposition.

  Conversely, let \((\psi,L)\) be a Toeplitz representation.  Let
  \(a\in\Bis(\Bisp)\) and
  \(f_1,f_2\in \Contc(a) \subseteq \Cst(\Bisp)\).  If \(x\in\Bisp\),
  \(g\in\Gr\) are such that \(x\in a\) and \(x g \in a\), then~\(g\)
  is a unit because the orbit space projection is injective on~\(a\)
  and~\(\Gr\) acts freely on~\(\Bisp\).  In addition, then \(g=\s(x)\)
  because it is composable with~\(x\).  Therefore, the Hilbert module
  scalar product on \(\ContS(\Bisp)\) in
  \cite{Antunes-Ko-Meyer:Groupoid_correspondences}*{(7.2)} specialises
  to \(\braket{f_1}{f_2}(\s(x)) = \conj{f_1(x)} f_2(x)\) for
  \(x\in a\) and \(\braket{f_1}{f_2}(g)=0\) for
  \(g\in \Gr\setminus \s(a)\).  More briefly,
  \(\braket{f_1}{f_2} = (f_1^\s)^\dagger f_2^\s\).  So
  \(L(f_1)^* L(f_2) = \psi(\braket{f_1}{f_2})\) implies that there is a
  unique isometric map \(\Rep_a\colon \Hilm_a \to \Hilm\) with
  \(\Rep_a(\varphi_0(f^\s)\xi) = L(f)\xi\) for all \(f\in\Contc(a)\),
  \(\xi\in\Hilm\).  By construction, \(\Rep_a\) has the domain
  required in~\ref{en:rep_CP_Hilm_3}.

  Let \(a\in\Bis(\Gr)\), \(b\in\Bis(\Bisp)\) and
  \(f_j\in \Contc(j)\) for \(j=a,b\).  Then \(f_a*f_b\) is
  supported on the slice~\(a b\) and
  \((f_a * f_b)^\s = (f_a^\s \circ \varrho_b) f_b^\s\).  So
  \begin{multline}
    \label{eq:Rep_a_b_functions}
    \Rep_a \varphi_0(f_a^\s) \Rep_b \varphi_0(f_b^\s)
    = \psi(f_a) L(f_b)
    = L(f_a*f_b)
    \\= \Rep_{a b} \varphi_0 \bigl( (f_a*f_b)^\s\bigr)
    = \Rep_{a b} \varphi_0 (f_a^\s \circ \varrho_b) \varphi_0 (f_b^\s).
  \end{multline}
  If~\(a\) is the unit slice~\(\Gr^0\), then \(\Rep_{\Gr^0}\) is the
  identity map on all of~\(\Hilm\) by the construction above.
  Then~\eqref{eq:Rep_a_b_functions} specialises
  to~\ref{en:rep_CP_Hilm_5} with~\(b\) instead of~\(a\).
  Letting~\(a\) be general again, \ref{en:rep_CP_Hilm_5} identifies
  the left hand side in~\eqref{eq:Rep_a_b_functions} with
  \(\Rep_a \Rep_b \varphi_0 (f_a^\s \circ \varrho_b) \varphi_0
  (f_b^\s) = \Rep_a \Rep_b \varphi_0 \bigl( (f_a*f_b)^\s\bigr)\).
  Any function \(f\in \Contc(\s(a b))\) may be written as
  \((f_a*f_b)^\s\) for \(f_a,f_b\) as above.
  So~\eqref{eq:Rep_a_b_functions} says that
  \(\Rep_{a b} \varphi_0(f) = \Rep_a\Rep_b \varphi_0(f)\) for all
  \(f\in \Contc(\s(a b))\).  Therefore, \(\Rep_a \Rep_b\) is defined
  on~\(\Hilm_{\s(a b)}\), and equal to~\(\Rep_{a b}\) there.

  Next we show that the domain of~\(\Rep_a \Rep_b\) is contained in
  the domain of~\(\Rep_{a b}\).  By
  Lemma~\ref{lem:domains_Hilm_apprid}, \(\Rep_j\) is defined at
  \(\xi\in\Hilm\) if and only if \(\lim \varphi_0(f_j^\s)\xi = \xi\)
  in norm for an approximate unit~\((f_j)\) in \(\Contc(j)\) for
  \(j\in \{a,b, a b\}\).  We may choose the functions
  in~\eqref{eq:Rep_a_b_functions} such that \(f_a^\s\) and~\(f_b^\s\)
  converge to~\(1\) uniformly on compact subsets of
  \(\s(j)\subseteq \Gr^0\) for \(j=a,b\), respectively.  Let
  \(\xi\in\Hilm\) belong to the domain of~\(\Rep_a \Rep_b\).  Then
  \(\lim \varphi_0(f_b^\s)\xi = \xi\) and
  \(\lim \varphi_0(f_a^\s) \Rep_b(\xi) = \Rep_b(\xi)\) together imply
  that
  \(\lim \varphi_0(f_a^\s) \Rep_b \varphi_0(f_b^\s) \xi =
  \Rep_b(\xi)\).  Using~\ref{en:rep_CP_Hilm_5}, this implies
  \(\lim \Rep_b \varphi_0((f_a * f_b)^\s) \xi = \Rep_b(\xi)\).
  Since~\(\Rep_b\) is a partial unitary, this implies
  \(\lim \varphi_0((f_a * f_b)^\s) \xi = \xi\), and this says
  that~\(\xi\) belongs to the domain of~\(\Rep_{a b}\).  This finishes
  the proof that the partial unitaries~\(\Rep_a\)
  satisfy~\ref{en:rep_CP_Hilm_1} for \(a\in \Bis(\Gr)\),
  \(b\in \Bis(\Bisp)\).  Similar computations
  establish~\ref{en:rep_CP_Hilm_1} for \(a,b\in \Bis(\Gr)\) and for
  \(a\in \Bis(\Bisp)\), \(b\in \Bis(\Gr)\).  That is,
  \ref{en:rep_CP_Hilm_1} holds.

  Let \(a,b\in\Bis(\Bisp)\) and \(f_j\in \Contc(j)\) for \(j=a,b\).
  Then \(L(f_a)^* L(f_b) = \psi(\braket{f_a}{f_b})\) for
  \(f_a\in \Contc(a)\) for \(j=a,b\).  Since
  \(L(f_j) = \Rep_j \varphi_0(f_j^\s)\) for \(j=a,b\) and
  \(\psi(\braket{f_a}{f_b}) = \Rep_{\braket{a}{b}}
  \varphi_0\bigl((f_a^\s\circ \vartheta_{\braket{a}{b}})^\dagger
  f_b^\s\bigr)\), this implies~\ref{en:rep_CP_Hilm_2}.

  The two constructions outlined above are inverse to each other and
  so give a bijection between nondegenerate representations
  \(\mathcal{O}\to \Comp(\Hilm)\) and the families
  \((\varphi_0,\Rep_a)\) with the properties in the statement of the
  proposition.
\end{proof}

Comparing the descriptions of representations of \(\Cst(\Mod)\)
and~\(\mathcal{O}\), the main difference is that in
Proposition~\ref{pro:rep_model_Hilm} we require a representation of
\(\Cont_0(\Omega(R))\), whereas we only require a representation of
\(\Cont_0(\Gr^0)\) in Proposition~\ref{pro:rep_CP_Hilm}.

\begin{lemma}
  \label{lem:compare_reps}
  A nondegenerate \Star{}homomorphism \(\Cst(\Mod)\to\Comp(\Hilm)\)
  induces a nondegenerate \Star{}homomorphism
  \(\mathcal{O}\to\Comp(\Hilm)\).
\end{lemma}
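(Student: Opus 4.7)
The plan is to unpack both universal properties and transport the data along the anchor map. Given a nondegenerate \Star{}homomorphism $\psi\colon \Cst(\Mod) \to \Comp(\Hilm)$, Proposition~\ref{pro:rep_model_Hilm} associates to it a pair $(\varphi, (\Rep_a)_{a\in\Bis})$ consisting of a nondegenerate \Star{}homomorphism $\varphi\colon \Cont_0(\Omega(\Reg)) \to \Comp(\Hilm)$ and a family of partial unitaries satisfying the stated conditions.  The anchor map $\rg\colon \Omega(\Reg) \to \Gr^0$ is continuous and proper by Lemma~\ref{lem:Omega_Reg}, so pullback yields a nondegenerate \Star{}homomorphism $\rg^*\colon \Cont_0(\Gr^0) \to \Cont_0(\Omega(\Reg))$.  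I set $\varphi_0 \defeq \varphi \circ \rg^*$ and keep the family $(\Rep_a)_{a\in\Bis}$ unchanged; the task is then to verify the conditions of Proposition~\ref{pro:rep_CP_Hilm}, which immediately produces the desired nondegenerate representation of~$\mathcal{O}$.

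The translation hinges on the identification $\varphi_0(\Cont_0(W))\Hilm = \varphi(\Cont_0(\rg^{-1}(W)))\Hilm$ for every open $W \subseteq \Gr^0$.  This follows from Lemma~\ref{lem:domains_Hilm_apprid}, since the properness of~$\rg$ guarantees that the pullback of an approximate unit in $\Cont_0(W)$ is an approximate unit in $\Cont_0(\rg^{-1}(W))$.  As $\rg^{-1}(\s(a))$ is precisely the source fibre $D_{a^*a}$ of the partial homeomorphism~$\vartheta_a$ on~$\Omega(\Reg)$, the required domain in~\ref{en:rep_CP_Hilm_3} matches the one prescribed by~\ref{en:rep_model_Hilm_3}.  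Condition~\ref{en:rep_CP_Hilm_1} is inherited verbatim from~\ref{en:rep_model_Hilm_1}.  The covariance condition~\ref{en:rep_CP_Hilm_5} follows from~\ref{en:rep_model_Hilm_5} together with the pointwise identity $\rg \circ \vartheta_a = \varrho_a \circ \rg$ on $D_{a^*a}$, which is immediate from how slices of~$\Gr$ and of~$\Bisp$ act on $\Omega(\Reg)$.  Finally, the inner product condition~\ref{en:rep_CP_Hilm_2} follows from the relation $\Rep_a^*\Rep_b = \Rep_{\braket{a}{b}}$ (condition~\ref{en:rep_model_Hilm_2} of Proposition~\ref{pro:rep_model_Hilm}) by moving $\Rep_a^*$ across the inner product and applying the covariance formula.

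The step that requires care is condition~\ref{en:rep_CP_Hilm_4}: the subspace $\Hilm_\Reg = \varphi_0(\Cont_0(\Reg))\Hilm$ must lie in the closed linear span of the codomains of the $\Rep_a$ for $a \in \Bis(\Bisp)$.  By the identification above, $\Hilm_\Reg = \varphi(\Cont_0(\rg^{-1}(\Reg)))\Hilm$, while by~\ref{en:rep_model_Hilm_4} the closed span of these codomains equals $\varphi(\Cont_0(\pi^{-1}(\Bisp/\Gr)))\Hilm$, since slices $\Qu(a)$ cover $\Bisp/\Gr$.  It therefore suffices to establish the set-theoretic inclusion $\rg^{-1}(\Reg) \subseteq \pi^{-1}(\Bisp/\Gr)$ inside~$\Omega(\Reg)$.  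By the construction in Lemma~\ref{lem:Omega_Reg}, the subset $(\Bisp_0/\Gr)_\Reg = \Reg$ has been removed from the level-zero stratum $\Gr^0 \subseteq \Omega_{[0,\infty)}$, so $\Gr^0 \cap \Omega(\Reg) = \Gr^0 \setminus \Reg$.  Since $\rg$ is the identity on $\Gr^0 \cap \Omega(\Reg)$, no point of $\rg^{-1}(\Reg)$ lies in $\Gr^0 \cap \Omega(\Reg)$; every such point therefore lies in the complement $\Omega(\Reg) \setminus \Gr^0 = \pi^{-1}(\Bisp/\Gr)$, as required.  Applying Proposition~\ref{pro:rep_CP_Hilm} to the pair $(\varphi_0, (\Rep_a))$ yields the induced nondegenerate \Star{}homomorphism $\mathcal{O}\to\Comp(\Hilm)$.
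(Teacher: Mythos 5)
Your overall strategy coincides with the paper's: set \(\varphi_0 \defeq \varphi\circ\rg^*\) for the proper anchor map \(\rg\colon\Omega(\Reg)\to\Gr^0\), keep the partial unitaries \(\Rep_a\), and check the conditions of Proposition~\ref{pro:rep_CP_Hilm} one by one. Your verifications of \ref{en:rep_CP_Hilm_1}, \ref{en:rep_CP_Hilm_3}, \ref{en:rep_CP_Hilm_4} and \ref{en:rep_CP_Hilm_5} are essentially the paper's; for \ref{en:rep_CP_Hilm_4} the paper phrases the key inclusion as \(\rg^{-1}(\Reg)\subseteq\Bisp\cdot\Omega(\Reg)\), which is condition \ref{en:correspondence_action_3} for the action on \(\Omega(\Reg)\), and your stratum-by-stratum argument yields the same inclusion.

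There is, however, a genuine gap in your treatment of \ref{en:rep_CP_Hilm_2}. You cannot simply ``move \(\Rep_a^*\) across the inner product'': \(\Rep_a^*\) is a partial unitary whose domain is the codomain \(\Hilm_{\pi^*(\Qu(a))}\) of \(\Rep_a\), whereas the vector \(\Rep_b\varphi_0(f_b^\s)\xi_b\) lies in \(\Hilm_{\pi^*(\Qu(b))}\) and need not lie in \(\Hilm_{\pi^*(\Qu(a))}\); moreover \(\Rep_a^*\Rep_b=\Rep_{\braket{a}{b}}\) is an equality of partial maps whose common domain is \(\Hilm_{\rg^{-1}(\s(\braket{a}{b}))}\), in general a proper subspace of \(\Hilm_{\rg^{-1}(\s(b))}\), which is where \(\varphi_0(f_b^\s)\xi_b\) lives. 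So the expression \(\Rep_a^*\Rep_b\varphi_0(f_b^\s)\xi_b\) is undefined for general \(f_b\), and \(\Hilm_{\pi^*(\Qu(a))}\) need not be orthogonally complemented, so no projection can be inserted to fix this formally. The paper closes exactly this gap with a reduction step: choose a positive approximate unit \((h_n)\) of \(\Cont_0(\Qu(a))\), write \(\Rep_a\varphi_0(f_a^\s)\xi_a=\lim\varphi(h_n\circ\pi)\Rep_a\varphi_0(f_a^\s)\xi_a\), move \(\varphi(h_n\circ\pi)\) to the other side of the inner product, and use \ref{en:rep_model_Hilm_5} together with the computation \(h_n\circ\pi\circ\vartheta_b=h_n'\circ\rg\) for an approximate unit \((h_n')\) of \(\Cont_0(\s(\braket{a}{b}))\) to reduce to the case where \(f_b^\s\) is supported in \(\s(\braket{a}{b})\). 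Only after this reduction does \(\Rep_b\varphi_0(f_b^\s)\xi_b\) land in \(\Hilm_{\pi^*(\Qu(a)\cap\Qu(b))}\subseteq\Hilm_{\pi^*(\Qu(a))}\), where your manipulation becomes legitimate. You should add this reduction; without it the step as stated does not go through.
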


\begin{proof}
  By composing with the proper map
  \(\rg\colon \Omega(\Reg) \to \Gr^0\), a \Star{}homomorphism on
  \(\Cont_0(\Omega(\Reg))\) induces one on \(\Cont_0(\Gr^0)\).  So the
  data in Proposition~\ref{pro:rep_model_Hilm} that is equivalent to a
  nondegenerate \Star{}homomorphism \(\Cst(\Mod)\to\Comp(\Hilm)\)
  produces the data in Proposition~\ref{pro:rep_CP_Hilm} that is
  equivalent to a nondegenerate \Star{}homomorphism
  \(\mathcal{O}\to\Comp(\Hilm)\).  The conditions
  \ref{en:rep_model_Hilm_1} and \ref{en:rep_CP_Hilm_1} are identical,
  and \ref{en:rep_model_Hilm_3} is equivalent to
  \ref{en:rep_CP_Hilm_3}.  In \(\Omega(\Reg)\),
  \(\rg^{-1}(\Reg) \subseteq \Bisp\cdot \Omega(\Reg)\).  This is
  covered by the subsets \(\pi^{-1}(\Qu(a))\) for \(a\in\Bis(\Bisp)\).
  So \ref{en:rep_model_Hilm_4} implies \ref{en:rep_CP_Hilm_4}.
  Computations as in the proof of Proposition~\ref{pro:rep_model_Hilm}
  show that
  \((f\circ \rg \circ \vartheta_a)(\omega) = f\circ \varrho_a \circ
  \rg(\omega)\) for \(f\in \Cont_0(\Gr^0)\), \(a\in\Bis(\Bisp)\),
  \(\omega \in \Omega(\Reg)\) with \(\rg(\omega) \in \s(a)\).
  Therefore, \ref{en:rep_model_Hilm_5} for functions
  on~\(\Omega(\Reg)\) of the form \(f\circ \rg\) implies
  \ref{en:rep_CP_Hilm_5}.  In the situation of \ref{en:rep_CP_Hilm_2},
  let~\((h_n)\) be a positive approximate unit in
  \(\Cont_0(\Qu(a))\).  Then
  \(\Rep_a \varphi_0(f_a^\s) = \lim \varphi(h_n \circ \pi) \Rep_a
  \varphi_0(f_a^\s)\) by Lemma~\ref{lem:domains_Hilm_apprid} and
  \ref{en:rep_model_Hilm_4}.  A computation in the proof of
  Proposition~\ref{pro:rep_model_Hilm} shows that
  \(h_n\circ \pi \circ \vartheta_b\) is of the form
  \(h_n'\circ \rg\) for an approximate unit \((h_n')\) in
  \(\Cont_0(\s(\braket{a}{b}))\).  Now
  \begin{multline*}
    \braket{\Rep_a \varphi_0(f_a^\s) \xi_a}{\Rep_b \varphi_0(f_b^\s)
      \xi_b}
    = \lim  {}\braket{\varphi(h_n\circ \pi)\Rep_a \varphi_0(f_a^\s)
      \xi_a}  {\Rep_b \varphi_0(f_b^\s) \xi_b}
    \\= \lim  {}\braket{\Rep_a \varphi_0(f_a^\s)
      \xi_a}  {\Rep_b \varphi_0(h'_n f_b^\s) \xi_b}.
  \end{multline*}
  Therefore, it suffices to prove the equation in
  \ref{en:rep_CP_Hilm_2} when~\(f_b^\s\) is supported in
  \(\s(\braket{a}{b}) \subseteq \s(b)\).  We assume this.  Then
  \(\Rep_b(f_b^\s)\) belongs to \(\Hilm_{\pi^{-1}(\Qu(a) \cap \Qu(b))}\),
  which is contained in the codomain of~\(\Rep_a^*\).  So
  \begin{multline*}
      \braket{\Rep_a \varphi_0(f_a^\s) \xi_a}
      {\Rep_b \varphi_0(f_b^\s) \xi_b}
      = \braket{\varphi_0(f_a^\s) \xi_a}
      {\Rep_a^* \Rep_b \varphi_0(f_b^\s) \xi_b}
      \\= \braket{\varphi_0(f_a^\s) \xi_a}
      {\Rep_{\braket{a}{b}} \varphi_0(f_b^\s) \xi_b}
      = \braket{\xi_a} {\varphi_0((f_a^\s)^\dagger)
        \Rep_{\braket{a}{b}} \varphi_0(f_b^\s) \xi_b}.
  \end{multline*}
  This together with \ref{en:rep_CP_Hilm_5} implies
  \ref{en:rep_CP_Hilm_2}.  So all the conditions in
  Proposition~\ref{pro:rep_CP_Hilm} hold.
\end{proof}

Applied to the identity representation on \(\Hilm= \Cst(\Mod)\), the
lemma implies that there is a nondegenerate \Star{}homomorphism
\(\mathcal{O}\to \Cst(\Mod)\).  In fact, it is not hard to describe
the Toeplitz representation \(\Cst(\Gr) \to \Cst(\Mod)\) and
\(\Cst(\Bisp) \to \Cst(\Mod)\) directly.  The more difficult point
is that this map is an isomorphism or, equivalently, both
\(\Cst\)\nb-algebras have the same nondegenerate compact
representations.  This means that, in the presence of the
operators~\(\Rep_a\) as in Proposition~\ref{pro:rep_CP_Hilm}, we may
always extend the homomorphism
\(\varphi_0\colon \Cont_0(\Gr^0) \to \Comp(\Hilm)\) to all of
\(A_{[0,\infty)} \cong \Cont_0(\Omega_{[0,\infty)})\).

\begin{lemma}
  \label{lem:left_action_space_n}
  The pointwise multiplication action of \(\Cont_0(\Bisp_n/\Gr)\) on
  \(\ContS(\Bisp)\) defined by \((M_f h)(x) \defeq f([x]) h(x)\) for
  \(f\in \Cont_0(\Bisp_n/\Gr)\), \(h\in \ContS(\Bisp)\),
  \(x\in \Bisp_n\), extends to a compact operator on
  \(\Cst(\Bisp_n)\), and this defines a nondegenerate
  \Star{}homomorphism
  \(\Cont_0(\Bisp_n/\Gr) \to \Comp(\Cst(\Bisp_n))\).
\end{lemma}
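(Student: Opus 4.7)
The plan is to exhibit $M_f$ as an adjointable operator on $\Cst(\Bisp_n)$ for every $f \in \Cont_0(\Bisp_n/\Gr)$, to realise $M_f$ as a rank-one operator when $f$ is supported inside $\Qu(\Slice)$ for a single slice~$\Slice$, and then to conclude compactness and nondegeneracy via a partition of unity. First I would verify well-definedness: the pullback $f \circ \Qu$ is a bounded continuous $\Gr$\nb-invariant function on~$\Bisp_n$, so multiplication by it preserves $\Contc(U)$ for every Hausdorff open $U \subseteq \Bisp_n$ and commutes with the right $\Gr$\nb-action, giving a $\Cst(\Gr)$\nb-linear operator on~$\Cst(\Bisp_n)$. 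A short computation with the inner product from \cite{Antunes-Ko-Meyer:Groupoid_correspondences}*{Equation~(7.2)} yields $\braket{M_f \xi}{\eta} = \braket{\xi}{M_{\bar f} \eta}$, so $M_f$ is adjointable with $M_f^* = M_{\bar f}$; hence $f \mapsto M_f$ is a \Star{}homomorphism from the commutative $\Cst$\nb-algebra $\Cont_0(\Bisp_n/\Gr)$ into $\Bound(\Cst(\Bisp_n))$, automatically contractive, and extends by continuity once defined on generators.

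The main step is the identification of $M_f$ with a compact operator. By~\cite{Antunes-Ko-Meyer:Groupoid_correspondences}, every point of~$\Bisp_n$ has a slice neighbourhood, so the open sets $\Qu(\Slice)$ for slices $\Slice \subseteq \Bisp_n$ cover~$\Bisp_n/\Gr$. Using a partition of unity and polarisation, it suffices to treat functions of the form $f = \bar{g_1} g_2$ with $g_1, g_2 \in \Contc(\Qu(\Slice))$ for one fixed slice~$\Slice$. I transport these back along the homeomorphism $\Qu|_{\Slice} \colon \Slice \congto \Qu(\Slice)$ to obtain $\eta_i \defeq g_i \circ (\Qu|_{\Slice})^{-1} \in \Contc(\Slice) \subseteq \ContS(\Bisp_n)$, and claim $M_f = \ket{\eta_2}\bra{\eta_1}$. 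Unfolding this as $h \mapsto \eta_2 \cdot \braket{\eta_1}{h}$, the injectivity of $\s|_{\Slice}$ collapses the sum defining $\braket{\eta_1}{h}$ to a single term, and the injectivity of $\Qu|_{\Slice}$ together with freeness of the right $\Gr$\nb-action on~$\Bisp_n$ forces a single term in the convolution $\eta_2 \cdot \braket{\eta_1}{h}$, giving $(\bar{g_1} g_2)(\Qu(y)) \cdot h(y)$ at $y\in\Bisp_n$; both sides vanish outside the preimage of $\Qu(\Slice)$. Norm density then yields $M_f \in \Comp(\Cst(\Bisp_n))$ for all $f \in \Cont_0(\Bisp_n/\Gr)$.

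For nondegeneracy, given $h \in \Contc(U)$ with $U \subseteq \Bisp_n$ a Hausdorff open subset, the image $\Qu(\mathrm{supp}(h))$ is a compact subset of~$\Bisp_n/\Gr$, and any $f \in \Contc(\Bisp_n/\Gr)$ equal to~$1$ on that subset satisfies $M_f h = h$; since such~$h$ span a dense subspace of $\Cst(\Bisp_n)$, the representation is nondegenerate. I expect the main obstacle to be the slice-wise rank-one identification: it rests essentially on both injectivity conditions ($\s$ and $\Qu$ on $\Slice$) in the definition of a slice, combined with freeness of the right $\Gr$\nb-action on $\Bisp_n$, and this is where careless bookkeeping would go wrong.
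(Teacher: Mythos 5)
Your proof is correct, but it takes a genuinely different route from the paper's. The paper proves the lemma in three lines by observing that the orbit space projection \(\Bisp_n\to\Bisp_n/\Gr\) turns \(\Bisp_n\) into a \emph{proper} groupoid correspondence \(\Bisp_n/\Gr\leftarrow\Gr\) (the induced map on orbit spaces is the identity), and then invoking the general result of \cite{Antunes-Ko-Meyer:Groupoid_correspondences}*{Theorem~7.14} that proper groupoid correspondences induce \(\Cst\)\nb-correspondences whose left action is by compact operators; pointwise multiplication is just what the general left module structure specialises to here. You instead prove everything by hand: adjointability of \(M_f\) from the inner product formula, the identification \(M_{\conj{g_1}g_2}=\ket{\eta_2}\bra{\eta_1}\) for functions supported in \(\Qu(\Slice)\) for a single slice \(\Slice\) (where both injectivity conditions on a slice and freeness of the right action collapse the double sum to a single term), and then a partition of unity plus density. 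Your key computation is sound --- in fact it is essentially the same calculation the paper carries out later, in the proof of Proposition~\ref{pro:main_for_emptyset}, when verifying \ref{en:rep_model_Hilm_4}, where \(\ket{f_1}\bra{f_2}\) for \(f_1,f_2\in\Contc(a)\) is shown to be pointwise multiplication by \([x]\mapsto\sum_g f_1(xg)\conj{f_2(xg)}\). What your approach buys is self-containedness and the explicit rank-one picture as a by-product; what the paper's approach buys is brevity and a conceptual placement of the statement inside the bicategorical framework of groupoid correspondences. One cosmetic point: the lemma's statement has a typo (\(\ContS(\Bisp)\) should be \(\ContS(\Bisp_n)\)); you correctly read it as \(\Bisp_n\) throughout.
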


\begin{proof}
  The orbit space projection \(\Bisp_n \to \Bisp_n/\Gr\) may be
  viewed as an action of the space \(\Bisp_n/\Gr\) on~\(\Bisp_n\).
  It manifestly commutes with the right \(\Gr\)\nb-action and so it
  turns~\(\Bisp_n\) into a groupoid correspondence
  \(\Bisp_n/\Gr \leftarrow \Gr\).  Since the anchor map of the left
  action induces the identity homeomorphism
  \(\Bisp_n/\Gr \congto \Bisp_n/\Gr\), this groupoid correspondence
  is also proper.  So the left action of
  \(\Cst(\Bisp_n/\Gr) \cong \Cont_0(\Bisp_n/\Gr)\) on
  \(\Cst(\Bisp)\) is by compact operators by
  \cite{Antunes-Ko-Meyer:Groupoid_correspondences}*{Theorem~7.14}.
  This left action comes from the left
  \(\ContS(\Bisp_n/\Gr)\)-module structure on \(\ContS(\Bisp_n)\)
  that is defined in
  \cite{Antunes-Ko-Meyer:Groupoid_correspondences}*{(7.1)}.  In the
  case at hand, this simplifies to pointwise multiplication.
\end{proof}

\begin{proposition}
  \label{pro:main_for_emptyset}
  Let \(\Reg = \emptyset\), so that~\(\mathcal{O}\) is the Toeplitz
  \(\Cst\)\nb-algebra of~\(\Cst(\Bisp)\).  There is a canonical
  \Star{}homomorphism
  \(\Cont_0(\Omega_{[0,\infty)}) = A_{[0,\infty)} \to \mathcal{O}\),
  which together with the partial unitaries~\(\Rep_a\) for \(a\in \Bis\)
  associated to the identity representation of~\(\mathcal{O}\) on
  itself produces a nondegenerate \Star{}homomorphism
  \(\Cst(\Mod) \to \mathcal{O}\).
\end{proposition}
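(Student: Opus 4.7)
The plan is to construct $\varphi\colon A_{[0,\infty)} \to \mathcal{O}$ stratum by stratum, and then show that $(\varphi, \Rep_a)$ satisfies the hypotheses of Proposition~\ref{pro:rep_model_Hilm}. For each $n \in \N$, the Toeplitz representation of $\Cst(\Bisp)$ inside $\mathcal{O}$ iterates to a Hilbert $\Cst(\Gr)$\nb-module isometry $L_n\colon \Cst(\Bisp_n) \to \mathcal{O}$. Composing the compact left action $\lambda_n\colon \Cont_0(\Bisp_n/\Gr) \to \Comp(\Cst(\Bisp_n))$ from Lemma~\ref{lem:left_action_space_n} with the \Star{}homomorphism $\Comp(\Cst(\Bisp_n)) \to \mathcal{O}$ sending $\ket{\xi}\bra{\eta}$ to $L_n(\xi) L_n(\eta)^*$ yields a nondegenerate \Star{}homomorphism $\varphi_n\colon \Cont_0(\Bisp_n/\Gr) \to \mathcal{O}$; for $n=0$ this is the Toeplitz image of $\Cont_0(\Gr^0) \subseteq \Cst(\Gr)$.

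The first thing I would verify is the identity
\[
  \varphi_k(f) \cdot L_j(\xi) = L_j\bigl(M_{(\pi_j^k)^* f}\, \xi\bigr) \quad \text{in } \mathcal{O}
\]
for all $k \le j$, $f \in \Cont_0(\Bisp_k/\Gr)$ and $\xi \in \Cst(\Bisp_j)$. This reduces, on elementary tensors in $\Cst(\Bisp_j)$ and using the definition of $L_j$ as an iterated Toeplitz product, to the fact that the left action of $\Cont_0(\Bisp_k/\Gr)$ on $\Cst(\Bisp_j)$ factors through pullback along $\pi_j^k$. Using this identity, I would then check that the family $(\varphi_n)_{n\in\N}$ assembles into a \Star{}homomorphism $A_{[0,n]} \to \mathcal{O}$ for each $n$ by matching the formula in Definition~\ref{def:Amn} with the products $\varphi_k(f_k) \varphi_j(h_j)$ computed in $\mathcal{O}$. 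Passing to the colimit yields $\varphi\colon A_{[0,\infty)} \to \mathcal{O}$, which is nondegenerate because $\varphi_0$ already is.

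With $\varphi$ in hand I would verify the conditions of Proposition~\ref{pro:rep_model_Hilm}. Conditions \ref{en:rep_model_Hilm_1} and~\ref{en:rep_model_Hilm_2} are inherited from the pair $(\varphi_0, \Rep_a)$ supplied by Proposition~\ref{pro:rep_CP_Hilm} applied to the identity representation of~$\mathcal{O}$. Condition~\ref{en:rep_model_Hilm_3} coincides with its counterpart in Proposition~\ref{pro:rep_CP_Hilm} because $\varphi|_{\Cont_0(\Gr^0)} = \varphi_0$. Condition~\ref{en:rep_model_Hilm_5} for generators supported in a single stratum $\Bisp_n/\Gr$ translates, via the identity above together with the fact that $\Rep_a^*$ strips one letter off an iterated creation operator, into a computation that closely mirrors the verification of~\ref{en:rep_CP_Hilm_5}.

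The hard part will be condition~\ref{en:rep_model_Hilm_4}: the codomain of $\Rep_a$ for $a \in \Bis(\Bisp)$ must equal $\Hilm_{\pi^*(\Qu(a))} = \varphi(\Cont_0(\pi^*(\Qu(a))))\mathcal{O}$. The codomain obtained from the identity representation is the closed linear span of $L(\Contc(a))\mathcal{O}$, so the task is to match these two closed submodules of $\mathcal{O}$. I expect this step to lean on the explicit description of $\Omega_{[0,\infty)}$ provided by Proposition~\ref{pro:Omega_set}, on the domain-by-approximate-unit characterisation in Lemma~\ref{lem:domains_Hilm_apprid}, and on the Cohen--Hewitt factorisation: a positive approximate unit in $\Cont_0(\Qu(a)) \subseteq \Cont_0(\Bisp/\Gr)$, pulled back along $\pi\colon \Omega_{[0,\infty)} \to \Omega_{[0,1]}$ and passed through $\varphi$, should converge to the identity on $L(\Contc(a))\mathcal{O}$ and vanish on its orthogonal complement, exhibiting both submodules as the range of the same approximate projection.
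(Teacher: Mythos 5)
Your proposal follows essentially the same route as the paper: you build $\varphi$ from the compact left actions of $\Cont_0(\Bisp_n/\Gr)$ on $\Cst(\Bisp_n)\cong\Hilm^{\otimes n}$ via the standard embedding $\Comp(\Hilm^{\otimes n})\hookrightarrow\mathcal{T}$ (your $\ket{\xi}\bra{\eta}\mapsto L_n(\xi)L_n(\eta)^*$), match the product with Definition~\ref{def:Amn} through pullback along $\pi_n^m$, and then verify the conditions of Proposition~\ref{pro:rep_model_Hilm}, which is exactly the paper's argument. The only cosmetic differences are that the paper settles \ref{en:rep_model_Hilm_4} by the explicit rank-one factorisation $\Contc(a)=\ket{\Contc(a)}\bra{\Contc(a)}\Contc(a)$ rather than your approximate-unit phrasing (the same computation underlies both), and that it disposes of \ref{en:rep_model_Hilm_2} by its stated redundancy rather than by inheritance.
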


\begin{proof}
  Let \(D=\Cst(\Gr)\) and let \(\Hilm = \Cst(\Bisp)\).  Let
  \(\Hilm^{\otimes n}\) be the \(n\)\nb-fold tensor product
  of~\(\Hilm\) with itself over~\(D\).  The multiplicativity of
  \(\Bisp\mapsto \Cst(\Bisp)\) in
  \cite{Antunes-Ko-Meyer:Groupoid_correspondences}*{Proposition~7.12}
  implies that
  \[
    \Cst(\Bisp_n) \otimes_{\Cst(\Gr)} \Cst(\Bisp)
    \cong\Cst(\Bisp_{n+1})
  \]
  for all \(n\in\N\).  So there are natural isomorphisms
  \(\Hilm^{\otimes n} \cong \Cst(\Bisp_n)\).  Recall
  from~\cite{Pimsner:Generalizing_Cuntz-Krieger} that the Toeplitz
  \(\Cst\)\nb-algebra of~\(\Hilm\) contains a copy of
  \(\Comp(\Hilm^{\otimes n})\) for all \(n\in\N\) and that the
  product of \(T_n\in \Comp(\Hilm^{\otimes n})\) and
  \(T_m\in \Comp(\Hilm^{\otimes m})\) for \(m\le n\) is the product
  of~\(T_n\) with the adjointable operator
  \(T_m \otimes \Id_{\Hilm^{\otimes n-m}}\).

  Lemma~\ref{lem:left_action_space_n} provides nondegenerate
  \Star{}homomorphisms
  \[
    \Cont_0(\Bisp_n/\Gr)
    \to \Comp(\Bisp_n)
    \cong \Comp(\Hilm^{\otimes n})
    \to \mathcal{O}.
  \]
  If \(m\le n\) and \(f_m \in \Cont_0(\Bisp_m/\Gr)\), then the
  operator on~\(\Hilm^{\otimes n}\) induced by pointwise
  multiplication by~\(f_m\) is pointwise multiplication by the
  function \(f_m \circ \pi_n^m\) on~\(\Bisp_n/\Gr\).  Therefore, the
  product of \(f_n \in \Cont_0(\Bisp_n/\Gr)\) and
  \(f_m \in \Cont_0(\Bisp_m/\Gr)\) in~\(\mathcal{O}\) is the product
  of \(f_n\) and \(f_m \circ \pi_n^m\) as functions
  on~\(\Bisp_n/\Gr\).  The same product is used to define the
  commutative \(\Cst\)\nb-algebras~\(A_{[0,n]}\) in
  Definition~\ref{def:Amn}.  Therefore, we get \Star{}homomorphisms
  \(A_{[0,n]} \to \mathcal{O}\).  These are compatible with the
  canonical maps \(A_{[0,n]} \to A_{[0,m]}\) for \(n\le m\).  So they
  combine to a \Star{}homomorphism
  \(\varphi\colon \Cont_0(\Omega_{[0,\infty)}) = A_{[0,\infty)} \to
  \mathcal{O}\).

  Now use Proposition~\ref{pro:rep_CP_Hilm} to associate to the
  identity map on~\(\mathcal{O}\) a nondegenerate \Star{}homomorphism
  \(\varphi_0\colon \Cont_0(\Gr^0) \to \mathcal{O}\) and a family of
  partial unitaries~\(\Rep_a\) on~\(\mathcal{O}\), viewed as a Hilbert
  module over itself.  The Toeplitz representation \((\psi,L)\) that
  corresponds to the identity map on~\(\mathcal{O}\) is such that
  \(\psi\) and~\(L\) are the canonical inclusions of \(\Cst(\Gr) = D\)
  and \(\Cst(\Bisp) = \Hilm\) into the Toeplitz
  \(\Cst\)\nb-algebra~\(\mathcal{O}\) of~\(\Hilm\), respectively.  The
  \Star{}homomorphism~\(\varphi_0\) is the restriction of the
  canonical map \(\Cst(\Gr) \to \mathcal{O}\) to \(\Cont_0(\Gr^0)\).
  This agrees with the restriction of~\(\varphi\) to
  \(\Cont_0(\Gr^0) \subseteq \Cont_0(\Omega_{[0,\infty)})\).  The
  partial unitary~\(\Rep_a\) is defined by
  \(\Rep_a(\psi(f^\s)\xi) = L(f) \xi\) for all
  \(f\in \Contc(a) \subseteq \Cst(\Bisp)\), \(\xi\in \mathcal{O}\).

  We claim that \(\varphi\) and the partial unitaries~\(\Rep_a\)
  satisfy the conditions in Proposition~\ref{pro:rep_model_Hilm} for
  \(\Reg=\emptyset\) with \(\Omega(\emptyset) = \Omega_{[0,\infty)}\).
  The conditions \ref{en:rep_CP_Hilm_1} and \ref{en:rep_CP_Hilm_3}
  imply \ref{en:rep_model_Hilm_1} and \ref{en:rep_model_Hilm_3}
  because~\(\varphi_0\) is the restriction of~\(\varphi\) to
  \(\Cont_0(\Gr^0) \subseteq \Cont_0(\Omega_{[0,\infty)})\).

  We now check \ref{en:rep_model_Hilm_4}.  The codomain
  of~\(\Rep_a\) for \(a\in \Bis(\Bisp)\) is the closed right ideal
  in~\(\mathcal{O}\) generated by
  \(\Contc(a) \subseteq \Cst(\Bisp) \subseteq \mathcal{O}\).  Let
  \(f_1,f_2 \in \Contc(a)\) and \(f_3 \in \ContS(\Bisp)\).  Then
  \begin{multline*}
    \ket{f_1}\bra{f_2} f_3(x)
    = \bigl(f_1 * \braket{f_2}{f_3}\bigr)(x)
    \\= \sum_{g\in \Gr^{\s(x)}} f_1(x g) \braket{f_2}{f_3}(g^{-1})
    = \sum_{g\in \Gr^{\s(x)}} \sum_{\setgiven{y\in \Bisp}{\s(y) = \s(g)}}
    f_1(x g) \conj{f_2(y)} f_3(y g^{-1}).
  \end{multline*}
  If \(g,y\) occur in a nonzero summand, then \(\s(y) = \s(x g)\) and
  \(x g,y\in a\).  This forces \(y = x g\) because~\(a\) is a slice.  So
  \[
    \ket{f_1}\bra{f_2} f_3(x)
    =  \sum_{g\in \Gr^{\s(x)}} f_1(x g) \conj{f_2(x g)} f_3(x).
  \]
  In other words, \(\ket{f_1} \bra{f_2}\) is the operator of pointwise
  multiplication by the function \(\Bisp/\Gr \to \C\),
  \([x] \mapsto \sum_{g\in \Gr^{\s(x)}} f_1(x g) \conj{f_2(x g)}\).
  If \(f_3 \in \Contc(a)\) as well, then
  \(\ket{f_1}\bra{f_2} f_3 \in \Contc(a)\) is the pointwise product
  \(f_1 f_2^\dagger f_3\).  Thus any function in~\(\Contc(a)\) may be
  written as \(\ket{f_1}\bra{f_2} f_3\) for
  \(f_1,f_2,f_3\in \Contc(a)\).  This implies that the closed right
  ideal in~\(\mathcal{O}\) generated by \(\Contc(a)\) is equal to the
  closed right ideal generated by
  \(\ket{f_1} \bra{f_2} \in \Comp(\Cst(\Bisp)) \subseteq \mathcal{O}\)
  for \(f_1,f_2\in \Contc(a)\).  The proof of
  Proposition~\ref{pro:main_for_emptyset} shows that this is the
  closed right ideal in~\(\mathcal{O}\) generated by
  \(\Contc(\Qu(a)) \subseteq \Cont_0(\Bisp/\Gr) \subseteq
  \Cont_0(\Omega_{[0,\infty)})\).  Then \ref{en:rep_model_Hilm_4}
  follows.

  Next we check~\ref{en:rep_model_Hilm_5} for \(a\in \Bis\).  Since
  the sum of \(\Cont_0(\Bisp_n/\Gr)\) is dense in~\(A_{[0, \infty)}\),
  we may assume without loss of generality that~\(f_1\) belongs to the
  image of \(\Cont_0(\Bisp_n/\Gr)\) for some \(n\in\N\); this image is
  \(\pi_n^*(\Cont_0(\Bisp_n/\Gr))\) for the canonical map
  \(\pi_n\colon \Omega_{[0,\infty)} \to \Omega_{[0,n]}\), where we
  tacitly embed \(\Bisp_n/\Gr\) into~\(\Omega_{[0,n]}\).  Then
  \(\varphi(f_1) \in \Comp(\Hilm^{\otimes n}) \subseteq \mathcal{O}\).
  The domain~\(D_{a^* a}\) is
  \(\rg^{-1}(\s(a)) \subseteq \Omega_{[0,\infty)}\).  Therefore,
  \(\Cont_0(D_{a^* a}) = \rg^*(\Cont_0(\s(a))) \cdot
  \Cont_0(\Omega_{[0,\infty)})\) and we may assume without
  loss of generality that \(f_2 = \rg^*(f_3)\cdot f_4\) for some
  \(f_3 \in \Cont_0(\s(a))\), \(f_4\in \Cont_0(\Omega_{[0,\infty)})\).
  We are going to prove that
  \[
    \Rep_a^* \varphi(f_1) \Rep_a \varphi_0(f_3^\s)
    = \varphi(f_1\circ \vartheta_a) \varphi_0(f_3^\s)
  \]
  for \(f_1\in \pi_n^*\Cont_0(\Bisp_n/\Gr)\), \(f_3\in \Cont_0(a)\),
  so that \(f_3^\s\in \Cont_0(\s(a))\).  This
  implies~\ref{en:rep_model_Hilm_5}.  We distinguish the cases
  \(a\in \Bis(\Gr)\) and \(a\in \Bis(\Bisp)\).

  First let \(a\in\Bis(\Gr)\).  Then
  \(\Rep_a \varphi_0(f_3^\s) = \psi(f_3) \in \Contc(a) \subseteq
  \Cst(\Gr)\).  When we multiply with
  \(\varphi(f_1) \in \Cont_0(\Bisp_n/\Gr) \subseteq
  \Comp(\Hilm^{\otimes n})\) in the Toeplitz \(\Cst\)\nb-algebra, we
  compose \(\varphi(f_1)\) with the operator
  \(\psi(f_3) \otimes \Id_{\Hilm^{\otimes n-1}} \in
  \Comp(\Hilm^{\otimes n})\).  The latter maps a function
  \(h\in \ContS(\Bisp_n) \subseteq \Cst(\Bisp_n) \cong \Hilm^{\otimes
    n}\) to the function
  \[
    [x_1,\dotsc,x_n] \mapsto
    \begin{cases}
      f_3(g) h[g^{-1}\cdot x_1,x_2,\dotsc,x_n] &\text{if }\rg(x_1) \in \rg(a);\\
      0 &\text{otherwise;}
    \end{cases}
  \]
  here \(x_1,\dotsc,x_n\in\Bisp\) satisfy \(\s(x_j) = \rg(x_{j+1})\)
  for \(j=1,\dotsc,n-1\), \([x_1,\dotsc,x_n]\) denotes the equivalence
  class of \((x_1,\dotsc,x_n)\) in~\(\Bisp_n\), and \(g\in a\) is the
  unique element with \(\rg(a) = \rg(x_1)\).  Then~\(\varphi(f_1)\)
  multiplies pointwise with the function
  \(f_1\in\Cont_0(\Bisp_n/\Gr)\), viewed as a \(\Gr\)\nb-invariant
  function on~\(\Bisp_n\).  Finally, \(\Rep_a^*\) gives the function
  \[
    [x_1,\dotsc,x_n] \mapsto
    \begin{cases}
      f_1[g\cdot x_1,x_2,\dotsc,x_n] f_3(g) h[g^{-1} g x_1,x_2,\dotsc,x_n]
      &\text{if }\rg(x_1) \in \s(a);\\
      0 &\text{otherwise;}
    \end{cases}
  \]
  here \(g\in a\) is the unique element with \(\s(g) = \rg(x_1)\), and
  then it is also the unique element of~\(a\) with
  \(\rg(g) = \rg(g \cdot x_1)\).  The function
  \(f_1\circ \vartheta_a\) is defined on the set of
  \([x_1,\dotsc,x_n] \in \Bisp_n\) with
  \(\rg[x_1,\dotsc,x_n] = \rg(x_1) \in \s(a)\), and there it has the
  value \(f_1[g\cdot x_1,\dotsc,x_n]\) for the unique \(g\in a\) with
  \(\s(g) = \rg(x_1)\).  So the composite operator
  \(\Rep_a^* \varphi(f_1) \Rep_a \varphi_0(f_3^\s)\) first multiplies
  pointwise with \(f_3^\s(\rg[x_1,\dotsc,x_n])\) and then with
  \(f_1\circ \vartheta_a\).  That is, it is equal to
  \(\varphi(f_1\circ \vartheta_a) \varphi_0(f_3^\s) \) as desired.

  Next let \(a\in\Bis(\Bisp)\).  The computation is similar.  Now
  \(\Rep_a \varphi_0(f_3^\s) = L(f_3)\in \Cst(\Bisp)\) maps
  \(\Hilm^{\otimes n-1} \to \Hilm^{\otimes n}\) in the Toeplitz
  \(\Cst\)\nb-algebra.  So the composite with~\(\varphi(f_1)\) is now
  the operator in \(\Comp(\Hilm^{\otimes n-1},\Hilm^{\otimes n})\)
  that maps
  \(h\in \ContS(\Bisp_{n-1}) \subseteq \Cst(\Bisp_{n-1}) \cong
  \Hilm^{\otimes n-1}\) to the function
  \[
    \Bisp_n \to \C,\qquad
    [x_1,\dotsc,x_n] \mapsto
    \begin{cases}
      f_3(x_1) h[x_2,x_3,\dotsc,x_n] &\text{if } \Qu(x_1) \in \Qu(a);\\
      0 &\text{otherwise;}
    \end{cases}
  \]
  here the representative \([x_1,x_2,\dotsc,x_n]\) is chosen with
  \(x_1\in a\), which is possible if and only if
  \(\Qu(x_1) \in \Qu(a)\).  Multiplying pointwise with~\(f_1\) and
  then applying~\(\Rep_a^*\) gives the operator in
  \(\Comp(\Hilm^{\otimes n-1})\) that maps
  \(h\in \ContS(\Bisp_{n-1}) \subseteq \Cst(\Bisp_{n-1}) \cong
  \Hilm^{\otimes n-1}\) to the function on~\(\Bisp_{n-1}\) given by
  \[
    [x_2,\dotsc,x_n] \mapsto
    \begin{cases}
      f_1[x_1,\dotsc,x_n] f_3(x_1) h[x_2,x_3,\dotsc,x_n] &\text{if } \rg(x_2) \in \s(a);\\
      0 &\text{otherwise;}
    \end{cases}
  \]
  here \(x_1\in a\) is the unique element with \(\s(x_1) = \rg(x_2)\).
  The function \(f_1\circ \vartheta_a\) is defined at
  \([x_2,\dotsc,x_n]\) if and only if \(\rg(x_2) \in \s(a)\) and then
  takes the value \(f_1[x_1,\dotsc,x_n]\) for~\(x_1\) as above.  So
  the composite operator
  \(\Rep_a^* \varphi(f_1) \Rep_a \varphi_0(f_3^\s)\) is equal to the
  image of \(\varphi(f_1\circ \vartheta_a) \varphi_0(f_3^\s)\) in
  \(\Comp(\Hilm^{\otimes n-1})\) as desired.  This finishes the proof
  of~\ref{en:rep_model_Hilm_5}.  Since the
  condition~\ref{en:rep_model_Hilm_2} is redundant, the
  representation~\(\varphi\) and the operators~\(\Rep_a\) have all the
  conditions required in Proposition~\ref{pro:rep_model_Hilm}.
\end{proof}

Finally, we may state and prove our main theorem:

\begin{theorem}
  \label{the:main}
  Let~\(\Gr\) be an \'etale locally compact groupoid and let
  \(\Bisp\colon \Gr\leftarrow \Gr\) be an \'etale locally compact
  groupoid correspondence.  Let \(\Reg\subseteq \Gr^0\) be an open
  subset such that~\(\Bisp\) is proper on~\(\Reg\).  Let
  \(\Mod \defeq \Omega(\Reg) \rtimes \IS(\Gr,\Bisp)\) be the groupoid
  model of \((\Gr,\Bisp,\Reg)\).  Then \(\Cst(\Mod)\) is isomorphic to
  the Cuntz--Pimsner algebra of the \(\Cst\)\nb-correspondence
  \(\Cst(\Bisp)\colon \Cst(\Gr) \leftarrow \Cst(\Gr)\) relative to the
  ideal \(\Cst(\Gr_\Reg)\), which acts on~\(\Cst(\Bisp)\) by compact
  operators.
\end{theorem}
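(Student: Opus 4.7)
The proof follows the strategy already set up by Propositions~\ref{pro:rep_model_Hilm} and~\ref{pro:rep_CP_Hilm}, which present $\Cst(\Mod)$ and $\mathcal{O} \defeq \mathcal{O}_{\Cst(\Bisp),\Cst(\Gr_\Reg)}$ by very similar families of generators and relations on Hilbert modules, both involving the same partial unitaries $\Rep_a$ for $a\in \Bis$. The plan is to identify these two presentations by constructing mutually inverse nondegenerate \Star{}homomorphisms $\alpha\colon \mathcal{O}\to\Cst(\Mod)$ and $\beta\colon \Cst(\Mod)\to\mathcal{O}$.

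The first map $\alpha$ is immediate from Lemma~\ref{lem:compare_reps}, applied to the identity representation of $\Cst(\Mod)$ on $\Hilm = \Cst(\Mod)$. For $\beta$, I would generalise the construction in Proposition~\ref{pro:main_for_emptyset} from the Toeplitz case to the relative case, starting from the identity representation of $\mathcal{O}$ on itself. Lemma~\ref{lem:left_action_space_n} and the Fock-space inclusions $\Comp(\Hilm^{\otimes n})\subseteq \mathcal{O}$ with $\Hilm=\Cst(\Bisp)$ produce, exactly as in Proposition~\ref{pro:main_for_emptyset}, a nondegenerate \Star{}homomorphism $\tilde\varphi\colon A_{[0,\infty)} = \Cont_0(\Omega_{[0,\infty)})\to\mathcal{O}$ extending $\varphi_0\colon \Cont_0(\Gr^0)\to\mathcal{O}$, and the three verifications there (\ref{en:rep_model_Hilm_1}, \ref{en:rep_model_Hilm_3}, \ref{en:rep_model_Hilm_5}) are insensitive to whether $\mathcal{O}$ is the Toeplitz algebra or a further quotient of it.

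The main obstacle is turning $\tilde\varphi$ into a nondegenerate \Star{}homomorphism $\varphi\colon \Cont_0(\Omega(\Reg))\to\mathcal{O}$ that also satisfies the coverage condition~\ref{en:rep_model_Hilm_4}. Since $\Omega(\Reg)$ is closed in $\Omega_{[0,\infty)}$, one cannot simply restrict $\tilde\varphi$ through the quotient map $A_{[0,\infty)}\prto \Cont_0(\Omega(\Reg))$, because $\tilde\varphi$ does not annihilate the kernel of this quotient: already on $\Cont_0(\Reg)\subseteq A_{[0,0]}$ it is the canonical embedding. The fix is exactly the Cuntz--Pimsner covariance assumption: it gives $\psi(\Cont_0(\Reg))\mathcal{O}\subseteq L(\Cst(\Bisp))\mathcal{O}$, and combining this with \ref{en:rep_CP_Hilm_5} and the description of codomains of $\Rep_a$ rewrites each $\tilde\varphi(f)$ for $f$ supported on $(\Bisp_n/\Gr)_\Reg$ as a limit of expressions $\sum \Rep_{a_i}\tilde\varphi(h_i)\Rep_{b_i}^*$ with $a_i,b_i\in \Bis(\Bisp)$ and $h_i$ supported one level higher in the $\Bisp_n$-tower. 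Iterating these identifications absorbs the values of $\tilde\varphi$ on the kernel into its values on the projective limit part of $\Omega(\Reg)$, and what remains is a nondegenerate \Star{}homomorphism $\varphi\colon \Cont_0(\Omega(\Reg))\to\mathcal{O}$ whose data together with $(\Rep_a)$ satisfies all the conditions of Proposition~\ref{pro:rep_model_Hilm}, with \ref{en:rep_model_Hilm_4} now encoding precisely Cuntz--Pimsner covariance. Applying the universal property of Proposition~\ref{pro:rep_model_Hilm} produces $\beta$.

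To conclude, the compositions $\alpha\beta$ and $\beta\alpha$ are nondegenerate \Star{}endomorphisms whose associated representation data $(\varphi,\Rep)$ and $(\varphi_0,\Rep)$ agree with the original data; by the uniqueness part of Propositions~\ref{pro:rep_model_Hilm} and~\ref{pro:rep_CP_Hilm} together with the naturality statement \ref{en:covariant_rep_1} of Theorem~\ref{the:covariant_rep}, both compositions must be the identity. I expect the construction of $\varphi$ and the verification of \ref{en:rep_model_Hilm_4} to be by far the most delicate step; the fact that Cuntz--Pimsner covariance on $\Cst(\Gr_\Reg)$ is exactly what is needed to descend $\tilde\varphi$ through the embedding $\Omega(\Reg)\hookrightarrow\Omega_{[0,\infty)}$ is the heart of the theorem.
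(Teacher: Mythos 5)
Your overall architecture --- $\alpha\colon\mathcal{O}\to\Cst(\Mod)$ from Lemma~\ref{lem:compare_reps}, $\beta$ from a relative version of Proposition~\ref{pro:main_for_emptyset}, and the inverse property from matching the two universal properties via the unique extension of $\varphi_0$ to $\varphi$ --- is the paper's, and the Toeplitz half is fine. The gap is in the descent step, which you rightly call the heart of the theorem but then mishandle. You claim $\tilde\varphi$ fails to annihilate the kernel of $A_{[0,\infty)}\prto\Cont_0(\Omega(\Reg))$ because ``already on $\Cont_0(\Reg)\subseteq A_{[0,0]}$ it is the canonical embedding''. This rests on a misidentification of that kernel. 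By Lemma~\ref{lem:Amn}, $A_{[0,0]}=\Cont_0(\Gr^0)$ sits inside $A_{[0,\infty)}$ as a \emph{subalgebra}, not an ideal: by the character description in Proposition~\ref{pro:Omega_set}, the element $(h,0,0,\dotsc)$ is the function $h\circ\rg$ on $\Omega_{[0,\infty)}$, supported on all of $\rg^{-1}(\operatorname{supp}h)$ including the infinite-path part, so it does \emph{not} vanish on $\Omega(\Reg)$ and is not in the kernel. The ideal of functions supported on the open subset $\Reg\subseteq\Omega_{[0,\infty)}$ is instead spanned by elements $h\oplus(-h\circ\rg_*)\in A_{[0,1]}$ with $h\in\Cont_0(\Reg)$, and Cuntz--Pimsner covariance says precisely that $\tilde\varphi$ kills these, because the images of $h$ via $\Cst(\Gr)\to\mathcal{O}$ and of $h\circ\rg_*$ via $\Cont_0(\Bisp/\Gr)\to\Comp(\Cst(\Bisp))\to\mathcal{O}$ coincide. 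So the obstacle you describe does not exist, and your proposed remedy --- rewriting $\tilde\varphi(f)$ as limits of $\sum\Rep_{a_i}\tilde\varphi(h_i)\Rep_{b_i}^*$ and ``absorbing'' values into the projective limit part --- is not a meaningful operation on a \Star{}homomorphism and does not yield one.

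The correct completion, which your iteration idea gestures at without stating, is: the kernel of $\tilde\varphi\colon A_{[0,\infty)}\to\mathcal{T}\to\mathcal{O}$ is an ideal invariant under the $\IS$-action by the covariance relation~\ref{en:rep_model_Hilm_5}; it contains $\Cont_0(\Reg)$ by the computation above; and by Proposition~\ref{pro:universal_restricted_to_Reg} the $\IS$-orbit of $\Reg$ is the whole complement $\bigsqcup_{n}(\Bisp_n/\Gr)_\Reg$ of $\Omega(\Reg)$ in $\Omega_{[0,\infty)}$. Hence $\tilde\varphi$ factors through $\Cont_0(\Omega(\Reg))$, the remaining conditions of Proposition~\ref{pro:rep_model_Hilm} carry over from the Toeplitz verification, and your concluding uniqueness argument then works as written.
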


\begin{proof}
  We abbreviate \(\IS = \IS(\Gr,\Bisp)\) as above.  Let
  \(\mathcal{T}\) denote the Toeplitz \(\Cst\)\nb-algebra
  of~\(\Cst(\Bisp)\), which is the Cuntz--Pimsner algebra relative to
  the zero ideal, which corresponds to \(\Reg=\emptyset\).  Then
  Lemma~\ref{lem:compare_reps} and
  Proposition~\ref{pro:main_for_emptyset} produce canonical
  nondegenerate \Star{}homomorphisms
  \(\mathcal{T} \to \Cst(\Omega_{[0,\infty)}\rtimes \IS)\) and
  \(\Cst(\Omega_{[0,\infty)} \rtimes \IS) \to \mathcal{T}\).  On the
  level of the universal properties in Propositions
  \ref{pro:rep_model_Hilm} and~\ref{pro:rep_CP_Hilm}, these
  nondegenerate \Star{}homomorphisms map the representation of
  \(\Cst(\Omega_{[0,\infty)} \rtimes \IS)\) corresponding to
  \((\varphi,\Rep_a)\) to the representation of~\(\mathcal{T}\)
  corresponding to \((\varphi_0,\Rep_a)\) with the same partial
  unitaries~\(\Rep_a\) and \(\varphi_0 = \varphi\circ \rg^*\) for
  \(\rg\colon \Omega_{[0,\infty)} \to \Gr^0\).  The proof of
  Proposition~\ref{pro:main_for_emptyset} shows that~\(\varphi_0\) may
  be extended in a unique way to~\(\varphi\) so as to satisfy the
  conditions in Proposition~\ref{pro:rep_model_Hilm}.  In particular,
  we get a bijection on the level of the data in Propositions
  \ref{pro:rep_model_Hilm} and~\ref{pro:rep_CP_Hilm}.  So the two maps
  above are inverse to each other.

  Now let~\(\Reg\) be arbitrary.  Lemma~\ref{lem:compare_reps} shows
  that the map
  \(\mathcal{T} \to \Cst(\Omega_{[0,\infty)}\rtimes \IS)\) descends to
  a nondegenerate \Star{}homomorphism
  \(\mathcal{O}\to \Cst(\Omega(\Reg)\rtimes \IS)\).  It remains to
  prove that the nondegenerate \Star{}homomorphism
  \(\Cst(\Omega_{[0,\infty)} \rtimes \IS) \to \mathcal{T}\) descends
  to \(\Cst(\Omega(\Reg)\rtimes \IS) \to \mathcal{O}\).  The
  embeddings of \(\Cont_0(\Gr^0)\) and \(\Cont_0(\Bisp/\Gr)\)
  into~\(\mathcal{O}\) combine to a canonical map
  \(\Cont_0(\Omega_{[0,1]})\to \mathcal{O}\).  The Cuntz--Pimsner
  covariance condition implies that if \(f\in \Cont_0(\Reg)\), then
  \(f\in\Cont_0(\Gr^0)\) and \(f\circ \rg_*\in \Cont_0(\Bisp/\Gr)\)
  have the same image in~\(\mathcal{O}\).  This says that the
  \Star{}homomorphism \(\Cont_0(\Omega_{[0,1]}) \to \mathcal{O}\)
  vanishes on the ideal \(\Cont_0(\Reg)\).  The covariance
  condition~\ref{en:rep_model_Hilm_5} implies that the kernel of the
  \Star{}homomorphism
  \(\Cont_0(\Omega_{[0,\infty)}) \to \mathcal{T} \to \mathcal{O}\) is
  invariant under the action of~\(\IS\).  By
  Proposition~\ref{pro:universal_restricted_to_Reg}, the
  \(\IS\)\nb-orbit of \(\Reg\subseteq \Gr^0\) is equal to the
  complement of \(\Omega(\Reg)\) in~\(\Omega_{[0,\infty)}\).  This
  gives the desired factorisation of the \Star{}homomorphism
  \(\Cont_0(\Omega_{[0,\infty)}) \to \mathcal{T} \to \mathcal{O}\)
  through \(\Cont_0(\Omega(\Reg))\).
\end{proof}

\begin{remark}
  Since the construction of full groupoid \(\Cst\)\nb-algebras is
  exact, the closed invariant subset
  \(\Omega(\Reg)\subseteq \Omega_{[0,\infty)}\) gives rise to an
  extension of groupoid \(\Cst\)\nb-algebras
  \[
    \Cont_0\Bigl( \bigsqcup_{n\in\N} (\Bisp_n/\Gr)_\Reg\Bigr) \rtimes \IS(\Bisp,\Gr)
    \into \Cont_0( \Omega_{[0,\infty)}) \rtimes \IS(\Bisp,\Gr)
    \prto  \Cont_0( \Omega_\Reg) \rtimes \IS(\Bisp,\Gr).
  \]
  So the kernel in this extension is the kernel of the canonical
  quotient map from the Toeplitz \(\Cst\)\nb-algebra to the relative
  Cuntz--Pimsner algebra.  It is well known that this kernel is
  isomorphic to the \(\Cst\)\nb-algebra of compact operators on
  \(\HilmF\cdot \Cst(\Gr_\Reg)\), where~\(\HilmF\) denotes the Fock
  module associated to \(\Cst(\Bisp)\).  This is Morita--Rieffel
  equivalent to the ideal \(\Cst(\Gr_\Reg)\).  The Morita--Rieffel
  equivalence of this ideal and \(\Cst(\Gr_\Reg)\) also follows from
  Theorem~\ref{the:Morita_equivalence} and
  Proposition~\ref{pro:universal_restricted_to_Reg}.
\end{remark}

\section{Some more examples}
\label{sec:examples}

We have discussed throughout the text how graph \(\Cst\)\nb-algebras
fit into our theory.  Namely, this is the special case where the
groupoid~\(\Gr\) is just a discrete set~\(V\) and~\(\Reg\) is the set
of regular vertices in the graph; a groupoid correspondence on~\(V\)
is the same as a directed graph.  It is well known that the
Cuntz--Pimsner algebra of the associated \(\Cst\)\nb-correspondence
relative to the ideal \(\Cont_0(\Reg)\) is the graph
\(\Cst\)\nb-algebra.  Finally, we consider the case of groupoid
correspondences of discrete groupoids in complete generality.  These
groupoid correspondences are roughly the same as the self-similar
groupoid actions
of~\cite{Laca-Raeburn-Ramagge-Whittaker:Equilibrium_self-similar_groupoid},
except that we do not require the graph in question to be finite.

So let~\(\Gr\) be a groupoid with the discrete topology.
Then~\(\Bisp\) also carries the discrete topology because
\(\s\colon \Bisp\to \Gr^0\) is a local homeomorphism.  Being a
groupoid correspondence means that the set~\(\Bisp\) carries commuting
actions of~\(\Gr\) on the left and right, where the right action
is free.  Then we may choose a fundamental domain \(F\subseteq \Bisp\)
for the right \(\Gr\)\nb-action, such that the following map is a
bijection:
\[
  F\times_{\s,\Gr^0,\rg} \Gr \congto \Bisp,
  \qquad (x,g)\mapsto x\cdot g.
\]
Since the left action commutes with the right action, it must be of
the form
\begin{equation}
  \label{eq:action_X_self-similar}
  h\cdot (x,g) = (h\circ x,h|_x\cdot g)  
\end{equation}
for \(h\in\Gr\), \(x\in F\), \(g\in\Gr\) with \(\s(h) = \rg(x)\) and
\(\s(x) = \rg(g)\); here~\(\circ\) is a \(\Gr\)\nb-action on~\(F\)
and \(h|_x\in\Gr\) is defined for \(h\in\Gr\), \(x\in F\) with
\(\s(h) = \rg(x)\), and it satisfies \(\s(h|_x) = \s(x) = \rg(g)\) and
\((h g)|_x = h|_{g\circ x} \cdot g|_x\) for composable \(h,g,x\).  We
refer to
\cite{Antunes-Ko-Meyer:Groupoid_correspondences}*{Proposition~4.3} for
more details in the special case where \(\Gr = V\rtimes \Gamma\) is
the transformation groupoid of a group action on a discrete set, or
to~\cite{Laca-Raeburn-Ramagge-Whittaker:Equilibrium_self-similar_groupoid}.

The maps \(\rg,\s\colon \Bisp \rightrightarrows \Gr^0\) restrict to
the fundamental domain and allow us to view it as a directed graph.
This point of view is taken
in~\cite{Laca-Raeburn-Ramagge-Whittaker:Equilibrium_self-similar_groupoid}.
The groupoid~\(\Gr\) acts both on the set~\(\Gr^0\) of vertices of the
graph and on the edges by the action~\(\circ\).  However,
\eqref{eq:action_X_self-similar} requires the relation
\[
  \s(h\circ x)
  = \rg(h|_x\cdot g)
  = h|_x \cdot \rg(g)
  =  h|_x \cdot \s(x)
\]
instead of \(\s(h\circ x) = h\cdot \s(x)\).  The relation is also used
in~\cite{Laca-Raeburn-Ramagge-Whittaker:Equilibrium_self-similar_groupoid},
but it is rather unnatural if one thinks of an action on a graph.  In
fact, the original article~\cite{Exel-Pardo:Self-similar} assumes both
\(\s(h\circ x) = h\cdot \s(x)\) and
\(h|_x \cdot \s(x) = h\cdot \s(x)\).

Let \(\Reg \subseteq \Gr^0\).  Then~\(\Bisp\) is proper on~\(\Reg\) if
and only if \(\rg^{-1}(v) \cap F \subseteq F\) is finite for all
\(v\in \Reg\); equivalently, \(\rg^{-1}(v)/\Gr\) is finite in
\(\Bisp/\Gr\).  We call~\(\Bisp\) \emph{regular on~\(\Reg\)} if, in
addition, \(\rg^{-1}(v)\) is nonempty for all \(v\in\Reg\).  It is
reasonable to restrict attention to this case, although our theory
also works without this assumption.

Since \(\Gr\) and~\(\Bisp\) are discrete, the sets of
\(\delta\)\nb-functions~\(\delta_x\) for \(x\in\Gr\) or \(x\in\Bisp\)
are bases of \(\ContS(\Gr)\) and \(\ContS(\Bisp)\), respectively, and
the \Star{}algebra structure on \(\ContS(\Gr)\) and the
\(\ContS(\Gr)\)\nb-bimodule structure and inner product
on~\(\ContS(\Bisp)\) are uniquely determined by their values on the
\(\delta\)\nb-functions.  These are given by
\(\delta_g * \delta_h \defeq \delta_{g h}\) if \(\s(g) = \rg(h)\)
and~\(0\) otherwise, \(\delta_g^* = \delta_{g^{-1}}\) for \(g\in\Gr\),
and \(\braket{\delta_x}{\delta_y} = \delta_h\) if \(\s(x) = \rg(h)\)
and \(x\cdot h = y\) and~\(0\) if there is no \(h\in\Gr\) with
\(\s(x) = \rg(h)\) and \(x\cdot h = y\); the relation for
\(\delta_g * \delta_h\) also applies if one of \(\{g,h\}\) belongs
to~\(\Bisp\) and then describes the \(\ContS(\Gr)\)\nb-bimodule
structure on~\(\ContS(\Bisp)\).  This implies the following:

\begin{theorem}
  \label{the:gen_rel_for_self-similar}
  Let \(\Gr\) be a discrete groupoid and let
  \(\Bisp\colon \Gr\leftarrow \Gr\) be a groupoid correspondence that
  is proper on a subset \(\Reg \subseteq \Gr^0\).  Let~\(D\) be a
  \(\Cst\)\nb-algebra.  A Toeplitz representation \(\Cst(\Bisp)\to D\)
  is equivalent to elements \(\Rep_x\in D\) for
  \(x\in \Gr\sqcup \Bisp\) that satisfy the following relations:
  \begin{itemize}
  \item let \(x,y\in\Gr \sqcup \Bisp\) and \(x\in\Gr\) or \(y\in\Gr\);
    then \(\Rep_x * \Rep_y \defeq \Rep_{x y}\) if \(\s(x) = \rg(y)\)
    and \(\Rep_x * \Rep_y \defeq 0\) if \(\s(x) \neq \rg(y)\);
  \item \(\Rep_g^* = \Rep_{g^{-1}}\) for \(g\in\Gr\);
  \item if \(x,y\in\Bisp\) and there is \(h\in\Gr\) with
    \(\s(x) = \rg(h)\) and \(x\cdot h = y\), then
    \(\Rep_x^*\Rep_y = \Rep_h\), and \(\Rep_x^*\Rep_y = 0\) if no
    such~\(h\) exists.
  \end{itemize}
  This representation is Cuntz--Pimsner covariant on
  \(\Cst(\Gr_\Reg)\) if and only if
  \begin{equation}
    \label{eq:CP-covariance_self-similar}
    \Rep_v = \sum_{x\in \rg_{\Bisp}^{-1}(v) \cap F} \Rep_x \Rep_x^*  
  \end{equation}
  holds for all \(v\in \Reg\).  As a consequence, the Cuntz--Pimsner
  algebra of \(\Cst(\Bisp)\) is the universal \(\Cst\)\nb-algebra with
  the generators~\(\Rep_x\) for \(x\in \Gr^0\sqcup \Bisp\) and the
  relations above.
\end{theorem}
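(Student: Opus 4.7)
The plan is to invoke the universal property of the relative Cuntz--Pimsner algebra: a nondegenerate representation of $\mathcal{O}$ on $D$ is equivalent to a Toeplitz representation $(\psi, L)$ of $\Cst(\Bisp)$ over $\Cst(\Gr)$ that is Cuntz--Pimsner covariant on the ideal $\Cst(\Gr_\Reg)$. Because $\Gr$ and $\Bisp$ are discrete, the $\delta$-functions form bases of $\ContS(\Gr)$ and $\ContS(\Bisp)$, so such a pair is determined by the elements $\Rep_x \defeq \psi(\delta_x)$ for $x \in \Gr$ and $\Rep_x \defeq L(\delta_x)$ for $x \in \Bisp$. Conversely, any family $(\Rep_x)$ satisfying the three bulleted relations defines $\psi$ and $L$ by linear extension on $\delta$-functions; the third relation gives $\|L(\xi)\|^2 = \|\psi(\braket{\xi}{\xi})\| \le \|\xi\|^2$, extending $L$ continuously to $\Cst(\Bisp)$, and similarly $\psi$ extends to $\Cst(\Gr)$.

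The translation between the Toeplitz axioms and the three bulleted relations is a direct computation on $\delta$-functions using the formulas $\delta_g * \delta_h = \delta_{gh}$ (when composable, else $0$), $\delta_g^* = \delta_{g^{-1}}$, and $\braket{\delta_x}{\delta_y} = \delta_h$ when $x \cdot h = y$ (else $0$), as recalled just before the theorem. The axioms $\psi(f) L(\xi) = L(f*\xi)$, $L(\xi) \psi(f) = L(\xi*f)$, and $L(\xi_1)^* L(\xi_2) = \psi(\braket{\xi_1}{\xi_2})$, together with the $\Star$algebra structure on $\psi$, specialise on $\delta$-functions precisely to the three stated relations, and the converse implication on the dense subspace spanned by $\delta$-functions propagates to all of $\Cst(\Gr)$ and $\Cst(\Bisp)$ by linearity and continuity.

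For the Cuntz--Pimsner covariance half, I would compute the left action $\phi(\delta_v)$ of $\delta_v \in \Cst(\Gr_\Reg)$ on $\Cst(\Bisp)$ for $v \in \Reg$. The submodule $\delta_v \cdot \Cst(\Bisp)$ is spanned by the $\delta_x$ with $\rg(x) = v$; freeness of the right $\Gr$-action together with the fundamental domain $F$ decomposes this as an orthogonal direct sum $\bigoplus_{x \in \rg^{-1}(v) \cap F} \delta_x \cdot \Cst(\Gr_{\s(x)})$, which is finite because $\Bisp$ is proper on $\Reg$. Consequently $\phi(\delta_v) = \sum_{x \in \rg^{-1}(v) \cap F} \ket{\delta_x}\bra{\delta_x}$ in $\Comp(\Cst(\Bisp))$, and Cuntz--Pimsner covariance applied to $\delta_v$ specialises to~\eqref{eq:CP-covariance_self-similar}. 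To extend from the generators $\delta_v$ to all of $\Cst(\Gr_\Reg)$, I would use that the covariance identity is linear and preserved by left and right multiplication with elements of $\Cst(\Gr)$, combined with $\Cst(\Gr_\Reg) = \overline{\Cst(\Gr) \cdot \Cont_0(\Reg) \cdot \Cst(\Gr)}$. The main delicate step will be the explicit identification of $\phi(\delta_v)$ as a finite sum of rank-one compacts, which requires careful bookkeeping with the fundamental domain and uses properness in an essential way; once this is verified, everything else is routine.
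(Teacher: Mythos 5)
Your proposal is correct and follows exactly the route the paper intends: the paper gives no separate proof, deriving the theorem directly from the preceding description of the convolution, involution and inner product on $\delta$\nobreakdash-functions, and your argument is a faithful elaboration of that, including the identification of the left action of $\delta_v$ as the finite sum $\sum_{x\in \rg^{-1}(v)\cap F}\ket{\delta_x}\bra{\delta_x}$ of orthogonal rank-one projections and the reduction of Cuntz--Pimsner covariance on $\Cst(\Gr_\Reg)$ to the generators $\delta_v$ with $v\in\Reg$. No gaps.
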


If~\(v\) does not belong to the range of \(\rg\colon \Bisp\to Y\),
then the relation~\eqref{eq:CP-covariance_self-similar} says that
\(\Rep_v=0\).  This is why it is reasonable to restrict attention to
the case when~\(\Bisp\) is regular on~\(\Reg\).

\begin{remark}
  Recall that \(F\subseteq \Bisp\) is a fundamental domain.  The
  products \(\Rep_x \Rep_g = \Rep_{x g}\) for all \(x\in F\),
  \(g\in\Gr\) with \(\s(x) = \rg(g)\) exhaust the generators for
  elements of~\(\Bisp\).  So the elements~\(\Rep_x\) for
  \(x\in \Gr^0\sqcup F\) suffice to generate the Toeplitz and
  Cuntz-Pimsner algebras of \(\Cst(\Bisp)\).  When we use only these
  generators, there is no longer any relation for \(\Rep_x \Rep_g\)
  for \(x\in F\) and \(g\in \Gr\), and the product relation for
  elements of \(\Gr\) and~\(\Bisp\) becomes
  \(\Rep_g \Rep_x = \Rep_{g\circ x} \Rep_{g|_x}\) for \(g\in \Gr\),
  \(x\in F\) with \(\s(g) = \rg(x)\), with the notation
  in~\eqref{eq:action_X_self-similar}.  If \(x,y\in F\), then
  \(\Rep_x^*\Rep_y\) is only nonzero if \(x=y\) because \(\Qu|_F\) is
  injective.  So \(\Rep_x^*\Rep_y = \delta_{x,y} \Rep_{\s(x)}\) for
  \(x,y \in F\), and this relation implies and therefore completely
  replaces the more general relation
  \(\Rep_x^*\Rep_y = \Rep_{\braket{x}{y}}\) for \(x,y \in \Bisp\).
\end{remark}

Now let~\(\Reg\) be the set of regular vertices for the graph
\(\rg,\s\colon F\rightrightarrows \Gr^0\) in the usual sense.  Then
the relations for the generators~\(\Rep_x\) for \(x\in \Gr^0\sqcup F\)
in the Cuntz--Pimsner algebra~\(\mathcal{O}\) of~\(\Cst(\Bisp)\)
relative to \(\Cst(\Gr_\Reg)\) are exactly the defining relations of a
Cuntz--Krieger family for the graph
\(\rg,\s\colon F\rightrightarrows \Gr^0\).  Thus these generators
generate a \Star{}homomorphism from the graph \(\Cst\)\nb-algebra of
this graph to~\(\mathcal{O}\).  The generators~\(\Rep_x\)
for \(x\in \Gr\supseteq \Gr^0\) satisfy the same relations as in the
groupoid \(\Cst\)\nb-algebra \(\Cst(\Gr)\).  Finally,
there are some extra commutation relations for \(\Rep_x\Rep_y\) for
\(x\in\Gr\) and \(y\in F\).

If \(\Reg \subseteq \Gr^0\) is a different subset, then the statements
above remain true if we replace the graph \(\Cst\)\nb-algebra by the
relative graph \(\Cst\)\nb-algebra, where the relation
\(p_v = \sum_{x\in \rg^{-1}(v) \cap F} \Rep_x\Rep_x^*\) is imposed for
\(v\in\Reg\).

Our main theorem, Theorem~\ref{the:main}, says that~\(\mathcal{O}\) is
the groupoid \(\Cst\)\nb-algebra of the groupoid
\(\Mod = \Omega(\Reg)\rtimes \IS(\Gr,\Bisp)\).  We have also
described~\(\Mod\) through a universal property, which specifies how
it acts on locally compact Hausdorff spaces.  Such an action on a
space~\(Y\) consists of an action of~\(\Gr\) and a map
\(\Bisp\times_{\s,\Gr^0,\rg} Y \to Y\) with certain properties.
Since~\(\Gr^0\) is discrete, the anchor map \(\rg\colon Y\to \Gr^0\)
is equivalent to a disjoint union decomposition
\(Y = \bigsqcup_{x\in\Gr^0} Y_x\), namely,
\(Y_x \defeq \rg_Y^{-1}(x) \subseteq Y\).  Then \(g\in\Gr\) acts
on~\(Y\) by a homeomorphism
\(\vartheta_g\colon Y_{\s(g)} \to Y_{\rg(g)}\), \(y\mapsto g\cdot y\).
If \(x\in \Bisp\), then left multiplication by~\(x\) must be a
homeomorphism~\(\vartheta_x\) from~\(Y_{\s(x)}\) onto a clopen subset
of~\(Y_{\rg(x)}\).  In fact, each singleton in \(\Gr\) and~\(\Bisp\)
is a slice, and \(\vartheta_x = \vartheta(\{x\})\) is the partial
homeomorphism associated to a singleton slice.  The product of two
singleton slices \(\{x\}\), \(\{y\}\) is either the singleton
\(\{x \cdot y\}\) if \(\s(x) = \rg(y)\) or empty otherwise, and the
empty slice acts by the empty map on~\(Y\).  Similarly,
\(\braket{\{x\}}{\{y\}} = \{\braket{x}{y}\}\) if \(x,y\in \Bisp\)
satisfy \(\Qu(x) = \Qu(y)\), so that \(\braket{x}{y}\) is defined and
\(\braket{\{x\}}{\{y\}}\) is empty otherwise.  With these
clarifications, our partial homeomorphisms
\(\vartheta_x = \vartheta(\{x\})\) for \(x\in \Gr\sqcup \Bisp\) now
satisfy the relations \ref{en:action_from_theta1}
and~\ref{en:action_from_theta2}.  In addition, it is clear that the
graph of \(\vartheta(\Slice)\) for a slice \(\Slice\) is simply the
disjoint union of the graphs of~\(\vartheta_x\) for all
\(x\in\Slice\).  Thus the partial homeomorphisms~\(\vartheta_x\) for
\(x\in\Gr\sqcup \Bisp\) determine the action of all slices on~\(Y\).
In fact, we may replace~\(\IS\) in our description of~\(\Mod\) by the
inverse semigroup with zero element that is generated by the singleton
slice generators \(\Theta(x)\) for \(x\in \Gr\sqcup \Bisp\) with the
analogues of the relations \ref{en:action_from_theta1}
and~\ref{en:action_from_theta2}, where the right hand side is
understood to be the zero element if the relevant slice is empty.

For \(n\in\N\), let
\[
  F_n \defeq \setgiven{ (x_1,\dotsc,x_n) \in F^n}{\s(x_j) =
    \rg(x_{j+1}) \text{ for }j=1,\dotsc,n-1}.
\]
This is in bijection with a fundamental domain for the composite
groupoid correspondence \(\Bisp_n = \Bisp^{\circ n}\).  That is,
\(F_n \cong \Bisp_n/\Gr\).  So
\(\Omega_{[0,n]} = \bigsqcup_{k=0}^n F_k\) as a set.  If \(F\)
and~\(\Gr^0\) are finite, then it follows that all the sets~\(F_k\)
are finite and so the topology on~\(\Omega_{[0,n]}\) is still
discrete.  Then \(\Omega_{[0,\infty)}\) carries the projective limit
topology of these finite compact spaces.  In general,
\(\Omega_{[0,n]}\) carries the locally compact Hausdorff topology
induced by the commutative \(\Cst\)\nb-algebra~\(A_{[0,n]}\) in
Definition~\ref{def:Amn}.  We also get this topology by iterating the
fibrewise one-point compactification construction several times.  We
refrain from giving more details.  The space~\(\Omega(\Reg)\) is also
the object space of the groupoid that underlies the graph
\(\Cst\)\nb-algebra of \(\rg,\s\colon F\rightrightarrows \Gr^0\)
inside~\(\mathcal{O}\).

\begin{remark}
  The definition of a self-similar groupoid action
  in~\cite{Laca-Raeburn-Ramagge-Whittaker:Equilibrium_self-similar_groupoid}
  imposes some extra assumptions.  Namely, the vertex and edge sets of
  the graph, \(\Gr^0\) and~\(F\), are assumed to be finite, and the
  induced action of~\(\Gr\) on the path space of the graph~\(F\) is
  assumed to be faithful.  In this case, the \(\Cst\)\nb-algebra of
  the self-similar groupoid action that is studied
  in~\cite{Laca-Raeburn-Ramagge-Whittaker:Equilibrium_self-similar_groupoid}
  is the absolute Cuntz--Pimsner algebra of \(\Cst(\Bisp)\), that is,
  for \(\Reg = \Gr^0\) (compare
  \cite{Laca-Raeburn-Ramagge-Whittaker:Equilibrium_self-similar_groupoid}*{Proposition~4.7}).
\end{remark}

Summing up, our construction for a discrete groupoid~\(\Gr\) gives the
\(\Cst\)\nb-algebras of self-similar groupoid actions
of~\cite{Laca-Raeburn-Ramagge-Whittaker:Equilibrium_self-similar_groupoid},
except that we drop some assumptions such as the graph being finite or
the action on the path space being faithful.  Thus we propose that our
construction for a general \'etale locally compact groupoid is a good
definition of a self-similar action of an \'etale locally compact
groupoid.  Of course, topological graphs are a special case of our
theory.  To get Katsura's topological graph \(\Cst\)\nb-algebras
of~\cite{Katsura:class_I}, we must let~\(\Reg\) be the largest subset
of the vertex set where the range map is surjective and proper.  For a
more general groupoid correspondence~\(\Bisp\), it is not clear
whether Katsura's ideal for the corresponding
\(\Cst\)\nb-correspondence \(\Cst(\Bisp)\) is of the form
\(\Cst(\Gr_{\Bisp})\) for an open invariant subset
\(\Bisp\subseteq \Gr^0\).  We seem to need this to be the case in
order for the resulting Cuntz--Pimsner algebra to be a groupoid
\(\Cst\)\nb-algebra in a natural way.

Finally, we specialise to the case of a groupoid correspondence
\(\Bisp\colon \Gr \leftarrow \Gr\) for a discrete group~\(\Gr\).  By
\cite{Antunes-Ko-Meyer:Groupoid_correspondences}*{Example~4.2}, this
is the same as a discrete set~\(\Bisp\) with commuting actions
of~\(\Gr\) on the left and right, such that the right action is free
and proper.  The only two options for \(\Reg \subseteq \Gr^0\) are the
empty set and the one-point set~\(\Gr^0\) itself.  If
\(\Reg = \Gr^0\), then we need~\(\Bisp\) to be proper, which amounts
to \(\Bisp/\Gr\) being finite.  If we add the condition that the
\(\Gr\)\nb-action on the set \(\varprojlim \Bisp_n/\Gr\) should be
free, then this is the same as a self-similar group.  If
\(\Reg=\emptyset\), then any groupoid correspondence~\(\Bisp\) is
allowed, that is, we may also allow \(\Bisp/\Gr\) to be infinite.
This goes beyond the scope of the theory of self-similar groups.

\begin{bibdiv}
  \begin{biblist}
    \bibselect{references}
  \end{biblist}
\end{bibdiv}
\end{document}